\documentclass[12pt]{amsart}

\usepackage{amssymb}
\usepackage{amsmath}
\usepackage{amscd}
\input amssym.def
\usepackage{pstricks}
\usepackage[all]{xy}
\usepackage{hyperref}
\setlength{\textheight}{630pt}
\setlength{\voffset}{-20pt}
\setlength{\oddsidemargin}{0pt}
\setlength{\evensidemargin}{0pt}
\setlength{\textwidth}{440pt}
\newtheorem{theorem}{Theorem}[section]
\newtheorem{lemma}[theorem]{Lemma}
\newtheorem{prop}[theorem]{Proposition}
\newtheorem{cor}[theorem]{Corollary}
\theoremstyle{definition}
\newtheorem{rem}[theorem]{Remark}
\newcommand{\M}{\mathcal{M}}
\newcommand{\PM}{\mathcal{PM}}
\newcommand{\Z}{\mathbb{Z}}
\newcommand{\Dif}{\mathcal{H}}
\newcommand{\bdr}[1]{\partial\! #1}
\newcommand{\C}{\mathcal{C}}
\newcommand{\D}{\mathcal{D}}
\newcommand{\G}{\mathcal{G}}
\newcommand{\cS}{\mathcal{S}}
\newcommand{\lr}[1]{\left<#1\right>}
\newcommand{\Stab}{\mathrm{Stab}}
\newcommand{\Sym}{\mathrm{Sym}}
\numberwithin{equation}{section}
\title[Presentation for the mapping class group]
{A presentation for the mapping class group of a
nonorientable surface}
\author{Luis Paris}
\address[Luis Paris]{Universit\'e de Bourgogne, Institut de Math\'ematiques de Bourgogne, UMR 5584 du CNRS, B.P. 47870, 21078 Dijon cedex, France.} \email{lparis@u-bourgogne.fr}
\author{B{\l}a\.zej Szepietowski}
\address[B{\l}a\.zej Szepietowski]{Institute of Mathematics, Gda\'nsk University, Wita Stwosza 57,
80-952 Gda\'nsk, Poland} \email{blaszep@mat.ug.edu.pl}
\begin{document}
\begin{abstract}
Let $N_{g,n}$ denote the nonorientable surface of genus $g$ with $n$ boundary components and $\M(N_{g,n})$ its mapping class group. We obtain an explicit finite presentation of  $\M(N_{g,n})$ for $n\in\{0,1\}$ and all $g$ such that $g+n>3$.
\end{abstract}
\maketitle
\section{Introduction}
Let $F$ be a compact surface with (possibly empty) boundary and let $\mathcal{P}=\mathcal{P}_m=\{P_1,\dots,P_m\}$ be a set of $m$ distinguished points in the interior of $F$, called {\it punctures}. We define $\Dif(F,\mathcal{P})$ to be the group of all, orientation preserving if $F$ is orientable, homeomorphisms
$h\colon F\to F$ such that $h(\mathcal{P})=\mathcal{P}$ and $h$ is equal to the identity on the boundary of $F$. The  {\it mapping class group} $\M(F,\mathcal{P})$ of $F$ relatively to $\mathcal{P}$ is the group of isotopy classes of elements of $\Dif(F,\mathcal{P})$. The {\it pure mapping class group} $\PM(F,\mathcal{P})$ is the subgroup of $\M(F,\mathcal{P})$ consisting of the isotopy classes of homeomorphisms fixing each puncture. If $\mathcal{P}=\emptyset$ then we drop it in the notation and write simply $\M(F)$. If $\mathcal{P}=\{P\}$ then we write $\M(F,P)$ instead of $\M(F,\{P\})$.
A compact surface of genus $g$ with $n$ boundary components will be denoted by $S_{g,n}$ if it is orientable, or by $N_{g,n}$ if it is nonorientable.

Historically, McCool \cite{McC} gave the first algorithm for finding a finite presentation for $\M(S_{g,1})$ for any $g$. His approach is purely algebraic and no explicit presentation has been derived from this algorithm. In their groundbreaking paper \cite{HT} Hatcher and Thurston gave an algorithm for computing a finite presentation for $\M(S_{g,1})$ from its action on a simply connected simplicial complex, the {\it cut system complex}. By this algorithm, Harer \cite{Harer} obtained a finite, but very unwieldy, presentation for  $\M(S_{g,1})$ for any $g$. This presentation was simplified by Wajnryb \cite{W,W1}, who also gave a presentation for $\M(S_{g,0})$. Using Wajnryb's result, Matsumoto \cite{Mat} found other presentations for $\M(S_{g,1})$ and $\M(S_{g,0})$, and Gervais \cite{Gerv} found a presentation for  $\M(S_{g,n})$ for arbitrary $g\ge 1$ and $n$. Starting from Matsumoto's presentations, Labru\`ere and Paris \cite{LabPar} computed a presentation for $\M(S_{g,n},\mathcal{P}_m)$ for arbitrary $g\ge 1$, $n$ and $m$. Benvenuti \cite{Benv} and Hirose \cite{Hir} independently recovered the Gervais presentation from the action of $\M(S_{g,n})$ on two different variations of the Harvey's  curve complex \cite{Harvey}, instead of the cut system complex. 

Until present, finite presentations of $\M(N_{g,n},\mathcal{P}_m)$ were know only for a few small values of $(g,n,m)$, with $g\le 4$. Using results of Lickorish \cite{Lick1,Lick2}, Chillingworth \cite{Chill} found a finite generating set for $\M(N_{g,0})$ for arbitrary $g$. This set was extended for $m>0$ by Korkmaz \cite{K}, and for $n+m>0$ and $g\ge 3$ by Stukow \cite{Stu_bdr}. For every nonorientable surface $N_{g,n}$ there is a covering
$p\colon S_{g-1,2n}\to N_{g,n}$ of degree two. By a result of Birman and Chillingworth \cite{BC}, generalised for $n>0$ in \cite{SzepB}, $\M(N_{g,n})$ is isomorphic to the subgroup of $\M(S_{g-1,2n})$ consisting of elements commuting with the covering involution. However, since the image of $\M(N_{g,n})$ has infinite index in $\M(S_{g-1,2n})$, it seems that it would be very hard to obtain a finite presentation for $\M(N_{g,n})$ from a presentation of $\M(S_{g-1,2n})$.
In \cite{Szep_Osaka} an algorithm for finding a finite presentation for $\M(N_{g,n})$ for any $g$ and $n$ is given, based on a result of Brown \cite{Br} and the action of $\M(N_{g,n})$ on the curve complex (following the idea of \cite{Benv}). By this algorithm, an explicit finite presentation for $\M(N_{4,0})$ was obtained in \cite{Szep1}.

In this paper we apply the algorithm given in \cite{Szep_Osaka} to find an explicit finite presentation for $\M(N_{g,n})$ for $n\in\{0,1\}$ and all $g$ such that $g+n>3$. We present $\M(N_{g,1})$ as a quotient of the free product
$\M(S_{\rho,r})\ast\M(S_{0,1},\mathcal{P}_g)$, where $g=2\rho+r$ and $r\in\{1,2\}$. The factor  $\M(S_{\rho,r})$ comes from an embedding of $S_{\rho,r}$ in $N_{g,1}$ and it is generated by Dehn twists. The factor $\M(S_{0,1},\mathcal{P}_g)$, which is isomorphic to the braid group, comes from the embedding $\M(S_{0,1},\mathcal{P}_g)\to\M(N_{g,1})$ defined in \cite{SzepB}, and it is generated by $g-1$ crosscap transpositions. There are three families of defining relations of $\M(N_{g,1})$: (A) relations from $\M(S_{\rho,r})$ between Dehn twists, (B) braid relations between crosscap transpositions, and (C) relations involving generators of both types. A presentation for $\M(N_{g,0})$ is obtained from that of $\M(N_{g,1})$ by adding three relations.

The presentations for $\M(N_{g,1})$ and $\M(N_{g,0})$ are given respectively in Theorems \ref{mainA} and \ref{mainB} in Section \ref{sec_pres}. They are proved simultaneously  by induction on $g$. The base cases $(g,n)\in\{(3,1), (4,0)\}$ are proved in Section \ref{sec_base}. Theorem \ref{mainA} is proved in Section \ref{sec_mainA} under the assumption that Theorem \ref{mainB} is true. The proof of Theorem \ref{mainB} uses the action of $\M(N_{g,0})$ on the ordered complex of curves defined in Section \ref{sec_curves}, and it
occupies Sections \ref{sec_v2}, \ref{sec_v13}, where presentations of stabilisers of vertices are calculated, and Sections \ref{sec_edges}, \ref{sec_triangles}, where we deal with relations corresponding to simplices of dimensions 1 and 2.
 
\subsection*{Acknowledgements}
First version of this paper was written during the second author's visit to Institut de Math\'ematiques de Bourgogne in Dijon in the period 01.10.2011 -- 30.09.2012  supported by 
the MNiSW ``Mobility Plus'' Program 639/MOB/2011/0. He wishes to thank the Institut for their hospitality. The second author was also partially supported by the MNiSW grant N~N201 366436. 
\section{Preliminaries}\label{sec_preli}
\subsection{Simple closed curves and Dehn twists.}
By a {\it simple closed curve} in $F$ we mean an embedding
$\gamma\colon S^1\to F\backslash\bdr{F}$. Note that $\gamma$ has an orientation; the
curve with the opposite orientation but same image will be denoted by
$\gamma^{-1}$. 
By abuse of notation, we will often identify a simple closed curve with its
oriented image and also with its isotopy class.
We say that $\gamma$ is {\it generic} if it 
does not bound a disc nor a M\"obius band and is not isotopic to a boundary component.
According to whether a regular neighbourhood of $\gamma$ is an annulus or a M\"obius strip, we call $\gamma$ respectively {\it two-} or {\it one-sided}. 
We say that $\gamma$ is {\it nonseparating} if $F\backslash\gamma$
is connected and {\it separating} otherwise. 

Given a two-sided simple closed curve $\gamma$, $T_\gamma$ denotes a Dehn
twist about $\gamma$. On a  nonorientable surface it is
impossible to distinguish between right and left twists, so the
direction of a twist $T_\gamma$ has to be specified for each curve
$\gamma$. In this paper it is usually indicated by arrows in a figure. Equivalently we may choose an orientation of a regular
neighbourhood of $\gamma$. Then $T_\gamma$ denotes the right Dehn twist with
respect to the chosen orientation. Recall that $T_\gamma$ does not depend on the orientation of $\gamma$. 

\begin{figure}
\input{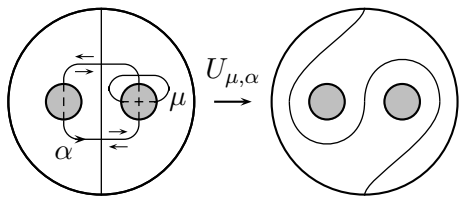}
\caption{\label{U} Crosscap transposition.}
\end{figure}

\begin{figure}
\input{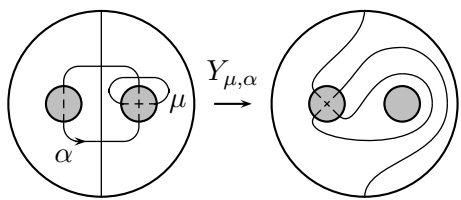}
\caption{\label{Y} Crosscap slide.}
\end{figure}

\subsection{Crosscap slides and transpositions.}
We begin this subsection by describing a convention used in all figures in this paper. We explain this on the example of Figure \ref{U}. The shaded discs
represent crosscaps; this means that their interiors should be
removed, and then antipodal points in each resulting boundary
component should be identified. The small arrows on two sides of the curve   $\alpha$ indicate the direction of the Dehn twist $T_\alpha$.

Let $N=N_{g,n}$ be a nonorientable surface of genus $g\ge 2$.
Suppose that $\mu$ and $\alpha$ are two simple closed curves in $N$,
such that $\mu$ is one-sided, $\alpha$ is two-sided and they
intersect in one point. Let $K\subset N$ be a regular neighbourhood of
$\mu\cup\alpha$, 
which is homeomorphic to the Klein bottle with a
hole. On Figure \ref{U} a homeomorphism of $K$ is shown, which interchanges the two crosscaps keeping the boundary of $K$ fixed. It may be extended 
by the identity outside $K$ to a homeomorphism of $N$, which
we call {\it crosscap
transposition} and denote as $U_{\mu,\alpha}$.
We define {\it crosscap slide} $Y_{\mu,\alpha}$ to be the composition
\[Y_{\mu,\alpha}=T_\alpha U_{\mu,\alpha},\]
where $T_\alpha$ is the Dehn twist about $\alpha$ in the direction indicated by the arrows in Figure \ref{U}. If $M\subset K$ is a regular neighbourhood of $\mu$, which is a M\"obius strip, then $Y_{\mu,\alpha}$ may be described as the effect of pushing $M$ once along $\alpha$ (Figure \ref{Y}).
Observe that $Y_{\mu,\alpha}$ reverses the orientation of $\mu$. 
Up to isotopy, $Y_{\mu,\alpha}$ does not depend on the
choice of the regular neighbourhood $K$. It also does not depend on the orientation of $\mu$
but does depend on the orientation of $\alpha$, as 
$Y_{\mu,\alpha^{-1}}=Y^{-1}_{\mu,\alpha}$. For any $h\in\M(N)$ we have the formula
\[hY_{\mu,\alpha}h^{-1}=Y_{h(\mu),h(\alpha)}.\]
The crosscap slide was introduced under the name Y-homeomorphism by Lickorish, who proved that $\M(N_{g,0})$ is generated by Dehn twists and one crosscap slide for $g\ge 2$ \cite{Lick1,Lick2}. 
\subsection{Exact sequences.}
Given an exact sequence of groups
\[1\to K\to G\stackrel{\rho}{\to}H\to 1\]
and presentations $K=\lr{S_K\,|\,R_K}$ and $H=\lr{S_H\,|\,R_H}$, a presentation for $G$ may be obtained as follows. For each $x\in S_H$ we choose $\widetilde{x}\in G$ such that $\rho(\widetilde{x})=x$ and let 
\[\widetilde{S_H}=\{\widetilde{x}\,|\,x\in S_H\}.\] For each $r=x_1^{\epsilon_1}\cdots x_k^{\epsilon_1}\in R_H$ let 
$\widetilde{r}=\widetilde{x_1}^{\epsilon_1}\cdots\widetilde{x_k}^{\epsilon_1}$. Since $\rho(\widetilde{r})=1$, there is a word $w_r$ over $S_K$ representing the same element of $G$ as $\widetilde{r}$. Let \[R_1=\{\widetilde{r}w_r^{-1}\,|\,r\in R_H\}.\] 
Since $K$ is a normal subgroup of $G$, for $x\in S_H$ and $y\in S_K$ we have $\widetilde{x}y\widetilde{x}^{-1}\in K$ and there is a word $w(x,y)$ over $S_K$ representing the same element of $G$ as  $\widetilde{x}y\widetilde{x}^{-1}$. Let
\[R_2=\{\widetilde{x}y\widetilde{x}^{-1}w(x,y)^{-1}\,|\,x\in S_H, y\in S_K\}.\]
Proof of the following lemma is left to the reader.
\begin{lemma}\label{ext_pres}
$G$ admits the presentation
\[G=\lr{S_K\cup \widetilde{S_H}\,|\, R_K\cup R_1\cup R_2}.\]
\end{lemma}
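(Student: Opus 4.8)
The plan is to verify the presentation by checking the two standard conditions: every relation in the proposed presentation holds in $G$, and the proposed generators and relations suffice to recover $G$. For the first condition, note that $R_K$ holds because $K\subset G$ with the given presentation, $R_1$ holds by the very definition of $w_r$, and $R_2$ holds by the definition of $w(x,y)$; so the natural map from the group $\widetilde{G}$ defined by the presentation onto $G$ is well-defined. Call this map $\varphi\colon\widetilde{G}\to G$. It is surjective because $\widetilde{S_H}$ maps onto a set of coset representatives for $K$ in $G$ (as $\rho$ carries $\widetilde{x}$ to $x$ and $S_H$ generates $H$) while $S_K$ maps onto generators of $K$.

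The core of the argument is injectivity of $\varphi$. First I would use the relations $R_2$ to show that, in $\widetilde{G}$, the subgroup $\widetilde{K}$ generated by $S_K$ is normal: conjugating a generator $y\in S_K$ by a generator $\widetilde{x}^{\pm1}$ lands in $\widetilde{K}$ by $R_2$ (for the inverse conjugate, rewrite using the relation), and conjugation by elements of $\widetilde{K}$ obviously stays in $\widetilde{K}$. Hence every element of $\widetilde{G}$ can be written as $w\cdot v$ with $w$ a word in $\widetilde{S_H}$ and $v\in\widetilde{K}$. Next, the relations $R_1$ let us push any word in $\widetilde{S_H}$ that represents a relator of $H$ into $\widetilde{K}$; combined with normality this shows that the image of $\widetilde{K}$ in the quotient $\widetilde{G}/\widetilde{K}$ satisfies all relations $R_H$, so there is a surjection $H=\langle S_H\mid R_H\rangle\twoheadrightarrow \widetilde{G}/\widetilde{K}$. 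On the other hand $\rho\circ\varphi$ factors through this quotient and induces $\widetilde{G}/\widetilde{K}\to H$, and chasing the generators shows the composite $H\to\widetilde{G}/\widetilde{K}\to H$ is the identity; hence $\widetilde{G}/\widetilde{K}\cong H$ and $\varphi$ induces an isomorphism on these quotients.

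It then remains to see that $\varphi$ restricted to $\widetilde{K}$ is injective onto $K$. Surjectivity onto $K$ is clear since $S_K$ generates $K$. For injectivity, suppose $v\in\widetilde{K}$ with $\varphi(v)=1$. Since $K=\langle S_K\mid R_K\rangle$, it suffices to show that $v$, viewed as a word in $S_K$, lies in the normal closure of $R_K$ inside the free group on $S_K$; equivalently, that the obvious map $K=\langle S_K\mid R_K\rangle\to\widetilde{K}$ is injective. But $\varphi$ carries $\widetilde{K}\to K$, and precomposing with $K\to\widetilde{K}$ gives a map $K\to K$ that fixes every generator, hence is the identity; therefore $K\to\widetilde{K}$ is injective, and combined with the reverse inequality it is an isomorphism. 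Feeding this back into the short exact sequence $1\to\widetilde{K}\to\widetilde{G}\to\widetilde{G}/\widetilde{K}\to1$ and comparing with $1\to K\to G\to H\to1$ via the five lemma (or a direct diagram chase) yields that $\varphi$ is an isomorphism.

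The main obstacle is the bookkeeping in the injectivity step — specifically, making rigorous the claim that relations already present in $H$ (encoded by $R_1$) together with the conjugation relations $R_2$ do not impose any \emph{extra} collapsing on $\widetilde{K}$ beyond $R_K$. This is exactly the point where one must argue that $\widetilde{K}$ is not a proper quotient of $K$, and the clean way to do it is the retraction argument above (build the map $K\to\widetilde{K}\to K$ and observe it is the identity), rather than trying to manipulate words directly. Everything else is routine: the relations are visibly satisfied, surjectivity is immediate, and the normal-form decomposition $\widetilde{G}=\widetilde{S_H}\text{-words}\cdot\widetilde{K}$ is a formal consequence of $R_2$.
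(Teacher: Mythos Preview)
The paper does not actually prove this lemma: it states ``Proof of the following lemma is left to the reader.'' Your argument is the standard one for presentations of group extensions and is correct; there is nothing to compare it against.

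One small point worth tightening: when you assert that $\widetilde{K}$ is normal in $\widetilde{G}$, the relations $R_2$ directly give $\widetilde{x}\,y\,\widetilde{x}^{-1}\in\widetilde{K}$, but the claim that $\widetilde{x}^{-1}y\,\widetilde{x}\in\widetilde{K}$ is not literally ``rewrite using the relation.'' It follows because conjugation by $\widetilde{x}$ defines an endomorphism of $\widetilde{K}$ which, after composing with your isomorphism $\widetilde{K}\cong K$, becomes the automorphism of $K$ given by conjugation in $G$; hence it is already bijective on $\widetilde{K}$. Alternatively, you can invoke $R_K$ as well: for each $y\in S_K$ there is a word $v$ over $S_K$ with $\widetilde{x}\,v\,\widetilde{x}^{-1}=y$ in $G$, and then $R_2$ plus $R_K$ force the same equality in $\widetilde{G}$. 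Either way the step is routine, but your parenthetical undersells what is needed.
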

The generators $S_K$ and $\widetilde{S_H}$ will be called
{\it kernel} and {\it cokernel} generators respectively. The relators
$R_K$, $R_1$ and $R_2$ will be called {\it kernel, cokernel} and {\it conjugation} relators respectively. In this paper we work with relations rather then relators.

\medskip

The inclusion $\mathcal{P}_{m-1}\subset\mathcal{P}_m$ gives rise to a {\it forgetful homomorphism}
$\mathfrak{f}\colon\PM(F,\mathcal{P}_m)\to\PM(F,\mathcal{P}_{m-1})$. By \cite{Bir1}, if the Euler characteristic of $F\backslash\mathcal{P}_{m-1}$ is negative, then we have the following {\it Birman exact sequence}.
\begin{equation}\label{Bir_es}
1\to\pi_1(F\backslash\mathcal{P}_{m-1},P_m)\stackrel{\mathfrak{p}}{\to}\PM(F,\mathcal{P}_m)
\stackrel{\mathfrak{f}}{\to}\PM(F,\mathcal{P}_{m-1})\to 1.
\end{equation}
Although the above result is proved in \cite{Bir1} for orientable $F$, the same proof works for nonorientable $F$ as well.
The homomorphism $\mathfrak{p}\colon\pi_1(F\backslash\mathcal{P}_{m-1},P_m)\to\PM(F,\mathcal{P}_m)$ is called the {\it point pushing map}. Suppose that $\gamma$ is a simple loop on $F\backslash\mathcal{P}_{m-1}$ based at $P_m$ and let $A$ be its regular neighbourhood. If $[\gamma]$ denotes the homotopy class of $\gamma$, then
$\mathfrak{p}[\gamma]$ is isotopic to a homeomorphism equal to the identity outside $A$, and obtained by pushing $P$ once along $\gamma$ keeping the boundary of $A$ fixed, see Figure \ref{sl}. Note that $A$ is a M\"obius band if $\gamma$ is one-sided, or an annulus if $\gamma$ is two-sided. In the latter case $\mathfrak{p}[\gamma]$ may be expressed in terms of Dehn twists about the boundary components of $A$. 
\begin{figure}
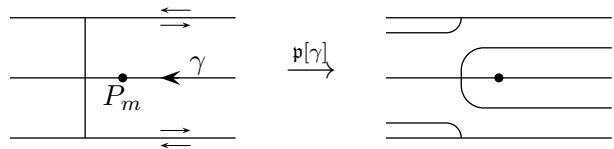

\pspicture*(8.5,2)
%
\psline[linewidth=.5pt](.25,.2)(3.25,.2)
\rput[b](2.75,1.05){\small$\gamma$}
\psline[linewidth=.5pt](.25,1)(3.25,1)
\psline[linewidth=.5pt](.25,1.8)(3.25,1.8)
\pscircle*(1.75,1){.06}\rput[t](1.75,.9){\small$P_m$}
\psline[linewidth=.5pt, arrowsize=4pt 3]{<-}(2.25,1)(2.5,1)
\psline[linewidth=.5pt](1.25,.2)(1.25,1.8)
\psline[linewidth=.4pt]{->}(2.25,.3)(2.65,.3)
\psline[linewidth=.4pt]{<-}(2.25,.1)(2.65,.1)
\psline[linewidth=.4pt]{<-}(2.25,1.9)(2.65,1.9)
\psline[linewidth=.4pt]{->}(2.25,1.7)(2.65,1.7)
\rput[b](4.25,1){$\stackrel{\mathfrak{p}[\gamma]}{\longrightarrow}$}
\psline[linewidth=.5pt](5.25,.2)(8.25,.2)
\psline[linewidth=.5pt](5.25,1)(8.25,1)
\psline[linewidth=.5pt](5.25,1.8)(8.25,1.8)
\pscircle*(6.75,1){.06}
%
\psline[linewidth=.5pt, linearc=.2](6.25,1.8)(6.25,1.6)(5.25,1.6)
\psline[linewidth=.5pt, linearc=.2](6.25,.2)(6.25,.4)(5.25,.4)
\psline[linewidth=.5pt, linearc=.3](8.25,1.4)(6.25,1.4)(6.25,.6)(8.25,.6)
%
\endpspicture
\caption{\label{sl}Pushing a puncture along a simple loop.}
\end{figure}
\begin{lemma}\label{push1} Suppose that $\gamma$ is a two-sided simple loop based at $P_m$ and $\delta_1$, $\delta_2$ are the boundary components of a regular neighbourhood of $\gamma$. Then $\mathfrak{p}[\gamma]=T_{\delta_1}T_{\delta_2}$,
where the directions of the twists
are determined by the orientation of $\gamma$ as indicated by arrows on the left hand side of Figure \ref{sl}.\hfill{$\Box$}
\end{lemma}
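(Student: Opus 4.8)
The plan is to localise the identity to an explicit model annulus and then realise the point pushing map as a shear. Let $A$ be the regular neighbourhood of $\gamma$ from the statement, with $\partial A=\delta_1\cup\delta_2$, and identify $A$ with $S^1\times[0,1]$ (where $S^1=\mathbb{R}/\mathbb{Z}$) so that the core $\gamma=S^1\times\{1/2\}$ carries the orientation of the first coordinate, the basepoint $P_m$ lies on $\gamma$, and $\delta_1=S^1\times\{0\}$, $\delta_2=S^1\times\{1\}$, the labelling being chosen to match the arrows of Figure \ref{sl}. A representative of $\mathfrak{p}[\gamma]$ may be chosen supported in $A$, and $T_{\delta_1}$, $T_{\delta_2}$ may be chosen supported in disjoint sub-annuli contained in $S^1\times[0,1/2]$ and $S^1\times[1/2,1]$ respectively; so the whole computation takes place in $\M(A,P_m)$ relative to $\partial A$, and it suffices to verify the formula there.

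Next I would write $\mathfrak{p}[\gamma]$ down concretely. Fix a smooth $f\colon[0,1]\to[0,1]$ with $f(0)=f(1)=0$, $f(1/2)=1$, monotone on each of $[0,1/2]$ and $[1/2,1]$, and put $\phi_t(\theta,s)=(\theta+tf(s),s)$. Then $(\phi_t)_{t\in[0,1]}$ is an isotopy of $A$ rel $\partial A$ with $\phi_0=\mathrm{id}$ along which $P_m$ runs once around $\gamma$ in the positive direction, so $\phi_1$ represents $\mathfrak{p}[\gamma]$. On the other hand the restriction of $\phi_1(\theta,s)=(\theta+f(s),s)$ to $S^1\times[0,1/2]$ is a full Dehn twist about $\delta_1$ (there $f$ increases monotonically from $0$ to $1$), its restriction to $S^1\times[1/2,1]$ is a full Dehn twist about $\delta_2$ performed in the opposite sense (there $f$ decreases from $1$ to $0$), and $\phi_1$ is the identity on $\gamma$. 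These two twists have disjoint support, hence commute, and one obtains $\mathfrak{p}[\gamma]=T_{\delta_1}T_{\delta_2}$ for the appropriate directions of the two twists.

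The one thing needing care is matching those directions with Figure \ref{sl}: by construction the shear twists $\delta_1$ and $\delta_2$ in opposite senses, ``to the left'' and ``to the right'' of $\gamma$, which is exactly the configuration of the little arrows drawn on the two sides of the two curves in the figure; concretely one tracks, inside the oriented annulus $A$, which local orientation near each $\delta_i$ is picked out by the corresponding pair of arrows. This bookkeeping is the main (and essentially the only) obstacle, since on a nonorientable surface the sign of a Dehn twist has no meaning except relative to a chosen local orientation; but every piece of data entering the comparison lives inside $A$, which is oriented, so there is no ambiguity. As a cross-check one can argue algebraically instead: $\M(A,P_m)$ rel $\partial A$ is free abelian on $\{T_{\delta_1},T_{\delta_2}\}$, the homomorphism forgetting $P_m$ kills $\mathfrak{p}[\gamma]$ and sends each of $T_{\delta_1}$, $T_{\delta_2}$ to the core twist of $A$, so $\mathfrak{p}[\gamma]$ lies in, and in fact generates, the infinite cyclic group generated by $T_{\delta_1}T_{\delta_2}^{-1}$ (in a uniform orientation convention), and a single picture tracking one arc across $A$ pins down the exponent.
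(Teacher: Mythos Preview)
Your argument is correct and is the standard way to see this identity; the paper itself gives no proof at all (the $\Box$ at the end of the statement signals that it is being recorded as a known fact, not proved). So there is nothing to compare against: you have supplied exactly the kind of explicit shear-isotopy computation that underlies the lemma, together with the algebraic cross-check in $\M(A,P_m)\cong\mathbb{Z}^2$ that fixes the exponent up to sign, and your discussion of why the orientation bookkeeping is unambiguous inside the oriented annulus $A$ is the right way to handle the nonorientable ambient surface.
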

The group $\PM(F,\mathcal{P}_m)$ acts on $\pi_1(F\backslash\mathcal{P}_{m-1},P_m)$ in the obvious
way. The next lemma says that $\mathfrak{p}$ is $\PM(F,\mathcal{P}_m)$-equivariant.
\begin{lemma}\label{push2}
For $h\in\PM(F,\mathcal{P}_m)$ and $[\gamma]\in\pi_1(F\backslash\mathcal{P}_{m-1},P_m)$ we have
$\mathfrak{p}(h[\gamma])=h\mathfrak{p}[\gamma]h^{-1}$.\hfill{$\Box$}
\end{lemma}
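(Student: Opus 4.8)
The plan is to reduce to the case where $\gamma$ is a simple loop and then use the local nature of point pushing. First observe that $\mathfrak{p}$ is a homomorphism, that conjugation $c_h\colon g\mapsto hgh^{-1}$ is an automorphism of $\PM(F,\mathcal{P}_m)$, and that the action $[\gamma]\mapsto h[\gamma]$ is an automorphism of $\pi_1(F\backslash\mathcal{P}_{m-1},P_m)$ --- namely the map induced by the restriction of a representative of $h$ to $F\backslash\mathcal{P}_{m-1}$, which fixes the base point $P_m$ since $h$ is pure, and which is well defined on isotopy classes. Hence both $[\gamma]\mapsto\mathfrak{p}(h[\gamma])$ and $[\gamma]\mapsto h\mathfrak{p}[\gamma]h^{-1}$ are homomorphisms from $\pi_1(F\backslash\mathcal{P}_{m-1},P_m)$ to $\PM(F,\mathcal{P}_m)$, so it is enough to check that they agree on a generating set of $\pi_1(F\backslash\mathcal{P}_{m-1},P_m)$. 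Since the fundamental group of a (punctured) surface is generated by classes of simple loops based at $P_m$, we may assume that $\gamma$ is a simple loop.

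Fix a representative $h_0\in\Dif(F,\mathcal{P}_m)$ of $h$. Let $A$ be a regular neighbourhood of $\gamma$ and let $\phi\in\Dif(F,\mathcal{P}_m)$ be the representative of $\mathfrak{p}[\gamma]$ that equals the identity outside $A$ and is obtained by pushing $P_m$ once along $\gamma$, as described above (see Figure \ref{sl}); thus $A$ is an annulus or a M\"obius band according as $\gamma$ is two- or one-sided. Then $h_0\phi h_0^{-1}$ equals the identity outside $h_0(A)$. Because $h_0$ fixes every puncture and restricts to a homeomorphism of $F\backslash\mathcal{P}_{m-1}$ fixing $P_m$, the set $h_0(A)$ is a regular neighbourhood of the simple loop $h_0\circ\gamma$ based at $P_m$, and $h_0\phi h_0^{-1}$ is precisely the homeomorphism pushing $P_m$ once along $h_0\circ\gamma$; in the two-sided case one may also see this from Lemma \ref{push1}, as $h_0T_{\delta_1}T_{\delta_2}h_0^{-1}=T_{h_0(\delta_1)}T_{h_0(\delta_2)}$ and $h_0(\delta_1),h_0(\delta_2)$ are the boundary curves of $h_0(A)$. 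Therefore $h_0\phi h_0^{-1}$ represents $\mathfrak{p}[h_0\circ\gamma]$, and passing to isotopy classes yields $h\mathfrak{p}[\gamma]h^{-1}=\mathfrak{p}[h_0\circ\gamma]=\mathfrak{p}(h[\gamma])$.

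The step most prone to error --- and the one I would treat as the main point to get right --- is the matching of orientation conventions: ``pushing once along $\gamma$'' depends on the orientation of $\gamma$ (equivalently, on Figure \ref{sl} up to reflection), so one must check that $h_0$ carries the chosen orientation of $\gamma$, the neighbourhood $A$ and the indicated twist directions to the corresponding data for $h_0\circ\gamma$, making the recipe equivariant under $h_0$; with the convention $h[\gamma]=[h_0\circ\gamma]$ this is automatic, and no genuine difficulty remains. An alternative proof, valid for arbitrary $[\gamma]$ and bypassing the reduction to simple loops, is to write $\mathfrak{p}[\gamma]=H_1$ for an ambient isotopy $(H_t)_{t\in[0,1]}$ of $F$ that is constant on $\bdr{F}\cup\mathcal{P}_{m-1}$, with $H_0=\mathrm{id}$ and $t\mapsto H_t(P_m)$ tracing out $\gamma$, and to note that $(h_0H_th_0^{-1})_{t\in[0,1]}$ is then such an isotopy for the loop $h_0\circ\gamma$; this uses the isotopy description of $\mathfrak{p}$ rather than the one recalled in the paper.
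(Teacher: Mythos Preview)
Your proof is correct. The paper does not actually prove this lemma: the $\Box$ immediately follows the statement, so it is treated as a standard fact left to the reader. Your argument via reduction to simple loops (or, alternatively, via conjugating an ambient isotopy) is exactly the kind of verification the reader is expected to supply, and it is carried out carefully, including the orientation check.
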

Suppose that $N$ is a nonorientable surface. We define $\PM^+(N,\mathcal{P}_m)$ to be the subgroup of $\PM(N,\mathcal{P}_m)$ consisting of the isotopy classes of homeomorphisms preserving local orientation at each puncture. Observe that 
it is a normal subgroup of index $2^m$.
For $1\le m\le n$ choose $m$ boundary components $\gamma_1,\dots,\gamma_m$ of $N_{g,n}$. Consider the surface $N_{g,n-m}$ as being obtained from $N_{g,n}$ by gluing a disc with a puncture $P_i$ in its interior along $\gamma_i$ for $i=1,\dots,m$. Let $\mathcal{P}_m=\{P_1,\dots,P_m\}$. Since every homeomorphism in $\Dif(N_{g,n})$ may be extended by the identity on the discs to an element of $\Dif(N_{g,n-m},\mathcal{P}_m)$, thus the inclusion $\imath\colon N_{g,n}\to N_{g,n-m}$ induces a homomorphism \[\imath_\ast\colon\M(N_{g,n})\to\PM^+(N_{g,n-m},\mathcal{P}_m).\] It is clearly surjective, and if $(g,n)\neq(1,1)$ then its kernel is the free abelian group of rank $m$ generated by the Dehn twists $T_{\gamma_i}$ for $i=1,\dots,m$ (see \cite[Theorem 3.6]{Stu_geom}).
Summarising, we have the following exact sequence.
\begin{equation}\label{Cup_es}
1\to\Z^m\to\M(N_{g,n})\stackrel{\imath_\ast}{\to}\PM^+(N_{g,n-m},\mathcal{P}_m)\to 1.
\end{equation}
\subsection{Blow up homomorphism and crosscap pushing map.}\label{blowup_cpush} 
In this subsection we recall from \cite{Szep2} the definitions of blowup homomorphism and  crosscap pushing map which will be important tools in what follows.

Let $F$ be a surface with $m\ge 1$ punctures $\mathcal{P}_m=\{P_1,\dots,P_m\}$.
Let $U=\{z\in\mathbb{C}\,|\,|z|\le 1\}$ and for $i=1,\dots,m$ fix an embedding
$e_i\colon U\to F\backslash\bdr{F}$ such that $e_i(0)=P_i$.
Let $\widetilde{F}$ be the nonorientable surface obtained by removing from $F$ the interiors of $e_i(U)$ and then identifying $e_i(z)$ with $e_i(-z)$ for $z\in S^1=\bdr{U}$ and $i=1,\dots,m$. Thus $\widetilde{F}=N_{g+m,n}$ if $F=N_{g,n}$ or $\widetilde{F}=N_{2g+m,n}$ if
$F=S_{g,n}$.

We define a {\it blowup homomorphism} 
\[\mathfrak{b}\colon\M(F,\mathcal{P}_m)\to\M(\widetilde{F})\] as follows. 
Represent $h\in\M(F,\mathcal{P}_m)$ by a homeomorphism $h\colon F\to F$ such that for some permutation $\sigma\in\Sym_m$ we have $h(e_i(z))=e_{\sigma(i)}(z)$ or $h(e_i(z))=e_{\sigma(i)}(\overline{z})$ for $z\in U$ and $i=1,\dots,m$. Such $h$ commutes with the identification leading to $\widetilde{F}$ and thus induces an element $\mathfrak{b}(h)\in\M(\widetilde{F})$. We refer the reader to \cite{Szep2} for a proof that $\mathfrak{b}$ is well defined
(the proof in \cite{Szep2} is only for $m=1$ but it can be easily modified to work for $m>1$). The next proposition is proved in \cite{SzepB} for $F=S_{0,1}$ but the same proof works for any $F$.
\begin{prop}\label{blowup_inj}
The blowup homomorphism $\mathfrak{b}\colon\M(F,\mathcal{P}_m)\to\M(\widetilde{F})$ is injective for any surface $F$.\hfill{$\Box$}
\end{prop}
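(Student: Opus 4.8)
The plan is to reduce the general statement to the case $m=1$, which is already handled in \cite{SzepB} for $F=S_{0,1}$ but whose proof, as the authors note, works verbatim for an arbitrary surface $F$. So the real content is an induction on $m$ using the Birman exact sequence \eqref{Bir_es}. First I would factor the blowup construction: choosing the embeddings $e_1,\dots,e_m$ so that $e_m$ is ``last'', one blows up $P_1,\dots,P_{m-1}$ first to obtain a surface $F'$ with one remaining puncture $P_m$, and then blows up $P_m$. This gives a commutative diagram relating the blowup homomorphism $\mathfrak{b}\colon\M(F,\mathcal{P}_m)\to\M(\widetilde{F})$ with the blowup homomorphisms for $m-1$ punctures and for a single puncture.

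The key step is to set up the comparison of Birman exact sequences. On the $\M(F,\mathcal{P}_m)$ side we have \eqref{Bir_es}, with kernel $\pi_1(F\backslash\mathcal{P}_{m-1},P_m)$ mapping in via the point pushing map $\mathfrak{p}$ and quotient $\PM(F,\mathcal{P}_{m-1})$. On the $\M(\widetilde F)$ side, blowing up only $P_1,\dots,P_{m-1}$ produces a nonorientable surface $F'$ carrying the single puncture $P_m$; there is a corresponding exact sequence \eqref{Cup_es} (or rather the Birman-type sequence from \cite{Szep2}) for $\M(F')$ versus $\PM^+(\widetilde F',\dots)$ — equivalently one uses the crosscap pushing map recalled in Section~\ref{blowup_cpush}. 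The point is that $\mathfrak{b}$ carries the point-pushing subgroup of $\M(F,\mathcal{P}_m)$ to the corresponding crosscap-pushing subgroup, because a homeomorphism of $F$ obtained by pushing $P_m$ along a loop $\gamma$, when lifted through the blowup at the other punctures and then blown up at $P_m$, is exactly the homeomorphism of $\widetilde F$ that pushes the $P_m$-crosscap along $\gamma$; this is essentially how the maps are defined in \cite{Szep2}, so it requires only unwinding definitions. Thus one obtains a ladder
\[
\begin{CD}
1 @>>> \pi_1(F\backslash\mathcal{P}_{m-1},P_m) @>>> \M(F,\mathcal{P}_m) @>>> \PM(F,\mathcal{P}_{m-1}) @>>> 1\\
@. @VVV @VV{\mathfrak{b}}V @VVV @.\\
1 @>>> \pi_1(F'\backslash\bdr{F'}\cup\{\text{new pts}\}) @>>> \M(F') @>>> \M(F,\mathcal{P}_{m-1})\text{-piece} @>>> 1
\end{CD}
\]
whose left vertical arrow is the map induced on fundamental groups by the blowup at $P_1,\dots,P_{m-1}$ and whose right vertical arrow is the blowup homomorphism for $m-1$ punctures.

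With the ladder in place, injectivity of $\mathfrak{b}$ follows by the five lemma (or a direct diagram chase): the right vertical arrow is injective by the inductive hypothesis, and the left vertical arrow is injective because the blowup of $F$ at the punctures $P_1,\dots,P_{m-1}$, restricted to the subsurface $F\backslash(\mathcal{P}_{m-1}\cup e_m(U))$, is a homeomorphism onto its image, so the induced map on $\pi_1$ of the (homotopy equivalent) punctured surfaces is injective — alternatively, this is exactly the single-puncture-at-a-time statement and can itself be fed through the same induction. The base case $m=1$ is Proposition~\ref{blowup_inj} for one puncture, i.e.\ the cited result of \cite{SzepB,Szep2}. The main obstacle I anticipate is purely bookkeeping: making the identification of $\mathfrak{b}$-image of the point-pushing subgroup with a crosscap-pushing subgroup precise, i.e.\ checking that the blowup genuinely intertwines $\mathfrak{p}$ with the crosscap pushing map of \cite{Szep2}, and keeping careful track of which surface ($\widetilde F$ versus the intermediate $F'$) and which marked points one is working with at each stage. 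Once that commutativity is nailed down, the homological argument is routine. (It is also possible the authors simply cite that the one-puncture proof of \cite{Szep2,SzepB} ``works for any $F$'' and ``can be easily modified to work for $m>1$'', in which case the proof is a one-line reference; but the induction above is the honest argument behind that remark.)
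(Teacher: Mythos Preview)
Your parenthetical guess is exactly what happens: the paper gives no proof at all. The proposition carries a \hfill$\Box$ and the preceding sentence simply says ``The next proposition is proved in \cite{SzepB} for $F=S_{0,1}$ but the same proof works for any $F$.'' So there is nothing to compare your argument against in the paper itself.

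That said, your inductive scheme is the right way to unpack that citation, and it is essentially correct. Two small points are worth tightening. First, the Birman sequence \eqref{Bir_es} is stated for the \emph{pure} mapping class group $\PM(F,\mathcal{P}_m)$, so your ladder only directly gives injectivity on $\PM$; to pass to $\M(F,\mathcal{P}_m)$ you should observe that an element in the kernel of $\mathfrak{b}$ must fix each crosscap core $\mu_i$ up to isotopy, hence permutes the punctures trivially and already lies in $\PM$. Second, the left vertical arrow in your ladder is the map on $\pi_1$ induced by the inclusion of $F\setminus\mathcal{P}_{m-1}$ (equivalently, $F$ minus $m-1$ open discs) into the blown-up surface $F_{m-1}$; this inclusion realises the former as a subsurface whose boundary circles each bound a M\"obius band, not a disc, in $F_{m-1}$, so $\pi_1$-injectivity follows from the standard fact about essential subsurfaces. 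With these two remarks made explicit your argument goes through. (You should also keep an eye on the Euler-characteristic hypothesis needed for \eqref{Bir_es}, but once a single crosscap has been attached the surface has negative Euler characteristic and the issue disappears.)
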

We define the {\it crosscap pushing map} 
\[\mathfrak{c}\colon\pi_1(F\backslash\mathcal{P}_{m-1},P_m)\to\M(\widetilde{F})\]
as the composition $\mathfrak{c}=\mathfrak{b}\circ\mathfrak{p}$, where $\mathfrak{p}$ is the point pushing map from the Birman exact sequence (\ref{Bir_es}). If $\gamma$ is a simple loop on $F\backslash\mathcal{P}_{m-1}$ based at $P_m$, then it follows immediately from the description of $\mathfrak{p}[\gamma]$, that $\mathfrak{c}[\gamma]$ is either a crosscap slide if $\gamma$ is one-sided, or a product of two Dehn twists about the boundary components of a M\"obius band with a hole if $\gamma$ is two-sided (just replace the puncture with a crosscap on Figure \ref{sl}).
\subsection{Notation.}\label{notation}
Let us  represent $N_{g,0}$ and $N_{g,1}$ as respectively a sphere or a disc  with $g$ crosscaps. This means that interiors of $g$ small pairwise disjoint discs should be removed from the sphere/disc, and then antipodal points in each of the resulting boundary components should be identified. Let us arrange the crosscaps as shown on Figure \ref{aI} and number them from $1$ to $g$. 
For each nonempty subset $I\subseteq\{1,\dots,g\}$ let $\gamma_I$ be the simple closed curve shown on Figure \ref{aI}.  Note that $\gamma_I$ is two-sided if and only if $I$ has even number of elements. In such case $T_{\gamma_I}$ will be the Dehn twist about $\gamma_I$ in the direction indicated by arrows on Figure \ref{aI}. 
\begin{figure}
\input{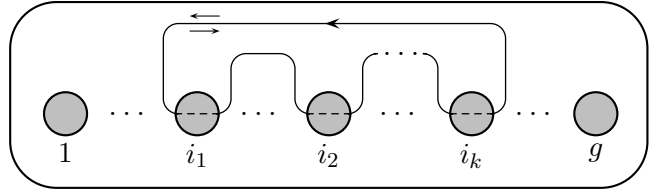}
\caption{\label{aI} The curve $\gamma_I$ for $I=\{i_1,i_2,\dots,i_k\}$.}
\end{figure}
The following curves will play a special role and so we give them different names.
\begin{itemize}
\item $\mu_i=\gamma_{\{i\}}$ for $i=1,\dots,g$
\item $\alpha_i=\gamma_{\{i,i+1\}}$ for $i=1,\dots,g-1$
\item $\beta=\gamma_{\{1,2,3,4\}}$
\item $\beta_j=\gamma_{\{1,\dots,2j+2\}}$ for $2\le 2j\le g-2$
\item $\xi=\gamma_{\{1,\dots,g\}}$
\end{itemize}
Note that $\beta=\beta_1$ and if $g=2\rho+2$ then $\xi=\beta_\rho$. We also give names to elements of $\M(N_{g,n})$ associated with these curves.
\begin{itemize}
\item $a_i=T_{\alpha_i}$, 
\item $y_i=Y_{\mu_{i+1},\alpha_i}$, 
\item $u_i=U_{\mu_{i+1},\alpha_i}$ for $i=1,\dots,g-1$,
\item $b=T_\beta$,
\item $b_j=T_{\beta_j}$ for $2\le 2j\le g-2$
\item $v=Y_{\mu_4,\beta}$, 
\item $c=T_{\gamma_{\{3,4,5,6\}}}$
\item $r_g=a_1\cdots a_{g-1}u_{g-1}\cdots u_1$
\end{itemize} 
If the surface is closed ($n=0$) then $r_g$ is isotopic to the homeomorphism induced by the reflection of Figure \ref{aI} across the line containing centers of the shaded discs (see \cite[Remark 2.4]{Szep1}).
%
\section{Presentations}\label{sec_pres}
The groups $\M(N_{1,0})$ and $\M(N_{1,1})$ are trivial by \cite[Theorem 3.4]{E}. The following presentations 
were obtained  in \cite{Lick1,Stu_Fund,BC} respectively.
\begin{align*}
\M(N_{2,0})=&\langle a_1,y_1\,|\, a_1^2=y_1^2=(a_1y_1)^2=1\rangle\\
\M(N_{2,1})=&\langle a_1,y_1\,|\, a_1y_1a_1=y_1\rangle\\
\M(N_{3,0})=&\langle a_1,a_2,y_2\,|\, a_1a_2a_1=a_2a_1a_2,  y_2^2=(a_1y_2)^2=(a_2y_2)^2=(a_1a_2)^6=1\rangle
\end{align*}
In this section we describe some other known presentations of various mapping class groups and also state our main theorems which provide presentations for $\M(N_{g,n})$ for $n\in\{0,1\}$ and $g+n\ge 4$. 
\subsection{Orientable subsurface.}
\begin{figure}
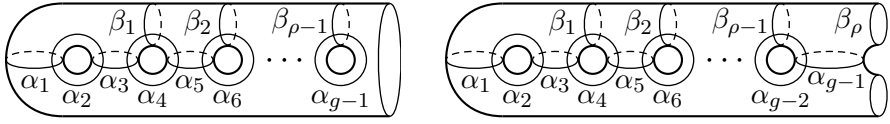

\begin{tabular}{cc}
\input{fig5}&\input{fig6}
\end{tabular}
\caption{\label{fig_S}Regular neighbourhood of the union of the curves $\alpha_i$ for $g=2\rho +1$ (left) and $g=2\rho+2$ (right).}
\end{figure}
Consider a regular neighbourhood $\Sigma$ of the union of the curves $\alpha_i$ for $i=1,\cdots,g-1$. This is an orientable subsurface of $N_{g,n}$ homeomorphic to
$S_{\rho,r}$, where $r\in\{1,2\}$ and $g=2\rho+r$ (Figure \ref{fig_S}).
The following theorem, whose proof is given in the Appendix, provides a presentation for $\M(S_{\rho,r})$, which will be a part of the presentation of $\M(N_{g,n})$. 

\begin{theorem}\label{presS}
For $r\in\{1,2\}$, $\rho\ge 1$ and $g=2\rho+r$, $\M(S_{\rho,r})$ admits a presentation with generators
$a_i$, $b_j$ for $1\le i\le g-1$, $0\le 2j\le g-2$ and relations:
\begin{itemize}
\item[(A1)] $a_ia_j=a_ja_i\quad$ for $|i-j|>1$,
\item[(A2)] $a_ia_{i+1}a_i=a_{i+1}a_ia_{i+1}\quad$ for $1\le i\le g-2$,
\item[(A3)] $a_ib_1=b_1a_i\quad$ for $i\ne 4$ if $g\ge 4$,
\item[(A4)] $b_1a_4b_1=a_4b_1a_4\quad$ if $g\ge 5$,
\item[(A5)] $(a_2a_3a_4b_1)^{10}=(a_1a_2a_3a_4b_1)^6\quad$ if $g\ge 5$,
\item[(A6)] $(a_2a_3a_4a_5a_6b_1)^{12}=(a_1a_2a_3a_4a_5a_6b_1)^{9}\quad$ if $g\ge 7$,
\item[(A7)] $b_0=a_1$,
\item [(A8)] $b_{i+1}=(b_{i-1}a_{2i}a_{2i+1}a_{2i+2}a_{2i+3}b_{i})^5(b_{i-1}a_{2i}a_{2i+1}a_{2i+2}a_{2i+3})^{-6}\quad$for $2\le 2i\le g-4$,
\item [(A9a)] $b_\rho a_{2\rho-3}=a_{2\rho-3}b_\rho\quad$ if $g=2\rho+2>6$,
\item [(A9b)] $b_2b_1=b_1b_2\quad$ if $g=6$.
\end{itemize}
\end{theorem}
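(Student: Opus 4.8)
The plan is to obtain Theorem~\ref{presS} from an already established finite presentation of $\M(S_{\rho,r})$ by Tietze transformations. A convenient starting point is Gervais' presentation~\cite{Gerv}, which applies uniformly to $S_{\rho,1}$ and $S_{\rho,2}$; for $r=1$ one may instead start from Matsumoto's presentation~\cite{Mat}, and for $r=2$ one may alternatively reconstruct $\M(S_{\rho,2})$ from the Labru\`ere--Paris presentation of $\M(S_{\rho,1},\mathcal{P}_1)$~\cite{LabPar} using the capping central extension $1\to\Z\to\M(S_{\rho,2})\to\M(S_{\rho,1},\mathcal{P}_1)\to 1$ together with Lemma~\ref{ext_pres}. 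Inside $\Sigma\cong S_{\rho,r}$ the curves $\alpha_1,\dots,\alpha_{g-1}$ form a chain (consecutive ones meet transversely once, the others are disjoint), while $\beta=\beta_1$ is disjoint from every $\alpha_i$ with $i\ne 4$ and meets $\alpha_4$ transversely once; thus $a_1,\dots,a_{g-1},b$ is, for $r=1$, a Humphries generating set of $\M(S_{\rho,1})$ and, for $r=2$, its standard analogue for $\M(S_{\rho,2})$, and in either case these twists generate $\M(\Sigma)$. Matching this generating set with that of the chosen known presentation makes relations (A1), (A2) (braid relations along the chain) and (A3), (A4) (interaction of $\beta$ with the chain) immediate.

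Next one introduces the remaining generators $b_0$ and $b_2,\dots,b_{\rho-1}$ (and $b_\rho$ when $r=2$). Since (A7) and every instance of (A8) exhibits a new symbol as an explicit word in the generators already present, adjoining these symbols and relations is a Tietze transformation leaving the group unchanged; the geometric content is that the new symbols denote the intended twists, which one verifies by induction on $j$: $b_j=T_{\beta_j}$, where $\beta_j=\gamma_{\{1,\dots,2j+2\}}$ bounds a subsurface $\Sigma_j\subset\Sigma$ homeomorphic to $S_{j,1}$ (and, for $j=\rho$, $r=2$, is a boundary curve of $\Sigma$). With this in place, (A5), (A6) and the (A8)-family are recognised as the standard $ADE$-type chain and star relations for nested configurations: in (A5) the chain $\alpha_2,\alpha_3,\alpha_4,\beta$ (type $A_4$) is extended by $\alpha_1$ to one of type $A_5$; in (A6) the configuration $\alpha_2,\dots,\alpha_6,\beta$ (type $E_6$) is extended by $\alpha_1$ to one of type $E_7$; in (A8) the chain $\beta_{i-1},\alpha_{2i},\alpha_{2i+1},\alpha_{2i+2},\alpha_{2i+3}$ (type $A_5$) is enlarged by $\beta_i$, attached at its fourth curve, to one of type $D_6$. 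The exponents $10,6$ in (A5), $12,9$ in (A6), and $5,6$ in (A8) are the Coxeter numbers of the respective root systems, halved when $-1$ lies in the Weyl group, and both sides of each relation are, up to capping, the twist about the relevant boundary curve ($\partial\Sigma$, respectively $\beta_{i+1}$). Relation (A9) records that $b_\rho=T_{\partial\Sigma}$ is central, which for $g=6$ takes the special form (A9b).

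It then remains to show the two presentations have the same set of consequences. One inclusion is the geometric verification above: every relation (A1)--(A9) holds in $\M(\Sigma)$, hence follows from the known presentation. For the converse one derives each defining relation of the known presentation from (A1)--(A9): braid and commutation relations from (A1)--(A4); chain, star and lantern relations from (A5), (A6), the (A8)-family and their conjugates, using braid relations to reposition supports; and the centrality of boundary twists from (A9) when $r=2$. The cases $g\in\{3,4\}$, where the presentation collapses to $\M(S_{1,1})\cong\langle a_1,a_2\mid a_1a_2a_1=a_2a_1a_2\rangle$, respectively to the stated presentation of $\M(S_{1,2})$ (with $b_0=a_1$ a harmless extra symbol), and the cases $g\in\{5,6,7\}$, where (A5), (A9b), (A6) are first non-trivial, should be checked directly against the low-rank $ADE$-type presentations.

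I expect the main obstacle to be this converse inclusion: showing that the small, highly structured family (A1)--(A9) --- in particular the two isolated relations (A5), (A6) and the recursion (A8) --- already entails all the chain, star and lantern relations of the ambient known presentation, so that nothing has been lost. This is the usual hard direction of a Tietze reduction (establishing redundancy of relators), made more delicate here by the bookkeeping of twist directions, since the relations are first read off on $N_{g,n}$ and must be transported into the orientable subsurface $\Sigma$.
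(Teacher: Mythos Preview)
Your primary route via Gervais has a genuine gap: the converse Tietze reduction you flag as ``the main obstacle'' is in fact the entire content of the theorem, and you do not carry it out. Deriving every lantern, star, and chain relation in Gervais' presentation from the short list (A1)--(A9) is not routine bookkeeping; absent that derivation, what you have is a plan, not a proof.

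The paper sidesteps this obstacle completely by taking the alternative you mention in passing but do not develop. For $r=1$ there is nothing to reduce: Matsumoto's presentation \cite{Mat} already \emph{is} (A1)--(A6), so one merely adjoins the redundant symbols $b_0,b_2,\dots,b_{\rho-1}$ via (A7), (A8), after checking geometrically that (A8) holds in $\M(S_{\rho,1})$ (this uses the identities $C_{i,2}=T_\delta b_{i+1}^2$ and $C_{i,1}^2=T_\delta b_{i+1}$ from \cite{LabPar} for the Artin elements of types $D_6$ and $A_5$). For $r=2$, rather than Tietze-reducing from any known presentation of $\M(S_{\rho,2})$, the paper builds it from the $r=1$ case in two steps: first $\M(S_{\rho,1},P)$ via the split Birman sequence $1\to\pi_1(S_{\rho,1},P)\to\M(S_{\rho,1},P)\to\M(S_{\rho,1})\to 1$, with an explicit computation showing that all the conjugation relations collapse to a single new relation (A8a); then $\M(S_{\rho,2})$ via the capping sequence $1\to\langle b_\rho\rangle\to\M(S_{\rho,2})\to\M(S_{\rho,1},P)\to 1$, where the cokernel lift of (A8a) becomes the last instance of (A8), and centrality of $b_\rho$ is shown to follow from (A1)--(A9). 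This yields exactly the stated presentation with no reduction step at all, and explains why (A9) is needed only when $r=2$.
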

It follows immediately from above presentation that $\M(S_{\rho,r})$ is generated by $b=b_1$ and $a_i$ for $i=1,\dots,g-1$. Moreover, if $g$ is odd, then we can drop the generators $b_j$ for $j\ne 1$ and the relations (A7, A8). The resulting presentation is the same as the one given in \cite{Mat}. However, if  we wanted to do the same for even $g$, then in the relations (A9a, A9b) the generator  $b_\rho$ would have to be replaced by its expression in terms of  $b$ and the $a_i$'s.
\begin{prop}\label{Sigma_inj}
The map $\jmath_\ast\colon\M(S_{\rho,r})\to\M(N_{g,n})$ induced by the inclusion of $\Sigma$ in $N_{g,n}$ is injective  for $n=1$, whereas for $n=0$ its kernel is an  infinite cyclic group generated by
$(a_1\cdots a_{g-1})^k$, where $k=g$ if $g$ is even, or $k=2g$ if $g$ is odd. The composition of $\jmath_\ast\colon\M(S_{\rho,r})\to\M(N_{g,1})$ with
$\imath_\ast\colon\M(N_{g,1})\to\M^+(N_{g,0},P)$ is also injective.
\end{prop}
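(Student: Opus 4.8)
The plan is to analyse the inclusion-induced map $\jmath_\ast$ through the surface $\Sigma \cong S_{\rho,r}$ sitting inside $N_{g,n}$. The key geometric input is the classification of the components of $N_{g,n}\setminus \Sigma$: since $\Sigma$ is a regular neighbourhood of $\bigcup_i \alpha_i$, its complement consists of one or two pieces obtained from the crosscaps and boundary, and the precise shape depends on the parity of $g$ and on $n$. I would first identify, for each case, which boundary curves of $\Sigma$ bound discs, M\"obius bands, or cobound the boundary of $N_{g,n}$ in the complement. For $n=1$ the outer boundary of $\Sigma$ is isotopic in $N_{g,1}$ to the boundary of $N_{g,1}$ (the complement of $\Sigma$ being discs-with-a-crosscap and one boundary-parallel annulus), so no boundary twist of $\Sigma$ dies, and one expects $\jmath_\ast$ injective. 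For $n=0$ the two boundary curves of $\Sigma$ that are not cut off by crosscaps become isotopic to each other (or a single boundary curve bounds a disc/M\"obius band), producing a relation; I would pin down that this relation is exactly a power of the boundary twist, equivalently $(a_1\cdots a_{g-1})^k$ by the chain relation, with $k=g$ or $2g$ according to parity.

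The standard tool here is the following: if $\Sigma$ is a (possibly disconnected) incompressible subsurface of a surface $X$ with no component a disc, annulus, M\"obius band, then the map $\M(\Sigma)\to\M(X)$ has kernel generated by the relations that become trivial in $X$ among boundary Dehn twists of $\Sigma$, and its image is the subgroup of mapping classes supported on $\Sigma$; see the general capping/inclusion results (e.g.\ \cite{Stu_geom} and the references on inclusion homomorphisms used elsewhere in this paper). So concretely I would: (1) verify $\Sigma$ is incompressible and not an exceptional surface in all cases with $g+n>3$ (here the hypothesis $\rho\ge 1$ and $g\ge 4$ — or $g=3,n=1$ — is used to exclude small cases); (2) compute $\ker\jmath_\ast$ as generated by differences of boundary twists of $\Sigma$ that are identified in $N_{g,n}$; (3) for $n=1$ show this kernel is trivial because the unique non-crosscap boundary curve of $\Sigma$ is $\partial N_{g,1}$ itself and no two distinct boundary curves of $\Sigma$ are isotopic in $N_{g,1}$; (4) for $n=0$ show the kernel is the infinite cyclic group generated by $T_{\partial_0\Sigma}=(a_1\cdots a_{g-1})^k$, using the chain relation (which expresses the twist about the boundary of a regular neighbourhood of a chain $\alpha_1,\dots,\alpha_{g-1}$ as an explicit power of $a_1\cdots a_{g-1}$) to rewrite the boundary twist, and checking the generator has infinite order in $\M(S_{\rho,r})$ (it does, since $\M(S_{\rho,r})$ is torsion-free for $r\ge 1$, or by its known structure from Theorem~\ref{presS}).

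For the final assertion — injectivity of $\imath_\ast\circ\jmath_\ast\colon\M(S_{\rho,r})\to\M^+(N_{g,0},P)$ — I would argue that $\imath_\ast$ restricted to the image of $\jmath_\ast$ is injective. By the exact sequence (\ref{Cup_es}) with $m=1$, $\ker\imath_\ast$ is generated by the boundary twist $T_{\gamma_1}$ of $N_{g,1}$. Since $\jmath_\ast$ is injective for $n=1$, it suffices to show $\jmath_\ast(\M(S_{\rho,r}))\cap\lr{T_{\gamma_1}}=1$ inside $\M(N_{g,1})$. Now $T_{\gamma_1}$ is the twist about $\partial N_{g,1}$, whereas the outer boundary of $\Sigma$ is a curve in the interior of $N_{g,1}$ that is \emph{not} isotopic to $\partial N_{g,1}$ (its complement in $N_{g,1}$ contains crosscaps on one side), so $T_{\gamma_1}$ is not in the image of $\jmath_\ast$; more carefully, one checks no nontrivial power of $T_{\gamma_1}$ lies in the image, e.g.\ by composing with the further map $\M^+(N_{g,0},P)\to\M(N_{g,0})$ (forgetting the puncture), under which $\jmath_\ast$ composed with everything becomes the $n=0$ map whose kernel we computed in step (4) and is generated by an element visibly not a power of (the image of) $T_{\gamma_1}$, forcing the intersection to be trivial.

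The main obstacle I anticipate is step (4): correctly determining the topology of $N_{g,0}\setminus\Sigma$ in every parity case and matching the resulting single relation to the precise element $(a_1\cdots a_{g-1})^k$ with the stated $k$. This requires a careful chain-relation computation — the twist about the boundary of a neighbourhood of the chain $\alpha_1,\dots,\alpha_{g-1}$ equals $(a_1\cdots a_{g-1})^{g}$ when the chain has odd length $g-1$ (one boundary component) and the pair of boundary twists equals $(a_1\cdots a_{g-1})^{g}$ when $g-1$ is even (two boundary components), and one must track how capping off with discs versus with M\"obius-band-containing pieces in the nonorientable surface turns these into the cyclic relation, accounting for the extra factor of $2$ in the odd-$g$ case coming from the crosscap at the free end of the chain. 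Everything else is a routine application of incompressibility of subsurfaces and the Birman-type sequences already recalled in Section~\ref{sec_preli}.
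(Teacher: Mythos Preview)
Your approach is essentially the paper's: identify the complement of $\Sigma$ in $N_{g,n}$ and invoke the subsurface-inclusion theorem of \cite[Theorem 3.6]{Stu_geom}. However, your description of that complement is wrong, and the error propagates.

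The complement of $\Sigma$ in $N_{g,n}$ is a \emph{single connected} piece: a M\"obius band with $n$ holes when $g$ is odd ($r=1$, so $\Sigma$ has one boundary curve), and an annulus with $n$ holes when $g$ is even ($r=2$, two boundary curves). There are no separate ``discs-with-a-crosscap'' or boundary-parallel annuli. You also have the chain-relation parity reversed: the regular neighbourhood of a chain of $m$ curves has one boundary component when $m$ is even and two when $m$ is odd; here $m=g-1$, so one boundary for $g$ odd and two for $g$ even. With the correct picture, the chain relation reads $(a_1\cdots a_{g-1})^{2g}=T_d$ for $g$ odd (and when $n=0$ this single $d$ bounds a M\"obius band, so $T_d$ dies) and $(a_1\cdots a_{g-1})^{g}=T_{d_1}T_{d_2}$ for $g$ even (when $n=0$ the complementary annulus makes $d_1$ isotopic to $d_2$ with the orientations such that $T_{d_1}T_{d_2}$ becomes trivial). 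Stukow's theorem then says these are exactly the generators of the kernel.

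For the last assertion your detour through the forgetful map is unnecessary and the argument as written does not quite close. The composition $\imath_\ast\circ\jmath_\ast$ is itself the map induced by inclusion of $\Sigma$ into $N_{g,0}$ with a marked point, and the complement of $\Sigma$ there is a M\"obius band (resp.\ annulus) with one puncture --- neither a disc, an annulus, nor a M\"obius band --- so \cite[Theorem~3.6]{Stu_geom} applies directly and gives injectivity in one stroke. This is what the paper does.
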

\begin{proof}
Set $x=(a_1\cdots a_{g-1})^k$. If $g$ is odd then $x$ is equal to a Dehn twist about the boundary of $\Sigma$, while if $g$ is even then $x$ is the product of twists about the two boundary components (see \cite{LabPar}).
The complement in $N_{g,n}$ of the interior of $\Sigma$ is either a M\"obius band with $n$ holes if $g$ is odd, or an annulus with $n$ holes if $g$ is even. By
\cite[Theorem 3.6]{Stu_geom}, the maps $\jmath_\ast$ and $\imath_\ast\circ\jmath_\ast$ 
 are injective for $n=1$, whereas for $n=0$ the kernel of $\jmath_\ast$ is an infinite cyclic group generated by $x$.
\end{proof}

\subsection{Punctured disc and sphere.}\label{disc_sphere}
Mapping class groups of a punctured disc or sphere are very closely related to braid groups. In fact $\M(S_{0,1},\mathcal{P}_g)$ is isomorphic to the Artin braid group on $g$ strands, while $\M(S_{0,0},\mathcal{P}_g)$ is isomorphic to the quotient of the group of spherical braids on  $g$ strands by its center. Both groups are generated by $g-1$ elements called elementary braids or half twists. For $n\in\{0,1\}$ we have the blowup homomorphism
\[\mathfrak{b}\colon\M(S_{0,n},\mathcal{P}_g)\to\M(N_{g,n})\]
defined in Subsection \ref{blowup_cpush} which is injective and maps the elementary braids on the crosscap transpositions $u_i$ for $i=1,\dots,g-1$ (see \cite{SzepB}). 
From now on we will identify $\M(S_{0,n},\mathcal{P}_g)$ with its image in $\M(N_{g,n})$. The generators $u_i$ satisfy the well known defining relations listed in the following theorem (see \cite{Bir-book}).
\begin{theorem}\label{pres_braids}
The group $\M(S_{0,1},\mathcal{P}_g)$ admits a presentation with
generators $u_i$ for $i=1,\dots,g-1$ and relations
\begin{itemize}
\item[(B1)] $u_iu_j=u_ju_i\quad$ for $|i-j|>1$,
\item[(B2)] $u_iu_{i+1}u_i=u_{i+1}u_iu_{i+1}\quad$ for $i=1,\dots,g-2$.
\end{itemize}
The group $\M(S_{0,0},\mathcal{P}_g)$ is isomorphic to the quotient of $\M(S_{0,1},\mathcal{P}_g)$ by the relations 
\begin{itemize}
\item[(B3)] $(u_1\cdots u_{g-1})^g=1$,
\item[(B4)] $(u_1\cdots u_{g-2})^{g-1}=1$.\hfill{$\Box$}
\end{itemize}
\end{theorem}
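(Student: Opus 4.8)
The plan is to deduce both presentations from known facts about braid groups, using the exact-sequence toolkit of Section \ref{sec_preli}; both statements are classical and already appear in \cite{Bir-book}, so the argument is really a matter of assembling standard pieces in a way compatible with Lemma \ref{ext_pres}.

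For $\M(S_{0,1},\mathcal{P}_g)$ I would argue by induction on $g$ that this group is isomorphic to the Artin braid group $B_g$ with the half-twists $u_1,\dots,u_{g-1}$ corresponding to the standard generators, so that (B1)--(B2) becomes Artin's presentation. The cases $g\le 2$ are trivial ($\M(S_{0,1},\mathcal{P}_1)=1$ and $\M(S_{0,1},\mathcal{P}_2)=\lr{u_1}\cong\Z$). For the inductive step I would use the Birman exact sequence (\ref{Bir_es}) obtained by forgetting the last puncture: its kernel $\pi_1(S_{0,1}\backslash\mathcal{P}_{g-1},P_g)$ is free of rank $g-1$, freely generated by simple loops $x_1,\dots,x_{g-1}$ around the punctures, and its cokernel is $\PM(S_{0,1},\mathcal{P}_{g-1})$; Lemma \ref{ext_pres} then produces a presentation of $\PM(S_{0,1},\mathcal{P}_g)$, the conjugation relators being read off from the classical Artin action of $\PM(S_{0,1},\mathcal{P}_{g-1})$ on $\pi_1$ (computed via Lemma \ref{push2}). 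One then passes from the pure to the full mapping class group through $1\to\PM(S_{0,1},\mathcal{P}_g)\to\M(S_{0,1},\mathcal{P}_g)\to\Sym_g\to1$ and a second use of Lemma \ref{ext_pres}, and finally performs Tietze transformations to rewrite the resulting presentation on the half-twists $u_1,\dots,u_{g-1}$. The hard part is precisely this last identification with Artin's presentation — equivalently, the faithfulness of the Artin representation $B_g\to\mathrm{Aut}(F_g)$ — which I would simply borrow from \cite{Bir-book}. (Alternatively one may identify $\M(S_{0,1},\mathcal{P}_g)$ with the fundamental group of the unordered configuration space of $g$ points in the plane, using that $\mathrm{Homeo}(S_{0,1},\bdr{S_{0,1}})$ is contractible, and quote the van Kampen computation of that group.)

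For $\M(S_{0,0},\mathcal{P}_g)$ I would realise $S_{0,0}$ by capping the boundary of $S_{0,1}$ with a disc $D$ disjoint from the punctures, and use the exact sequence coming from ``forgetting $D$'' (the long exact homotopy sequence of the fibration whose fibre is the space of embeddings of $D$ into $S_{0,0}\backslash\mathcal{P}_g$, with total space $\mathrm{Homeo}^+(S_{0,0},\mathcal{P}_g)$, together with the contractibility of $\mathrm{Homeo}(D,\bdr{D})$). This gives a surjection $\M(S_{0,1},\mathcal{P}_g)\to\M(S_{0,0},\mathcal{P}_g)$ whose kernel is generated by the Dehn twist $T_c$ about the boundary curve $c$ of $S_{0,1}$ together with the Dehn twists $T_{c_i}$ about curves $c_i$ enclosing all the punctures except $P_i$ (the images of $\pi_1(S^1)$ and of $\pi_1(S_{0,0}\backslash\mathcal{P}_g)$ under the connecting homomorphism). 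On $S_{0,0}\backslash\mathcal{P}_g$ the curve $c$ bounds a disc and each $c_i$ bounds a once-punctured disc, so all these twists die; in $\M(S_{0,1},\mathcal{P}_g)=B_g$ one has $T_c=(u_1\cdots u_{g-1})^g$ and $T_{c_g}=(u_1\cdots u_{g-2})^{g-1}$, while every $T_{c_i}$ is conjugate in $B_g$ to $T_{c_g}$ by the change-of-coordinates principle. Hence the kernel is the normal closure of $(u_1\cdots u_{g-1})^g$ and $(u_1\cdots u_{g-2})^{g-1}$, which is exactly the asserted presentation. The main obstacle here is the precise determination of this kernel, i.e.\ keeping track of the homotopy type of $\mathrm{Homeo}^+(S^2)$ in the fibration argument; as before I would quote this from \cite{Bir-book} rather than reprove it.
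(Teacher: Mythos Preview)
The paper does not prove this theorem at all: it is stated as a classical fact with a reference to \cite{Bir-book}, and the $\Box$ appended to the statement signals that no proof is given. Your outline is a correct sketch of the standard argument (Birman exact sequence plus the Artin faithfulness for the disc case; the capping/fibration argument for the sphere case), and is essentially what one would extract from \cite{Bir-book}, so you and the paper are in agreement that this is a result to be quoted rather than reproved. The only remark worth making is that your identification of the kernel in the sphere case is slightly informal---the fibration really involves the unit tangent bundle of $S^2\setminus\mathcal{P}_g$ and the nontrivial $\pi_1(\mathrm{Homeo}^+(S^2))\cong\Z/2$, which is why both (B3) and (B4) are needed and not just one of them---but you flag exactly this point and defer to \cite{Bir-book}, which is the appropriate move here.
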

For $k=1,\dots,g$ we define $\Delta_k\in\M(S_{0,1},\mathcal{P}_g)$ as
\[\Delta_1=1,\qquad
\Delta_k=(u_1\cdots u_{k-1})\Delta_{k-1}\]
The following relations hold in $\M(S_{0,1},\mathcal{P}_g)$.
\begin{itemize}
\item[(B5)] $\Delta_k u_i=u_{k-i}\Delta_k\quad$ for $i=1,\dots,k-1$
\item[(B6)] $\Delta_k=\Delta_{k-1}(u_{k-1}\cdots u_1)$
\item[(B7)] $\Delta_k^2=(u_1\cdots u_{k-1})^k$
\item[(B8)] $\Delta_k^2=\Delta_{k-1}^2(u_{k-1}\cdots u_1)(u_1\cdots u_{k-1})$
\end{itemize}
Equalities (B6) and (B8) are straightforward consequences of (B5) and (B7), which can be found in \cite{BS}.
By (B7) the relations (B3, B4) are respectively $\Delta_g^2=1$ and 
$\Delta_{g-1}^2=1$.
It follows immediately from (B8) that one of them may be replaced in the presentation of 
$\M(S_{0,0},\mathcal{P}_g)$  by
\[(\textrm{B4a})\quad (u_{g-1}\cdots u_1)(u_1\cdots u_{g-1})=1.\]
By (B5) $\Delta_g^2$ is central in $\M(S_{0,1},\mathcal{P}_g)$. Geometrically it is the right Dehn twist about the boundary of $N_{g,1}$.
\begin{lemma}\label{Delta_in_stab}
In $\M(S_{0,1},\mathcal{P}_g)$ we have
\[\Delta_g^2=u_{g-1}^2(u_{g-2}u_{g-1}^2u_{g-2})\cdots(u_1\cdots u_{g-1}^2\cdots u_1)\]
\end{lemma}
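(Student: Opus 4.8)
The plan is to prove the identity $\Delta_g^2=u_{g-1}^2(u_{g-2}u_{g-1}^2u_{g-2})\cdots(u_1\cdots u_{g-1}^2\cdots u_1)$ by induction on $g$, using the recursive structure of the $\Delta_k$'s encoded in relations (B6)--(B8). The base case $g=1$ is trivial since $\Delta_1=1$ and the right-hand side is the empty product. For the inductive step I would use (B8), which gives $\Delta_g^2=\Delta_{g-1}^2\,(u_{g-1}\cdots u_1)(u_1\cdots u_{g-1})$. Here I must be slightly careful: the element "$\Delta_{g-1}$" built inside $\M(S_{0,1},\mathcal{P}_g)$ from $u_1,\dots,u_{g-2}$ is the image of the corresponding element of $\M(S_{0,1},\mathcal{P}_{g-1})$ under the standard inclusion of braid groups, so the inductive hypothesis applies verbatim to rewrite $\Delta_{g-1}^2$ as $u_{g-2}^2(u_{g-3}u_{g-2}^2u_{g-3})\cdots(u_1\cdots u_{g-2}^2\cdots u_1)$.

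The heart of the argument is then a second, nested identity: I claim that
\[
u_{g-2}^2(u_{g-3}u_{g-2}^2u_{g-3})\cdots(u_1\cdots u_{g-2}^2\cdots u_1)\cdot(u_{g-1}\cdots u_1)(u_1\cdots u_{g-1})
\]
equals
\[
u_{g-1}^2(u_{g-2}u_{g-1}^2u_{g-2})\cdots(u_1\cdots u_{g-1}^2\cdots u_1).
\]
To see this I would move the factor $(u_{g-1}\cdots u_1)$ leftward through the product $\prod_{k=g-2}^{1}(u_1\cdots u_k^{2}\cdots u_1)$. The key commutation facts are the braid relations (B1),(B2): $u_{g-1}$ commutes with every $u_j$ for $j\le g-3$, so as the $u_{g-1}$ at the front of $(u_{g-1}\cdots u_1)$ passes across a block $(u_1\cdots u_k^2\cdots u_1)$ with $k\le g-3$ it goes through untouched, and similarly $u_{g-2},u_{g-3},\dots$ commute past the smaller-index blocks they do not interact with. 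The only genuine interactions are of the form $u_j u_{j+1}u_j = u_{j+1}u_j u_{j+1}$ (equivalently $u_j^2 u_{j+1}=u_{j+1}u_j u_{j+1}u_{j+1}u_j^{-1}\cdot(\dots)$, handled via conjugation $u_{j+1}(u_j u_{j+1}u_j^{-1})=\dots$), which is exactly what converts a block $(u_1\cdots u_k^2\cdots u_1)$ together with the appropriate segment of $(u_{g-1}\cdots u_1)$ into the next block $(u_1\cdots u_{k+1}^2\cdots u_1)$ in the target product. An alternative, and perhaps cleaner, route to the same nested identity is to note that $(u_{g-1}\cdots u_1)(u_1\cdots u_{g-1})=\mathfrak{p}$-type expression is conjugation by $u_{g-1}\cdots u_1$ composed with reversal, and to recognise both sides as $\Delta_g^2$ directly via (B5): since $\Delta_g^2$ is central and $\Delta_g^2=\Delta_{g-1}^2 v$ with $v=(u_{g-1}\cdots u_1)(u_1\cdots u_{g-1})$, one only needs that conjugating the inductive expression for $\Delta_{g-1}^2$ by $\Delta_g^{-1}$ permutes indices $i\mapsto g-i$, which (B5) provides.

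I expect the main obstacle to be the bookkeeping in the nested braid-word manipulation: tracking exactly which $u_i$'s commute and which trigger a braid relation, and checking that after shuffling $(u_{g-1}\cdots u_1)(u_1\cdots u_{g-1})$ across the product one lands precisely on the single extra block $(u_1\cdots u_{g-1}^2\cdots u_1)$ prefixed to a reindexed copy of the old product, with no stray factors. A safe way to organise this is to prove the auxiliary lemma that for all $1\le k\le g-1$,
\[
(u_1\cdots u_k^2\cdots u_1)(u_k\cdots u_1)(u_1\cdots u_k)=(u_k\cdots u_1)(u_1\cdots u_k)(u_1\cdots u_{k+1}^2\cdots u_1)\cdot(\text{correction})
\]
is not needed in that crude form; rather, it suffices to observe $(u_k\cdots u_1)(u_1\cdots u_{k-1}^2\cdots u_1)=(u_1\cdots u_k^2\cdots u_1)(u_k\cdots u_1)$, i.e.\ conjugation by $u_k\cdots u_1$ shifts the "$u_{k-1}$-block" to the "$u_k$-block", which follows from (B5) applied inside the subgroup on $u_1,\dots,u_k$ together with (B7). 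Granting that shift-lemma, a telescoping induction collapses the product and yields the stated formula. I would present the shift-lemma and the telescoping as the two displayed steps, and relegate the elementary verification of the shift-lemma (a direct consequence of (B5) and (B7) with $k$ in place of $g$) to a one-line remark.
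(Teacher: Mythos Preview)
Your inductive approach is correct in principle, but it takes a much harder road than necessary. The paper's proof is two lines: first observe that iterating (B8) gives
\[
\Delta_g^2 = u_1^2\,(u_2u_1^2u_2)\cdots(u_{g-1}\cdots u_1^2\cdots u_{g-1}),
\]
since the factor $(u_{k-1}\cdots u_1)(u_1\cdots u_{k-1})$ appearing in (B8) is \emph{literally} the palindrome $u_{k-1}\cdots u_2u_1^2u_2\cdots u_{k-1}$, so the induction telescopes with no braid relations required at all; then conjugate this ``mirror'' identity by $\Delta_g$ and use (B5) (which sends $u_i\mapsto u_{g-i}$) to obtain the stated formula.

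The difference from your plan is that you induct directly on the target identity, whose right-hand side has $u_{g-1}$ at the centre of every block. This forces you, at each inductive step, to reshuffle a product built from $u_1,\dots,u_{g-2}$ against the new factor $(u_{g-1}\cdots u_1)(u_1\cdots u_{g-1})$, and that is where all your ``shift-lemma'' bookkeeping comes from. By contrast, if you induct on the mirror identity (centre $u_1$), the new factor from (B8) is already the next block and simply appends on the right. You do gesture toward the conjugation-by-$\Delta_g$ idea in your ``alternative route'', but you frame it as a way to prove the messy nested identity rather than as a way to bypass it entirely. The cleaner statement is: prove the mirror identity first (trivial telescoping), then apply (B5) once globally.
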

\begin{proof}
By expanding (B8) inductively we have
\[\Delta_g^2=u_{1}^2(u_{2}u_{1}^2u_{2})\cdots(u_{g-1}\cdots u_{1}^2\cdots u_{g-1}).\]
Conjugating both sides by $\Delta_g$ we obtain the desired equality. 
\end{proof}
\subsection{Main theorems.}
\begin{theorem}\label{mainA}
Let $g=2\rho +r$ for $r\in\{1,2\}$ and $\rho\ge 1$. The group
$\M(N_{g,1})$ is isomorphic to the quotient of the free product
$\M(S_{\rho,r})\ast\M(S_{0,1},\mathcal{P}_g)$ by the following relations.
\begin{itemize}
\item[(C1)] $a_1u_i=u_ia_1\quad$ for $i=3,\dots,g-1$
\item[(C2)] $a_iu_{i+1}u_i=u_{i+1}u_ia_{i+1}\quad$ for $i=1,\dots,g-2$
\item[(C3)] $a_{i+1}u_iu_{i+1}=u_iu_{i+1}a_i\quad$ for $i=1,\dots,g-2$
\item[(C4)] $a_1u_1a_1=u_1$ 
\item[(C5)] $u_2a_1a_2u_1=a_1a_2$ 
\item[(C6)] $(u_3b)^2=(a_1a_2a_3)^2(u_1u_2u_3)^2\quad$ if $g\ge 4$ 
\item[(C7)] $u_5b=bu_5\quad$ if $g\ge 6$ 
\item[(C8)]  $a_4u_4(a_4a_3a_2a_1u_1u_2u_3u_4)b=ba_4u_4\quad$ if $g\ge 5$ 
\end{itemize}
\end{theorem}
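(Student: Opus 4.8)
\emph{Approach.} The plan is to build the presentation from the central extension given by (\ref{Cup_es}) with $m=1$,
\[1\longrightarrow\Z\longrightarrow\M(N_{g,1})\stackrel{\imath_\ast}{\longrightarrow}\PM^+(N_{g,0},P)\longrightarrow 1,\]
whose kernel $\Z$ is generated by the boundary twist $T_{\bdr{N_{g,1}}}=\Delta_g^2$ (Subsection \ref{disc_sphere}), using the presentation of $\M(N_{g,0})$ from Theorem \ref{mainB} (which we may assume for the same $g\ge 4$; the case $g=3$ is one of the base cases, treated directly). Let $G$ be the group defined by the claimed presentation, i.e.\ the free product $\M(S_{\rho,r})\ast\M(S_{0,1},\mathcal{P}_g)$ — presented by (A1)--(A9) and (B1)--(B2) of Theorems \ref{presS} and \ref{pres_braids} — modulo (C1)--(C8). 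Since the two factors embed in $\M(N_{g,1})$ (Propositions \ref{Sigma_inj} and \ref{blowup_inj}) and each of (C1)--(C8) is checked to hold in $\M(N_{g,1})$ by drawing the relevant curves — (C7) a commutation of twists with disjoint supports, (C2)--(C3) recording how the half-twists $u_i$ permute the $\alpha_i$, (C4)--(C5) short identities near the first crosscaps, and (C6), (C8) chain/lantern-type relations, with the twist directions in (C8) needing care — there is a natural homomorphism $\phi\colon G\to\M(N_{g,1})$ on generators, surjective by Stukow's generating result \cite{Stu_bdr} (as $\M(N_{g,1})$ is generated by the $a_i$, $b$ and the crosscap slide $y_1=a_1u_1$). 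It remains to prove $\phi$ injective.

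\emph{Step 1: a presentation of $\PM^+(N_{g,0},P)$.} Restricting the Birman exact sequence (\ref{Bir_es}), for $F=N_{g,0}$ and one puncture, to orientation-preserving classes gives
\[1\longrightarrow\Gamma\longrightarrow\PM^+(N_{g,0},P)\stackrel{\mathfrak f}{\longrightarrow}\M(N_{g,0})\longrightarrow 1,\]
where $\Gamma=\ker\bigl(\pi_1(N_{g,0},P)\to\Z/2\bigr)$ is a closed surface group of genus $g-1$ (the fundamental group of the orientation double cover $S_{g-1,0}$). Feeding a standard presentation of $\Gamma$ and the presentation of $\M(N_{g,0})$ from Theorem \ref{mainB} into Lemma \ref{ext_pres} yields a finite presentation of $\PM^+(N_{g,0},P)$; the conjugation relators are read off from the $\M(N_{g,0})$-action on $\pi_1(N_{g,0},P)$, which is made explicit via Lemma \ref{push2}, together with Lemma \ref{push1} to rewrite point pushes along two-sided loops as products of two Dehn twists.

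\emph{Step 2: $\phi$ descends to an isomorphism $\overline\phi\colon G/\lr{z}\xrightarrow{\ \sim\ }\PM^+(N_{g,0},P)$, where $z:=\Delta_g^2\in\M(S_{0,1},\mathcal{P}_g)\le G$.} One verifies that $z$ is central in $G$ — commutation with the $u_i$ is the statement that $z$ is the full twist of the braid factor and follows from (B1)--(B2), while commutation with the $a_i$ and $b_j$ follows after rewriting $z$ by Lemma \ref{Delta_in_stab} and using (C1)--(C8) — and that $z$ has infinite order in $G$, because $\phi(z)=T_{\bdr{N_{g,1}}}$ does. Since $\phi(z)$ generates $\ker\imath_\ast$, the composite $\imath_\ast\circ\phi$ kills $z$ and descends to $\overline\phi$, which is surjective because $\imath_\ast$ is (noted after (\ref{Cup_es})). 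Injectivity of $\overline\phi$ is established by exhibiting an inverse: one sends the generators of the presentation of $\PM^+(N_{g,0},P)$ from Step 1 to suitable elements of $G/\lr{z}$ and checks that every defining relator maps to the identity — equivalently, one performs a sequence of Tietze transformations reducing the kernel relators (those of $\Gamma$), the conjugation relators, and the cokernel relators of the $\PM^+$-presentation to consequences of (A1)--(A9), (B1)--(B2), (C1)--(C8) and $z=1$. This is where all the computational weight lies.

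\emph{Step 3: conclusion.} Once $\overline\phi$ is an isomorphism, $G$ is a central extension $1\to\lr{z}\to G\to G/\lr{z}\to 1$ with $\lr{z}\cong\Z$, whose quotient is identified with $\PM^+(N_{g,0},P)$ compatibly with $\imath_\ast$; as $\phi$ restricts to the isomorphism $\lr{z}\to\Z$ and induces $\overline\phi$ on the quotients, the five lemma gives that $\phi$ is an isomorphism, which is Theorem \ref{mainA}. I expect the main obstacle to be the presentation comparison of Step 2: making the $\M(N_{g,0})$-action on $\pi_1(N_{g,0},P)$ explicit enough to control the point-pushing and conjugation relators, choosing lifts of the braid generators and a generating set of $\Gamma$ for which these relators collapse precisely to the short list (C1)--(C8), and tracking twist orientations throughout (notably in (C6) and (C8)).
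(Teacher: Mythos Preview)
Your outline is essentially the paper's own: define $\phi\colon G\to\M(N_{g,1})$, reduce to showing that the induced map $G/\lr{\Delta_g^2}\to\M^+(N_{g,0},P)$ is an isomorphism via a presentation of $\M^+(N_{g,0},P)$ obtained from the Birman sequence and Theorem~\ref{mainB}, and conclude by the five lemma. Two organisational differences are worth noting. First, rather than restricting the Birman sequence to $\PM^+$ with kernel the orientation cover group $\Gamma\cong\pi_1(S_{g-1,0})$, the paper applies Lemma~\ref{ext_pres} to the full sequence $1\to\pi_1(N_{g,0},P)\to\M(N_{g,0},P)\to\M(N_{g,0})\to 1$ and then passes to the index-$2$ subgroup $\M^+$ by Reidemeister--Schreier with transversal $\{1,\sigma\}$; this keeps the kernel free on $g$ generators and makes the conjugation relators (your Step~1) more manageable than working with the $2(g-1)$ generators and surface relator of $\Gamma$.

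Second, and more substantively, you underestimate what is needed to carry out Step~2. Your claim that centrality of $z=\Delta_g^2$ in $G$ ``follows after rewriting $z$ by Lemma~\ref{Delta_in_stab} and using (C1)--(C8)'' is optimistic: the paper proves this (Lemma~\ref{Delta2_central}) not by direct manipulation but by first establishing, via the induction hypothesis that Theorem~\ref{mainA} holds for $g-1$, that there is an injective homomorphism $\psi_{v_2}\colon\Stab[\mu_g]\to G$ (Theorem~\ref{pres_Stab_mu_g}). This same injection is the workhorse throughout the relator-checking in your Step~2: the paper repeatedly argues that a given word in $G$ lies in the image $\mathcal{S}$ of $\psi_{v_2}$, whence the relation holds in $G$ because it holds in $\M(N_{g,1})$ and $\varphi_{g,1}|_{\mathcal{S}}$ is injective. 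Without this inductive shortcut, the ``sequence of Tietze transformations'' you allude to would be far longer and it is not clear it goes through. So your plan is correct, but you should make the simultaneous induction with Theorem~\ref{mainB} explicit and isolate the stabiliser of $\mu_g$ as the key intermediate object.
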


\begin{theorem}\label{mainB}
Let $g=2\rho +r$ for $r\in\{1,2\}$. For $g\ge 4$  the group $\M(N_{g,0})$ 
is isomorphic to the quotient of the free product
$\M(S_{\rho,r})\ast\M(S_{0,0},\mathcal{P}_g)$ by the relations (C1--C8) from Theorem \ref{mainA} and
\begin{itemize}
\item[(D)] $a_1(a_2\cdots a_{g-1}u_{g-1}\cdots u_2)a_1=a_2\cdots a_{g-1}u_{g-1}\cdots u_2$
\end{itemize}
\end{theorem}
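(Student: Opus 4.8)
The plan is to derive the presentation of $\M(N_{g,0})$ from that of $\M(N_{g,1})$ (Theorem~\ref{mainA}) by applying Lemma~\ref{ext_pres} to the exact sequence (\ref{Cup_es}) with $m=1$:
\[
1\to\Z\to\M(N_{g,1})\stackrel{\imath_\ast}{\to}\M^+(N_{g,0},P)\to 1,
\]
where the kernel $\Z$ is generated by the Dehn twist $T_{\partial N_{g,1}}$ about the boundary, together with an application of the Birman exact sequence (\ref{Bir_es}) relating $\PM^+(N_{g,0},P)$ (which here equals $\M^+(N_{g,0},P)$ since there is only one puncture) to $\M(N_{g,0})$. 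Concretely, one has
\[
1\to\pi_1(N_{g,0})\stackrel{\mathfrak{p}}{\to}\M^+(N_{g,0},P)\to\M(N_{g,0})\to 1,
\]
so that $\M(N_{g,0})$ is the quotient of $\M(N_{g,1})$ by the normal closure of $T_{\partial}$ together with lifts of loops $\mathfrak{p}[\gamma]$ generating $\pi_1(N_{g,0})$. The first task is to identify the boundary twist $T_{\partial N_{g,1}}$ inside the free product $\M(S_{\rho,r})\ast\M(S_{0,1},\mathcal P_g)$: by Lemma~\ref{Delta_in_stab} and the discussion of $\Delta_g^2$ it equals $(u_1\cdots u_{g-1})^g=\Delta_g^2$ in the braid factor, and also (by Proposition~\ref{Sigma_inj} and the remarks around it) it is expressible via $(a_1\cdots a_{g-1})^k$ in the Dehn-twist factor. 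Killing $\Delta_g^2$ in the braid factor turns $\M(S_{0,1},\mathcal P_g)$ into $\M(S_{0,0},\mathcal P_g)$ via relation (B3); one then checks that (B4) is automatically a consequence, or is absorbed into relation (D).

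The heart of the argument is to show that, after killing $T_\partial$, exactly \emph{one} further relation, namely (D), suffices to pass to $\M(N_{g,0})$ — i.e. that the point-pushing subgroup $\pi_1(N_{g,0})$ of $\M^+(N_{g,0},P)$ is normally generated by a single element, and that this element, pulled back to $\M(N_{g,1})$, is the relation (D). The natural choice is to push $P$ along a one-sided loop — say the core $\mu_1$ of the first crosscap; by the description of $\mathfrak c=\mathfrak b\circ\mathfrak p$ in Subsection~\ref{blowup_cpush}, pushing a puncture along a one-sided loop yields a crosscap slide, and in $N_{g,1}$-coordinates the resulting relation compares a product of $a_i$'s and $u_i$'s before and after; the plan is to verify, by a direct computation in the model of Figure~\ref{aI}, that this relation is precisely
\[
a_1(a_2\cdots a_{g-1}u_{g-1}\cdots u_2)a_1=a_2\cdots a_{g-1}u_{g-1}\cdots u_2,
\]
or differs from it only by relations already present (C1--C8) and the braid relations. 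One recognizes $a_2\cdots a_{g-1}u_{g-1}\cdots u_2$ as a ``shifted'' analogue of $r_g=a_1\cdots a_{g-1}u_{g-1}\cdots u_1$, which is the homeomorphism induced by a reflection; relation (D) then says $a_1$ commutes up to the relation with this shifted reflection, which is exactly the vanishing of a crosscap pushing class.

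The main obstacle, I expect, is the bookkeeping needed to show that no \emph{additional} relations are required — that is, that Lemma~\ref{ext_pres} applied to the two exact sequences produces only relators already implied by (C1--C8), the relations of $\M(S_{\rho,r})$, the braid relations, (B3--B4), and (D). This has two parts. First, the conjugation relators $R_2$: one must check that conjugating each generator of $\M(N_{g,1})$ by (a lift of) the pushing element stays inside the kernel and is handled — but since $\pi_1(N_{g,0})$ is central-free and we are quotienting, what really must be checked is that the \emph{other} generators of $\pi_1(N_{g,0})$ (pushes along the remaining generating loops of the fundamental group of a genus-$g$ nonorientable surface) are consequences of the single relation (D) together with the mapping-class-group relations — i.e. one uses the $\M(N_{g,0})$-equivariance of $\mathfrak p$ (Lemma~\ref{push2}) to conjugate the one relation (D) around and recover the pushes along all the $\mu_i$. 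Second, one must confirm that the kernel $\Z$ generated by $T_\partial$ contributes nothing beyond $\Delta_g^2=1$, i.e. that the conjugation relators for $T_\partial$ are trivial because $T_\partial$ is central in $\M(N_{g,1})$ — this follows since $T_\partial=\Delta_g^2$ is central in the braid factor by (B5) and commutes with all Dehn twists supported in the interior $\Sigma$. Assembling these pieces, together with the identification of $\mathfrak p[\mu_1]$ with (D), yields the presentation; the base cases $g\in\{4\}$ (with $(g,n)=(4,0)$) are established separately in Section~\ref{sec_base}, and the general case then follows by the simultaneous induction on $g$ described in the introduction.
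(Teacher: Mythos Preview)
Your proposal has a fundamental circularity problem. You assume Theorem~\ref{mainA} (the presentation of $\M(N_{g,1})$) in order to derive Theorem~\ref{mainB}, but in the paper's logical structure the implication runs the \emph{other way}: Theorem~\ref{mainA} for genus $g$ is proved in Section~\ref{sec_mainA} \emph{assuming} Theorem~\ref{mainB} for the same $g$ (together with Theorem~\ref{mainA} for $g-1$). The paper's proof of Theorem~\ref{mainB} is an independent and much longer argument: it applies Brown's theorem (Theorem~\ref{Brown}) to the action of $\M(N_{g,0})$ on the ordered curve complex $\widetilde X$, computing presentations of vertex stabilisers (Sections~\ref{sec_v2}, \ref{sec_v13}) and verifying the edge and triangle relations (Sections~\ref{sec_edges}, \ref{sec_triangles}). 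Your two-line reduction via capping and Birman is exactly the route the paper takes to get \emph{from} $\M(N_{g,0})$ \emph{to} $\M(N_{g,1})$, not the reverse; reversing it without an independent proof of Theorem~\ref{mainA} begs the question.

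There are also technical slips. The Birman sequence you write, $1\to\pi_1(N_{g,0})\to\M^+(N_{g,0},P)\to\M(N_{g,0})\to 1$, is incorrect: the kernel of the forgetful map restricted to $\M^+$ is only the index-two subgroup $\pi_1^+(N_{g,0})$ of two-sided loops, since pushing $P$ along a one-sided loop reverses local orientation at $P$ and hence lands outside $\M^+$. Consequently your proposed generator ``push along $\mu_1$'' is not even in the group you are working with. Moreover, passing from $\G_{g,1}$ to $\G_{g,0}$ requires three new relations --- (B3), (B4), and (D) --- not one; (B3) is indeed $\Delta_g^2=1$, but (B4) (equivalently, that the twist about the curve encircling $g-1$ crosscaps vanishes because it bounds a M\"obius band) is a separate relation, and your assertion that it is ``automatically a consequence, or absorbed into (D)'' would itself require a proof. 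Even granting all of this, you would still need to show that these relations normally generate the full kernel $\pi_1^+(N_{g,0})$ inside $\M^+(N_{g,0},P)$, which is the substantive content you have not addressed.
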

Let $g=2\rho +r$ for $r\in\{1,2\}$. We define
\begin{itemize}
\item[$\G_{g,1}$] as the quotient of $\M(S_{\rho,r})\ast\M(S_{0,1},\mathcal{P}_g)$ by the relations (C1--C8),
\item[$\G_{g,0}$] as the quotient of $\M(S_{\rho,r})\ast\M(S_{0,0},\mathcal{P}_g)$ by the relations (C1--C8, D). 
\end{itemize}
The very essential idea of the proof of above theorems is the following.
In the first step we are going to show that there is a homomorphism 
$\varphi_{g,n}\colon\mathcal{G}_{g,n}\to \M(N_{g,n})$ and then the rest of the paper will be devoted to proving that it has an inverse.
\begin{prop}
Let $g=2\rho +r$ for $r\in\{1,2\}$. For $n\in\{0,1\}$ the map 
\[(\jmath_\ast\ast\mathfrak{b})\colon \M(S_{\rho,r})\ast\M(S_{0,n},\mathcal{P}_g)\to\M(N_{g,n})\] induces a homomorphism $\varphi_{g,n}\colon\mathcal{G}_{g,n}\to \M(N_{g,n})$.
\end{prop}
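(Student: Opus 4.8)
The plan is to verify that the homomorphism $\jmath_\ast\ast\mathfrak{b}\colon\M(S_{\rho,r})\ast\M(S_{0,n},\mathcal{P}_g)\to\M(N_{g,n})$, which exists by the universal property of the free product (given the two homomorphisms $\jmath_\ast$ and $\mathfrak{b}$ from Proposition \ref{Sigma_inj} and Subsection \ref{disc_sphere}), kills each of the relators (C1)--(C8) (and, when $n=0$, also (D)); this is exactly what is needed for it to factor through the quotient $\G_{g,n}$. So the whole proof is a finite list of geometric identities in $\M(N_{g,n})$, one per relation, each asserting that two explicit words in Dehn twists $a_i$, $b$ and crosscap transpositions $u_i$ represent the same mapping class.

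The key step is to set up good pictures. First I would fix the model of $N_{g,n}$ as a disc/sphere with $g$ crosscaps arranged as in Figure \ref{aI}, locate the curves $\alpha_i=\gamma_{\{i,i+1\}}$, $\mu_i=\gamma_{\{i\}}$, $\beta=\gamma_{\{1,2,3,4\}}$ inside it, and recall that $u_i=U_{\mu_{i+1},\alpha_i}$ is the crosscap transposition swapping crosscaps $i$ and $i+1$ supported in a Klein-bottle-with-hole neighbourhood of $\mu_i\cup\mu_{i+1}$. Then each relation is checked by examining the supports of the maps involved and their action on a handful of test curves. For the "braid-type" relations (C1), (C2), (C3) the supports of $a_i$ and $u_j$ are disjoint unless $j\in\{i,i+1\}$, which immediately gives the commutation (C1) and reduces (C2), (C3) to an identity in the mapping class group of a small subsurface (a sphere/Klein bottle with few crosscaps and holes) containing $\alpha_i\cup\mu_{i+1}\cup\mu_{i+2}$; there one can either compute directly or invoke the known lantern-type / braid relations between twists and crosscap transpositions. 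Relations (C4), (C5) are genus-$2$ or genus-$3$ identities (they only involve $a_1,a_2,u_1,u_2$), so they can be pushed down to $\M(N_{3,1})$ or $\M(N_{4,1})$ and verified there, e.g. by using the already-known presentations of $\M(N_{2,1})$, $\M(N_{3,1})$ recalled in Section \ref{sec_pres} or by an explicit curve computation. Relations (C6), (C7), (C8) involve $b=T_\beta$; here I would use that $\beta$ is disjoint from $\alpha_i$ for $i\ge 6$ and from $\mu_6,\mu_7,\dots$ to localise, and for (C6), (C8) work inside the genus-$4$ or genus-$5$ subsurface carrying $\beta$, the $\mu_i$ and $\alpha_i$ with small index, again reducing to a check in a low-genus mapping class group. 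Finally, when $n=0$, relation (D) says $T_{\mu_1}$-type symmetry: $a_1$ commutes with the reflection word $a_2\cdots a_{g-1}u_{g-1}\cdots u_2$ up to the specified twist; geometrically $a_2\cdots a_{g-1}u_{g-1}\cdots u_2$ is (a piece of) the reflection of Figure \ref{aI} which preserves $\alpha_1$, so it conjugates $a_1=T_{\alpha_1}$ to itself, and the relation follows from $hT_\gamma h^{-1}=T_{h(\gamma)}$; one must be careful about orientations of the twist directions, and that is where the explicit arrows in the figures are used.

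The main obstacle will be the genuinely "mixed" relations (C6) and (C8), which are the ones that do not reduce to a braid relation or a disjoint-support commutation: they equate a word mixing one Dehn twist $b$ with several crosscap transpositions and several other twists, and verifying them requires either a careful lantern/chain-relation manipulation in a subsurface of genus $4$ or $5$, or a direct isotopy argument tracking the images of boundary-parallel and separating curves. I expect the cleanest route is to restrict to the smallest subsurface of $N_{g,1}$ (resp. $N_{g,0}$) containing all the curves appearing in the relation — this has bounded genus independent of $g$ — and then either cite the base-case presentations of $\M(N_{3,1})$, $\M(N_{4,0})$ and their relatives (to be established in Section \ref{sec_base}) or perform the finite computation by hand; the fact that every relation only involves boundedly many generators is what makes this feasible. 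Once all relators are checked to map to the identity, Lemma \ref{ext_pres}-style bookkeeping is not even needed: the existence of $\varphi_{g,n}$ is immediate from the presentation of $\G_{g,n}$ as a quotient of the free product.
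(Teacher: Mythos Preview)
Your overall framework is correct and matches the paper's: verify each relator (C1)--(C8), (D) as an identity in $\M(N_{g,n})$, using disjoint-support arguments for the commutation relations and the conjugation formula $hT_\gamma h^{-1}=T_{h(\gamma)}^{\pm 1}$ for (C2), (C3), (C4), (D). One small slip: for (D) the element $w=a_2\cdots a_{g-1}u_{g-1}\cdots u_2$ preserves $\alpha_1$ but \emph{reverses} orientation of its neighbourhood, so it conjugates $a_1$ to $a_1^{-1}$, not to itself; that is precisely what (D) says ($a_1wa_1=w$, i.e.\ $w^{-1}a_1w=a_1^{-1}$).

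Where your proposal diverges from the paper is in the treatment of the two hard relations (C6) and (C8). You propose to localise to a bounded-genus subsurface and either cite low-genus presentations or compute by hand; note that the base cases in Section~\ref{sec_base} logically \emph{presuppose} that $\varphi_{g,n}$ exists, so citing them here would be circular --- you would need to cite the external presentations of \cite{Szep_Osaka,Szep1} directly, and those are for $\M(N_{3,1})$ and $\M(N_{4,0})$, not for the subsurfaces carrying (C6), (C8). The paper instead uses the crosscap pushing map $\mathfrak{c}$ as a conceptual shortcut: for (C8) one rewrites $z=a_4a_3a_2a_1u_1u_2u_3u_4$ as a product of four crosscap slides $Y_{\mu_5,\gamma_{\{i,5\}}}$, recognises this product as $\mathfrak{c}$ of a single loop $\gamma_{\{1,2,3,4,5\}}$, and then Lemma~\ref{push1} identifies it with a product of two boundary twists, one of which is $b$ and the other $y_4^{-1}by_4$; (C8) drops out immediately. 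For (C6) the paper cites \cite[Lemma~7.8]{Szep_Osaka} on the Klein bottle with two holes $K$ (a regular neighbourhood of $\beta\cup\alpha_3\cup\mu_4$) to express $(u_3b)^2$ as a product of boundary twists of $K$, and then does a short algebraic manipulation. Both arguments are short and avoid any curve-tracking; your brute-force plan would work in principle but is considerably more laborious and less transparent.
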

\begin{proof} We have to show that the relations (C1--C8, D) are satisfied in $\M(N_{g,n})$.
For $|i-j|>1$ the crosscap transposition $u_i$ is equal to the identity in a neighbourhood of the curve $\alpha_j$ and thus it commutes with the twist $a_j$. Thus (C1) is satisfied and analogously (C7). Observe that $u_{i+1}u_i(\alpha_{i+1})=\alpha_i$ and the local orientation used to define $a_i$ agrees with that induced by $u_{i+1}u_i$ from the local orientation used to define $a_{i+1}$. Thus $(u_{i+1}u_i)a_{i+1}(u_{i+1}u_i)^{-1}=a_i$ which is equivalent to (C2) and (C3) is proved analogously.
Since $u_i$ preserves $\alpha_i$ but reverses orientation of its neighbourhood thus
\[(\ast)\qquad u_ia_iu_i^{-1}=a_i^{-1}\qquad\mathrm{for\ }1\le i\le g-1.\] In particular (C4) is satisfied. Let $x=a_ia_{i+1}$. It can be easily checked that 
$x(\mu_{i+1})=\mu_{i+2}^{-1}$ and $x(\alpha_i)=\alpha_{i+1}^{-1}$. It follows that
$xy_ix^{-1}=y^{-1}_{i+1}$, hence $xa_iu_ix^{-1}=u_{i+1}^{-1}a_{i+1}^{-1}=a_{i+1}u_{i+1}^{-1}$, where the last equality follows from $(\ast)$.  By the braid relation (A2) we have $xa_ix^{-1}=a_{i+1}$ and thus
\[(\ast\ast)\qquad u_{i+1}a_ia_{i+1}u_i=a_ia_{i+1}\qquad\mathrm{for\ }1\le i\le g-2.\] 
In particular (C5) is satisfied. Let $K$ be a regular neighbourhood of 
$\beta\cup\alpha_3\cup\mu_4$. It is homeomorphic to Klein bottle with two holes and one of its boundary components is isotopic to $\alpha_1$ while the other one is isotopic to $y_2^{-1}u_3^{-1}y_2^{-1}(\alpha_1)$. By \cite[Lemma 7.8]{Szep_Osaka} we have
\begin{align*}
&(u_3b)^2=a_1y_2^{-1}u_3^{-1}y_2^{-1}a_1y_2u_3y_2=
a_1\underline{u_2^{-1}a_2^{-1}}u_3^{-1}\underline{u_2^{-1}a_2^{-1}}a_1a_2\underline{u_2u_3a_2}u_2\stackrel{(\ast,C3)}{=}\\
&a_1a_2\underline{u_2^{-1}u_3^{-1}a_2}u_2^{-1}a_1a_2a_3u_2u_3u_2\stackrel{(C2)}{=}
a_1a_2a_3\underline{u_2^{-1}u_3^{-1}u_2^{-1}a_1a_2a_3}u_2u_3u_2
\stackrel{(\ast\ast)}{=}\\
&(a_1a_2a_3)^2u_1u_2u_1u_2u_3u_2\stackrel{(B1,B2)}{=}(a_1a_2a_3)^2(u_1u_2u_3)^2
\end{align*}
which proves (C6).
Let $z=a_4a_3a_2a_1u_1u_2u_3u_4$. We have
\begin{align*}
z&=(a_4u_4)(u_4^{-1}a_3u_3u_4)(u_4^{-1}u_3^{-1}a_2u_2u_3u_4)(u_4^{-1}u_3^{-1}u_2^{-1}a_1u_1u_2u_3u_4)\\
&=y_4(u_4^{-1}y_3u_4)(u_4^{-1}u_3^{-1}y_2u_3u_4)(u_4^{-1}u_3^{-1}u_2^{-1}y_1u_2u_3u_4)\\
&=Y_{\mu_5,\gamma_{\{4,5\}}}Y_{\mu_5,\gamma_{\{3,5\}}}Y_{\mu_5,\gamma_{\{2,5\}}}Y_{\mu_5,\gamma_{\{1,5\}}}
\end{align*}
Consider the surface $N'$ obtained by cutting $N_{g,n}$ along $\mu_5$ and then gluing a disc with 
a puncture $P$ along the resulting boundary component. Then $N_{g,n}$ may be seen as being obtained from $N'$ by blowing up the puncture and we have the crosscap pushing map $\mathfrak{c}\colon\pi_1(N',P)\to\M(N_{g,n})$ whose image contains the crosscap slides $Y_{\mu_5,\gamma_{\{i,5\}}}$ for $i=1,2,3,4$. Since this is a homomorphism, thus $z$ is isotopic to the effect of pushing $\mu_5$ once along $\gamma_{\{1,2,3,4,5\}}$. One of the boundary components of the regular neighbourhood of $\mu_5\cup \gamma_{\{1,2,3,4,5\}}$ is isotopic to $\beta$, while the other one is isotopic to $y_4^{-1}(\beta)$. From Lemma \ref{push1} we have $z=y_4^{-1}by_4b^{-1}$ which is equivalent to (C8). 
Finally it is easy to check that if the surface is closed (i.e. $n=0$) then $a_2\cdots a_{g-1}u_{g-1}\cdots u_2$ preserves the curve $\alpha_1$ (up to isotopy) and reverses orientation of its neighbourhood, which proves the relation (D). 
\end{proof}

Consider the exact sequence (\ref{Cup_es}) in the case $m=1$.
\[1\to\Z\to\M(N_{g,1})\stackrel{\imath_\ast}{\to}\M^+(N_{g,0},P)\to 1.\]
The kernel of $\imath_\ast$ is generated by the Dehn twist 
$T_{\bdr{N_{g,1}}}=\Delta_g^2$. Let $\G_{g,0}^1$ denote the quotient of $\G_{g,1}$ by the normal closure of $\Delta_{g}^2$. Since 
$\imath_\ast(\varphi_{g,1}(\Delta_g^2))=1$, there is a homomorphism 
$\varphi_{g,0}^1\colon\G_{g,0}^1\to\M^+(N_{g,0},P)$ such that
$\varphi_{g,0}^1\circ p=\imath_\ast\circ\varphi_{g,1}$, where
$p\colon\G_{g,1}\to\G_{g,0}^1$ is the canonical projection.
By abuse of notation, we will denote by the same symbols the generators of $\G_{g,1}$ and their images in $\G_{g,0}^1$. We will prove in Section \ref{sec_mainA} that $\varphi_{g,0}^1$ is an isomorphism.
\subsection{Some consequences of the defining relations.}
Throughout this paper we will often have to verify that some relations are satisfied in $\G_{g,n}$ or $\G_{g,0}^1$ . In this subsection we prove the most useful ones.
 
\begin{lemma}\label{useful_C}
The following relations hold in $\mathcal{G}_{g,n}$ for $n=0,1$.
\begin{itemize}
\item[(C1a)] $a_iu_j=u_ja_i\quad$ for $|j-i|>1$ 
\item[(C4a)] $a_iu_ia_i=u_i\quad$ for $i=1,\cdots,g-1$ 
\item[(C5a)] $u_{i+1}a_ia_{i+1}u_i=a_ia_{i+1}\quad$ for $i=1,\cdots,g-2$ 
\item[(C6a)] $(bu_3)^2=(u_3b)^2=(a_1a_2a_3)^2(u_1u_2u_3)^2$
\item[(C7a)] $u_ib=bu_i\quad$ for $i=5,\cdots,g-1$. 
\end{itemize}
\end{lemma}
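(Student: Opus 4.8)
\textbf{Plan of proof for Lemma \ref{useful_C}.}
The plan is to derive each of the listed relations (C1a)--(C7a) from the defining relations (C1--C8) of $\mathcal{G}_{g,n}$ together with the relations already available inside the two free factors $\M(S_{\rho,r})$ and $\M(S_{0,n},\mathcal{P}_g)$, in particular the braid relations (B1), (B2) among the $u_i$ and the braid relations (A1), (A2) among the $a_i$. The point is purely formal: everything takes place in the abstract group $\mathcal{G}_{g,n}$, so no geometry is needed, only word manipulation. I would treat the relations roughly in the order they are stated, since later ones reuse earlier ones.

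First, (C1a) is obtained from (C1) together with the braid relations. The relation (C1) gives $a_1u_i=u_ia_1$ for $i\ge 3$; conjugating by suitable products of the $u_j$ (using that $u_{j+1}u_j$ conjugates $a_j$ to $a_{j+1}$ by (C2), and $u_ju_{j+1}$ conjugates $a_{j+1}$ to $a_j$ by (C3)) one moves the index of the twist up or down while keeping the commuting crosscap transposition far away, using (B1)/(B2) to rearrange the $u$'s. Concretely, $a_i$ is conjugate to $a_1$ by a word in $u_1,\dots,u_{i}$, and if $|j-i|>1$ then $u_j$ commutes with all of $u_1,\dots,u_i$ except possibly one, which can be absorbed; I would spell out the short induction on $i$. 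Next, (C4a) is just (C4) written for general $i$: from (C2), (C3) the pair $(a_i,u_i)$ is simultaneously conjugate (by a word in the $u$'s and $a$'s) to $(a_1,u_1)$, for which (C4) gives $a_1u_1a_1=u_1$; conjugating back gives (C4a). Equivalently one checks $a_iu_ia_i=u_i$ directly by induction using (C2), (C3) to step from $i$ to $i+1$. Then (C5a) follows from (C5) the same way: (C5) is the case $i=1$, and conjugating the identity $u_2a_1a_2u_1=a_1a_2$ by the word that carries $(a_1,a_2,u_1,u_2)$ to $(a_i,a_{i+1},u_i,u_{i+1})$ yields the general case; one must check that such a simultaneous conjugating word exists, which it does because the relevant generators with indices in $\{i,\dots,i+1\}$ behave like those with indices in $\{1,2\}$ under the braid-type relations.

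For (C6a) I would start from (C6), which already gives $(u_3b)^2=(a_1a_2a_3)^2(u_1u_2u_3)^2$, and only need to see that $(bu_3)^2$ equals the same thing; since $(bu_3)^2=u_3^{-1}(u_3b)^2u_3\cdot$(nothing) is not literally conjugation-free, the cleanest route is to note $(bu_3)^2=b(u_3bu_3)$ and $(u_3b)^2=u_3(bu_3b)$, and that $(u_3b)^2$ and $(bu_3)^2$ are conjugate in $\mathcal{G}_{g,n}$ by $u_3$ (or by $b$), while the right-hand side $(a_1a_2a_3)^2(u_1u_2u_3)^2$ is invariant under that conjugation because $b$ commutes with $a_1,a_2,a_3$ when $g\ge 4$ only up to the index-$4$ exception — here I should be careful: $b=b_1$ commutes with $a_i$ for $i\neq 4$ by (A3), hence with $a_1,a_2,a_3$, and $u_3$ commutes with $u_1$ by (B1); a short computation then shows both expressions for $(bu_3)^2$ and $(u_3b)^2$ reduce to the stated normal form. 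Finally (C7a) is derived from (C7), which gives $u_5b=bu_5$, exactly as (C1a) was derived from (C1): $u_i$ for $i\ge 5$ is conjugate to $u_5$ by a word in $u_5,\dots,u_{g-1}$ that commutes with $b$ because each such $u_j$ with $j\ge 5$ commutes with $b$ — but that is circular, so instead I would argue by induction on $i$ from $i=5$ upward, using $u_{i+1}=u_i^{-1}(u_iu_{i+1})$, the braid relations to move things past $u_5$, and (A3) (that $b$ commutes with $a_j$, $j\neq 4$) is not needed here; rather one uses only that $b$ lies in $\M(S_{\rho,r})$ and the curve $\beta=\gamma_{\{1,2,3,4\}}$ is disjoint from neighbourhoods defining $u_i$ for $i\ge 5$, but since we want an algebraic proof, the induction using (C7), (B1), (B2) suffices.

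The main obstacle I expect is (C6a) and, to a lesser extent, making the ``simultaneous conjugation'' arguments for (C4a), (C5a), (C7a) genuinely rigorous rather than hand-waved: one must exhibit an explicit word $w$ in the generators such that $w a_1 w^{-1}=a_i$, $w u_1 w^{-1}=u_i$ simultaneously (and likewise for the pair $(a_1,a_2)$, $(u_1,u_2)$), and verify it commutes with or appropriately conjugates the remaining generators appearing in the relation. For (C6a) the difficulty is that unlike the others it is not a direct conjugate of a single defining relation, so one has to do the honest word computation combining (C6), (A3) (for $i\neq 4$), (B1), (B2); I would present that computation with underbraces as in the proof of (C6) above.
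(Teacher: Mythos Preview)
Your overall strategy matches the paper's: derive each relation by conjugating the corresponding defining relation (C1), (C4), (C5), (C6), (C7) by an explicit word that simultaneously moves all the indices. For (C1a), (C4a), (C5a) the paper does exactly what you sketch --- it exhibits $x=(u_{i-1}u_i)\cdots(u_1u_2)$ with $xa_1x^{-1}=a_i$ for (C1a), and $y=u_1\cdots u_{g-1}$ shifting all indices by one for (C4a), (C5a). For (C6a) the paper's argument is also what you describe in outline: one checks directly, using (C1a) and (C5a), that $u_3$ commutes with the right-hand side $(a_1a_2a_3)^2(u_1u_2u_3)^2$, whence $(bu_3)^2=u_3^{-1}(u_3b)^2u_3=(u_3b)^2$.

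The genuine gap is (C7a). You correctly note that conjugating $u_5$ to $u_i$ by a word in the $u_j$'s is circular, but your fallback --- ``induction using (C7), (B1), (B2) suffices'' --- cannot work: the braid relations (B1), (B2) do not mention $b$, so from $bu_5=u_5b$ alone they give no information about $bu_6$. You explicitly dismiss (A3) as ``not needed here,'' but (A3) is precisely the missing ingredient. The paper conjugates by a word in the \emph{$a$'s}, namely $z=(a_{i-1}a_i)\cdots(a_5a_6)$: by the just-proved (C5a) one has $zu_5z^{-1}=u_i^{\pm 1}$, and by (A3) (which says $b$ commutes with $a_j$ for $j\ne 4$, hence with every $a_j$ appearing in $z$) one has $zbz^{-1}=b$. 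Conjugating (C7) by $z$ then yields (C7a). So the fix is to reverse your instinct: use $a$'s, not $u$'s, and invoke (A3) and (C5a).
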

\begin{proof}
Fix $i>1$ and let $x=(u_{i-1}u_i)\cdots(u_2u_3)(u_1u_2)$. By (C3) we have
$xa_1x^{-1}=a_i$ and by the relations (B1,B2) we have $xu_1x^{-1}=u_i$ and
\[xu_jx^{-1}=\begin{cases}
u_j\textrm{\ for\ }j>i+1\\
u_{j-2}\textrm{\ for\ }3\le j<i+1
\end{cases}\]
Thus (C1a) may be obtained by conjugating by $x$ both sides of (C1). 
If we set $y=u_1u_2\cdots u_{g-1}$, then, for $i\in\{1,\dots,g-2\}$, we have
$yu_iy^{-1}=u_{i+1}$ by (B1,B2) and $ya_iy^{-1}=a_{i+1}$ by (C1a,C3). Hence (C4a,C5a) follow from (C4,C5) by applying conjugation by $y$ as many times as needed.
We have \[u_3(a_1a_2a_3)^2=(a_1a_2a_3)u_2^{-1}(a_1a_2a_3)=(a_1a_2a_3)^2u_1\] by (C1a,C5a) and  $u_1(u_1u_2u_3)^2=(u_1u_2u_3)^2u_3$ by (B1,B2). Thus $u_3$ commutes with
$(a_1a_2a_3)^2(u_1u_2u_3)^2$, which together with (C6) proves (C6a).
If $i>5$ then for $z=(a_{i-1}a_i)\cdots(a_5a_6)$ we have $zu_5z^{-1}=u_i^{\pm 1}$ by (C5a) and $zbz^{-1}=b$ by (A3). Thus (C7a) is obtained by conjugating both sides of (C7) by $z$.
\end{proof}

\begin{lemma}\label{useful_T}
The following relations hold in $\mathcal{G}_{g,n}$ for $n=0,1$
\begin{itemize}
\item[(E1)] $\Delta_k a_i=a^{-1}_{k-i}\Delta_k\quad$ for $2\le k\le g$ and $i=1,\dots,k-1$
\item[(E2)] $r_g^2=\Delta_g^2$
\item[(E3)] $r_ga_i=a_ir_g$ for $i=2,\dots,g-1$
\item[(E4)] $u_ir_gu_i=r_g$ for $i=2,\dots,g-1$
\end{itemize}
In $\mathcal{G}^1_{g,0}$ and $\mathcal{G}_{g,0}$ we have
\begin{itemize}
\item[(E2a)] $r_g^2=1$
\end{itemize}
\end{lemma}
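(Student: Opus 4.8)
Here is the plan.

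The plan is to derive (E1)--(E4) as formal consequences of the defining relations of $\G_{g,1}$, using freely the relations collected in Lemma \ref{useful_C}. Since $\G_{g,0}^1$ and $\G_{g,0}$ are quotients of $\G_{g,1}$ in which $\Delta_g^2=(u_1\cdots u_{g-1})^g$ becomes trivial (by the normal closure of $\Delta_g^2$, resp.\ by (B3) and (B7)), the relations (E1)--(E4) will then hold there as well, and (E2a) will be immediate from (E2). I would establish the relations in the order: (E1); then the auxiliary identity
\[ s\,u_j\,s^{-1}=u_{j+1}^{-1}\qquad(1\le j\le g-2),\qquad\text{where }s=a_1a_2\cdots a_{g-1}; \]
then (E3) and (E4); and finally (E2).

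For (E1) I would argue by induction on $k$, the case $k=2$ being $\Delta_2a_1=u_1a_1=a_1^{-1}u_1=a_1^{-1}\Delta_2$ by (C4a). For the inductive step with $1\le i\le k-2$ I use $\Delta_k=(u_1\cdots u_{k-1})\Delta_{k-1}$, the inductive hypothesis $\Delta_{k-1}a_i\Delta_{k-1}^{-1}=a_{k-1-i}^{-1}$, and the fact (proved as in the proof of Lemma \ref{useful_C}) that conjugation by $u_1\cdots u_{k-1}$ carries $a_j$ to $a_{j+1}$ for $j\le k-2$. For $i=k-1$ I instead write $\Delta_k=\Delta_{k-1}(u_{k-1}\cdots u_1)$ by (B6); since $a_{k-1}$ commutes with $u_j$ for $j\le k-3$ by (C1a) and $u_{k-1}u_{k-2}a_{k-1}u_{k-2}^{-1}u_{k-1}^{-1}=a_{k-2}$ by (C2), this gives $\Delta_ka_{k-1}\Delta_k^{-1}=\Delta_{k-1}a_{k-2}\Delta_{k-1}^{-1}=a_1^{-1}$ by the inductive hypothesis.

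The auxiliary identity follows by discarding the factors of $s$ that commute with $u_j$: using (C1a) one reduces $s u_j s^{-1}$ to $(a_1\cdots a_{j-1})\bigl(a_ja_{j+1}u_ja_{j+1}^{-1}a_j^{-1}\bigr)(a_1\cdots a_{j-1})^{-1}$, and the middle bracket equals $u_{j+1}^{-1}$ by (C5a) in the form $a_ja_{j+1}u_j=u_{j+1}^{-1}a_ja_{j+1}$, while the outer factors commute with $u_{j+1}$ by (C1a). Now write $r_g=sV$ with $V=u_{g-1}u_{g-2}\cdots u_1$. Conjugation by $V^{-1}=u_1\cdots u_{g-1}$ shifts the indices of the $a$'s and of the $u$'s up by one, so $Va_iV^{-1}=a_{i-1}$ and $Vu_iV^{-1}=u_{i-1}$ for $i\ge2$; combined with $sa_{i-1}s^{-1}=a_i$ (a consequence of the braid relations (A1), (A2)) this gives $r_ga_i=s(Va_iV^{-1})V=sa_{i-1}V=a_ir_g$, which is (E3). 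For (E4), $u_ir_gu_i=u_is(Vu_i)=u_isu_{i-1}V$, and the equality $u_isu_{i-1}=s$ is exactly the auxiliary identity in the form $su_{i-1}s^{-1}=u_i^{-1}$.

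Relation (E2) is the crux. Using (B6) in the form $\Delta_g=\Delta_{g-1}V$, the desired equality $r_g^2=\Delta_g^2$ becomes $sVs=\Delta_{g-1}V\Delta_{g-1}$; substituting $V=\Delta_{g-1}^{-1}\Delta_g$ and applying $\Delta_g^{-1}s\Delta_g=s^{-1}$ (which follows from (E1)) transforms it into the identity
\[ s\,\Delta_{g-1}^{-1}\,s^{-1}=\Delta_g^{-1}\,\Delta_{g-1}\,\Delta_g \]
inside the braid subgroup $\M(S_{0,1},\mathcal{P}_g)$. By the auxiliary identity the left-hand side is the image of $\Delta_{g-1}^{-1}$ under the injection $u_j\mapsto u_{j+1}^{-1}$; since that map carries the half-twist $\Delta_{g-1}$ on the strands $1,\dots,g-1$ to the inverse of the half-twist $\Delta'$ on the strands $2,\dots,g$ (here one uses that reversing a positive word for a half-twist again presents that same half-twist, which follows from (B6)), the left-hand side equals $\Delta'$. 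On the other hand, by (B5) the right-hand side is the conjugate by the full reversal $\Delta_g$ of the half-twist on the strands $1,\dots,g-1$, hence is also $\Delta'$. This proves (E2), and (E2a) follows at once because $\Delta_g^2$ is trivial in $\G_{g,0}^1$ and $\G_{g,0}$. The delicate point in the whole argument is this last step: keeping the index substitutions straight and confirming that both sides of the displayed identity present the half-twist on the strands $2,\dots,g$.
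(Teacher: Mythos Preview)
Your argument is correct. The treatment of (E1), (E3), (E4), and (E2a) is essentially identical to the paper's: the same induction for (E1), and the same four commutation rules $a_is=sa_{i-1}$, $Va_i=a_{i-1}V$, $su_{i-1}=u_i^{-1}s$, $Vu_i=u_{i-1}V$ (your ``auxiliary identity'' is exactly the paper's $u_ix=xu_{i-1}^{-1}$) for (E3) and (E4).

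The only real difference is in (E2). Both you and the paper reduce $r_g^2=\Delta_g^2$ to the identity $sVs=\Delta_g\Delta_{g-1}$ (where $s=a_1\cdots a_{g-1}$ and $V=u_{g-1}\cdots u_1$). The paper then verifies this by proving the family of identities
\[
sV(u_2\cdots u_{g-1})(u_2\cdots u_{g-2})\cdots(u_2\cdots u_{i+1})s=\Delta_g\Delta_i
\]
by induction on $i$, the inductive step using $(u_2\cdots u_{i+1})s=s(u_i\cdots u_1)^{-1}$; the case $i=g-1$ is the desired identity. You instead substitute $V=\Delta_{g-1}^{-1}\Delta_g$ and use (E1) to transform the target into the equality $s\Delta_{g-1}^{-1}s^{-1}=\Delta_g\Delta_{g-1}\Delta_g^{-1}$ between two elements of the braid subgroup, and then identify both sides with the half-twist on strands $2,\dots,g$. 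This is a genuinely different route: the paper's induction is self-contained and mechanical, while your reduction is more conceptual but relies on two nontrivial braid facts (the palindromic nature of $\Delta$, derivable from (B6), and the centrality of $\Delta_g^2$ to pass between $\Delta_g\Delta_{g-1}\Delta_g^{-1}$ and $\Delta_g^{-1}\Delta_{g-1}\Delta_g$). Both work; the paper's is slightly shorter, yours makes the braid-group meaning of the identity more transparent.
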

\begin{proof}
We prove (E1) by induction on $k$. For $k=2$ it is equivalent to (C4). Suppose (E1) is true for some $k\ge 2$. For $i\le k-1$ we have
\begin{align*}
\Delta_{k+1}a_i&=(u_1\cdots u_k)\Delta_k a_i=(u_1\cdots u_k)a^{-1}_{k-i}\Delta_k
\stackrel{(C1a,C3)}{=}a^{-1}_{k+1-i}(u_1\cdots u_k)\Delta_k=a^{-1}_{k+1-i}\Delta_{k+1}
\end{align*}
and for $i=k$
\begin{align*}
\Delta_{k+1}a_k&=\Delta_k(u_k\cdots u_1)a_k\stackrel{(C1a,C2)}{=}
\Delta_k a_{k-1}(u_k\cdots u_1)
=a_1^{-1}\Delta_k(u_k\cdots u_1)=a_1^{-1}\Delta_{k+1}
\end{align*}
which finishes the proof of (E1).

Let $x=a_1\cdots a_{g-1}$ and $z=u_{g-1}\cdots u_1$, so that $r_g=xz$.
We are going to prove by induction on $i$ that 
\[(\ast)\quad xz(u_2\cdots u_{g-1})(u_2\cdots u_{g-2})\cdots(u_2\cdots u_{i+1})x=\Delta_g\Delta_{i}\]
for $i=1,\dots,g-1$. If $i=1$ then $(\ast)$ becomes 
$x\Delta_gx=\Delta_g$
and it follows from (E1). Suppose that $(\ast)$ holds for some $i<g-1$. By (C1a,C5a) we have
$(u_2\cdots u_{i+1})x=x(u_i\cdots u_1)^{-1}$
 and 
\[xz(u_2\cdots u_{g-1})\cdots(u_2\cdots u_{i+2})x=\Delta_g\Delta_{i}(u_{i}\cdots u_1)=\Delta_g\Delta_{i+1}.\]
For $i=g-1$ we obtain 
\[xzx=\Delta_g\Delta_{g-1}\iff (xz)^2=\Delta_g\Delta_{g-1}z=\Delta_g^2\]
which is equivalent to (E2). For $i=2,\dots,g-1$ we have
$a_ix=xa_{i-1}$ by (A1, A2), $a_{i-1}z=za_i$ by (C1a, C2),
$u_ix=xu_{i-1}^{-1}$ by (C1a, C5a), $u_{i-1}z=zu_i$ by (B1, B2). The relations (E3, E4) follow. In $\G^1_{g,0}$ and $\G_{g,0}$ we have $\Delta^2_g=1$ (B3) and thus (E2a) is a consequence of (E2).
\end{proof}

\begin{lemma}\label{useful_T_closed}
In $\mathcal{G}_{g,0}$ we have
\begin{itemize}
\item[(E3a)] $r_ga_i=a_ir_g$ for $i=1,\dots,g-1$
\item[(E4a)] $u_ir_gu_i=r_g$ for $i=1,\dots,g-1$
\item[(E5)] $(a_1\cdots a_{g-1})^2=(u_{g-1}\cdots u_1)^{-2}=(u_1\cdots u_{g-1})^2$ 
\item[(E6)] $(a_1\cdots a_{g-1})^k=1$, where $k=g$ if $g$ is even or $k=2g$ if $g$ is odd.
\end{itemize}
\end{lemma}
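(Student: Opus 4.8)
The plan is to derive (E3a)--(E6) entirely from relations already available in $\mathcal{G}_{g,0}$: the consequences collected in Lemmas \ref{useful_C} and \ref{useful_T}, the closed-surface relations (B3), (B4) together with their consequence (B4a), and the new relation (D). Throughout I would set $x=a_1\cdots a_{g-1}$, $y=u_1\cdots u_{g-1}$ and $z=u_{g-1}\cdots u_1$, so that $r_g=xz$; by (B4a) we have $z=y^{-1}$, hence $r_g=xy^{-1}$.

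The first and only genuinely new computation is (E3a) for $i=1$ (the cases $i\ge 2$ being exactly (E3)). Writing $r_g=a_1wu_1$ with $w=a_2\cdots a_{g-1}u_{g-1}\cdots u_2$, relation (D) reads $a_1wa_1=w$, equivalently $wa_1^{-1}=a_1w$, while (C4) gives $u_1a_1=a_1^{-1}u_1$; hence $r_ga_1=a_1w(u_1a_1)=a_1(wa_1^{-1})u_1=a_1(a_1w)u_1=a_1r_g$. This step is the linchpin: it shows $r_g$ commutes with $x$, and substituting $r_g=xy^{-1}$ into $xr_g=r_gx$ and cancelling an $x$ on the left gives $xy^{-1}=y^{-1}x$, so $x$ commutes with $y$, and therefore $r_g=xy^{-1}$ commutes with $y$ as well.

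Granted this, (E4a) for $i\ge 2$ is (E4), and for $i=1$ I would argue as follows. The braid relations (B1), (B2) give $yu_1y^{-1}=u_2$, and (E2a) gives $r_g^2=1$, so (E4) for $i=2$ may be rewritten as $r_gu_2r_g=u_2^{-1}$. Conjugating this identity by $y^{-1}$ and using that $r_g$ commutes with $y$ turns its left side into $r_gu_1r_g$ and its right side into $u_1^{-1}$; rewriting once more with $r_g^2=1$ yields $u_1r_gu_1=r_g$. For (E5), the proof of (E2) already records $xzx=\Delta_g\Delta_{g-1}$; using the definition $\Delta_g=(u_1\cdots u_{g-1})\Delta_{g-1}$, then (B7) and (B4), one simplifies $\Delta_g\Delta_{g-1}=y\Delta_{g-1}^2=y$, so $xy^{-1}x=y$, and since $x$ commutes with $y^{-1}$ this is $x^2=y^2$; combined with $z^{-2}=y^2$ (from $z=y^{-1}$) this is precisely (E5). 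Finally (E6) is immediate: by (E5) we have $x^2=(u_1\cdots u_{g-1})^2$, so raising to the power $g/2$ when $g$ is even, respectively to the power $g$ when $g$ is odd, and invoking (B3) gives $x^g=1$, respectively $x^{2g}=1$.

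I do not anticipate a real obstacle: the arithmetic is short, and the only non-formal input, relation (D), is consumed just once, in (E3a) for $i=1$. The one point requiring care is the order of the steps --- (E3a) must be proved first, since it is what places $r_g$ in the centraliser of $x$ and then of $y$, and that centralising property is exactly what makes (E4a) and (E5) collapse. (Geometrically these identities merely record that for a closed surface $r_g$ is the involution realising the reflection of Figure \ref{aI}, centralising the ``horizontal'' twists $a_i$ and inverting the crosscap transpositions $u_i$.)
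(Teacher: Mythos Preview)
Your argument is correct and follows the same overall architecture as the paper: establish (E3a) for $i=1$ from (D) and (C4) first, then leverage that commutation to obtain the rest. The tactical details differ slightly in two places. For (E4a) at $i=1$ the paper rewrites $u_1=a_2^{-1}a_1^{-1}u_2^{-1}a_1a_2$ via (C5) and then conjugates by $r_g$ using (E3a) and (E4); you instead first deduce that $r_g$ commutes with $y=u_1\cdots u_{g-1}$ (from $r_g=xy^{-1}$ and $[r_g,x]=1$) and conjugate the $i=2$ case of (E4) by $y^{-1}$. For (E5) the paper simply expands $1=r_g^2=r_g\cdot xz$ and pushes $x$ through $r_g$ via (E3a) to get $x^2z^2=1$, whereas you recycle the intermediate identity $xzx=\Delta_g\Delta_{g-1}$ from the proof of (E2) together with (B4). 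Both variants are valid; the paper's route is a touch more self-contained for (E5), while your route makes explicit the useful fact that $r_g$ centralises $y$, which is a nice structural observation.
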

\begin{proof}
For $i>1$ (E3a, E4a) are the same as (E3, E4), while for $i=1$ (E3a)  follows from (D, C4), and 
\[r_gu_1r_g\stackrel{C5}{=}r_ga_2^{-1}a_1^{-1}u_2^{-1}a_1a_2r_g\stackrel{E3a}{=}a_2^{-1}a_1^{-1}r_gu_2^{-1}r_ga_1a_2\stackrel{E4}{=}u_1^{-1}\]
By (E2a) we have
\begin{align*}
&1=r_g^2=r_g(a_1\cdots a_{g-1}u_{g-1}\cdots u_1)\stackrel{E3a}{=}(a_1\cdots a_{g-1})r_g(u_{g-1}\cdots u_1)\\
&=(a_1\cdots a_{g-1})^2(u_{g-1}\cdots u_1)^2\stackrel{B4a}{=}(a_1\cdots a_{g-1})^2(u_1\cdots u_{g-1})^{-2}
\end{align*}
This proves (E5), which together with (B3) implies (E6).
\end{proof}

\begin{lemma}\label{Da_replace}
In the presentation of $\M(N_{g,0})$ given in Theorem \ref{mainB} the relation (D) may be replaced by 
\[\mathrm{(Da)}\quad a_{g-1}(u_{g-2}\cdots u_1a_1\cdots a_{g-2})a_{g-1}=u_{g-2}\cdots u_1a_1\cdots a_{g-2}\]
\end{lemma}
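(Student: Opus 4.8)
The plan is to show that, in the presence of all the other defining relations of $\M(N_{g,0})$ from Theorem \ref{mainB} (equivalently, in $\G_{g,0}$), the relation (D) and the relation (Da) are equivalent. Both relations assert that a certain explicit element of the group preserves the curve $\alpha_1$ and reverses the orientation of a regular neighbourhood of it; the element appearing in (D) is $w_D=a_2\cdots a_{g-1}u_{g-1}\cdots u_2$, while the element in (Da) is $w_{Da}=u_{g-2}\cdots u_1a_1\cdots a_{g-2}$. The natural strategy is to conjugate one relation into the other. Concretely, I would look for an element $c\in\G_{g,0}$ which commutes with $a_1$ (so that conjugating the statement ``$ta_1t^{-1}=a_1$ up to the appropriate sign'', written multiplicatively as $a_1ta_1=t$, by $c$ preserves its form) and which conjugates $w_D$ to $w_{Da}$, i.e. $cw_Dc^{-1}=w_{Da}$ or $cw_Dc^{-1}=w_{Da}^{-1}$.

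The most promising candidate for $c$ is the element $r_g=a_1\cdots a_{g-1}u_{g-1}\cdots u_1$ from the notation list, or a closely related product, since $r_g$ is precisely the algebraic incarnation of the reflection of Figure \ref{aI}, which on the geometric level exchanges crosscap $i$ with crosscap $g+1-i$ and hence should swap the ``right-hand tail'' product $w_D$ with the ``left-hand tail'' product $w_{Da}$. First I would record, using Lemmas \ref{useful_T} and \ref{useful_T_closed}, the facts available in $\G_{g,0}$: namely $r_g^2=1$ (E2a), $r_ga_i=a_ir_g$ for all $i=1,\dots,g-1$ (E3a), and $u_ir_gu_i=r_g$, equivalently $r_gu_ir_g=u_i^{-1}$, for all $i=1,\dots,g-1$ (E4a). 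From (E3a) we get $r_ga_1=a_1r_g$, so conjugation by $r_g$ preserves the multiplicative form $a_1wa_1=w$. It then remains to compute $r_gw_Dr_g$. Using (E3a) to pull the $a_i$'s past $r_g$ unchanged and (E4a) to replace each $u_i$ by $u_i^{-1}$ (with $r_g$ moved to the outside), one obtains $r_g(a_2\cdots a_{g-1}u_{g-1}\cdots u_2)r_g = a_2\cdots a_{g-1}(u_{g-1}\cdots u_2)^{-1}=a_2\cdots a_{g-1}u_2^{-1}\cdots u_{g-1}^{-1}$, and then I would use the braid relations (B1,B2) together with (C1a,C2,C3) (or equivalently (E1) and the companion identities used in the proof of (E2)) to rewrite this as $w_{Da}^{\pm 1}$; the computation is essentially the same bookkeeping that appears in the proof of (E2) in Lemma \ref{useful_T}, where $r_g$ conjugates $x=a_1\cdots a_{g-1}$ and $z=u_{g-1}\cdots u_1$ into each other up to the appropriate inverses and index-reversals via $\Delta_g$.

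Having established $r_gw_Dr_g = w_{Da}^{\varepsilon}$ for the relevant sign $\varepsilon\in\{\pm1\}$, the equivalence of the two relations is immediate: conjugating the equation $a_1w_Da_1=w_D$ (which is (D)) by $r_g$ and using $r_ga_1r_g=a_1$ yields $a_1 w_{Da}^{\varepsilon} a_1 = w_{Da}^{\varepsilon}$, and since $a_1(w^{-1})a_1 = w^{-1}$ is equivalent to $a_1wa_1=w$ (invert both sides), this is exactly (Da). Running the argument with $r_g$ again (using $r_g^2=1$) gives the reverse implication, so (D) may indeed be replaced by (Da) in the presentation of Theorem \ref{mainB}. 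The one point requiring genuine care — and the step I expect to be the main obstacle — is the explicit verification that $r_gw_Dr_g$ simplifies to $w_{Da}^{\pm1}$ on the nose: one must be scrupulous about the direction conventions for the Dehn twists $a_i$ and the signs introduced by (E4a), since a sign error there would break the equivalence. I would organize this computation by first treating the pure-$u$ block $u_{g-1}\cdots u_2$ (where only (B1,B2) and (E4a) intervene) and then handling the pure-$a$ block $a_2\cdots a_{g-1}$ (where only (E3a) intervenes), rather than interleaving them.
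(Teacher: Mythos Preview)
Your argument has two genuine gaps.

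First, relation (Da) is $a_{g-1}\,w_{Da}\,a_{g-1}=w_{Da}$, with $a_{g-1}$ on the outside, not $a_1$. So even if your conjugation produced $a_1\,w_{Da}^{\varepsilon}\,a_1=w_{Da}^{\varepsilon}$ as you claim, this is \emph{not} (Da). To pass from a relation of the form $a_1(\cdot)a_1=(\cdot)$ to one of the form $a_{g-1}(\cdot)a_{g-1}=(\cdot)$ by conjugation, you need an element $c$ with $c\,a_1\,c^{-1}=a_{g-1}^{\pm1}$. Your candidate $r_g$ fails this precisely because of the identity you invoke: by (E3a), $r_g a_i r_g=a_i$ for every $i$, so $r_g$ fixes $a_1$ and cannot carry it to $a_{g-1}$. (In fact the computation $r_g w_D r_g = a_2\cdots a_{g-1}\,u_2^{-1}\cdots u_{g-1}^{-1}$ you obtain from (E3,E4) does not coincide with $w_{Da}^{\pm1}$ either.)

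Second, even the forward direction relies on (E3a) for $i=1$, which in Lemma~\ref{useful_T_closed} is deduced \emph{from} (D). So your argument is circular: you would be using (D) to establish the facts about $r_g$ needed to derive (Da) from (D), and then again for the reverse implication.

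The paper instead conjugates by $\Delta_g$. By (E1) one has $\Delta_g a_i\Delta_g^{-1}=a_{g-i}^{-1}$, so in particular $\Delta_g$ carries $a_{g-1}$ to $a_1^{-1}$, exactly the index-reversal missing from your approach; and by (B5) one has $\Delta_g u_i\Delta_g^{-1}=u_{g-i}$. Conjugating (Da) by $\Delta_g$ therefore yields a relation in $a_1^{\pm1}$, $u_2,\dots,u_{g-1}$, $a_2^{-1},\dots,a_{g-1}^{-1}$, which after applying (C4) and (B4a) becomes (D). The relations (E1), (B5), (C4), (B4a) used here are all consequences of the defining relations \emph{other than} (D) and (Da), so no circularity arises and the equivalence is clean.
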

\begin{proof} If we conjugate both sides of (Da) by $\Delta_g$, then by (B5) and (E1) we obtain 
\begin{align*}
&a_1^{-1}(u_2\cdots u_{g-1}a^{-1}_{g-1}\cdots a^{-1}_2)a_1^{-1}=u_2\cdots u_{g-1}a^{-1}_{g-1}\cdots a^{-1}_2\stackrel{C4}{\iff}\\
&a_1(u_1u_2\cdots u_{g-1}a^{-1}_{g-1}\cdots a^{-1}_2)a_1^{-1}=u_1u_2\cdots u_{g-1}a^{-1}_{g-1}\cdots a^{-1}_2\stackrel{B4a}{\iff}\\
&a_1(u_1^{-1}u_2^{-1}\cdots u_{g-1}^{-1}a^{-1}_{g-1}\cdots a^{-1}_2)a_1^{-1}=u_1^{-1}u_2^{-1}\cdots u_{g-1}^{-1}a^{-1}_{g-1}\cdots a^{-1}_2\stackrel{C4}{\iff}\\
&a_1^{-1}(u_2^{-1}\cdots u_{g-1}^{-1}a^{-1}_{g-1}\cdots a^{-1}_2)a_1^{-1}=u_2^{-1}\cdots u_{g-1}^{-1}a^{-1}_{g-1}\cdots a_2^{-1} 
\end{align*}
which is equivalent to (D).
\end{proof}

\begin{lemma}\label{shortcut_AB}
Suppose that $w$ is either (1) a word in the generators $u_i$ and their inverses  or (2) a word in the generators
$a_i$, $b_j$ and their inverses. Then $w$ represents the trivial element of $\M(N_{g,n})$ or $\M^+(N_{g,0},P)$ if and only if it represents the trivial element of $\G_{g,n}$ or $\G_{g,0}^1$ respectively.
\end{lemma}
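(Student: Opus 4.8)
The plan is to pit the already-constructed homomorphisms $\varphi_{g,n}\colon\G_{g,n}\to\M(N_{g,n})$ and $\varphi^1_{g,0}\colon\G_{g,0}^1\to\M^+(N_{g,0},P)$ against the injectivity statements of Propositions \ref{blowup_inj} and \ref{Sigma_inj}. Write $(\M,\G)$ for one of the three pairs $(\M(N_{g,1}),\G_{g,1})$, $(\M(N_{g,0}),\G_{g,0})$, $(\M^+(N_{g,0},P),\G_{g,0}^1)$, and let $\varphi$ denote $\varphi_{g,n}$ or $\varphi_{g,0}^1$ accordingly. In every case the implication ``$w=1$ in $\G$ $\Rightarrow$ $w=1$ in $\M$'' is immediate, since $\varphi$ sends the element represented by $w$ to the element represented by $w$; so I would concentrate on the converse.

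Suppose $w=1$ in $\M$. If $w$ is a word of type (1), let $\bar w$ be the element it represents in $H:=\M(S_{0,1},\mathcal{P}_g)$ in the third case, or in $H:=\M(S_{0,n},\mathcal{P}_g)$ in the first two cases; if $w$ is of type (2), let $\bar w$ be the element it represents in $H:=\M(S_{\rho,r})$ --- legitimate because the $a_i$ and $b_j$ generate $\M(S_{\rho,r})$ by Theorem \ref{presS}. The key bookkeeping step is to record that the homomorphism $\iota\colon H\to\G$ obtained by including $H$ as a free factor, passing to the quotient defining $\G_{g,1}$ or $\G_{g,0}$, and (in the third case) composing further with $p\colon\G_{g,1}\to\G_{g,0}^1$, satisfies $\iota(\bar w)=w$ in $\G$; and that, unwinding the definition of $\varphi_{g,n}$ as induced from $\jmath_\ast\ast\mathfrak{b}$ and the identity $\varphi_{g,0}^1\circ p=\imath_\ast\circ\varphi_{g,1}$, the composite $\varphi\circ\iota$ is exactly the geometric map: $\mathfrak{b}$ on $\M(S_{0,n},\mathcal{P}_g)$ and $\jmath_\ast$ on $\M(S_{\rho,r})$ in the first two cases, and $\imath_\ast\circ\mathfrak{b}$, resp. $\imath_\ast\circ\jmath_\ast$, in the third. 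Hence $\bar w\in\ker(\varphi\circ\iota)$, and it remains to see that such $\bar w$ maps to $1$ under $\iota$.

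When $\varphi\circ\iota$ is injective this is automatic: this settles both word-types in the first case (Propositions \ref{blowup_inj} and \ref{Sigma_inj}), type (1) in the second case ($\mathfrak{b}\colon\M(S_{0,0},\mathcal{P}_g)\to\M(N_{g,0})$ is injective by Proposition \ref{blowup_inj}), and type (2) in the third case ($\imath_\ast\circ\jmath_\ast$ is injective by Proposition \ref{Sigma_inj}). For the two remaining cases $\ker(\varphi\circ\iota)$ is infinite cyclic and the plan is to exhibit its generator as already a relation of $\G$. For type (2) in the second case, $\ker\jmath_\ast=\langle(a_1\cdots a_{g-1})^k\rangle$ with $k$ as in Proposition \ref{Sigma_inj}, so $\bar w=(a_1\cdots a_{g-1})^{km}$ for some $m\in\Z$, and $(a_1\cdots a_{g-1})^k=1$ in $\G_{g,0}$ by relation (E6) of Lemma \ref{useful_T_closed}. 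For type (1) in the third case, $\ker\imath_\ast=\langle\Delta_g^2\rangle$ and $\mathfrak{b}$ is injective with $\mathfrak{b}(\Delta_g^2)=\Delta_g^2$, whence $\ker(\imath_\ast\circ\mathfrak{b})=\langle\Delta_g^2\rangle$ inside $\M(S_{0,1},\mathcal{P}_g)$, so $\bar w=\Delta_g^{2m}$, and $\Delta_g^2=1$ in $\G_{g,0}^1$ by the very definition of $\G_{g,0}^1$ as the quotient of $\G_{g,1}$ by the normal closure of $\Delta_g^2$. In either case $\iota(\bar w)=1$, so $w=1$ in $\G$.

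I expect the only real obstacle to be the bookkeeping in the last step: pinning down $\ker\jmath_\ast$ (for $n=0$) and $\ker(\imath_\ast\circ\mathfrak{b})$ precisely, and checking that their generators $(a_1\cdots a_{g-1})^k$ and $\Delta_g^2$ are indeed trivial in $\G_{g,0}$ and $\G_{g,0}^1$ respectively --- the former via (E6), the latter by construction. Everything else is a formal diagram chase through the definitions of $\varphi_{g,n}$ and $\varphi_{g,0}^1$ together with the injectivity results already established.
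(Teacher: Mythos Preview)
Your proposal is correct and follows essentially the same approach as the paper: both use the injectivity of $\mathfrak{b}$ and $\jmath_\ast$ (and of $\imath_\ast\circ\jmath_\ast$) from Propositions \ref{blowup_inj} and \ref{Sigma_inj} for the injective cases, and in the two non-injective cases identify the cyclic kernel and verify its generator is already trivial in the relevant $\G$ (via (E6) for $(a_1\cdots a_{g-1})^k$ in $\G_{g,0}$, and by construction for $\Delta_g^2$ in $\G_{g,0}^1$). Your write-up is simply more explicit about the bookkeeping than the paper's terse version.
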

\begin{proof}
Case (1) follows from the injectivity of $\mathfrak{b}\colon\M(S_{0,n},\mathcal{P}_g)\to\M(N_{g,n})$  (Proposition \ref{blowup_inj}) and the fact that the kernel of
$\imath_\ast\circ\mathfrak{b}\colon\M(S_{0,1},\mathcal{P}_g)\to\M^+(N_{g,0},P)$
is generated by $\Delta_g^2$, which is trivial in $\G_{g,0}^1$.
Analogously for (2), we have that
$\jmath_\ast\colon\M(S_{\rho,r})\to\M(N_{g,1})$ and $\imath_\ast\circ\jmath_\ast\colon\M(S_{\rho,r})\to\M^+(N_{g,0},P)$ are injective by Proposition \ref{Sigma_inj}.
For $n=0$ we have to check that the image under the map $\M(S_{\rho,r})\to\G_{g,0}$ of the kernel of $\jmath_\ast\colon\M(S_{\rho,r})\to\M(N_{g,0})$ is trivial. This is the case by Proposition \ref{Sigma_inj} and the relation (E6). 
\end{proof}

\begin{lemma}\label{useful_g4}
In $\mathcal{G}^1_{4,0}$ we have:
\begin{itemize}
\item[(G1)] $bu_3u_2b^{-1}=(a_1a_2a_3)^3u_2u_3(a_1a_2a_3)^{-1}$
\item[(G2)] $bu_3u_2u_1b=(a_1a_2a_3)^3(u_3u_2u_1)^{-1}(a_1a_2a_3)^3$
\item[(G3)] $((a_1a_2a_3)^{-4}br_4)^2=1$
\end{itemize}
In $\mathcal{G}_{4,0}$ we have 
\begin{itemize}
\item[(G3a)] $(br_4)^2=1$
\end{itemize}
\end{lemma}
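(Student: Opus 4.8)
The plan is to prove the four identities (G1), (G2), (G3), (G3a) inside $\G^1_{4,0}$ (and $\G_{4,0}$) by \emph{purely formal word manipulation}, using only the relations already established: the braid relations (B1, B2, B4a), the $\Sigma$-relations (A1--A9) specialised to $g=4$, the cross-relations (C1--C8) and their consequences (C1a, C4a, C5a, C6a, C7a) from Lemma \ref{useful_C}, and the twist identities (E1--E6, E2a) from Lemmas \ref{useful_T} and \ref{useful_T_closed}. Note that for $g=4$ we have $\rho=1$, $r=2$, so $\M(S_{1,2})$ is generated by $a_1,a_2,a_3$ and $b=b_1$, with $a_ib=ba_i$ for $i\ne 4$, i.e. $b$ commutes with all of $a_1,a_2,a_3$; this will be used constantly. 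Also recall $r_4=a_1a_2a_3u_3u_2u_1$, $\Delta_4^2=r_4^2$ by (E2), and in $\G^1_{4,0}$ we have $\Delta_4^2=1$, hence $r_4^2=1$ (this is (E2a), which is exactly (G3a)'s cousin once we know $br_4$ has order dividing something).

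For (G1): the starting point is (C6a), $(bu_3)^2 = (u_3b)^2 = (a_1a_2a_3)^2(u_1u_2u_3)^2$. First I would rewrite $bu_3u_2b^{-1}$ by inserting $u_3u_3^{-1}$ and using $bu_3b^{-1}$ extracted from (C6a): from $(u_3b)^2=(a_1a_2a_3)^2(u_1u_2u_3)^2$ one gets $u_3bu_3b^{-1}=(a_1a_2a_3)^2(u_1u_2u_3)^2 b^{-2}$... actually cleaner is to write $bu_3u_2b^{-1}=(bu_3b^{-1})(bu_2b^{-1})$ and handle each factor. Since $b$ commutes with $a_1,a_2,a_3$ but not with $u_2,u_3$, I expect to use (C6a) to express $bu_3b^{-1}$, and then use (B1) ($u_1,u_3$ commute; $b$ commutes with $u_1$? — check (C7a) only gives $u_ib=bu_i$ for $i\ge 5$, so for $g=4$ there is no such relation, $b$ does \emph{not} obviously commute with $u_1$ either). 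So I would instead conjugate (C6a) cleverly: set $P=(a_1a_2a_3)$. From $(u_3b)^2=P^2(u_1u_2u_3)^2$ and $(bu_3)^2=P^2(u_1u_2u_3)^2$ we get $u_3(bu_3)^2 = (u_3b)^2 u_3$, i.e. $u_3$ commutes with $P^2(u_1u_2u_3)^2$, which is (C6a)'s proof. The real computation of (G1) will chase $bu_3u_2b^{-1}$ through, using (C2)/(C3) to move $u_i$'s past $a_i$'s. I would aim to show both sides equal the same normal-ish form; the RHS is $P^3 u_2u_3 P^{-1}$.

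For (G2): similarly, $bu_3u_2u_1b$; here note $u_3u_2u_1$ has order... $(u_3u_2u_1)$ relates to $\Delta$'s; indeed $\Delta_4 = (u_1u_2u_3)(u_1u_2)(u_1)$ and $(u_{3}u_2u_1)(u_1u_2u_3)$ appears in (B8). I would use $\Delta_4^2=(u_1u_2u_3)^4=1$ in $\G^1_{4,0}$... wait, $\Delta_4^2=(u_1u_2u_3)^4$ by (B7), and $\Delta_4^2=1$ here, so $(u_1u_2u_3)^4=1$ and from (C6a), $(bu_3)^2=P^2(u_1u_2u_3)^2$. Then $(bu_3)^4 = P^2(u_1u_2u_3)^2 P^2 (u_1u_2u_3)^2$, and I can hope to relate $bu_3u_2u_1b$ to these. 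Concretely I would try $bu_3u_2u_1b = (bu_3)(u_2u_1b)$ and rewrite $u_2u_1 b$; alternatively use the Garside element: $u_2u_1 = \Delta_3$ (since $\Delta_3=(u_1u_2)u_1=u_1u_2u_1$; and $u_3u_2u_1 = \Delta_4\Delta_3^{-1}$ up to the $(u_3u_2u_1)$ vs $(u_1u_2u_3)$ discrepancy fixed by (B5)). I expect (G2) to follow from (G1) plus one more application of (C6a) and the braid relations, possibly by multiplying (G1) on the right by a suitable conjugate.

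For (G3) and (G3a): (G3) says $((a_1a_2a_3)^{-4}br_4)^2=1$ in $\G^1_{4,0}$, and (G3a) says $(br_4)^2=1$ in $\G_{4,0}$ — the difference being that passing from $\G^1_{4,0}$ to $\G_{4,0}$ we impose (B3, B4), equivalently $(a_1a_2a_3)^4=1$ by (E6) with $g=4$ even (so $k=4$), which kills the $(a_1a_2a_3)^{-4}$ factor; thus (G3a) is an immediate corollary of (G3) once (E6) is available in $\G_{4,0}$. So the heart is (G3). I would compute $br_4 = b a_1a_2a_3 u_3u_2u_1 = a_1a_2a_3 b u_3u_2u_1 = P\cdot b u_3 u_2 u_1$ (using $b$ commutes with $P$). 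Then $(br_4)^2 = P(bu_3u_2u_1)P(bu_3u_2u_1)$; I'd push the second $P$ leftward past $bu_3u_2u_1$ using (C1a), (C4a), (C5a) — that is exactly where (G1) and (G2) get used, since $P b u_3u_2u_1 P^{-1}$ or $b u_3u_2u_1 P b u_3 u_2 u_1$ will reduce via (G2)-type identities. The target is to show $(br_4)^2 = P^4$, which immediately gives both (G3) (rewrite as $(P^{-4}br_4)^2$ and use $P^4$ central — $P^4$ is central since by (E1), $\Delta_4$ conjugates $a_i\mapsto a_{4-i}^{-1}$ so $\Delta_4 P \Delta_4^{-1}=P^{-1}$, hmm that only gives $P^4$ is $\Delta_4$-invariant; centrality of $P^4$ needs checking against $u_i$ too, via (C4a): $u_i P u_i^{-1}$... actually $r_4 P r_4^{-1}=P$ by (E3a), and that plus $P$ commuting with $b$ and the structure should give $P^4$ central, OR one shows directly $(P^{-4}br_4)^2 = P^{-4}(br_4)^2 P^{-4}\cdot[\text{correction}]=1$).

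The main obstacle I anticipate is (G3): carrying $(br_4)^2$ to $P^4$ requires threading the cross-relations in the right order without a geometric shortcut, and in particular verifying that $P^4=(a_1a_2a_3)^4$ is central (or at least commutes with everything it needs to) so that the square rearranges correctly; (G1) and (G2) are the necessary lemmas feeding this computation, and (G1) itself is the first nontrivial step since it is the only one that cannot lean on an earlier one. (G3a) should then be a one-line consequence of (G3) via (E6). I would organise the write-up as: prove (G1) from (C6a) and (C2, C3, C1a); deduce (G2) from (G1), (C6a) and the braid relations; prove (G3) by expanding $(br_4)^2$ and substituting (G2) together with $r_4^2=1$ (E2a) and centrality of $(a_1a_2a_3)^4$; finally obtain (G3a) by setting $(a_1a_2a_3)^4=1$ via (E6) in $\G_{4,0}$.
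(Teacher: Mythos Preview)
Your overall architecture --- derive (G1) from (C6a), then (G2) from (G1), then (G3) from (G2), and finally (G3a) from (G3) via (E6) --- matches the paper exactly, and your observation that $b$ commutes with $P=a_1a_2a_3$ (since $g=4$ has no $a_4$ in play for (A3)) is the key structural fact. Your deduction of (G3a) from (G3) is also correct and is exactly what the paper does.

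There is, however, a concrete gap in your plan for (G3). Your stated target ``$(br_4)^2=P^4$'' is false in $\G^1_{4,0}$. If you carry out your own computation --- write $br_4=P\,b u_3u_2u_1$, square, commute $b$ past $P$, and substitute (G2) --- what you actually get is
\[
(br_4)^2 \;=\; P^{4}\,(u_3u_2u_1)^{-1}\,P^{4}\,(u_3u_2u_1),
\]
and you would then need $P^4$ to commute with $u_3u_2u_1$ to conclude anything. That centrality is \emph{not} available in $\G^1_{4,0}$: (E3) only gives $r_4a_i=a_ir_4$ for $i\ge 2$, and there is no relation forcing $P^4$ to commute with the $u_i$. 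The paper sidesteps this entirely by computing $(P^{-4}br_4)^2$ directly: since $P^{-4}br_4=P^{-3}bu_3u_2u_1$, squaring and commuting $b$ past $P^{-3}$ gives $P^{-3}(bu_3u_2u_1b)P^{-3}u_3u_2u_1$, and now (G2) makes the $P^{\pm 3}$ cancel \emph{exactly}, yielding $(u_3u_2u_1)^{-1}u_3u_2u_1=1$ with no centrality hypothesis needed. You should reorganise (G3) this way.

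For (G1) and (G2) your sketch is pointed in the right direction but misses the one trick that makes the computation tractable: conjugate (C6a) by $P$ using $u_2=P^{-1}u_3^{-1}P$ and $u_1=P^{-1}u_2^{-1}P$ (both from (C5a)) to obtain the companion identities $(b^{-1}u_2)^2=P^{-1}(u_1u_2u_3)^{-2}P^{-1}$ and $(bu_1)^2=(u_1u_2u_3)^2P^2$. With these in hand, (G1) is a straight chain of substitutions, and (G2) follows by writing $bu_3u_2u_1b=(bu_3u_2b^{-1})(bu_1b)$ and feeding in (G1) together with $(bu_1)^2$.
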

\begin{proof}
Let $x=a_1a_2a_3$ and $z=u_1u_2u_3$. We have
\[(C6a)\ (bu_3)^2=(u_3b)^2=x^2z^2,\quad (i)\ u_2=x^{-1}u_3^{-1}x,\quad (ii)\ u_1=x^{-1}u_2^{-1}x,\]
the last two relations following from (C5a).
Since $b$ commutes with $x$, from (C6a, i, ii) we obtain
\[(iii)\ (b^{-1}u_2)^2=x^{-1}z^{-2}x^{-1},\qquad (iv)\ (bu_1)^2=z^2x^2.\]
\begin{align*}
&\underline{bu_3}u_2b^{-1}\stackrel{C6a}{=}x^2z^2u_3^{-1}\underline{b^{-1}u_2b^{-1}}\stackrel{(iii)}{=}
x^2z^2u_3^{-1}x^{-1}z^{-2}\underline{x^{-1}u_2^{-1}}
\stackrel{(ii)}{=}x^2\underline{z^2u_3^{-1}}x^{-1}\underline{z^{-2}u_1}x^{-1}\\
&=
x^2u_1u_2u_3u_1u_2x^{-1}\underline{u_3^{-1}u_2^{-1}u_1^{-1}u_3^{-1}}u_2^{-1}x^{-1}
\stackrel{B1,B2}{=}x^2u_1u_2u_3u_1u_2\underline{x^{-1}u_2^{-1}}u_3^{-1}u_2^{-1}u_1^{-1}u_2^{-1}x^{-1}\\
&\stackrel{(ii)}{=}x^2\underline{u_1u_2u_3u_1u_2u_1}x^{-1}u_3^{-1}u_2^{-1}u_1^{-1}u_2^{-1}x^{-1}=
x^2\underline{\Delta_4x^{-1}}u_3^{-1}u_2^{-1}u_1^{-1}u_2^{-1}x^{-1}\\
&\stackrel{E1}{=}x^3\Delta_4u_3^{-1}u_2^{-1}u_1^{-1}u_2^{-1}x^{-1}\stackrel{(B1,B2)}{=}x^3u_2u_3x^{-1}.
\end{align*}
\begin{align*}
&bu_3u_2u_1b=(bu_3u_2b^{-1})(bu_1b)=(x^3\Delta_4u_3^{-1}u_2^{-1}u_1^{-1}\underline{u_2^{-1}x^{-1}})(z^2\underline{x^2u_1^{-1}})\\
&\stackrel{(i,ii)}{=}x^3\Delta_4u_3^{-1}u_2^{-1}u_1^{-1}x^{-1}\underline{u_3u_1u_2u_3u_1u_2}x^2=
x^3\Delta_4u_3^{-1}u_2^{-1}u_1^{-1}x^{-1}\Delta_4x^2\\
&\stackrel{E1,B5}{=}x^3\Delta^2_4(u_3u_2u_1)^{-1}x^3\stackrel{B3}{=}x^3(u_3u_2u_1)^{-1}x^3
\end{align*}
\[(x^{-4}br_4)^2=x^{-3}\underline{bu_3u_2u_1b}x^{-3}u_3u_2u_1\stackrel{(G2)}{=}(u_3u_2u_1)^{-1}u_3u_2u_1=
1\]
The relation (G3a) follows from (G3) and (E6).
\end{proof}
\section{The base cases.}\label{sec_base}
In this section we deduce the main theorems for
$(g,n)\in\{(3,1),(4,0)\}$ from the presentations of $\M(N_{g,n})$ obtained in \cite{Szep_Osaka,Szep1}. 
Theorem \ref{mainA} for $g=3$ follows from the following.
\begin{theorem}\label{g3n1}
The map $\varphi_{3,1}\colon\G_{3,1}\to\M(N_{3,1})$ is an isomorphism.
\end{theorem}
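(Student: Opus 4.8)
The plan is to produce an explicit finite presentation of $\M(N_{3,1})$ — either by consulting the presentation already computed in \cite{Szep_Osaka} for small genus, or by combining the exact sequence (\ref{Cup_es}) with known presentations — and then to compare it with the presentation of $\G_{3,1}$ obtained by specialising Theorems \ref{presS} and \ref{pres_braids} and the relations (C1--C8) to $g=3$, $\rho=1$, $r=1$. For $g=3$ the orientable subsurface is $S_{1,1}$, so by Theorem \ref{presS} the factor $\M(S_{1,1})=\M(S_{\rho,r})$ is free on $a_1,a_2$ with the single braid relation $a_1a_2a_1=a_2a_1a_2$ (relations A3--A9 are vacuous, and $b=b_1=\gamma_{\{1,2,3,4\}}$ does not exist), while $\M(S_{0,1},\mathcal{P}_3)$ is the $3$-strand braid group on $u_1,u_2$ with $u_1u_2u_1=u_2u_1u_2$. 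The cross-relations reduce to (C1) (which is empty since it would require $i\ge 3$), (C2) $a_1u_2u_1=u_2u_1a_2$, (C3) $a_2u_1u_2=u_1u_2a_1$, (C4) $a_1u_1a_1=u_1$, (C5) $u_2a_1a_2u_1=a_1a_2$; relations (C6), (C7), (C8) are all vacuous as $g<4$. So $\G_{3,1}$ has generators $a_1,a_2,u_1,u_2$ and the listed relations only.

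First I would recall (or reconstruct) from \cite{Szep_Osaka} an explicit presentation $\M(N_{3,1})=\langle S\,|\,R\rangle$ in terms of a generating set that is easy to match with $\{a_1,a_2,u_1,u_2\}$; the natural choice is a Dehn-twist/crosscap-slide generating set as in \cite{Stu_bdr}, or the generators $a_1,a_2,y_2,u_1,u_2$ used implicitly above (note $y_i=T_{\alpha_i}U_{\mu_{i+1},\alpha_i}$, so $y_i$ and $u_i$ are interchangeable once $a_i$ is present). Then I would show the homomorphism $\varphi_{3,1}$ is surjective — this is essentially Lickorish/Chillingworth-type generation, or follows directly because $\jmath_\ast$ hits all the $a_i$ and $\mathfrak{b}$ hits all the $u_i$, and these generate $\M(N_{3,1})$ (cf.\ Lemma \ref{shortcut_AB} and the discussion of generators of $\M(N_{g,n})$). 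Surjectivity being cheap, the content is injectivity: I must verify that every defining relation of $\M(N_{3,1})$ (in the chosen presentation) is a consequence of the braid relations in the two free factors together with (C1--C5). This is a finite check: rewrite each relator of $\M(N_{3,1})$ as a word in $a_1,a_2,u_1,u_2$ and reduce it to the identity using only (A2), (B2), (C1a)--(C5a) from Lemma \ref{useful_C} and their formal consequences (e.g.\ (E1), (E2) specialised to $g=3$, which give $r_3^2=\Delta_3^2$ and the relations governing the hyperelliptic-type element).

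The key steps, in order: (i) fix the matching of generators between the \cite{Szep_Osaka} presentation and $\{a_1,a_2,u_1,u_2\}$, expressing the crosscap slide $y_2$ as $a_2u_2$ (or $u_2a_2^{-1}$, as appropriate) and checking the relations $y_2^2=1$-type identities follow from (C4a) and $(*)$: $u_ia_iu_i=a_i^{-1}$; (ii) verify surjectivity of $\varphi_{3,1}$; (iii) translate each relator of $\M(N_{3,1})$ into a word in $\G_{3,1}$'s generators and kill it using (A2), (B2), (C1a--C5a) and the derived relations (E1--E4) at $g=3$; (iv) conclude that $\varphi_{3,1}$ has an inverse, hence is an isomorphism. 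I expect step (iii) — matching the particular relators of the genus-$3$ presentation from \cite{Szep_Osaka} (there may be a relation of the form $(a_1a_2)^6=\cdots$ or a lantern/chain-type relation, plus relations involving the crosscap slide) — to be the main obstacle, since it requires knowing exactly which relators appear there and then carrying out the word reductions; the braid relations and (C2--C5) are precisely designed to make these go through, but some of them (e.g.\ any relation encoding $r_3^2 = \Delta_3^2$) need the identities of Lemma \ref{useful_T} specialised to $g=3$, so I would prove those specialisations first and quote them.
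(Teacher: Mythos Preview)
Your plan is correct and is exactly the route the paper takes: quote the known presentation of $\M(N_{3,1})$ from \cite[Theorem~7.16]{Szep_Osaka}, define a candidate inverse $\psi\colon\M(N_{3,1})\to\G_{3,1}$ on generators, and verify the defining relations by word manipulations using (A2), (B2), (C2)--(C5). The one piece you are missing is the specific generating set in that presentation: it uses $a_1,a_2,u_2$ together with an extra Dehn twist $d=T_{a_1^{-1}u_2(\alpha_1)}$ (rather than $u_1$ or $y_2$), subject to seven relations, among them $a_2d=da_2$, $da_1d=a_1da_1$, $(du_2)^2=(u_2d)^2=(a_2d^2a_1)^3$. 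The key step is to set $\psi(d)=a_1^{-1}u_2a_1^{-1}u_2^{-1}a_1$ and rewrite this, via (C3), (C4), (C5), as $(a_1a_2a_1)^{-1}u_2u_1$; after that the relation checks are short computations of the kind you describe, and both groups are visibly generated by $a_1,a_2,u_2$ (since $u_1$ is redundant by (C5) and $d$ by one of the seven relations). Note that no $(a_1a_2)^6$-type relation appears for $N_{3,1}$ --- the boundary twist is nontrivial --- so your speculated chain relation is not among the relators to be checked.
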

\begin{proof}
By \cite[Theorem 7.16]{Szep_Osaka} $\M(N_{3,1})$ admits a presentation  with generators
$a_1$, $a_2$, $u_2$, $d$ (called respectively $B$, $A_1$, $U$, $A_2$ in \cite{Szep_Osaka}),
where $d$ is a Dehn twist about the curve $a_1^{-1}u_2(\alpha_1)$.
The defining relations are 
\begin{align*}
&(i)\ a_2d=da_2\qquad (ii)\ a_2a_1a_2=a_1a_2a_1\qquad (iii)\ da_1d=a_1da_1\qquad
(iv)\ u_2a_2u_2^{-1}=a_2^{-1}\\
&(v)\  u_2a_1u_2^{-1}=a_1d^{-1}a_1^{-1}\qquad 
(vi)\ (du_2)^2=(u_2d)^2\qquad (vii)\  (du_2)^2=(a_2d^2a_1)^3.
\end{align*}
We define $\psi\colon\M(N_{3,1})\to\G_{3,1}$ on the generators as
$\psi(a_1)=a_1$, $\psi(a_2)=a_2$, $\psi(u_2)=u_2$, $\psi(d)=a_1^{-1}u_2a_1^{-1}u_2^{-1}a_1$. To prove that $\psi$ is a homomorphism we have to show that it respects the relations (i--vii). This is obvious for (v) and (ii, iv) are (A2,C4a). 
\begin{align*}
\psi(d)&=a_1^{-1}\underline{u_2a_1^{-1}u_2^{-1}}a_1\stackrel{C3}{=}a_1^{-1}u_1^{-1}a_2^{-1}\underline{u_1a_1}
\stackrel{C4}{=}a_1^{-1}\underline{u_1^{-1}a_2^{-1}a_1^{-1}}u_1\\
&\stackrel{C5}{=}a_1^{-1}a_2^{-1}a_1^{-1}u_2u_1=(a_1a_2a_1)^{-1}u_2u_1\\
\psi(a_2)\psi(d)&=a_2(a_1a_2a_1)^{-1}u_2u_1\stackrel{A2}{=}(a_1a_2a_1)^{-1}a_1u_2u_1
\stackrel{C2}{=}(a_1a_2a_1)^{-1}u_2u_1a_2=\psi(d)\psi(a_2)
\end{align*}
The relation $\psi(d)\psi(a_1)\psi(d)=\psi(a_1)\psi(d)\psi(a_1)$ is equivalent to
\begin{align*}
&(a_1a_2a_1)^{-1}u_2u_1a_1(a_1a_2a_1)^{-1}u_2u_1=a_1(a_1a_2a_1)^{-1}u_2u_1a_1\iff\\
&u_2\underline{u_1a_2^{-1}a_1^{-1}u_2}u_1=a_2u_2u_1a_1\stackrel{C5}{\iff}
u_2a_2^{-1}a_1^{-1}u_1=a_2u_2u_1a_1
\end{align*}
The last relation follows easily from (C4a).
\begin{align*}
(\psi(d)\psi(u_2))^2&=(a_1a_2a_1)^{-1}u_2u_1u_2(a_1a_2a_1)^{-1}u_2u_1u_2\\
&=(a_1a_2a_1)^{-1}\Delta_3(a_1a_2a_1)^{-1}\Delta_3\stackrel{E1}{=}\Delta_3^2\\
(\psi(u_2)\psi(d))^2&=u_2(\psi(d)\psi(u_2))^2u_2^{-1}=u_2\Delta^2_3u_2^{-1}=\Delta^2_3=(\psi(d)\psi(u_2))^2
\end{align*}
\begin{align*}
&(\psi(a_2)\psi(d)^2\psi(a_1))^3=(a_2(a_1a_2a_1)^{-1}u_2u_1(a_1a_2a_1)^{-1}u_2u_1a_1)^3\\
&\stackrel{A2}{=}(a_2(a_2a_1a_2)^{-1}u_2u_1(a_2a_1a_2)^{-1}u_2u_1a_1)^3
=a_1^{-1}(a_2^{-1}u_2\underline{u_1a_2^{-1}a_1^{-1}}a_2^{-1}u_2u_1)^3a_1\\
&\stackrel{C5}{=}a_1^{-1}(\underline{a_2^{-1}u_2a_2^{-1}}a_1^{-1}\underline{u_2^{-1}a_2^{-1}u_2}u_1)^3a_1
\stackrel{C4a}{=}a_1^{-1}(u_2a_1^{-1}a_2u_1)^3a_1\\
&=a_1^{-1}(u_2a_1^{-1}\underline{a_2u_1u_2}a_1^{-1}\underline{a_2u_1u_2}a_1^{-1}a_2u_1)a_1
=a_1^{-1}(u_2a_1^{-1}u_1u_2u_1u_2a_2u_1)a_1\\
&=a_1^{-1}(u_2a_1^{-1}\Delta_3u_2a_2u_1)a_1\stackrel{B5,E1}{=}
a_1^{-1}(\Delta_3u_1\underline{a_2u_2a_2}u_1)a_1\\
&\stackrel{C4a}{=}a_1^{-1}(\Delta_3u_1u_2u_1)a_1=a_1^{-1}\Delta_3^2a_1\stackrel{E1}{=}\Delta_3^2=(\psi(d)\psi(u_2))^2
\end{align*}
Thus $\psi$ is a homomorphism. Observe that $\G_{3,1}$ and $\M(N_{3,1})$ are generated by $a_1$, $a_2$ and $u_2$. Indeed, the generator $u_1$ of $\G_{3,1}$ is redundant because of (C5) and the generator $d$ of $\M(N_{3,1})$ is redundant because of (v). Since for $x\in\{a_1, a_2, u_2\}$ we have
$\psi(\varphi_{3,1}(x))=x=\varphi_{3,1}(\psi(x))$ thus $\psi$ is the inverse of $\varphi_{3,1}$.
\end{proof}
\begin{cor}\label{genus3_shortcut}
Suppose that $g\ge 4$, $i\in\{1,\dots,g-2\}$ is fixed  and $w\in\G_{g,n}$ is represented by a word in the generators $a_i$, $u_i$, 
$a_{i+1}$, $u_{i+1}$ and their inverses. Then $\varphi_{g,n}(w)=1$ if and only if $w=1$.
\end{cor}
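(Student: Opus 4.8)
The plan is to reduce the assertion to the already settled case \(g=3\) by ``implanting'' a copy of \(\G_{3,1}\cong\M(N_{3,1})\) at the three crosscaps numbered \(i,i+1,i+2\). First I would construct a homomorphism \(\theta\colon\G_{3,1}\to\G_{g,n}\) sending \(a_1\mapsto a_i\), \(a_2\mapsto a_{i+1}\), \(u_1\mapsto u_i\), \(u_2\mapsto u_{i+1}\). For \(g=3\) every defining relation of \(\G_{3,1}\) is vacuous except \(a_1a_2a_1=a_2a_1a_2\), \(u_1u_2u_1=u_2u_1u_2\), the relations (C2) and (C3) for \(i=1\), (C4), and (C5) (together with \(b_0=a_1\) from (A7)); so it suffices to check that \(\theta\) carries these to, respectively, (A2) and (B2) at index \(i\), (C2) and (C3) at index \(i\), (C4a) at index \(i\), and (C5a) at index \(i\). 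All of these hold in \(\G_{g,n}\) by Theorems \ref{presS} and \ref{pres_braids} and by Lemma \ref{useful_C}, since \(1\le i\le g-2\). Hence \(\theta\) exists, and its image is exactly the subgroup generated by \(a_i,u_i,a_{i+1},u_{i+1}\), which contains \(w\).

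Next I would realise \(\theta\) geometrically. In the disc-with-crosscaps model of Subsection \ref{notation}, let \(\Sigma'\subset N_{g,n}\) be a disc with three crosscaps containing exactly the crosscaps numbered \(i,i+1,i+2\); then \(\Sigma'\cong N_{3,1}\), and under this identification the curves \(\alpha_i,\alpha_{i+1},\mu_{i+1},\mu_{i+2}\) become \(\alpha_1,\alpha_2,\mu_2,\mu_3\) of \(N_{3,1}\). Writing \(\iota_\ast\colon\M(N_{3,1})\to\M(N_{g,n})\) for the inclusion-induced map, comparison on the generators \(a_1,a_2,u_1,u_2\) gives \(\varphi_{g,n}\circ\theta=\iota_\ast\circ\varphi_{3,1}\). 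The complement of the interior of \(\Sigma'\) in \(N_{g,n}\) is \(N_{g-3,1}\) if \(n=0\) and \(N_{g-3,2}\) if \(n=1\); since \(g\ge 4\) this is a Möbius band exactly when \((g,n)=(4,0)\), and otherwise a surface which is neither a disc, an annulus, nor a Möbius band. In either case \cite[Theorem 3.6]{Stu_geom} shows that \(\ker\iota_\ast\) is contained in the cyclic subgroup generated by the Dehn twist about \(\bdr{\Sigma'}\), which equals \(\varphi_{3,1}(\Delta_3^2)\). As \(\varphi_{3,1}\) is an isomorphism (Theorem \ref{g3n1}), this yields \(\ker(\varphi_{g,n}\circ\theta)\subseteq\langle\Delta_3^2\rangle\).

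With these ingredients the corollary follows by a short diagram chase. The implication \(w=1\Rightarrow\varphi_{g,n}(w)=1\) is trivial. For the converse, assume \(\varphi_{g,n}(w)=1\) and pick \(\widetilde w\in\G_{3,1}\) with \(\theta(\widetilde w)=w\); then \(\widetilde w\in\ker(\varphi_{g,n}\circ\theta)\), so \(\widetilde w=\Delta_3^{2k}\) for some \(k\in\Z\). By (B7) we have \(\Delta_3^2=(u_1u_2)^3\) in \(\G_{3,1}\), hence \(w=\theta(\widetilde w)=(u_iu_{i+1})^{3k}\) in \(\G_{g,n}\). This is a word in the \(u_j\), so it lies in the image of the free factor \(\M(S_{0,n},\mathcal{P}_g)\) in \(\G_{g,n}\), on which \(\varphi_{g,n}\) restricts to the blowup homomorphism \(\mathfrak{b}\); since \(\mathfrak{b}\) is injective (Proposition \ref{blowup_inj}) and \(\varphi_{g,n}(w)=1\), we conclude \(w=1\) in \(\G_{g,n}\).

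I expect the second step to be the one needing real care: one must arrange the embedding \(\Sigma'\hookrightarrow N_{g,n}\) so that all five named curves sit inside \(\Sigma'\) with the correct identification, and then extract \(\ker\iota_\ast\) from the subsurface injectivity theorem. The mild subtlety is that for \((g,n)=(4,0)\) the map \(\iota_\ast\) genuinely fails to be injective — the complementary Möbius band kills \(\varphi_{3,1}(\Delta_3^2)=T_{\bdr{\Sigma'}}\) — but this does no harm, because the final step absorbs the resulting ambiguity \(\widetilde w\in\langle\Delta_3^2\rangle\) into the braid word \((u_iu_{i+1})^{3k}\), which is then controlled entirely inside the free factor \(\M(S_{0,n},\mathcal{P}_g)\).
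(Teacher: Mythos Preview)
Your proof is correct and follows essentially the same route as the paper: embed a copy of $N_{3,1}$ as a disc containing the crosscaps $i,i+1,i+2$, use Theorem~\ref{g3n1} together with the subsurface result \cite[Theorem~3.6]{Stu_geom}, and read off the conclusion. The only tactical difference is in the handling of the exceptional case $(g,n)=(4,0)$: the paper builds the map in the reverse direction, $\psi\colon\imath_\ast(\M(K))\to\G_{g,n}$, and checks directly via relation~(B4) that $T_{\partial K}$ is sent to~$1$ in $\G_{4,0}$, whereas you keep the forward map $\theta$ and absorb the resulting $\Delta_3^{2k}$ ambiguity into a braid word controlled by Proposition~\ref{blowup_inj} (equivalently Lemma~\ref{shortcut_AB}); this amounts to the same thing.
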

\begin{proof} Consider a subsurface $K\subset N_{g,n}$ which is a disc with crosscaps $i, i+1, i+2$. Thus $K$ is a copy of $N_{3,1}$. By \cite[Theorem 3.6]{Stu_geom} the map $\imath_\ast\colon\M(K)\to\M(N_{g,n})$ induced by the inclusion of $K$ in $N_{g,n}$ is injective, except for the case $(g,n)=(4,0)$ where its kernel is generated by a Dehn twist about the boundary of $K$. As in the proof of Theorem \ref{g3n1}, there is  a homomorphism $\psi\colon\imath_\ast(\M(K))\to\G_{g,n}$ such that $\psi(x)=x$ for $x\in\{a_i,a_{i+1},u_i,u_{i+1}\}$. For $(g,n)=(4,0)$ we additionally have to verify that $\psi(T_{\bdr{K}})=1$, which is true by (B4), because either
$T_{\bdr{K}}=\Delta_2^2$ if $i=1$, or $T_{\bdr{K}}=\Delta_3\Delta_2^2\Delta_3$ if $i=2$.
Since $\psi(\varphi_{g,n}(w))=w$ the corollary is proved.
\end{proof}
Theorem \ref{mainB} for $g=4$ follows from the following.
\begin{theorem}\label{g4n0}
The map $\varphi_{4,0}\colon\G_{4,0}\to\M(N_{4,0})$ is an isomorphism.\end{theorem}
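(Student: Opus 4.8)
The plan is to mimic the proof of Theorem \ref{g3n1}: rather than manipulating presentations abstractly, I will exhibit an explicit homomorphism $\psi\colon\M(N_{4,0})\to\G_{4,0}$ and check that it is a two-sided inverse of $\varphi_{4,0}$. The input is the explicit finite presentation of $\M(N_{4,0})$ obtained in \cite{Szep1}; write $X$ and $R$ for its generators and defining relations. Under the conventions of Subsection \ref{notation} each element of $X$ is either a Dehn twist about one of the curves $\alpha_i,\beta,\gamma_I$ or a crosscap slide $Y_{\mu_j,\gamma}$, so, using identities of $\G_{4,0}$ such as $y_i=a_iu_i$ (a consequence of (C2)--(C3)) and the formulas (A7)--(A8) for the $b_j$, I can write down a word $\psi(x)$ in $a_1,a_2,a_3,b,u_1,u_2,u_3$ for every $x\in X$. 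This defines a homomorphism from the free group on $X$ to $\G_{4,0}$, and the core of the argument is to verify that each relation of $R$ maps to a consequence of the relations (A1--A9b), (B1--B4), (C1--C8), (D) of $\G_{4,0}$, so that $\psi$ descends to $\M(N_{4,0})$.

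I expect the relations of $R$ to split into three kinds, each manageable with the machinery set up above. First, relations supported in the orientable subsurface $\Sigma\cong S_{1,2}$, and relations among the $u_i$ alone, follow directly from Theorems \ref{presS} and \ref{pres_braids} together with Lemma \ref{useful_C}. Second, relations supported in a disc with three crosscaps follow from Theorem \ref{g3n1} via Corollary \ref{genus3_shortcut}. Third, there remain the relations that genuinely involve the closed genus-$4$ surface: these feature the finite-order classes coming from $\M(S_{0,0},\mathcal{P}_4)$ (relations (B3), (B4)), the reflection $r_4=a_1a_2a_3u_3u_2u_1$, and the crosscap slide $v=Y_{\mu_4,\beta}$ interacting with $b$. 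For these I will use the identities prepared in Lemmas \ref{useful_T}, \ref{useful_T_closed} and \ref{useful_g4}: (E2a) and (E3a)--(E6) dispose of the torsion relations and the $r_4$-relations, while (G1), (G2), (G3) and, most importantly, $(br_4)^2=1$ from (G3a) are exactly what is needed to absorb the relations of \cite{Szep1} that tie $b$ to the $u_i$ and to $r_4$.

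Once $\psi$ is known to be a well-defined homomorphism, I would finish as in Theorem \ref{g3n1}. Both $\G_{4,0}$ and $\M(N_{4,0})$ are generated by the images of $a_1,a_2,a_3,b,u_1,u_2,u_3$ (and $u_1$ is in fact redundant in $\G_{4,0}$ by (C5)), so it suffices to evaluate $\psi\circ\varphi_{4,0}$ and $\varphi_{4,0}\circ\psi$ on this finite set and check that both are the identity; Lemma \ref{shortcut_AB} helps here by reducing some of these equalities to identities inside $\M(S_{1,2})$ or $\M(S_{0,0},\mathcal{P}_4)$, where they are immediate.

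The hard part will be the third family of relations. Unlike the genus-$3$ case of Theorem \ref{g3n1}, the presentation of $\M(N_{4,0})$ records the finite-order mapping classes and the reflection $r_4$ peculiar to a closed surface, and recognising these as consequences of (C1--C8), (D) is delicate; pinning down that the particular relation of \cite{Szep1} describing the crosscap slide is implied by (G1)--(G3a) is the crux. A lesser, bookkeeping-type difficulty is simply matching the generating set and relations of \cite{Szep1}, phrased in a different geometric model and with different names, to the generators $a_i,b,u_i$ of $\G_{4,0}$.
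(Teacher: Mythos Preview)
Your approach is essentially the same as the paper's: define $\psi$ on the generators of the presentation from \cite{Szep1}, verify that $\psi$ respects each defining relation using the prepared lemmas, and conclude that $\psi$ is the inverse of $\varphi_{4,0}$. The paper's proof is organised just as you anticipate: many relations already appear verbatim among (A1--A3, B1, C1, C4, C5a, E2a, E3a, E4, E6, G3a) and hence hold in $\G_{4,0}$ by Lemmas \ref{useful_C}, \ref{useful_T}, \ref{useful_T_closed}, \ref{useful_g4}; a block of relations involving an auxiliary Dehn twist $d=T_{a_2^{-1}u_3(\alpha_2)}$ (not the crosscap slide $v$) is rewritten in $a_2,a_3,u_2,u_3$ and disposed of via Corollary \ref{genus3_shortcut}; one relation among the $u_i$ falls to Lemma \ref{shortcut_AB}; and the remaining closed-surface relations $(u_3b)^2=1$ and $u_3du_3^{-1}=u_1du_1^{-1}$ are handled by short direct calculations using (C6), (E5), (E6). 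So your third family is slightly smaller than you feared, and (G1)--(G2) are not actually needed here.
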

\begin{proof}
By \cite[Theorem 2.1]{Szep1} $\M(N_{4,0})$ admits the presentation with generators 
$a_i,u_i,$ for $i=1,2,3$, $b$, $r_4$, $d$ (the last two generators are denoted respectively as $t$ and  $a_4$ in \cite{Szep1}), where $d$ is a Dehn twist about the curve $a_2^{-1}u_3(\alpha_2)$.  
The defining relations are (A1--A3, B1, C1, C4, C5a, E2a, E3a, E4, E6, G3a) 
and
\begin{align*}
&(i)\ r_4=a_1a_2a_3u_3u_2u_1\qquad (ii)\  u_3a_2u_3^{-1}=a_2d^{-1}a_2^{-1}\qquad
(iii)\ u_1^2=u_3^2\\
&(iv)\ (u_3b)^2=1\qquad
(v)\ (u_3d)^2=1\qquad
(vi)\ da_3=a_3d\\
&(vii)\ da_2d=a_2da_2\qquad
(viii)\ (da_2a_3)^4=1\qquad
(ix)\ u_3du_3^{-1}=u_1du_1^{-1}.
\end{align*}
We define $\psi\colon\M(N_{4,0})\to\G_{4,0}$ on the generators as
$\psi(a_i)=a_i$, $\psi(u_i)=u_i$ for $i=1,2,3$, $\psi(b)=b$,
$\psi(r_4)=a_1a_2a_3u_3u_2u_1$ and $\psi(d)=a_2^{-1}u_3a_2^{-1}u_3^{-1}a_2$.
To show that $\psi$ is a homomorphism we have to show that the relations (iii--ix) are satisfied in $\G_{4,0}$.
By Lemma \ref{shortcut_AB} the relation (iii) is satisfied in $\G_{4,0}$. 
The relations (v,vi,vii,viii) can be rewritten using (ii) in the generators $a_2, u_2, a_3, u_3$ and so they hold in $\mathcal{G}_{4,0}$ by Corollary \ref{genus3_shortcut}. We have
\[(u_3b)^2\stackrel{(C6)}{=}(a_1a_2a_3)^2(u_1u_2u_3)^2\stackrel{(E5)}{=}(a_1a_2a_3)^4\stackrel{(E6)}{=}1\]
As in the proof of Theorem \ref{g3n1} we have 
$\psi(d)=(a_2a_3a_2)^{-1}u_3u_2$ and 
$\psi(u_3)\psi(d)\psi(u_3)^{-1}=\psi(u_1)\psi(d)\psi(u_1)^{-1}$ is equivalent to
\begin{align*}
&u_3(a_2a_3a_2)^{-1}u_3u_2u_1=u_1\underline{(a_2a_3a_2)^{-1}u_3u_2u_3}\stackrel{(C2,C3,C4a)}{\iff}\\
&u_3(a_2a_3a_2)^{-1}u_3u_2u_1=u_1u_3u_2u_3a_2a_3a_2\stackrel{(B1)}{\iff}\\
&(a_2a_3a_2)^{-1}\underline{u_3u_2u_1(a_2a_3a_2)^{-1}}=u_1u_2u_3\stackrel{(C1a,C2)}{\iff}\\
&\underline{(a_2a_3a_2)^{-1}(a_1a_2a_1)^{-1}}u_3u_2u_1=u_1u_2u_3\stackrel{(A1,A2)}{\iff}\\
&(a_1a_2a_3)^{-2}=u_1u_2u_3(u_3u_2u_1)^{-1}\stackrel{(E5)}{\iff}\\
&(u_3u_2u_1)^2=u_1u_2u_3(u_3u_2u_1)^{-1}
\end{align*}
The last relation is satisfied in $\G_{4,0}$ by Lemma \ref{shortcut_AB}. Since $\varphi_{4,0}\circ\psi=id$, hence $\psi$ is injective, and since $\G_{4,0}$ is generated by $a_i, u_i$ and $b$, it is also surjective. It follows that $\varphi_{4,0}$ is an isomorphism.
\end{proof}
\section{Curve complexes}\label{sec_curves}
\subsection{Definitions and simple connectedness.}
Let $N=N_{g,n}$.
Suppose that $C=(\gamma_1,\dots,\gamma_m)$ is an $m$-tuple of generic curves on $N$. We say that $C$ is a {\it generic $m$-tuple of disjoint curves} if for
$i\ne j$
\begin{itemize}
\item $\gamma_i$ is disjoint from $\gamma_j$, and
\item $\gamma_i$ is neither isotopic to $\gamma_j$ nor to $\gamma^{-1}_j$. 
\end{itemize}
We denote by $N_C$ the compact surface obtained by cutting $N$ along $C$.
If $C'=(\gamma'_1,\dots,\gamma'_m)$ then we say that $C$ and $C'$ are {\it equivalent} if $\gamma_i$ is isotopic to ${\gamma'}^{\pm 1}_i$ for
$i=1,\dots,m$, and {\it equivalent up to permutation} if
$\gamma_i$ is isotopic to ${\gamma'}_{\tau(i)}^{\pm 1}$ for $i=1,\dots,m$ and for some permutation $\tau\in\Sym_m$. We denote by
$[C]=[\gamma_1,\dots,\gamma_m]$ the equivalence class of $C$, and by
$\lr{C}=\lr{\gamma_1,\dots,\gamma_m}$ its equivalence class up permutation.

The {\it complex of curves} $\C(N)$ is a simplicial complex whose $m$-simplices are the equivalence classes up to permutation of generic $(m+1)$-tuples of disjoint curves on $N$. We are going to use its two full subcomplexes: $\C_0(N)$ is the subcomplex of $\C(N)$ consisting of simplices $\lr{C}$ such that $N_C$ is connected; $\D(N)$ is the subcomplex of $\C_0(N)$ consisting of simplices $\lr{C}$ such that $N_C$ is nonorientable.

The {\it ordered complex of curves} $\C^{ord}(N)$ is a $\Delta$-complex
(in the sense of \cite{Hat}, Chapter 2) whose $m$-simplices are the
equivalence classes of generic $(m+1)$-tuples of disjoint curves
on $N$. If $[\gamma_1,\dots,\gamma_{m+1}]$ is an $m$-simplex then its
faces are the $(m-1)$-simplices
$[\gamma_1,\dots,\widehat{\gamma_i},\dots,\gamma_{m+1}]$ for
$i=1,\dots,m+1$, where $\widehat{\gamma_i}$ means that $\gamma_i$ is
deleted. We define the subcomplexes $\C^{ord}_0(N)$ and $\D^{ord}(N)$ as the ordered versions of $\C_0(N)$ and $\D(N)$. 

The complex of curves was introduced by Harvey \cite{Harvey} and the ordered complex of curves by Benvenuti \cite{Benv}.
The following theorem was proved for the unordered complexes in \cite[Theorems 5.4, 5.5]{Wahl}. To obtain the result for their ordered versions, the proof of \cite[Proposition 8]{Benv} can be applied.
\begin{theorem}\label{simplyc_wahl}
The complexes $\C_0(N_{g,n})$ and $\C_0^{ord}(N_{g,n})$ are simply connected for $g\ge 5$; $\D(N_{g,n})$ and $\D^{ord}(N_{g,n})$ are simply connected for $g\ge 7$.\hfill{$\Box$} 
\end{theorem}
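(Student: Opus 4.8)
The plan is to treat the unordered complexes $\C_0(N_{g,n})$, $\D(N_{g,n})$ and their ordered refinements separately. For the unordered ones there is essentially nothing to do: in the stated ranges $g\ge 5$ and $g\ge 7$ these complexes are (at least) $1$-connected by \cite[Theorems 5.4, 5.5]{Wahl}, and a $1$-connected simplicial complex is by definition path-connected and simply connected. Wahl's connectivity estimates grow with $g$, so only the low-dimensional part of his conclusion is needed and the bounds $5$ and $7$ sit comfortably inside his ranges.

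For the ordered complexes I would adapt the argument of \cite[Proposition 8]{Benv}. Put $N=N_{g,n}$ and let $p\colon\C^{ord}(N)\to\C(N)$ be the forgetful map collapsing the ordering of each simplex, $[\gamma_1,\dots,\gamma_{m+1}]\mapsto\lr{\gamma_1,\dots,\gamma_{m+1}}$; it is the identity on vertices, surjective on simplices, and restricts to $\C^{ord}_0(N)\to\C_0(N)$ and to $\D^{ord}(N)\to\D(N)$. Path-connectedness of the ordered complexes is immediate: join two vertices by an edge-path downstairs and lift it one edge at a time, every edge of $\C_0(N)$ (resp. $\D(N)$) admitting an ordered lift. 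For $\pi_1$, fix once and for all a total order on the (countable) set of vertices; then every pair of adjacent vertices has a preferred ordered edge and every $2$-simplex $\lr{v,w,x}$ a preferred ordered lift $[a,b,c]$ (with $a<b<c$ its sorted entries), and the three faces of $[a,b,c]$ are precisely the preferred ordered edges $[b,c]$, $[a,c]$, $[a,b]$, so these preferred lifts agree along shared faces. Given an edge-loop $\ell$ in $\C^{ord}_0(N)$, the plan is: first homotope $\ell$ so that every ordered edge it traverses is the preferred one; then, since $\C_0(N)$ is simply connected, $p(\ell)$ bounds a simplicial disk $D$ in $\C_0(N)$, and replacing each simplex of $D$ by its preferred ordered lift lifts the disk simplexwise to $\C^{ord}_0(N)$ with boundary $\ell$. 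Repeating the argument inside $\D^{ord}(N)$, using that $\D(N)$ is simply connected for $g\ge 7$ (established above), disposes of that complex.

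The step I expect to be the main obstacle is the first homotopy of $\ell$: turning a traversal along one ordered edge over $\lr{v,w}$ into the preferred one amounts to contracting the \emph{bigon} formed by the two ordered edges $[v,w]$ and $[w,v]$, which share both endpoints but do not cobound any single ordered $2$-simplex---this is precisely why $p$ is not a homotopy equivalence. To kill such a bigon I would bring in a third curve: choose a generic $x$ disjoint from $v$ and $w$ with $\lr{v,w,x}$ a $2$-simplex of $\C_0(N)$ (respectively of $\D(N)$). Every ordering of a subset of $\{v,w,x\}$ is then a simplex of $\C^{ord}_0(N)$ (respectively $\D^{ord}(N)$)---a sub-tuple of a generic disjoint tuple with connected (resp. nonorientable) complement again has connected (resp. nonorientable) complement---and the finite subcomplex these orderings span, the complex of injective words on three letters with $3$ vertices, $6$ edges and $6$ triangles, is simply connected by direct inspection, so the bigon bounds inside it. The existence of such an $x$ is where the hypotheses enter: cutting $N_{g,n}$ along $v\cup w$ leaves a connected surface which, once $g\ge 5$, still has enough complexity to contain a further generic curve keeping the complement connected, and for $g\ge 7$ one arranges in addition that the complement remains nonorientable. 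Verifying these numerical inequalities in all of the cases $n\in\{0,1\}$ and for one- and two-sided $v,w$, together with organizing the homotopies above carefully, is the real content; the rest is bookkeeping.
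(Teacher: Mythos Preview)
Your proposal is correct and follows exactly the paper's approach: cite \cite[Theorems 5.4, 5.5]{Wahl} for the unordered complexes and then apply the argument of \cite[Proposition 8]{Benv} to pass to the ordered versions. The paper in fact gives no more than this two-line citation, whereas you have spelled out the Benvenuti-style lifting argument (preferred orderings, killing bigons via a third disjoint curve and the injective-words complex on three letters); that extra detail is sound, the face-closure observation for $\C_0$ and $\D$ is correct, and the genus bounds are precisely what guarantees the needed third curve. One small non-mathematical slip: Wahl is ``her'', not ``his''.
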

The mapping class group $\M(N)$ acts on the set of  isotopy classes of generic curves on $N$, and thus it also acts on the complexes $\C(N)$, $\C_0(N)$, $\D(N)$ and their ordered versions by permuting their simplices. We say that two simplices $\sigma_1$, $\sigma_2$ are {\it $\M(N)$-equivalent} if $\sigma_2=h\sigma_1$ for some $h\in\M(N)$. Observe that $\C^{ord}(N)$ has a natural orientation (the vertices of every simplex are ordered) preserved by $\M(N)$. In particular, $\M(N)$ acts on the $1$-simplices of $\C^{ord}(N)$ without inversion, which simplifies the statement of Brown's theorem below. This is the only, purely technical, reason for considering $\C^{ord}(N)$ instead of $\C(N)$. 
\subsection{The structure of a stabiliser.}\label{struct_stab}
Let $N=N_{g,n}$. 
Suppose that $C=(\gamma_1,\dots,\gamma_m)$ is a  generic $m$-tuple of disjoint curves on $N$ such that $N_C$ is connected. Suppose that $\gamma_i$ are two-sided for $i\le r$ and one-sided for $i>r$.

Let $\Stab[C]=\Stab_{\M(N)}[C]$ denote the stabiliser of the simplex $[C]$  with respect to the action of $\M(N)$ on $C_0^{ord}(N)$. This is the subgroup of $\M(N)$ consisting of the  isotopy classes of homeomorphisms fixing each curve $\gamma_i$. We define
$\Stab^+[C]=\Stab^+_{\M(N)}[C]$ to be the subgroup of $\Stab[C]$ consisting of the isotopy classes of homeomorphisms fixing each curve $\gamma_i$, preserving its orientation, and preserving its sides if $\gamma_i$ is two-sided. We have an exact sequence
\begin{equation}\label{stab_es}
1\to\Stab^+[C]\to\Stab[C]\stackrel{\eta}{\to}\mathbb{Z}_2^{m+r},
\end{equation}
where $\eta(h)$ is the vector $(e_i)_{i=1}^{m+r}$ defined as follows
\begin{itemize}
\item for $i=1,\dots,m$, $e_i=0$ if $h$  preserves orientation of $\gamma_i$ and $e_i=1$ otherwise,
\item for $j=1,\dots,r$, $e_{m+j}=0$ if $h$  preserves  sides of $\gamma_j$ and $e_{r+j}=1$ otherwise.
\end{itemize}
\begin{rem}\label{eta_onto}
The map $\eta$ is not surjective in general and its image depends on $C$. 
For example, if $N_C$ is orientable and $m>1$ then $\eta$ is not onto.
Indeed, suppose that $h\in\Stab[C]$ preserves sides of $\gamma_j$ for 
$j=1,\dots, r$. Then since $N_C$ is orientable, $h$ either preserves orientation of each $\gamma_i$ or reverses orientation of each $\gamma_i$.
On the other hand, we leave it as an exercise for the reader to check that if $N_C$ is nonorientable then $\eta$ is surjective.
\end{rem}

The gluing map $N_C\to N$ induces a surjective homomorphism 
\[\rho_C\colon\M(N_C)\to\Stab^+[C].\] For $i=1,\dots,r$ let $\delta_i$, $\delta'_i$ be the boundary components of  a regular neighbourhood $A_i$ of $\gamma_i$. Note that if $T_{\delta_i}$, $T_{\delta'_i}$ are right Dehn twists with respect to some orientation of $A_i$, then $T_{\delta_i}^{-1}T_{\delta'_i}\in\ker\rho_C$. For $i=r+1,\dots,m$ let $\epsilon_i$ be the boundary curve of a regular neighbourhood $M_i$ of $\gamma_i$ and note that $T_{\epsilon_i}\in\ker\rho_C$. By \cite[Lemma 4.1]{Szep_Osaka} $\ker\rho_C$ is the free abelian group of rank $m$ generated by
$T_{\delta_i}^{-1}T_{\delta'_i}$ for $i=1,\dots,r$ and $T_{\epsilon_j}$ for $i=r+1,\dots,m$. Summarising, we have the following exact sequence
\begin{equation}\label{cut_es}
1\to\Z^m\to\M(N_C)\stackrel{\rho_C}{\to}\Stab^+[C]\to 1.
\end{equation}
Suppose that $N_C$ is nonorientable and $C$ consists entirely of one-sided curves ($r=0$).  Let $N'$ be the surface obtained by cutting $N$ along  $\gamma_i$ and gluing a disc with a puncture $P_i$ along the resulting boundary component for $i=1,\dots,m$. Note that $N$  may be seen as being obtained from $N'$ by blowing up the punctures $\mathcal{P}_m=\{P_1,\dots,P_m\}$, and we have the blowup homomorphism $\mathfrak{b}\colon\PM(N',\mathcal{P}_m)\to\M(N)$, whose image is contained in $\Stab[C]$.
\begin{lemma}\label{blow_stab}
$\mathfrak{b}\colon\PM(N',\mathcal{P}_m)\to\Stab_{\M(N)}[C]$ is an isomorphism.
\end{lemma}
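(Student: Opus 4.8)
\emph{Proof plan.}
Write $N=N_{g,n}$. Cutting $N$ along the one-sided curve $\gamma_i$ creates one new boundary component $\epsilon_i'$ of $N_C$; since all the $\gamma_i$ are one-sided, $N_C=N_{g-m,n+m}$ is connected and nonorientable, $N'$ is obtained from $N_C$ by gluing a once-punctured disc $D_i$ along $\epsilon_i'$ for $i=1,\dots,m$, and blowing up $\mathcal{P}_m$ turns each $D_i$ back into a M\"obius neighbourhood of $\gamma_i$ and recovers $N$ from $N'$. Since the image of $\mathfrak{b}$ is contained in $\Stab[C]$ and $\mathfrak{b}$ is injective by Proposition \ref{blowup_inj}, the whole content of the lemma is surjectivity of $\mathfrak{b}$, and the plan is to obtain it by reducing everything to the already established exact sequences (\ref{stab_es}), (\ref{cut_es}) and (\ref{Cup_es}). (The case $(g,n,m)=(2,0,1)$, in which $N_C=N_{1,1}$ has trivial mapping class group, is checked directly; in the applications $g$ is large.)

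First I would record a small geometric fact: for $h\in\PM(N',\mathcal{P}_m)$ the homeomorphism $\mathfrak{b}(h)$ reverses the orientation of $\gamma_i$ if and only if $h$ reverses the local orientation at $P_i$. This is immediate from the construction of $\mathfrak{b}$, since the identification $e_i(z)\sim e_i(-z)$ carries an orientation-preserving (resp.\ reversing) homeomorphism of $U$ fixing $0$ to an orientation-preserving (resp.\ reversing) homeomorphism of $\gamma_i$. Hence $\mathfrak{b}$ restricts to $\mathfrak{b}^+\colon\PM^+(N',\mathcal{P}_m)\to\Stab^+[C]$, and fits into a commutative diagram with short exact rows
\[
\begin{CD}
\PM^+(N',\mathcal{P}_m) @>>> \PM(N',\mathcal{P}_m) @>>> \mathbb{Z}_2^{m}\\
@VV{\mathfrak{b}^+}V @VV{\mathfrak{b}}V @|\\
\Stab^+[C] @>>> \Stab[C] @>\eta>> \mathbb{Z}_2^{m},
\end{CD}
\]
the top row being the extension defining $\PM^+$ (of index $2^m$ in $\PM$, the third map recording the local orientation behaviour at $P_1,\dots,P_m$) and the bottom row being exact by (\ref{stab_es}) together with Remark \ref{eta_onto} (here $r=0$, so $\mathbb{Z}_2^{m+r}=\mathbb{Z}_2^m$, and $N_C$ is nonorientable, so $\eta$ is onto). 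By the short five lemma it is then enough to prove that $\mathfrak{b}^+$ is an isomorphism.

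For that I would compare two descriptions of $\M(N_C)$ as an extension. On one hand, (\ref{cut_es}) with $r=0$ gives $1\to\Z^m\to\M(N_C)\xrightarrow{\rho_C}\Stab^+[C]\to1$ with $\ker\rho_C$ freely generated by $T_{\epsilon_1},\dots,T_{\epsilon_m}$, where $\epsilon_i=\bdr{M_i}$ for a M\"obius neighbourhood $M_i$ of $\gamma_i$. On the other hand, (\ref{Cup_es}) applied to $N_C=N_{g-m,n+m}$ and its boundary components $\epsilon_1',\dots,\epsilon_m'$ gives $1\to\Z^m\to\M(N_C)\xrightarrow{\imath_\ast}\PM^+(N',\mathcal{P}_m)\to1$ with $\ker\imath_\ast$ freely generated by $T_{\epsilon_1'},\dots,T_{\epsilon_m'}$. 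In $N_C$ the curves $\epsilon_i$ and $\epsilon_i'$ cobound an annulus (the two boundary circles of the piece into which $M_i$ is turned after cutting along $\gamma_i$), so $T_{\epsilon_i}=T_{\epsilon_i'}$ in $\M(N_C)$ and the two kernels coincide. Finally $\mathfrak{b}^+\circ\imath_\ast=\rho_C$: the homeomorphism $\imath_\ast(h)$ equals the identity on each capping disc $D_i$, hence $\mathfrak{b}(\imath_\ast(h))$ equals the identity on the resulting M\"obius neighbourhood of $\gamma_i$ and equals $h$ elsewhere, which is precisely $\rho_C(h)$. Since $\imath_\ast$ is onto, this yields $\mathfrak{b}^+(\PM^+(N',\mathcal{P}_m))=\rho_C(\M(N_C))=\Stab^+[C]$ and $\ker\mathfrak{b}^+=\imath_\ast(\ker\rho_C)=\imath_\ast(\ker\imath_\ast)=1$, so $\mathfrak{b}^+$, and hence $\mathfrak{b}$, is an isomorphism.

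The only substantive ingredients are the two geometric identifications used above — that $\mathfrak{b}$ matches local orientations at the punctures with orientations of the curves $\gamma_i$, and that $\mathfrak{b}^+\circ\imath_\ast=\rho_C$ — together with the equality $\ker\rho_C=\ker\imath_\ast$ inside $\M(N_C)$. I expect these to be routine once the definitions of the blowup homomorphism and of the gluing map are unwound, so the work here is bookkeeping rather than any real obstacle; the remainder is a diagram chase.
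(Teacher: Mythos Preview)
Your proof is correct. Both your argument and the paper's hinge on the factorisation $\rho_C=\mathfrak{b}\circ\imath_\ast$ to see that the image of $\mathfrak{b}$ contains $\Stab^+[C]$. The difference lies in how one climbs from $\Stab^+[C]$ to $\Stab[C]$: you do it by a five-lemma argument, matching the extension $\PM^+\hookrightarrow\PM\twoheadrightarrow\Z_2^m$ with $\Stab^+\hookrightarrow\Stab\twoheadrightarrow\Z_2^m$ (the latter surjection coming from Remark~\ref{eta_onto}), after verifying the geometric compatibility that $\mathfrak{b}$ carries local-orientation data at $P_i$ to orientation data on $\gamma_i$; the paper instead exhibits explicit preimages, using the crosscap pushing map to produce, for each $i$, a crosscap slide in the image of $\mathfrak{b}$ that reverses the orientation of $\gamma_i$ and fixes the other $\gamma_j$. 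Your route is a bit longer and requires the extra check that $\ker\rho_C=\ker\imath_\ast$ inside $\M(N_C)$, but it is purely structural and avoids naming any auxiliary elements; the paper's route is shorter and more hands-on. Either is entirely adequate here.
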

\begin{proof}
Since $\mathfrak{b}$ is injective by Proposition \ref{blowup_inj}, it suffices to show  that its image is equal to $\Stab_{\M(N)}[C]$.
It follows immediately from the definitions  that 
$\rho_{C}=\mathfrak{b}\circ\imath_\ast$, where $\imath_\ast\colon\M(N_{C})\to\M^+(N',\mathcal{P}_m)$ is the map induced by the inclusion of $N_{C}$ in $N'$. Thus the image of $\mathfrak{b}$  contains $\Stab^+[C]$. As $N'$ is nonorientable by assumption, $\pi_1(N'\backslash(\mathcal{P}_m\backslash\{P_i\}),P_i)$ contains a homotopy class of one-sided loops, whose image under the crosscap pushing map $\mathfrak{c}\colon\pi_1(N'\backslash(\mathcal{P}_m\backslash\{P_i\}),P_i)\to\M(N)$ is a crosscap slide reversing the orientation of $\gamma_i$ and equal to the identity on $\gamma_j$ for $j\ne i$ . It follows that the image of $\mathfrak{b}$ is equal to $\Stab{[C]}$.
\end{proof}
Let us identify $N_{g-1,n}$ with the surface obtained from $N'$ by blowing up $\mathcal{P}_{m-1}$, and
by abuse of notation, treat $C'=(\gamma_1,\dots,\gamma_{m-1})$ as a generic
$(m-1)$-tuple of disjoint curves on $N_{g-1,n}$. 
Consider the following commutative diagram
\[
\begin{CD}
\pi_1(N'\backslash\mathcal{P}_{m-1},P_m) @>\mathfrak{p}>> \PM(N',\mathcal{P}_{m}) @>\mathfrak{f}>> \PM(N',\mathcal{P}_{m-1})\\
 @| @VV\mathfrak{b}V @VV\mathfrak{b}V \\
\pi_1(N'\backslash\mathcal{P}_{m-1},P_m) @>\mathfrak{c}>> \Stab_{\M(N_{g,n})}[C] @>\zeta>> \Stab_{\M(N_{g-1,n})}[C'] 
\end{CD}\]
whose top row is a part of the Birman exact sequence (\ref{Bir_es}), $\mathfrak{c}$ is the crosscap pushing map and $\zeta=\mathfrak{b}\circ\mathfrak{f}\circ\mathfrak{b}^{-1}$. As the vertical maps are isomorphisms, exactness of (\ref{Bir_es}) implies exactness of the sequence
\begin{equation}\label{stab_bir_es}
1\to\pi_1(N'\backslash\mathcal{P}_{m-1},P_m)\stackrel{\mathfrak{c}}{\to}\Stab_{\M(N_{g,n})}[C]\stackrel{\zeta}{\to}\Stab_{\M(N_{g-1,n})}[C']\to 1. 
\end{equation}
\subsection{Orbits and a presentation of $\M(N)$.}\label{orbits}
For the rest of this section we fix $g\ge 5$ and $N=N_{g,0}$. Let
$\widetilde{X}$ denote $\D^{ord}(N)$ if $g\ge 7$ or $\C^{ord}_0(N)$ if $g\in\{5,6\}$. Let $X=\widetilde{X}/\M(N)$ and $p\colon\widetilde{X}\to X$ be the canonical projection. Observe that $X$ inherits from $\widetilde{X}$ the structure of a $\Delta$-complex. Let $\cS_m(X)$ (resp. $\cS_m(\widetilde{X})$) be the set of $m$-simplices of $X$ (resp. $\widetilde{X}$). 
The simplices of dimension $0$, $1$ and $2$ will be called {\it vertices}, {\it edges} and {\it triangles} respectively. Observe that the canonical projection $p\colon\widetilde{X}\to X$ induces a surjection $p\colon\cS_m(\widetilde{X})\to\cS_m(X)$.  In the present subsection we will determine a section to $p$ for $m=0,1,2$, that is a map
$s\colon \cS_m(X)\to\cS_m(\widetilde{X})$
such that $p\circ s=identity$. We also describe a presentation of $\M(N)$ obtained by applying Brown's theorem to the action of $\M(N)$ on $\widetilde{X}$.

For $C=(\gamma_1,\dots,\gamma_m)$ and $I\subseteq\{1,\dots,m\}$ let $C_I=(\gamma_i)_{i\in I}$.
The following proposition is a special case of \cite[Proposition 5.2]{Szep_Osaka}.
\begin{prop}\label{orbits_descr}
Two simplices  $[C]=[\gamma_1,\cdots,\gamma_m]$
and $[C']=[\gamma'_1,\cdots,\gamma'_m]$ of $\widetilde{X}$ are $\M(N)$-equivalent if and only if the following two conditions are satisfied.
\begin{itemize}
\item For every $i\in\{1,\dots,m\}$, $\gamma_i$ is one-sided if and only if $\gamma'_i$ is one-sided.
\item For every $I\subseteq\{1,\dots,m\}$, the surface $N_{C_I}$ is orientable if and only if $N_{C'_I}$ is orientable.\hfill{$\Box$}
\end{itemize}
\end{prop}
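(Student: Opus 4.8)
The plan is to treat the two directions of the equivalence separately. Necessity is routine: if $h\in\M(N)$ realizes $h\cdot[C]=[C']$, then after an ambient isotopy --- performed simultaneously for all the curves, which is possible since the $\gamma'_i$ are disjoint and pairwise non-isotopic --- I may assume $h(\gamma_i)=\gamma'_i$ for every $i$. Then $h$ carries a regular neighbourhood of $\gamma_i$ onto one of $\gamma'_i$, so $\gamma_i$ is one-sided exactly when $\gamma'_i$ is; and for each $I\subseteq\{1,\dots,m\}$ the map $h$ restricts to a homeomorphism $N_{C_I}\to N_{C'_I}$, so one of these surfaces is orientable iff the other is.

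The substantial direction is the converse, which is an instance of the \emph{change of coordinates principle} on $N$. The quickest route is to observe that the statement is exactly the case $n=0$, $\widetilde X\in\{\D^{ord}(N),\C^{ord}_0(N)\}$ of \cite[Proposition 5.2]{Szep_Osaka}, and to quote that. For a self-contained proof I would establish the following change-of-coordinates lemma by induction on $m$: if $C$ and $C'$ are generic $m$-tuples of disjoint curves on $N$ with $N_C$ and $N_{C'}$ connected and the two listed conditions hold, then some $h\in\M(N)$ satisfies $h(\gamma_i)=\gamma'_i$ for all $i$. Passing to cut surfaces, producing such an $h$ amounts to finding a homeomorphism $N_C\to N_{C'}$ that sends the boundary circle(s) created by $\gamma_i$ to the boundary circle(s) created by $\gamma'_i$ and conjugates the gluing data of $N_C$ (for each two-sided $\gamma_i$, an orientation-reversing identification of its two boundary circles; for each one-sided $\gamma_i$, a free involution of its single boundary circle) to that of $N_{C'}$.

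The base case $m=1$ is the classification of generic curves with connected complement: up to the $\M(N)$-action such a curve has exactly the type recorded by its sidedness together with the orientability of its complement. This follows from the classification of compact surfaces ($N_C$ and $N_{C'}$ have equal Euler characteristic $2-g$, equal number of boundary components once the sidedness pattern is fixed, and matching orientability by the hypothesis with $I=\{1,\dots,m\}$, hence are homeomorphic) together with the transitivity of the mapping class group of a compact surface on the relevant gluing maps (its boundary components may be permuted, and, using boundary Dehn twists, the identifications adjusted). For the inductive step I would delete $\gamma_m$ and $\gamma'_m$ (note $N_{C_{\{1,\dots,m-1\}}}$ is again connected since $N_C$ is), apply the inductive hypothesis to move $C_{\{1,\dots,m-1\}}$ onto $C'_{\{1,\dots,m-1\}}$, and then invoke the $m=1$ case inside the connected surface $N_{C'_{\{1,\dots,m-1\}}}$ --- the needed homeomorphism there being realized in $\Stab_{\M(N)}[C'_{\{1,\dots,m-1\}}]$ via the exact sequences (\ref{cut_es}) and (\ref{stab_bir_es}). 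Here the hypothesis ``$N_{C_I}$ orientable iff $N_{C'_I}$ orientable'' applied with $I=\{1,\dots,m-1\}$ and $I=\{1,\dots,m\}$ is exactly what guarantees that $\gamma_m$ and $\gamma'_m$ have matching sidedness and matching complement-orientability relative to the curves already put in place.

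The step I expect to be the main obstacle is this bookkeeping: one must check that the homeomorphisms produced at the successive stages can be chosen to respect \emph{all} the gluing identifications simultaneously, so that the composite homeomorphism of cut surfaces descends to $N$. The full family of subsets $I$ in the second hypothesis is precisely what controls, at every stage of the induction, the topological type of the not-yet-matched curve in the complement of the curves already handled; with less data one could move each $\gamma_i$ to $\gamma'_i$ individually but not necessarily all at once. Remaining inside $\widetilde X$ throughout needs no extra argument: connectedness of the cut surface (and, for $g\ge 7$, its non-orientability) is $\M(N)$-invariant and is already built into the hypotheses through $I=\{1,\dots,m\}$.
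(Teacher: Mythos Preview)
Your proposal is correct and matches the paper's approach exactly: the paper does not prove this proposition but simply cites it as a special case of \cite[Proposition 5.2]{Szep_Osaka}, which is precisely what you identify as ``the quickest route.'' Your additional self-contained inductive sketch via the change-of-coordinates principle is a reasonable outline of how such a result is proved, and goes beyond what the paper provides.
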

Note that the second condition is vacuous for $\widetilde{X}=\D^{ord}(N)$, that is if $g\ge 7$. As an immediate corollary we see that every vertex of $\widetilde{X}$ is $\M(N)$-equivalent to one of the following  (see Subsection \ref{notation} for definitions).
\begin{itemize}
\item $[\alpha_1]$ -- two-sided curve with a non-orientable complement, 
\item $[\mu_g]$ -- one-sided curve with a non-orientable complement,
\item $[\xi]$ -- curve with an orientable complement, one-sided for odd $g$ or two-sided for even $g$. 
\end{itemize}
We define
\begin{align*}
&v_1=p[\alpha_1],\ v_2=p[\mu_g],\ v_3=p[\xi],\qquad
s(v_1)=[\alpha_1],\ s(v_2)=[\mu_g],\ s(v_3)=[\xi].
\end{align*}
Note that $\cS_0(X)=\{v_1, v_2\}$ if $g\ge 7$ or $\cS_0(X)=\{v_1, v_2, v_3\}$ if $g\in\{5,6\}$. 
\begin{table}\caption{\label{tabE} The edges of $X$.}
\begin{tabular}{|c|c|c|c|c|c|}
\hline
$e$&$s(e)$&$s(t(e))$&$h_e$&$N_{s(e)}$&$g$\\
\hline
$e_1$&$[\alpha_1,\mu_g]$&$[\mu_g]$&$1$&$N_{g-3,3}$&$\ge 5$\\
\hline
$e_2$&$[\alpha_1,\alpha_3]$&$[\alpha_1]$& $a_2a_3a_1a_2$ &$N_{g-4,4}$&$\ge 5$\\
\hline
$e_3$&$[\mu_g,\mu_{g-1}]$&$[\mu_g]$&$a_{g-1}^{-1}$&$N_{g-2,2}$&$\ge 5$\\
\hline
$e_4$&$[\alpha_1,\xi]$&$[\xi]$&$1$&$S_{1,g-2}$& $5,6$\\
\hline
$e_5$&$[\mu_5,\beta_1]$&$[\alpha_1]$&$a_4ba_3a_4a_2a_3a_1a_2$&$S_{1,3}$& $5$\\
\hline
$e_6$&$[\alpha_1,\gamma_{\{3,4,5,6\}}]$&$[\alpha_1]$&$a_2ca_1a_2$&$S_{1,4}$& $6$\\
\hline
$e_7$&$[\mu_6,\gamma_{\{1,2,3,4,5\}}]$&$[\mu_6]$&$b_2^{-1}$&$S_{2,2}$& $6$\\
\hline
\end{tabular}
\end{table}
If $e$ is an edge of $X$ or $\widetilde{X}$, then we denote by 
$i(e)$ and $t(e)$ its initial and terminal vertices respectively, and by
$\overline{e}$ the edge with the same vertices as $e$ but with the opposite orientation. 
We define edges $e_i\in \cS_1(X)$ for $i\in\{1,\dots,7\}$ as $e_i=p(s(e_i))$, where $s(e_i)$ are defined in the second column of Table \ref{tabE}.
\begin{prop}\label{edges_descr}
If $g\ge 7$ then $\cS_1(X)=\{e_1,\overline{e_1},e_2,e_3\}$;\\
If $g=5$ then $\cS_1(X)=\{e_1,\overline{e_1},e_2,e_3,e_4,\overline{e_4},
e_5,\overline{e_5}\}$;\\
If $g=6$ then $\cS_1(X)=\{e_1,\overline{e_1},e_2,e_3,e_4,\overline{e_4},
e_6,e_7\}$.
\end{prop}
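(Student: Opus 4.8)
The plan is to enumerate the $\M(N)$-orbits of edges of $\widetilde X$ using Proposition~\ref{orbits_descr}, and then match each orbit with one of the listed representatives $s(e_i)$. An edge of $\widetilde X$ is an equivalence class $[\gamma_1,\gamma_2]$ of a generic pair of disjoint curves with $N_{(\gamma_1,\gamma_2)}$ connected (and nonorientable if $g\ge 7$). By Proposition~\ref{orbits_descr}, the orbit of $[\gamma_1,\gamma_2]$ is determined by the following discrete data: for each $i\in\{1,2\}$, whether $\gamma_i$ is one- or two-sided; and (only when $g\in\{5,6\}$, since the condition is vacuous for $\D^{ord}$) for each $I\subseteq\{1,2\}$, whether $N_{C_I}$ is orientable. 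So first I would list the a priori possible combinations of these invariants and then discard those that cannot be realised by a connected (and, for $g\ge7$, nonorientable) cut surface on $N_{g,0}$.

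**Next I would do the realisability/impossibility analysis.** For the sidedness of the ordered pair there are four cases: (two-sided, two-sided), (two-sided, one-sided), (one-sided, two-sided), (one-sided, one-sided). By symmetry of the underlying unordered simplex, reversing the order of the two curves sends an edge $e$ to $\overline e$ in general (it is the ``opposite orientation'' edge in the $\Delta$-complex sense), which is why one-sided/two-sided mixed pairs contribute a pair $\{e_i,\overline{e_i}\}$. A pair of one-sided curves on a genus-$g$ surface cuts out a genus-$(g-2)$ surface with extra boundary, which is nonorientable for $g\ge 5$ and always connected; this gives $e_3$ ($[\mu_g,\mu_{g-1}]$). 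A pair consisting of one one-sided and one two-sided curve, chosen so the complement stays connected and nonorientable, gives $e_1$ and its reverse $\overline{e_1}$ ($[\alpha_1,\mu_g]$). A pair of two-sided curves with nonorientable connected complement gives $e_2$ ($[\alpha_1,\alpha_3]$); here one must check there is only one orbit, i.e. that any two such pairs have homeomorphic (hence, by the change-of-coordinates principle, $\M(N)$-equivalent) configurations. For $g\ge 7$ these four are all the orbits, and since Proposition~\ref{orbits_descr} shows $e_1\not\sim\overline{e_1}$ while $e_2,e_3$ are ``symmetric'' (an orientation-reversing symmetry of the configuration realises $e_i=\overline{e_i}$), we get exactly $\{e_1,\overline{e_1},e_2,e_3\}$. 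For $g\in\{5,6\}$ one gets in addition edges having at least one endpoint at $v_3=p[\xi]$, i.e. with a curve whose complement is orientable; going through the orientability invariants of $N_{C_I}$ one finds $e_4,\overline{e_4}$ ($[\alpha_1,\xi]$) in both cases, plus $e_5,\overline{e_5}$ ($[\mu_5,\beta_1]$) when $g=5$ and $e_6$ ($[\alpha_1,\gamma_{\{3,4,5,6\}}]$) together with $e_7$ ($[\mu_6,\gamma_{\{1,2,3,4,5\}}]$) when $g=6$.

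**The last ingredient** is to verify that for each $e_i$ the stated representative $s(e_i)$ actually lies in $\widetilde X$ and has the invariants claimed in Table~\ref{tabE} (columns $N_{s(e)}$ and the sidedness of its two curves, read off from the parity of the index sets as in Subsection~\ref{notation}), and that it does terminate/initiate at the claimed vertex $s(t(e))$ — this is where the data $s(t(e))$ and $h_e$ in the table are bookkeeping for the later Brown's-theorem computation, but for the present proposition we only need that $s(e_i)$ represents the orbit in question. Concretely: $\gamma_I$ is two-sided iff $|I|$ is even, so in $[\alpha_1,\mu_g]$ the first curve $\gamma_{\{1,2\}}$ is two-sided and $\mu_g=\gamma_{\{g\}}$ is one-sided, etc.; and computing $N_C$ in each case (genus drops by the number of crosscaps absorbed, boundary count goes up) reproduces the table. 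The main obstacle will be the completeness/irredundancy step for $g\in\{5,6\}$: one must be sure that the change-of-coordinates principle for nonorientable surfaces, combined with Proposition~\ref{orbits_descr}, really forces exactly these orbits and no others — in particular that every allowed combination of the sidedness invariants and the $N_{C_I}$-orientability invariants is realised by at most one orbit and that unrealisable combinations (e.g. two disjoint one-sided curves whose cut complement is a connected \emph{orientable} surface on $N_{g,0}$) are correctly excluded. Once the orbit count is pinned down, the displayed lists follow immediately by setting $e_i=p(s(e_i))$ and noting which of them coincide with their own reverses.
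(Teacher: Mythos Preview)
Your overall strategy---enumerate orbits via Proposition~\ref{orbits_descr} and match them to the $s(e_i)$---is the same as the paper's, but your case analysis for $g\in\{5,6\}$ contains a genuine error. You describe the additional edges as ``edges having at least one endpoint at $v_3=p[\xi]$, i.e.\ with a curve whose complement is orientable.'' This is true only of $e_4$. The edges $e_5$, $e_6$, $e_7$ have \emph{neither} endpoint at $v_3$: in $[\mu_5,\beta_1]$, $[\alpha_1,\gamma_{\{3,4,5,6\}}]$, $[\mu_6,\gamma_{\{1,2,3,4,5\}}]$, each individual curve has nonorientable complement (a crosscap survives), so the endpoints lie in $\{v_1,v_2\}$. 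What distinguishes these orbits is that $N_{(\gamma_1,\gamma_2)}$ is orientable while both $N_{(\gamma_1)}$ and $N_{(\gamma_2)}$ are nonorientable---this is exactly the extra invariant in Proposition~\ref{orbits_descr} for $I=\{1,2\}$ that is invisible at the level of vertices.

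Relatedly, your example of an ``unrealisable combination''---two disjoint one-sided curves with connected orientable complement---is in fact realised: for $g=6$ this is precisely $e_7$ (Table~\ref{tabE} gives $N_{s(e_7)}=S_{2,2}$, and both $\mu_6$ and $\gamma_{\{1,2,3,4,5\}}$ are one-sided). So your exclusion argument would wrongly discard $e_7$. What \emph{does} need to be excluded is an edge with both endpoints at $v_3$; the paper handles this (its case~(2)) by a short geometric argument: if $N_{(\gamma_2)}$ is orientable then, since $N_{(\gamma_1,\gamma_2)}$ is connected, there exists a curve disjoint from $\gamma_1$ meeting $\gamma_2$ once, and such a curve is necessarily one-sided, forcing $N_{(\gamma_1)}$ to be nonorientable. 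You should replace your vertex-based description of the extra edges by the correct dichotomy on the orientability of $N_{(\gamma_1,\gamma_2)}$, and then split the orientable case according to whether some $N_{(\gamma_i)}$ is orientable (giving $e_4$) or neither is (giving $e_5$, $e_6$, $e_7$, distinguished by the sidedness pattern).
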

\begin{proof} Let $[C]=[\gamma_1,\gamma_2]$ be any edge of $\widetilde{X}$. If $N_C$ is nonorientable, then it follows easily from Proposition \ref{orbits_descr} that $[C]$ is $\M(
N)$ equivalent to one of the edges
$s(e_1)$, $\overline{s(e_1)}$, $s(e_2)$, or $s(e_3)$. This finishes the proof for $g\ge 7$.
Suppose that $N_C$ is orientable. There are two cases: (1) $N_{(\gamma_1)}$ and $N_{(\gamma_2)}$ are nonorientable;
(2) $N_{(\gamma_1)}$ or $N_{(\gamma_2)}$ is orientable. In the case (1) $C$ is $\M(N)$-eqivalent to one of the edges 
$s(e_5)$, $\overline{s(e_5)}$, $s(e_6)$, or $s(e_7)$. Suppose that we are in case (2) and  $N_{(\gamma_2)}$ is orientable. Since $N_C$ is connected, there is a curve on $N$ disjoint from $\gamma_1$ and intersecting $\gamma_2$ in one point. As such curve must be one-sided, $N_{(\gamma_1)}$ is nonorientable and $[C]$ is $\M(N)$-equivalent to $s(e_4)$.
\end{proof}

The representatives $s(e_i)$ of the edges $e_i$ for $i\in\{1,\dots,7\}$ have been chosen in such a way that $i(s(e_i))=s(i(e_i))$. The elements $h_{e_i}$ defined in the fourth column of Table \ref{tabE} satisfy $h_{e_i}(s(t(e_i))=t(s(e_i))$.
For $i\in\{1,4,5\}$ we define $s(\overline{e_i})=h_{e_i}^{-1}(\overline{s(e_i)})$ and $h_{\overline{e_i}}=h_{e_i}^{-1}$. In this way, for every $e\in \cS_1(X)$ we have
$i(s(e))=s(i(e))$ and $h_{e}(s(t(e))=t(s(e))$. The conjugation map $c_e$ defined as $c_e(x)=h_{e}^{-1}xh_e$ maps $\Stab\,t(s(e))$ onto $\Stab\,s(t(e))$; in particular $c_e(\Stab\,s(e))\subset\Stab\, s(t(e))$.

\begin{table}\caption{\label{tabT} The triangles of $X$.}
\begin{tabular}{|c|c|c|c|c|} 
\hline
$f$&$s(f)$&$\nu_1, \nu_2, \nu_3$&$\varepsilon_1, \varepsilon_2, \varepsilon_3$ &$g$\\
\hline
$f_1$&$[\alpha_1,\alpha_3,\alpha_5]$&$v_1, v_1, v_1$&$e_2, e_2, e_2$& $\ge 6$\\
\hline
$f_2$&$[\alpha_1,\alpha_3,\mu_g]$&$v_1, v_1, v_2$&$e_2, e_1, e_1$& $\ge 5$ \\
\hline
$f_3$&$[\alpha_1,\mu_g,\mu_{g-1}]$&$v_1, v_2, v_2$&$e_1, e_3, e_1$& $\ge 5$ \\
\hline
$f_4$&$[\mu_g,\mu_{g-1},\mu_{g-2}]$&$v_2, v_2, v_2$&$e_3, e_3, e_3$& $\ge 5$ \\
\hline
$f_5$&$[\alpha_1,\alpha_3,\xi]$&$v_1, v_1, v_3$&$e_2, e_4, e_4$&
$5,6$\\
\hline
$f_6$&$[\mu_6,\mu_5,\beta]$&$v_2, v_2, v_1$&$e_3, \overline{e_1}, \overline{e_1}$& $6$\\
\hline
$f_7$&$[\mu_5,\mu_4,\gamma_{\{1,2,3\}}]$&$v_2, v_2, v_2$&$e_3, e_3, e_3$& $5$\\
\hline
$f_8$&$[\alpha_1,\mu_6,\gamma_{\{1,\dots,5\}}]$&$v_1, v_2, v_2$&$e_1, e_7, e_1$&
$6$\\
\hline
$f_9$&$[\alpha_1,\mu_5,\beta]$&$v_1, v_2, v_1$&$e_1, e_5, e_2$& 
$5$\\
\hline
$f_{10}$&$[\alpha_1,\alpha_3,\gamma_{\{3,4,5,6\}}]$&$v_1, v_1, v_1$&$e_2, e_2, e_6$& 
$6$\\
\hline
\end{tabular}
\end{table}

\medskip

Suppose that $\widetilde{f}=[\gamma_1,\gamma_2,\gamma_3]\in\cS_2(\widetilde{X})$ and $f=p(\widetilde{f})\in \cS_2(X)$. For a permutation $\sigma\in\Sym_3$ we define
$\widetilde{f}^\sigma=[\gamma_{\sigma(1)},\gamma_{\sigma(2)},\gamma_{\sigma(3)}]$
and $f^\sigma=p[\widetilde{f}^\sigma]$. We say that $\widetilde{f}^\sigma$ (resp. $f^\sigma$ ) is a permutation of $\widetilde{f}$ (resp. $f$). We also define
$\varepsilon_1(f)=p[\gamma_1,\gamma_2]$, 
$\varepsilon_2(f)=p[\gamma_2,\gamma_3]$, 
$\varepsilon_3(f)=p[\gamma_1,\gamma_3]$, and
$\nu_i(f)=p[\gamma_i]$ for $i=1,2,3$. 

We define triangles $f_i\in \cS_2(X)$ for $i\in\{1,\dots,10\}$ as $f_i=p(s(f_i))$, where $s(f_i)$ are defined in the second column of Table \ref{tabT}.
\begin{prop}\label{triangles_descr}
Every triangle of $X$ is a permutation of  $f_i$ for some
$i\in\{1,\dots,10\}$.
\end{prop}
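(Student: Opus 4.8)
The plan is to reduce the statement to a finite combinatorial enumeration via Proposition \ref{orbits_descr}. A triangle of $X$ is an $\M(N)$-orbit of a $2$-simplex $[\gamma_1,\gamma_2,\gamma_3]$ of $\widetilde X$, and by Proposition \ref{orbits_descr} such an orbit is completely determined by its \emph{combinatorial type}: the data consisting of (i) for each $i$ whether $\gamma_i$ is one- or two-sided, and (ii) for each $I\subseteq\{1,2,3\}$ whether $N_{C_I}$ is orientable. When $g\ge 7$ we have $\widetilde X=\D^{ord}(N)$, so $N_C$ is nonorientable; since a surface obtained by cutting along fewer curves cannot be orientable when $N_C$ is not, every $N_{C_I}$ is nonorientable as well and condition (ii) becomes vacuous, so the type is just the sidedness vector. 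Thus it suffices to enumerate the realizable combinatorial types and, for each, point to a representative among $f_1,\dots,f_{10}$ up to permutation.

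The enumeration is controlled by a few constraints, which I would set up first. If $j$ of the three curves are one-sided, then cutting $N=N_{g,0}$ along all three yields a connected $N_C$ with $6-j$ boundary components and $\chi(N_C)=2-g$; this forces $N_C$ — and likewise every proper face $N_{C_I}$ — into a short list of surfaces, with the genus-negative possibilities excluded outright, which is very restrictive for $g\in\{5,6\}$. In addition, every edge $\varepsilon_k(f)$ of a triangle appears on the list of Proposition \ref{edges_descr} / Table \ref{tabE}, and every vertex $\nu_i(f)$ is one of $v_1,v_2,v_3$; together these pin down the admissible orientability patterns. For $g\ge 7$ this already finishes the job: up to permutation there are only the four sidedness types (three two-sided; two two-sided and one one-sided; one two-sided and two one-sided; three one-sided), and they are realized by $f_1,f_2,f_3,f_4$, each of which is a genuine $2$-simplex because the explicit triples of Table \ref{tabT} have connected nonorientable complements, as one reads off the crosscap picture of Subsection \ref{notation}.

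For $g\in\{5,6\}$ I would proceed case by case according to $j$ and, within each case, according to the orientability pattern allowed by the above constraints: for example a pair $(\gamma_i,\gamma_j)$ with orientable complement must be an edge of type $e_4$, $e_5$, $e_6$ or $e_7$, and a single curve with orientable complement is a $v_3$-vertex, which by the Euler characteristic count forces the complementary surface and hence the remaining pattern. In each admissible case I would exhibit the matching triangle $f_i$ of Table \ref{tabT}, check that its complement is connected, and invoke Proposition \ref{orbits_descr} to conclude that any $2$-simplex of that type is a permutation of $f_i$. Assembling the cases should produce exactly $\{f_1,f_2,f_3,f_4\}$ for $g\ge 7$, $\{f_1,f_2,f_3,f_4,f_5,f_6,f_8,f_{10}\}$ for $g=6$, and $\{f_2,f_3,f_4,f_5,f_7,f_9\}$ for $g=5$, up to permutation.

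The hard part will be the case analysis for $g=6$ (and, to a lesser extent, $g=5$): one must be certain that no admissible combinatorial type has been missed and, conversely, that each listed $f_i$ genuinely is a triangle, i.e.\ that cutting along its three curves leaves a connected surface. The classifications of vertices and of edges keep this enumeration finite, but verifying all the sub-cases for $g=6$ is where the bulk of the effort lies.
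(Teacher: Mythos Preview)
Your proposal is correct and follows essentially the same route as the paper: both reduce the question to a finite enumeration of combinatorial types via Proposition \ref{orbits_descr}, use the vertex and edge classifications (Proposition \ref{edges_descr}) to constrain the admissible orientability patterns for $g\in\{5,6\}$, and match each type to an $f_i$ from Table \ref{tabT}. The only difference is organizational: the paper splits first on whether $N_C$ is orientable and then, in the orientable case, into three sub-cases according to which $N_{C_I}$ are orientable, whereas you split first on the number $j$ of one-sided curves; both schemes cover the same cases and yield the same lists you state.
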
 
\begin{proof}
Suppose that $f=p[C]$ for $C=(\gamma_1,\gamma_2,\gamma_3)$. 
If $N_C$ is nonorientable, then by Proposition \ref{orbits_descr}, $f$ is determined up to permutation by the number of one-sided  vertices. It follows that $f$ is a permutation of one of the triangles: $f_1$ (if $g\ge 7$), $f_2$ (if $g\ge 6$), $f_3$ or $f_4$. We assume that $N_C$ is orientable, $g\in\{5,6\}$. There are three cases.

Case 1: $N_{(\gamma_i,\gamma_j)}$ are nonorientable for all $1\le i<j\le 3$. Then again $f$ is determined up to permutation by the number of one-sided  vertices, and it is a permutation of one of the triangles: $f_1$, $f_2$, $f_6$ or $f_7$.

Case 2: $N_{(\gamma_i)}$ is orientable for some $i\in\{1,2,3\}$. Assume $i=3$. By  the same argument as in the proof of Proposition \ref{edges_descr} (case (2)), 
$N_{(\gamma_1,\gamma_2)}$ is nonorientable  and $f=f_5$ by Proposition \ref{orbits_descr}.

Case 3: $f$ contains one of the edges: $e_5$, $e_6$ or $e_7$. Assume $p[\gamma_1,\gamma_2]=e_i$ for $i\in\{5,6,7\}$. Since $N_{(\gamma_1,\gamma_2)}$ is orientable, thus $\gamma_3$ is two-sided and it is easy to see, 
by similar argument as in the proof of Proposition \ref{edges_descr} (case (2)), 
that $N_{(\gamma_i,\gamma_3)}$ are nonorientable  for $i=1,2$. It follows that $f$ is a permutation of one of the triangles $f_8$, $f_9$ or $f_{10}$.
\end{proof}
\begin{figure}
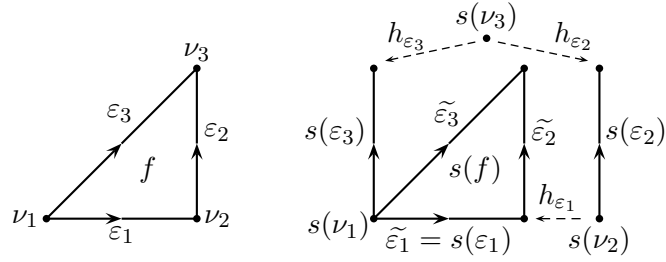

\begin{tabular}{cc}
\input{fig7}&\input{fig8}
\end{tabular}
\caption{\label{tr}A triangle in $X$ and its representative in $\widetilde{X}$.}
\end{figure}

Let $f=f_i$ for some $i\in\{1,\dots, 10\}$. For $j=1,2,3$ we let
$\nu_j=\nu_j(f)$, $\varepsilon_j=\varepsilon_j(f)$ and define $\widetilde{\varepsilon_j}$ to be the edge of $s(f)$ such that
$p(\widetilde{\varepsilon_j})=\varepsilon_j$ (Figure \ref{tr}).
The representatives $s(f)$ have been chosen in such a way that
$\widetilde{\varepsilon_1}=s(\varepsilon_1)$. For $j=1,2,3$ we choose 
$x_j=x_j(f)\in\Stab\,s(\nu_j)$  such that
\begin{equation}\label{the_x_i}
x_1(s(\varepsilon_3))=\widetilde{\varepsilon_3},\quad 
h_{\varepsilon_1}x_2(s(\varepsilon_2))=\widetilde{\varepsilon_2},\quad   
h_{\varepsilon_1}x_2h_{\varepsilon_2}x_3h^{-1}_{\varepsilon_3}=x_1. 
\end{equation}
For $\sigma\in\Sym_3$ we define $s(f^\sigma)=z_\sigma(s(f))^\sigma$ and $x_j(f^{\sigma})$ according to the following table:
\begin{center}
\begin{tabular}{|c|c|c|c|c|c|} 
\hline
$\sigma$&$(1,2)$&$(1,3)$&$(2,3)$&$(1,2,3)$&$(1,3,2)$\\
\hline
$z_\sigma$&$h_{\varepsilon_1}^{-1}$&$h_{\varepsilon_2}^{-1}x_2^{-1}h_{\varepsilon_1}^{-1}$&$x_1^{-1}$&$x_2^{-1}h^{-1}_{\varepsilon_1}$&$h^{-1}_{\varepsilon_3}x_1^{-1}$\\
\hline
$x_1(f^\sigma)$&$x_2$&$x_3$&$x_1^{-1}$&$x_2^{-1}$&$x_3^{-1}$\\
\hline
$x_2(f^\sigma)$&$x_1$&$x_2^{-1}$&$x_3^{-1}$&$x_3$&$x_1^{-1}$\\
\hline
$x_3(f^\sigma)$&$x_3^{-1}$&$x_1$&$x_2^{-1}$&$x_1^{-1}$&$x_2$\\
\hline
\end{tabular}\end{center}
In this way the equations  $\widetilde{\varepsilon_1}=s(\varepsilon_1)$ and (\ref{the_x_i}) are satisfied for every $f\in \cS_2(X)$. We check this for $\sigma=(1,2)$, the other cases can be checked similarly. Let $f'=f^{(1,2)}$,
and for $j=1,2,3$, $x_j'=x_j(f')$, $\varepsilon_j'=\varepsilon_j(f')$.
We have
$\varepsilon_1'=\overline{\varepsilon_1}$, $\varepsilon_2'=\varepsilon_3$,
$\varepsilon_3'=\varepsilon_2$, the edges of $s(f')$ are $\widetilde{\varepsilon_1'}=h^{-1}_{\varepsilon_1}(\overline{\widetilde{\varepsilon_1}})$,
$\widetilde{\varepsilon_2'}=h^{-1}_{\varepsilon_1}(\widetilde{\varepsilon_3})$,
$\widetilde{\varepsilon_3'}=h^{-1}_{\varepsilon_1}(\widetilde{\varepsilon_2})$,
and $s(\varepsilon_1')=s(\overline{\varepsilon_1})=h^{-1}_{\varepsilon_1}(\overline{s(\varepsilon_1}))=
\widetilde{\varepsilon_1'}$,  
\begin{align*}
&x'_1(s(\varepsilon_3'))=x_2(s(\varepsilon_2))=h_{\varepsilon_1}^{-1}(\widetilde{\varepsilon_2})=
\widetilde{\varepsilon_3'},\\
&h_{\varepsilon_1'}x_2'(s(\varepsilon_2'))=
h_{\varepsilon_1}^{-1}x_1(s(\varepsilon_3))=h_{\varepsilon_1}^{-1}(\widetilde{\varepsilon_3})=
\widetilde{\varepsilon_2'},\\  
&h_{\varepsilon_1'}x_2'h_{\varepsilon_2'}x_3'h^{-1}_{\varepsilon_3'}=
h_{\varepsilon_1}^{-1}x_1h_{\varepsilon_3}x_3^{-1}h^{-1}_{\varepsilon_2}=x_2=
x'_1.
\end{align*}
The following theorem is a special case of a general result of Brown \cite{Br} (cf. \cite[Theorem 6.3]{Szep_Osaka}).
\begin{theorem}\label{Brown}
Suppose that:
\begin{itemize}
\item[(1)] for each $v\in \cS_0(X)$ the stabiliser $\Stab\,s(v)$ admits a presentation $\lr{S_v\,|\,R_v}$,
\item[(2)] for each $e\in \cS_1(X)$ the stabiliser $\Stab\,s(e)$ is generated by $G_e$.
\end{itemize} 
Then $\M(N)$ admits a presentation with generators 
\[\bigcup_{v\in \cS_0(X)}S_v \cup\{h_e\, |\,e\in \cS_1(X)\}\]
and relations:
\begin{itemize}
\item $\bigcup_{v\in \cS_0(X)}R_v$,
\item $h_{e_1}=1$ and (if $g\in\{5,6\}$) $h_{e_4}=1$,
\item $h_e^{-1}\imath_e(x)h_e=c_e(x)$ for $e\in \cS_1(X)$ and $x\in G_e$, where
$\imath_e\colon\Stab\,s(e)\to \Stab\,s(i(e))$ is the inclusion and
$c_e\colon\Stab\,s(e)\to \Stab\,s(t(e))$ is the conjugation map defined above,
\item $h_{\varepsilon_1(f)}x_2(f)h_{\varepsilon_2(f)}x_3(f)h^{-1}_{\varepsilon_3(f)}=x_1(f)$ for $f\in \cS_2(X)$.\hfill{$\Box$}
\end{itemize}
\end{theorem}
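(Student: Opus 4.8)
The plan is to derive the statement directly from Brown's theorem \cite{Br} (in the form recalled in \cite[Theorem 6.3]{Szep_Osaka}), which produces a presentation for a group acting on a simply connected complex out of the combinatorial data of the quotient together with presentations of vertex stabilisers and generating sets of edge stabilisers. So the first thing I would do is record that all the hypotheses of Brown's theorem are met in our situation. The complex $\widetilde{X}$ is simply connected by Theorem \ref{simplyc_wahl} (this is exactly why we set $\widetilde{X}=\D^{ord}(N)$ when $g\ge 7$ and $\widetilde{X}=\C^{ord}_0(N)$ when $g\in\{5,6\}$). Since a presentation of $\M(N)$ depends only on the $2$-skeleton of $\widetilde{X}$, and $\pi_1$ is unchanged by passing to the $2$-skeleton, we may replace $\widetilde{X}$ by $\widetilde{X}^{(2)}$, which is still simply connected and on which $\M(N)$ still acts simplicially. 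Finally $\C^{ord}(N)$ carries a canonical orientation (ordered vertices) preserved by $\M(N)$, so the stabiliser of every simplex fixes that simplex pointwise; in particular $\M(N)$ acts on edges without inversion, which is why it suffices to give a generating set $G_e$, rather than a presentation, for each edge stabiliser $\Stab\,s(e)$.

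With the hypotheses in place, the remaining work is translation. Brown's theorem yields a presentation whose generators are the generators $S_v$ of the vertex stabilisers $\Stab\,s(v)$, one $v$ per $\M(N)$-orbit of vertices ($\cS_0(X)=\{v_1,v_2\}$ for $g\ge 7$ and $\{v_1,v_2,v_3\}$ for $g\in\{5,6\}$ by Proposition \ref{orbits_descr}), together with one letter $h_e$ for each oriented edge orbit, these orbits being the ones listed in Proposition \ref{edges_descr}. The relations are: the relations $\bigcup_v R_v$ of the vertex stabilisers; the tree relations $h_e=1$ for $e$ ranging over a spanning tree of the $1$-skeleton of $X$; the conjugation relations $h_e^{-1}\imath_e(x)h_e=c_e(x)$ for $e\in\cS_1(X)$ and $x\in G_e$, which encode that the two embeddings of $\Stab\,s(e)$ into the stabilisers of $i(e)$ and $t(e)$ differ by conjugation by $h_e$; and one relation per triangle $f$ of $X$, asserting that the loop traversing the boundary of $s(f)$ — with the corner corrections $x_j(f)$ inserted so as to return to the chosen representatives $s(\varepsilon_j)$ — is trivial in $\M(N)$. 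Since the $1$-skeleton of $X$ has two vertices for $g\ge 7$ and three for $g\in\{5,6\}$, and since $e_1$ joins $v_1$ to $v_2$ while $e_4$ joins $v_1$ to $v_3$, we may take the spanning tree to be $\{e_1\}$, respectively $\{e_1,e_4\}$, which gives precisely the relations $h_{e_1}=1$ and (for $g\in\{5,6\}$) $h_{e_4}=1$. The choices of $s(v)$, $s(e)$, $h_e$, $c_e$ and of the corner elements $x_j(f)$ fixed in Subsection \ref{orbits} are exactly the ones demanded by Brown's theorem, and with them the relator attached to a triangle $f$ is $h_{\varepsilon_1(f)}x_2(f)h_{\varepsilon_2(f)}x_3(f)h^{-1}_{\varepsilon_3(f)}x_1(f)^{-1}$, which is the last family of relations in the statement.

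The only genuine obstacle is the bookkeeping that makes the triangle relators come out in this uniform shape for \emph{every} element of $\cS_2(X)$, including the permuted triangles $f^\sigma$. Concretely, one must check that the representatives $s(f^\sigma)=z_\sigma(s(f))^\sigma$ and the corner elements $x_j(f^\sigma)$ defined by the table in Subsection \ref{orbits} satisfy $\widetilde{\varepsilon_1}=s(\varepsilon_1)$ together with the three equations (\ref{the_x_i}); the verification for $\sigma=(1,2)$ is carried out in the text, and the remaining permutations $\sigma\in\Sym_3$ are handled by the same computation. Once this compatibility is established there is nothing left: Brown's theorem applies verbatim to the action of $\M(N)$ on $\widetilde{X}^{(2)}$ and returns exactly the presentation asserted. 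I therefore expect the main difficulty to be this routine but somewhat lengthy consistency check on the orbit data, rather than anything concerning the topology of the curve complex, which is entirely absorbed into Theorem \ref{simplyc_wahl}.
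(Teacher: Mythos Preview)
Your proposal is correct and takes the same approach as the paper, which simply cites Brown's theorem \cite{Br} (via \cite[Theorem~6.3]{Szep_Osaka}) without further argument. You have supplied more detail than the paper itself—verifying simple connectedness via Theorem~\ref{simplyc_wahl}, identifying the spanning tree $\{e_1\}$ (resp.\ $\{e_1,e_4\}$), and noting the action is without inversion—but this is all implicit in the paper's setup of Subsection~\ref{orbits}.
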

Recall that in order to prove Theorem \ref{mainB} we want to define a homomorphism
$\psi\colon\M(N_{g,0})\to\G_{g,0}$, which will be the inverse of $\varphi_{g,0}$. 
We will use the presentation  from  Theorem \ref{Brown} and 
define $\psi$ on the generators and prove that it respects the defining relations.
To this end we need presentations of $\Stab\,s(v)$ and generators of $\Stab\,s(e)$  which will be obtained by using the exact sequences defined in the previous subsection and the induction hypothesis. 
\section{The stabiliser of $[\mu_g]$.}\label{sec_v2}
In this section we are assuming  $n\in\{0,1\}$,  $g+n\ge 4$ and that Theorems \ref{mainA} and \ref{mainB} are true for $g-1$. 
\begin{figure}
\input{fig9}
\caption{\label{loop} The loop $\eta_i$.}
\end{figure}
\begin{theorem}\label{pres_Stab_mu_g}
The stabiliser $\Stab[\mu_g]=\Stab_{\M(N_{g,n})}[\mu_{g}]$ is generated by $u_i$, $a_i$, $b_j$ for $1\le i\le g-2$, 
$2\le 2j\le g-3$ and $a_{g-1}u_{g-1}$.
There is a homomorphism $\psi_{v_2}\colon\Stab[\mu_g]\to\G_{g,n}$ such that
$\varphi_{g,n}\circ\psi_{v_2}=\mathrm{id}_{\Stab[\mu_g]}$ and $\psi_{v_2}(x)=x$ for each generator $x$ of $\Stab[\mu_g]$. 
\end{theorem}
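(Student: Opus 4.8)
The plan is to realise $\Stab[\mu_g]$ as an extension whose quotient is $\M(N_{g-1,n})$, to which the induction hypothesis applies, and whose kernel is a surface or free group, and then to read off both assertions from a presentation of that extension via Lemma \ref{ext_pres}. Concretely, cut $N_{g,n}$ along the one-sided curve $\mu_g$ and glue back a disc carrying a puncture $P$; this produces $N'=N_{g-1,n}$ with $\mathcal{P}_1=\{P\}$, and $N_{g,n}$ is recovered by blowing $P$ up. By Lemma \ref{blow_stab} the blowup homomorphism identifies $\M(N_{g-1,n},P)$ with $\Stab[\mu_g]$, and the sequence (\ref{stab_bir_es}), with $C'$ the empty tuple, specialises to
\[1\to\pi_1(N_{g-1,n},P)\stackrel{\mathfrak{c}}{\to}\Stab[\mu_g]\stackrel{\zeta}{\to}\M(N_{g-1,n})\to 1.\]

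By the induction hypothesis $\M(N_{g-1,n})$ has the presentation of Theorem \ref{mainA} (for $n=1$) or Theorem \ref{mainB} (for $n=0$), whose generators $a_i,u_i$ ($1\le i\le g-2$) and $b_j$ ($2\le 2j\le g-3$) are supported away from $\mu_g$, hence lift canonically to $\M(N_{g-1,n},P)$, their $\mathfrak{b}$-images being the elements of $\M(N_{g,n})$ of the same name. For the kernel I would take the standard generating loops $\eta_1,\dots,\eta_{g-1}$ of $\pi_1(N_{g-1,n},P)$ pictured in Figure \ref{loop}, each running once through one crosscap and therefore one-sided; they form a free basis when $n=1$ and satisfy a single surface relation when $n=0$. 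Since $\eta_i$ is one-sided, $\mathfrak{c}[\eta_i]$ is a crosscap slide, and using the conjugation formula $hY_{\mu,\alpha}h^{-1}=Y_{h(\mu),h(\alpha)}$ one checks that (for a suitable orientation) $\mathfrak{c}[\eta_{g-1}]=Y_{\mu_g,\alpha_{g-1}}=a_{g-1}u_{g-1}$, and that for $i<g-1$ the class $\mathfrak{c}[\eta_i]$ is a conjugate of $y_i=a_iu_i$ by a product of crosscap transpositions, which one rewrites, using relations among Dehn twists and crosscap transpositions, as a word in $\{a_j,u_j:j\le g-2\}$ and $a_{g-1}u_{g-1}$. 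As $\Stab[\mu_g]$ is generated by the lifted cokernel generators together with the $\mathfrak{c}[\eta_i]$, the generation statement of the theorem follows.

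To produce $\psi_{v_2}$ I would write out the presentation of $\Stab[\mu_g]$ furnished by Lemma \ref{ext_pres}: kernel generators $t_i\leftrightarrow\mathfrak{c}[\eta_i]$, cokernel generators $a_i,u_i,b_j$, kernel relations $R_K$ (none for $n=1$, one for $n=0$), cokernel relations $R_1$ obtained by lifting the relations (A), (B), (C) (and (D) for $n=0$) of $\M(N_{g-1,n})$ and correcting each by a kernel word $w_r$ (most of which vanish, the surviving ones being explicit point-push words occurring only when $n=0$), and conjugation relations $R_2$. Define $\psi_{v_2}$ on generators by sending $a_i,u_i,b_j$ to the like-named elements of $\G_{g,n}$ and $t_i$ to the word in $u_j$ and $a_{g-1}u_{g-1}$ found above (so in particular $\psi_{v_2}(a_{g-1}u_{g-1})=a_{g-1}u_{g-1}$), and verify that the images of all relations in $R_K\cup R_1\cup R_2$ hold in $\G_{g,n}$. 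Granting this, $\psi_{v_2}$ is a homomorphism; it sends each generator from the theorem's list to the like-named element of $\G_{g,n}$, and since $\varphi_{g,n}$ reverses this assignment on generators, $\varphi_{g,n}\circ\psi_{v_2}$ is the identity of $\Stab[\mu_g]$.

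The main obstacle is precisely this last verification. The relations in $R_1$ with $w_r=1$ amount to asserting the genus-$(g-1)$ instances of (A), (B) and (C) in $\G_{g,n}$; the braid- and commutation-type ones are immediate, but the ``top'' relations, namely (A5), (A6), (A9), and (B3), (B4) when $n=0$, together with the $n=0$ relations carrying a non-trivial $w_r$ and the single relation in $R_K$, require the consequences collected in Lemmas \ref{useful_C}, \ref{useful_T}, \ref{useful_T_closed}, the rewriting principle of Lemma \ref{shortcut_AB} (for words in the $u_i$ alone, or in the $a_i,b_j$ alone), and Corollary \ref{genus3_shortcut} (for words on three consecutive crosscaps). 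The conjugation relations $R_2$ reduce, via the $\M$-equivariance of the point-pushing map (Lemma \ref{push2}) transported through $\mathfrak{b}$, to computing the $\pi_1$-action of $a_i,u_i,b_j$ on the loops $\eta_k$, which is the standard action of half-twists and Dehn twists on loops around crosscaps, and then checking the resulting word identities in $\G_{g,n}$ with the same toolkit, chiefly the relations (C1)--(C8). I expect the bookkeeping of the $n=0$ case, i.e.\ the surface relation in $\pi_1$ and the point-push correction words, to be the most delicate point; everything else is routine manipulation with the defining relations and their consequences.
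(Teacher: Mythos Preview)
Your approach is essentially the paper's: the same exact sequence, the same kernel and cokernel generators, and verification of the resulting presentation in $\G_{g,n}$ via Lemma \ref{ext_pres} with the tools you list. Two sharpenings worth noting: the paper writes $\sigma_i=\mathfrak{c}[\eta_i]$ directly as $(u_i\cdots u_{g-2})(a_{g-1}u_{g-1})(u_i\cdots u_{g-2})^{-1}$ (a conjugate of $y_{g-1}$, not of $y_i$, so generation is immediate without further rewriting), and the relations actually requiring work are not the (A)-type ones (these hold in $\G_{g,n}$ tautologically) but rather the modified cokernel relations $(\widetilde{\mathrm{B4}})$ and $(\widetilde{\mathrm{Da}})$ for $n=0$ and especially the conjugation relation $b\sigma_4 b^{-1}=\sigma_4\delta$, whose verification is precisely where (C8) is used.
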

\begin{proof}
We will obtain a presentation of $\Stab[\mu_g]$ by applying Lemma \ref{ext_pres} to the exact sequence (\ref{stab_bir_es}), which in this case is
\[
1\to\pi_1(N_{g-1,n},P)\stackrel{\mathfrak{c}}{\to}\Stab[\mu_g]\stackrel{\zeta}{\to}\M(N_{g-1,n})\to 1, 
\]
where we assume that $N_{g-1,n}$ was obtained by cutting $N_{g,n}$ along $\mu_{g}$ and then gluing a disc with puncture $P$ along the resulting boundary component. 

The kernel $\pi_1(N_{g-1,n},P)$ is generated by the homotopy classes of the loops $\eta_i$ in Figure \ref{loop} for $i=1,\dots,g-1$.
Let $\sigma_i=\mathfrak{c}[\eta_i]=Y_{\mu_g,\gamma_{\{i,g\}}}$. 
If $n=1$ then $\pi_1(N_{g-1,n},P)$ is free, while if $n=0$, then there is a single kernel relation: \[(\mathrm{K})\qquad \sigma_{g-1}^2\cdots \sigma_1^2=1.\]
By the induction hypothesis $\M(N_{g-1,n})$ admits the presentation given in Theorem \ref{mainA} if $n=1$ or \ref{mainB} if $n=0$. In the latter case we replace the relation (D) by (Da) (see Lemma \ref{Da_replace}). 
For the cokernel generators we take $u_i, a_i$ and $b_j$ for $1\le i\le g-2$, 
$0\le 2j\le g-3$. Observe that all defining relations of $\M(N_{g-1,n})$ are satisfied also in $\M(N_{g,n})$, except  (B4) and (Da) if $n=0$, in which case we have instead
\begin{align*}
&(\widetilde{\textrm{B4}})\ (u_1\cdots u_{g-3})^{g-2}=\sigma_{g-1}^2\\
&(\widetilde{\textrm{Da}})\ a_{g-2}(u_{g-3}\cdots u_1a_1\cdots a_{g-3})a_{g-2}(u_{g-3}\cdots u_1a_1\cdots a_{g-3})^{-1}=\sigma_{g-1}^2\sigma_{g-2}\sigma_{g-1}^{-1}
\end{align*}
These relations hold in $\M(N_{g,0})$ because the corresponding relations, with each $\sigma_i$ replaced by $\mathfrak{p}[\eta_i]$, hold in $\M(N_{g-1,0},P)$.  Indeed, by Lemma \ref{push1}, $\mathfrak{p}[\eta_{g-1}^2]$ is equal to a product of two Dehn twists about the boundary curves of a regular neighborhood of a simple loop homotopic to $\eta_{g-1}^2$. One of these curves bounds a 
M\"obius strip, and hence the twist is trivial, while the other curve bounds $N_{g-2,1}$ and the twist is equal to $\Delta^2_{g-2}=(u_1\cdots u_{g-3})^{g-2}$.
Analogously, $\mathfrak{p}[\eta_{g-1}^2\eta_{g-2}\eta_{g-1}^{-1}]$ is equal to the product of Dehn twists about the curves $\alpha_{g-2}$ and $u_{g-3}\cdots u_1a_1\cdots a_{g-3}(\alpha_{g-2})$.

To determine the conjugation relations we have to express
$x\sigma_ix^{-1}$ in terms of the kernel generators for $i=1,\dots,g-1$ and 
each cokernel generator $x$. This can be done by first expressing
$x[\eta_i]$ in the generators of $\pi_1(N_{g-1,n},P)$, and then applying $\mathfrak{c}$ together with Lemma \ref{push2}.
Since every cokernel generator can be expressed in terms of  $u_i$ for $i=1,\dots,g-2$, $a_{g-2}$ and $b$, we only have to use these cokernel generators to produce the conjugation relations. As a result we obtain:  
\begin{align*}
&(1)\ u_i\sigma_{i+1}u_i^{-1}=\sigma_i,\quad
(2)\ u_i\sigma_iu_i^{-1}=\sigma_i^{-2}\sigma_{i+1}\sigma_i^2,\quad
(3)\ u_i\sigma_ju_i^{-1}=\sigma_j \textrm{\ for\ } j\ne i,i+1,\\
&(4)\ a_{g-2}\sigma_{g-1}a_{g-2}^{-1}=\sigma_{g-1}^2\sigma_{g-2},\quad
(5)\ a_{g-2}\sigma_{g-2}a_{g-2}^{-1}=\sigma_{g-2}^{-1}\sigma_{g-1}^{-1}\sigma_{g-2},\\
&(6)\ a_{g-2}\sigma_ja_{g-2}^{-1}=\sigma_j \textrm{\ for\ }j<g-2,\quad
(7)\ b\sigma_jb^{-1}=\sigma_j \textrm{\ for\ }j>4,\\ 
&(8)\ b\sigma_4b^{-1}=\sigma_4\delta,\quad
(9)\ b\sigma_4^2\sigma_3b^{-1}=\sigma_4^2\sigma_3\delta,\quad 
(10)\ b\sigma_4^2\sigma_3^2\sigma_2b^{-1}=\sigma_4^2\sigma_3^2\sigma_2\delta,\\
&(11)\  b\sigma_4^2\sigma_3^2\sigma_2^2\sigma_1b^{-1}=\sigma_4^2\sigma_3^2\sigma_2^2\sigma_1\delta,
\quad\textrm{where\ }\delta=\sigma_4\sigma_3\sigma_2\sigma_1.
\end{align*}
We also have the following relations:
\begin{align*}
&a_3\sigma_{4}a_{3}^{-1}=\sigma^2_4\sigma_3,\quad a_{2}a_{3}\sigma_{4}a_{3}^{-1}a_{2}^{-1}=\sigma_4^2\sigma^2_{3}\sigma_{2}\\
&a_{1}a_{2}a_{3}\sigma_{4}a_{3}^{-1}a_{2}^{-1}a_{1}^{-1}=\sigma_4^2\sigma^2_{3}\sigma_{2}^2\sigma_{1},\quad
a_j\delta a_j^{-1}=\delta,\textrm{\ for\ }j=1,2,3, 
\end{align*}
but since every $a_i$ can be expressed in terms of $a_{g-2}$ and $u_j$'s by (C2), these relations are consequences of (C2), (1--6) and  the kernel relation (K).
Since $b$ commutes with $a_j$ for $j=1,2,3$ (A3), the relations above together with (8) imply (9, 10, 11). This shows that (9, 10, 11) are redundant, they follow from other relations. 
For $i>1$, if we conjugate both sides of (2) by $u_iu_{i-1}$, then by using (B2), (1), (3) we obtain the relation $u_{i-1}\sigma_{i-1}u_{i-1}^{-1}=\sigma_{i-1}^{-2}\sigma_i\sigma_{i-1}^2$. If follows that we only need (2) with $i=g-2$. Therefore we replace (2) by
\[\textrm{(2')\ }u_{g-2}\sigma_{g-2}u_{g-2}^{-1}=\sigma_{g-2}^{-2}\sigma_{g-1}\sigma_{g-2}^2.\]
We claim that we can also replace (3) by
\[\textrm{(3')\ }u_i\sigma_{g-1}u_i^{-1}=\sigma_{g-1}\quad\mathrm{for\ }i\le g-3.\]
Indeed, if we set $x=u_ju_{j+1}\cdots u_{g-2}$, then  $x\sigma_{g-1}x^{-1}=\sigma_j$ by (1), for $i<j-1$ we have $xu_ix^{-1}=u_i$ by (B1), and for $i>j$ we have $xu_{i-1}x^{-1}=u_i$ by (B1,B2). Thus (3) follows from (3') by applying conjugation by $x$.
Similarly, it can  be easily proved, using (1) and (C1a, C7a), that (6, 7) can be replaced by
\[\textrm{(6')\ }a_{g-2}\sigma_{g-3}a_{g-2}^{-1}=\sigma_{g-3}\qquad
\textrm{(7')\ }b\sigma_{g-1}b^{-1}=\sigma_{g-1}\textrm{\ if\ }g>5.\]
We have $\sigma_{g-1}=y_{g-1}=a_{g-1}u_{g-1}$, and $\sigma_i=(u_i\cdots u_{g-2})a_{g-1}u_{g-1}(u_i\cdots u_{g-2})^{-1}$ for $i<{g-1}$ by (1). 
It follows that $\Stab[\mu_g]$ is generated by the elements listed in the theorem. To prove that the mapping $\psi_{v_2}(x)=x$ for $x$ a generator of $\Stab[\mu_g]$ extends to  a homomorphism, we have to check that (K, $\widetilde{\textrm{B4}}$, 
$\widetilde{\textrm{Da}}$, 2', 3', 4, 5, 6', 7', 8) are satisfied in $\G_{g,n}$.

By (1), the kernel relation (K) can be rewritten using only the generators $u_i$ and $\sigma_{g-1}^2$. Since $\sigma_{g-1}^2=(a_{g-1}u_{g-1})^2=u_{g-1}^2$ by (C4a), (K) is satisfied in $\G_{g,n}$ by Lemma \ref{shortcut_AB}, and so is ($\widetilde{\textrm{B4}}$).
By (Da) in $\G_{g,n}$ we have
\begin{align*}
&a_{g-1}^{-1}=(u_{g-2}\cdots u_1a_1\cdots \underline{a_{g-2})a_{g-1}(a_{g-2}^{-1}}\cdots a_1^{-1}u_1^{-1}\cdots u_{g-2}^{-1})\stackrel{(B1,B2,C1)}=\\
&u_{g-2}a_{g-1}^{-1}(u_{g-3}\cdots u_1a_1\cdots a_{g-3})a_{g-2}(a_{g-3}^{-1}\cdots a_1^{-1}u_1^{-1}\cdots u_{g-3}^{-1})a_{g-1}u_{g-2}^{-1}
\end{align*}
and after substitution ($\widetilde{\textrm{Da}}$) becomes
\[a_{g-2}(u_{g-2}a_{g-1}^{-1})^{-1}a_{g-1}^{-1}(u_{g-2}a_{g-1}^{-1})=\sigma_{g-1}^2\sigma_{g-2}\sigma_{g-1}^{-1}.\] 
Since $\sigma_{g-1}$ and $\sigma_{g-2}$ can be expressed in the generators
$a_{g-1}$, $a_{g-2}$, $u_{g-1}$, $u_{g-2}$, the last relation holds in $\G_{g,n}$ by Corollary \ref{genus3_shortcut}  and so do (2', 4, 5). (3') follows from (B1, C1a), (6') follows from (C3, A1, C1a), (7') follows from 
(A3, C7a).
By (B1, B2, C1a, C2) we have
\begin{align*}
&\sigma_4=(u_4\cdots u_{g-2})a_{g-1}u_{g-1}(u_4\cdots u_{g-2})^{-1}=
(u_{5}\cdots u_{g-1})^{-1}a_4u_4(u_{5}\cdots u_{g-1})\\
&\delta=(u_5\cdots u_{g-1})^{-1}a_4a_3a_2a_1u_1u_2u_3u_4(u_5\cdots u_{g-1})
\end{align*}
and we see that (8) follows from (C8) and the fact that $(u_{5}\cdots u_{g-1})$ commutes with $b$ (C7a). 
Thus $\psi_{v_2}$ is a homomorphism and obviously  $\varphi_{g,n}\circ\psi_{v_2}=\mathrm{id}_{\Stab[\mu_g]}$.
\end{proof}
\begin{lemma}\label{relC9}
If $g\ge 5$ then the following relation holds in $\G_{g,1}$.
\[(\mathrm{C9})\quad b(a_4a_3a_2a_1u_1u_2u_3u_4)=(a_4a_3a_2a_1u_1u_2u_3u_4)b.\]
\end{lemma}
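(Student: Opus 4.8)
The plan is to deduce (C9) from the section $\psi_{v_2}\colon\Stab[\mu_g]\to\G_{g,1}$ constructed in Theorem \ref{pres_Stab_mu_g}, rather than by manipulating the defining relations directly. Write $z=a_4a_3a_2a_1u_1u_2u_3u_4$, so that (C9) is the statement $bz=zb$. Recall from the computation establishing relation (C8) that, as an element of $\M(N_{g,1})$, $z$ is the effect of pushing the one-sided curve $\mu_5$ once along the loop $\gamma_{\{1,2,3,4,5\}}$; in particular $z=y_4^{-1}by_4b^{-1}$ with $y_4=a_4u_4=Y_{\mu_5,\alpha_4}$, and the curves $\beta=\gamma_{\{1,2,3,4\}}$ and $y_4^{-1}(\beta)$ are the two boundary components of a regular neighbourhood of $\mu_5\cup\gamma_{\{1,2,3,4,5\}}$. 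Since distinct boundary components of a subsurface are disjoint, $T_\beta$ and $y_4^{-1}by_4=T_{y_4^{-1}(\beta)}$ commute, so $b=T_\beta$ commutes with $z=T_{y_4^{-1}(\beta)}T_\beta^{-1}$. Thus $bz=zb$ \emph{already holds in $\M(N_{g,1})$}; the remaining task is to transport this equality back to $\G_{g,1}$.

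The point is that this commutation takes place inside $\Stab[\mu_g]$. Being a crosscap pushing map along a \emph{loop}, $z$ fixes $\mu_5$; moreover for $g>5$ its support meets only the crosscaps $1,\dots,5$, so in every case $z\in\Stab_{\M(N_{g,1})}[\mu_g]$, and likewise $b=T_{\gamma_{\{1,2,3,4\}}}\in\Stab[\mu_g]$ because $\gamma_{\{1,2,3,4\}}$ is disjoint from $\mu_g$. Hence $bz=zb$ is a relation in $\Stab[\mu_g]$, and applying $\psi_{v_2}$ will give $\psi_{v_2}(b)\psi_{v_2}(z)=\psi_{v_2}(z)\psi_{v_2}(b)$ in $\G_{g,1}$. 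It then suffices to identify $\psi_{v_2}(b)$ with $b$ and $\psi_{v_2}(z)$ with $z$, using that $\psi_{v_2}$ fixes each generator of $\Stab[\mu_g]$ appearing in Theorem \ref{pres_Stab_mu_g}.

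For $g\ge 6$ this identification is immediate, since $a_1,\dots,a_4,u_1,\dots,u_4$ and $b=b_1$ are all among those generators, so $\psi_{v_2}(z)=z$ and $\psi_{v_2}(b)=b$, and (C9) follows. For $g=5$ one must be slightly more careful, because then $a_4$ and $u_4$ are \emph{not} separately generators of $\Stab[\mu_5]$ — only the product $a_{g-1}u_{g-1}=a_4u_4=y_4$ is. Here I would first rewrite $z$ using (C2) together with (B1) and (B2): from $u_{i+1}^{-1}y_iu_{i+1}=u_iy_{i+1}u_i^{-1}$ (a consequence of (C2) and (B2)) and the commutations $u_iu_j=u_ju_i$ for $|i-j|>1$, the factorisation of $z$ used to prove (C8) becomes
\[z=y_4\,(u_3y_4u_3^{-1})\,(u_2u_3y_4u_3^{-1}u_2^{-1})\,(u_1u_2u_3y_4u_3^{-1}u_2^{-1}u_1^{-1}),\]
an identity valid both in $\M(N_{g,1})$ and in $\G_{g,1}$. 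Now $z$ is written as a word in the generators $y_4,u_1,u_2,u_3$ of $\Stab[\mu_5]$, which $\psi_{v_2}$ fixes, and $b=b_1$ is again a fixed generator, so $\psi_{v_2}(z)=z$ and $\psi_{v_2}(b)=b$, whence (C9).

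I expect the only genuinely delicate point to be the bookkeeping of this last step: recognising that, although $z$ involves crosscaps $1,\dots,5$, it nonetheless lies in $\Stab[\mu_g]$ and can be written as a word in the \emph{precise} generating set supplied by Theorem \ref{pres_Stab_mu_g} — in particular, for $g=5$, that it can be expressed using $a_{g-1}u_{g-1}$ as a single symbol. The topological input (that $z$ is the push of $\mu_5$ along $\gamma_{\{1,2,3,4,5\}}$ and that $\beta$ is one of the two boundary curves of a regular neighbourhood of $\mu_5\cup\gamma_{\{1,2,3,4,5\}}$, whence $z$ commutes with $T_\beta$) is exactly what was verified while proving (C8), so no new geometry is required.
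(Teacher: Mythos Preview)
Your approach is correct and is essentially the paper's own argument: verify $bz=zb$ geometrically in $\M(N_{g,1})$ using that $z=y_4^{-1}by_4b^{-1}$ is a product of twists about curves disjoint from $\beta$, observe that the relation lives in $\Stab[\mu_g]$, and transport it back to $\G_{g,1}$ via $\psi_{v_2}$.

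Your separate treatment of $g=5$, while correct, is an unnecessary detour. The identity $z=y_4^{-1}by_4b^{-1}$ is not merely a fact in $\M(N_{g,1})$: it is exactly the defining relation (C8) of $\G_{g,1}$, rewritten. Hence, for every $g\ge 5$, the relation $bz=zb$ is equivalent in $\G_{g,1}$ to the commutation of $b$ with $y_4^{-1}by_4$, a word in the two symbols $b$ and $a_4u_4$. Both of these are generators of $\Stab[\mu_g]$ fixed by $\psi_{v_2}$ (for $g=5$ the product $a_4u_4=a_{g-1}u_{g-1}$ is the listed generator; for $g\ge 6$ the factors $a_4,u_4$ are separately listed), so the argument is uniform and the rewriting of $z$ as a product of conjugates of $y_4$ is not needed.
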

\begin{proof}
By (C8) we have 
$a_4a_3a_2a_1u_1u_2u_3u_4=(a_4u_4)^{-1}b(a_4u_4)b^{-1}$. (C9) is satisfied in $M(N_{g,1})$ because $(a_4u_4)^{-1}b(a_4u_4)$ is a Dehn twist about the curve $(a_4u_4)^{-1}(\beta)$, which is disjoint from $\beta$ up to isotopy. Since $b$, $a_4u_4\in\Stab[\mu_g]$, (C9) also holds in $\G_{g,1}$ by Theorem \ref{pres_Stab_mu_g}.
\end{proof}
We define $\mathcal{S}_{g,n}(v_2)$ to be the image of $\psi_{v_2}$ in $\G_{g,n}$.  By Theorem \ref{pres_Stab_mu_g} $\psi_{v_2}$ is an isomorphism onto $\mathcal{S}_{g,n}(v_2)$, whose inverse is the restriction of $\varphi_{g,n}$.

\begin{lemma}\label{Delta2_central}
$\Delta_g^2$ is central in $\G_{g,1}$.
\end{lemma}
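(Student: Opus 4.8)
The plan is to show that $\Delta_g^2$ commutes with a generating set of $\G_{g,1}$, namely with $u_1,\dots,u_{g-1}$, with $a_1,\dots,a_{g-1}$, and, when $g\ge 4$, with $b=b_1$ (for $g=3$ the last point is vacuous, as then $b$ does not occur among the generators). For the $u_i$ there is nothing to do: $\Delta_g^2=(u_1\cdots u_{g-1})^g$ is central in the free factor $\M(S_{0,1},\mathcal{P}_g)$ by (B5), hence commutes with each $u_i$ in $\G_{g,1}$. For the $a_i$ I would invoke (E1) twice: from $\Delta_g a_{g-i}=a_i^{-1}\Delta_g$ one gets $\Delta_g a_{g-i}^{-1}=a_i\Delta_g$, and therefore, using (E1) once more, $\Delta_g^2 a_i=\Delta_g a_{g-i}^{-1}\Delta_g=a_i\Delta_g^2$. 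So everything reduces to the commutation of $\Delta_g^2$ with $b$.

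For $g\ge 5$ I would deduce this from the stabiliser $\Stab[\mu_g]$. By Lemma \ref{Delta_in_stab}, $\Delta_g^2$ is a word in $u_1,\dots,u_{g-2}$ and $u_{g-1}^2$, and by (C4a) one has $u_{g-1}^2=(a_{g-1}u_{g-1})^2$; hence $\Delta_g^2$ is a word in the generators $u_1,\dots,u_{g-2},a_{g-1}u_{g-1}$ of $\Stab[\mu_g]$, so $\Delta_g^2$ lies in $\mathcal{S}_{g,1}(v_2)$, the image of $\psi_{v_2}$. By Theorem \ref{pres_Stab_mu_g}, $\psi_{v_2}$ is an isomorphism onto $\mathcal{S}_{g,1}(v_2)$ whose inverse is the restriction of $\varphi_{g,1}$. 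Since $\varphi_{g,1}(\Delta_g^2)$ is the Dehn twist about $\bdr{N_{g,1}}$, which is central in $\M(N_{g,1})$ and a fortiori in $\Stab[\mu_g]$, applying the isomorphism $\psi_{v_2}$ shows that $\Delta_g^2$ is central in $\mathcal{S}_{g,1}(v_2)$. As $b=b_1$ is one of the generators of $\Stab[\mu_g]$ when $g\ge 5$ (the range $2\le 2j\le g-3$ contains $j=1$), it lies in $\mathcal{S}_{g,1}(v_2)$, so $\Delta_g^2$ commutes with $b$, and we are done in this case.

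For $g=4$ the element $b_1$ is not among the listed generators of $\Stab[\mu_4]$, so instead I would compute directly: by (B7), $\Delta_4^2=(u_1u_2u_3)^4$, and by (C6), $(u_1u_2u_3)^2=(a_1a_2a_3)^{-2}(u_3b)^2$, whence $\Delta_4^2=\big((a_1a_2a_3)^{-2}(u_3b)^2\big)^2$. Now $b$ commutes with $a_1,a_2,a_3$ by (A3), hence with $(a_1a_2a_3)^{-2}$, and $b$ commutes with $(u_3b)^2$ because $(u_3b)^2=(bu_3)^2$ by (C6a), so that $b(u_3b)^2b^{-1}=(bu_3b)u_3=(bu_3)^2=(u_3b)^2$. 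Thus $b$ commutes with $\Delta_4^2$. Putting the cases together, $\Delta_g^2$ commutes with every generator of $\G_{g,1}$ and is therefore central. I expect the only slightly delicate point to be the genus-$4$ case: for $g\ge 5$ the statement is essentially inherited from the already computed stabiliser $\Stab[\mu_g]$, in which $b_1$ occurs as a generator, whereas for $g=4$ one must fall back on the explicit identity $\Delta_4^2=\big((a_1a_2a_3)^{-2}(u_3b)^2\big)^2$ together with the relations (A3) and (C6a) that link $b$ to the braid generators.
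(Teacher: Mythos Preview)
Your proof is correct and follows essentially the same approach as the paper. The paper organises the argument slightly differently: rather than checking commutation with each $a_i$ directly via (E1), it observes that $\G_{g,1}$ is generated by $\mathcal{S}_{g,1}(v_2)$ together with $u_{g-1}$ (and $b$ if $g=4$), so that centrality of $\Delta_g^2$ in $\mathcal{S}_{g,1}(v_2)$ plus commutation with $u_{g-1}$ suffices; but the stabiliser argument and the separate treatment of $b$ for $g=4$ via (A3) and (C6a) are exactly as you give them.
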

\begin{proof}
Since $u_{g-1}^2=(a_{g-1}u_{g-1})^2$ by (C4a), thus
$u_{g-1}^2\in \mathcal{S}_{g,n}(v_2)$. 
By Lemma \ref{Delta_in_stab},  
$\Delta^2_g\in\mathcal{S}_{g,1}(v_2)$. Since $\varphi_{g,1}(\Delta_g^2)$ is a Dehn twist about the boundary of $N_{g,1}$, thus it is central in $\M(N_{g,1})$, and since the restriction of $\varphi_{g,1}$ to $\mathcal{S}_{g,1}(v_2)$ is an isomorphism, thus $\Delta_g^2$ is central in $\mathcal{S}_{g,1}(v_2)$. Note that
$\G_{g,1}$ is generated by $\mathcal{S}_{g,1}(v_2)$, $u_{g-1}$, and if $g=4$ then also $b$. By (B5) $\Delta_g^2$ commutes with $u_{g-1}$ and it remains to prove that it commutes with $b$ if $g=4$. By (C6a) $b$ commutes with $(a_1a_2a_3)^2(u_1u_2u_3)^2$, and by (A3) it commutes with $(a_1a_2a_3)^2$, hence it commutes with $(u_1u_2u_3)^2$ and with $\Delta_4^2=(u_1u_2u_3)^4$.
\end{proof}
\begin{cor}\label{five}
$\varphi_{g,1}$ is an isomorphism if and only if $\varphi_{g,0}^1$ is an isomorphism.\hfill{$\Box$} 
\end{cor}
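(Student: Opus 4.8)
The plan is to present $\varphi_{g,1}$ and $\varphi_{g,0}^1$ as the middle and right vertical arrows of a morphism between two short exact sequences and then to apply the five lemma. For the bottom row I would take the exact sequence (\ref{Cup_es}) with $m=1$,
\[1\to\Z\to\M(N_{g,1})\stackrel{\imath_\ast}{\to}\M^+(N_{g,0},P)\to 1,\]
whose kernel, as recalled just before the statement, is the infinite cyclic group generated by $T_{\bdr{N_{g,1}}}=\Delta_g^2$. For the top row I would take
\[1\to\langle\Delta_g^2\rangle\to\G_{g,1}\stackrel{p}{\to}\G_{g,0}^1\to 1,\]
which is exact because $\Delta_g^2$ is central in $\G_{g,1}$ by Lemma \ref{Delta2_central}, so its normal closure --- which is $\ker p$ by the very definition of $\G_{g,0}^1$ --- equals the cyclic subgroup $\langle\Delta_g^2\rangle$.

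Next I would check that the resulting ladder commutes. The right-hand square is exactly the identity $\varphi_{g,0}^1\circ p=\imath_\ast\circ\varphi_{g,1}$ used to define $\varphi_{g,0}^1$. Since $\varphi_{g,1}(\Delta_g^2)=T_{\bdr{N_{g,1}}}$ lies in $\ker\imath_\ast$, the homomorphism $\varphi_{g,1}$ carries $\langle\Delta_g^2\rangle$ into $\Z$, which simultaneously makes the left-hand square commute and supplies the restricted vertical arrow $\langle\Delta_g^2\rangle\to\Z$. The one extra remark needed is that this restricted arrow is an isomorphism: it is surjective because $T_{\bdr{N_{g,1}}}$ generates $\ker\imath_\ast$, and a cyclic group admitting a surjection onto $\Z$ is necessarily infinite cyclic with that surjection an isomorphism.

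With the kernel arrow an isomorphism, the five lemma yields both implications at once. If $\varphi_{g,1}$ is an isomorphism, it restricts to an isomorphism of the kernels and hence descends to an isomorphism of the cokernels, namely $\varphi_{g,0}^1$. Conversely, if $\varphi_{g,0}^1$ is an isomorphism, then, the two outer vertical maps being isomorphisms, the five lemma forces $\varphi_{g,1}$ to be an isomorphism as well. I do not anticipate any genuine obstacle here: the whole argument rests on Lemma \ref{Delta2_central} and the already-established description of $\ker\imath_\ast$, and the remainder is a routine diagram chase.
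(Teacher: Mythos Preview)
Your proposal is correct and follows essentially the same approach as the paper: set up the commutative ladder with the exact sequence $1\to\langle\Delta_g^2\rangle\to\G_{g,1}\to\G_{g,0}^1\to 1$ on top (using Lemma \ref{Delta2_central} to identify the kernel) and the sequence (\ref{Cup_es}) on the bottom, verify that $\varphi_{g,1}$ restricts to an isomorphism on the kernels, and apply the five lemma. Your justification that the kernel map is an isomorphism (surjective onto $\Z$, hence an isomorphism of cyclic groups) is slightly more explicit than the paper's, but the argument is the same.
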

\begin{proof}
By Lemma \ref{Delta2_central} the normal closure of $\Delta_g^2$ is a cyclic subgroup of $\G_{g,1}$. Moreover, as $\ker\imath_\ast$ is infinite cyclic, thus the restriction of $\varphi_{g,1}$ to the subgroup of $\G_{g,1}$ generated by $\Delta_g^2$ is an isomorphism onto $\ker\imath_\ast$. We have the following commutative diagram.
\[
\begin{CD}
1 @>>> \Z @>>>  \G_{g,1} @>>> \G_{g,0}^1 @>>> 1\\
@. @| @VV\varphi_{g,1}V @VV\varphi_{g,0}^1V \\
1 @>>> \Z @>>> \M(N_{g,1}) @>\imath_\ast>> \M^+(N_{g,0},P) @>>> 1 
\end{CD}\]
Since the rows are exact, the  corollary follows from the five lemma.
\end{proof}
 Let $\mathcal{S}_{g,0}^1(v_2)$ be the image of  
$\mathcal{S}_{g,1}(v_2)$ under the canonical projection $p\colon\G_{g,1}\to\G_{g,0}^1$.
Since $\Stab_{\M^+(N_{g,0},P)}[\mu_g]=\imath_\ast(\Stab_{\M(N_{g,1})}[\mu_g])$, there is an isomorphism \[\psi_{v_2}^1\colon \Stab_{\M^+(N_{g,0},P)}[\mu_g]\to\mathcal{S}_{g,0}^1,\] such that
$\imath_\ast\circ\psi_{v_2}^1=\psi_{v_2}\circ p$, and whose inverse is the restriction of $\varphi_{g,0}^1$.
\section{Proof of Theorem \ref{mainA}.}\label{sec_mainA}
In this section we assume that $g\ge 4$ is fixed, Theorem \ref{mainB} is true for $g$ and Theorem \ref{mainA} is true for $g-1$. The last assumption implies that Theorem \ref{pres_Stab_mu_g} is true for $g$, as well as Lemma \ref{relC9} and Corollary \ref{five}.
Theorem \ref{mainA} for $g$ will follow from Corollary \ref{five} and the following theorem.
\begin{theorem}\label{mainA_punctured}
$\varphi_{g,0}^1\colon\G_{g,0}^1\to\M^+(N_{g,0},P)$ is an isomorphism.
\end{theorem}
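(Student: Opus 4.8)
The plan is to derive a finite presentation of $\M^+(N_{g,0},P)$ from its relation with $\M(N_{g,0})$ --- whose presentation is available for the present $g$ by the hypothesis that Theorem \ref{mainB} holds for $g$ --- and then to write down an explicit two-sided inverse of $\varphi_{g,0}^1$. First I would set up the relevant exact sequence: applying the Birman exact sequence (\ref{Bir_es}) with $F=N_{g,0}$ and $m=1$ gives $1\to\pi_1(N_{g,0},P)\xrightarrow{\mathfrak{p}}\M(N_{g,0},P)\xrightarrow{\mathfrak{f}}\M(N_{g,0})\to 1$. Since $\mathfrak{p}[\gamma]$ lies in the index-two subgroup $\M^+(N_{g,0},P)$ exactly when $\gamma$ is two-sided, and $\mathfrak{f}$ restricted to $\M^+(N_{g,0},P)$ is still onto $\M(N_{g,0})$, this restricts to
\[1\to\pi_1^+(N_{g,0},P)\xrightarrow{\ \mathfrak{p}\ }\M^+(N_{g,0},P)\xrightarrow{\ \mathfrak{f}\ }\M(N_{g,0})\to 1,\]
where $\pi_1^+(N_{g,0},P)$ is the subgroup of homotopy classes of two-sided loops; it is the fundamental group of the orientation double cover $S_{g-1,0}$, hence a closed orientable surface group of genus $g-1$, with a one-relator presentation on $2(g-1)$ generators.

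Next I would apply Lemma \ref{ext_pres} to this sequence. As cokernel data I take the presentation of $\M(N_{g,0})=\G_{g,0}$ from Theorem \ref{mainB}, with cokernel generators $a_i,u_i$ lifted to $\widetilde a_i=\imath_\ast(a_i)$ and $\widetilde u_i=\imath_\ast(u_i)$, the Dehn twists and crosscap transpositions supported away from $P$. As kernel data I take the surface-group presentation of $\pi_1^+(N_{g,0},P)$ with generators the classes of explicit two-sided simple loops $\lambda_1,\lambda_2,\dots$ based at $P$, obtained as products of the one-sided loops $\eta_i$ running around the crosscaps of Figure \ref{aI}. By Lemma \ref{push1} each $\mathfrak{p}[\lambda_j]$ is a product of two Dehn twists about the boundary of a regular neighbourhood of $\lambda_j$, and these boundary curves are of the type treated in Subsection \ref{notation}, so $\mathfrak{p}[\lambda_j]$ equals a word $W_j$ in the $\widetilde a_i,\widetilde u_i$ (using, as in the proof of Theorem \ref{pres_Stab_mu_g}, that the crosscap slides $Y_{\mu_g,\gamma_{\{i,g\}}}$ are such words). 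Lemma \ref{ext_pres} then outputs a finite presentation of $\M^+(N_{g,0},P)$ with generators $\widetilde a_i,\widetilde u_i,\mathfrak{p}[\lambda_j]$ and three families of relations: the single surface-group kernel relation; the cokernel relations coming from (A1--A9, B1--B4, C1--C8, D); and the conjugation relations $\widetilde x\,\mathfrak{p}[\lambda_j]\,\widetilde x^{-1}=\mathfrak{p}[x[\lambda_j]]$ (Lemma \ref{push2}), one for each cokernel generator $\widetilde x$, whose right-hand sides are read off the $\M(N_{g,0})$-action on $\pi_1^+(N_{g,0},P)$.

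Then I would define $\psi\colon\M^+(N_{g,0},P)\to\G_{g,0}^1$ on this presentation by $\psi(\widetilde a_i)=a_i$, $\psi(\widetilde u_i)=u_i$, $\psi(\mathfrak{p}[\lambda_j])=W_j$ (the same word read in $\G_{g,0}^1$), and verify that $\psi$ respects every defining relation. The cokernel relations coming from (A1--A9, B1, B2, C1--C8) are immediate, since these already hold in $\G_{g,1}$ hence in $\G_{g,0}^1$, and (B3) is precisely the relation we quotiented by. The work is concentrated in the relation attached to (B4) --- which is \emph{not} a relation of $\G_{g,0}^1$; its cokernel relation instead identifies the $u$-word $(u_1\cdots u_{g-2})^{g-1}=\Delta_{g-1}^2$ with $\mathfrak{p}[\eta_g^2]$, so $\psi$ sends it to an identity between $u$-words that holds in $\G_{g,0}^1$ by Lemma \ref{shortcut_AB} --- and in the relation attached to (D), which identifies $\widetilde a_1\widetilde w\widetilde a_1\widetilde w^{-1}$, with $w=a_2\cdots a_{g-1}u_{g-1}\cdots u_2$, with a point-push, and is checked in $\G_{g,0}^1$ using the relations of Lemma \ref{useful_T} together with (Da). The kernel relation, being expressible purely in the $u_i$, is checked by Lemma \ref{shortcut_AB}, and the conjugation relations by explicit computation of the action on the $\lambda_j$, for which Theorem \ref{pres_Stab_mu_g} (the section $\psi_{v_2}$), Corollary \ref{genus3_shortcut}, and Lemmas \ref{useful_C}--\ref{useful_g4} supply the required identities in $\G_{g,0}^1$. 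Once $\psi$ is a homomorphism, $\psi\circ\varphi_{g,0}^1$ and $\varphi_{g,0}^1\circ\psi$ fix every generator, so $\psi=(\varphi_{g,0}^1)^{-1}$ and $\varphi_{g,0}^1$ is an isomorphism.

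I expect the main obstacle to be exactly the verification in the last paragraph: recognising the Dehn-twist products coming from (B4) and (D) as point-pushing maps and matching them with the chosen loops, and then pushing everything through $\psi$ to identities valid in $\G_{g,0}^1$. The bookkeeping for the conjugation relations is long but routine once the $\M(N_{g,0})$-action on $\pi_1^+(N_{g,0},P)$ is made explicit; the genuine difficulty lies in the two relations that separate $\M(N_{g,0})$ from $\M^+(N_{g,0},P)$, and it is there that the consequences of the defining relations gathered in Section \ref{sec_pres} and the section $\psi_{v_2}$ of Theorem \ref{pres_Stab_mu_g} do the heavy lifting.
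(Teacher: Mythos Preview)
Your overall strategy---Birman sequence plus the presentation of $\M(N_{g,0})$ from Theorem \ref{mainB}, then build an explicit inverse $\psi$---is the same as the paper's. But you diverge at the first step: you restrict the Birman sequence to $\M^+(N_{g,0},P)$ and work with the orientation subgroup $\pi_1^+\cong\pi_1(S_{g-1,0})$ directly, whereas the paper applies Lemma \ref{ext_pres} to the \emph{full} sequence $1\to\pi_1(N_{g,0},P)\to\M(N_{g,0},P)\to\M(N_{g,0})\to 1$, obtains a presentation of $\M(N_{g,0},P)$, and only then passes to the index-two subgroup $\M^+(N_{g,0},P)$ by Reidemeister--Schreier with transversal $\{1,\sigma\}$, $\sigma=\mathfrak p[\eta_1]$.

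This is not merely cosmetic. In the full $\pi_1$ the kernel generators are $\sigma_i=\mathfrak p[\eta_i]$ and the $u_j$ act on them braid-transitively ($u_i\sigma_{i+1}u_i^{-1}=\sigma_i$); the paper exploits this to reduce all conjugation relations to eight relations (R1)--(R8) in $\sigma_1$ alone, and then Reidemeister--Schreier yields primed generators $a_i',u_i',b'$ with a clear meaning. Your $\pi_1^+$ has $2(g-1)$ generators with no such transitivity, so the conjugation relations do not collapse. In particular, your assertion that the surface-group kernel relation is ``expressible purely in the $u_i$'' is not justified in your setup: for two-sided $\lambda_j$, Lemma \ref{push1} gives $\mathfrak p[\lambda_j]$ as a product of Dehn twists about curves that in general are not supported on crosscap transpositions, so the words $W_j$ involve $a_i$'s and $b$, and Lemma \ref{shortcut_AB} does not apply directly. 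In the paper this works precisely because every $\sigma_i$ is a $u$-conjugate of $\sigma_1$ and $\sigma_1^2$ is a $u$-word.

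You also underestimate the verification step. The paper isolates a genuine technical core that is not routine: Claim~1 (the redundancy of (R7), via a page-long computation in $r_g$ and (E2a)--(E3)), and Claims~2--3 establishing that $(\mathrm{C6}')$ is expressible in $\cS$ and that $\psi(b')=r_gb^{-1}(a_1a_2a_3)^4r_g$. These are where Lemmas \ref{useful_T}--\ref{useful_g4} and $\psi_{v_2}$ actually earn their keep, and analogous computations would have to be redone from scratch in your framework. Your route can be made to work, but only after replicating these reductions for your generators of $\pi_1^+$; the paper's two-step detour through $\M(N_{g,0},P)$ is a real simplification, not an accident.
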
 
\begin{proof}
First we will obtain a presentation for $\M(N_{g,0},P)$ by applying Lemma \ref{ext_pres} to the Birman exact sequence (\ref{Bir_es})
\[
1\to\pi_1(N_{g,0},P)\stackrel{\mathfrak{p}}{\to}\M(N_{g,0},P)\stackrel{\mathfrak{f}}{\to}\M(N_{g,0})\to 1 
\]
and the presentation of $\M(N_{g,0})$ given in Theorem \ref{mainB}.
Then we will apply the Reidemeister-Schreier method to find a presentation of $\M^+(N_{g,0},P)$, which is an index $2$ subgroup of $\M(N_{g,0},P)$.
 
To obtain a presentation for $\M(N_{g,0},P)$ we proceed in the same way as we did in the proof of Theorem \ref{pres_Stab_mu_g}, with the following differences: (1) we use the sequence (\ref{Bir_es}) instead of (\ref{stab_bir_es}); (2) in the presentation of  $\M(N_{g,0})$ we use (D) instead of (Da), and we replace the relation (B4) by the equivalent relation 
\[(\textrm{B4b})\, (u_2\cdots u_{g-1})^{g-1}=1,\]
obtained by conjugating (B4) by $\Delta_g$;  (3) to produce the conjugation relations we use the cokernel generators $u_i$ for $i=1,\dots,g-1$, $b$ and  $a_1^{-1}$ (instead of $a_{g-1}$). As a result we obtain a presentation with 
kernel generators $\sigma_i=\mathfrak{p}[\eta_i]$ for $i\in\{1,\dots, g\}$ and 
cokernel generators $u_i, a_i$ and $b_j$ for $1\le i\le g-1$, 
$0\le 2j\le g-2$. There is the single kernel relation
\[(\mathrm{K})\qquad \sigma_{g}^2\cdots \sigma_1^2=1,\]
the cokernel relations are the defining relations of $\M(N_{g,0})$ except for (B4b) and (D), instead of which we have  
\begin{itemize}
\item[($\widetilde{\textrm{B4b}}$)] $(u_2\cdots u_{g-1})^{g-1}=\sigma_{1}^2$ 
\item[($\widetilde{\textrm{D}}$)] $a_1(a_2\cdots a_{g-1}u_{g-1}\cdots u_2)a_1(a_2\cdots a_{g-1}u_{g-1}\cdots u_2)^{-1}=(\sigma_2\sigma_1)^{-1}$
\end{itemize}
By Lemma \ref{relC9}, if $g\ge 5$ then (C9) is a consequence of the cokernel relations.

The conjugation relations are 
\begin{align*}
&(1)\ u_i\sigma_{i+1}u_i^{-1}=\sigma_i,\quad
(2)\ u_i\sigma_iu_i^{-1}=\sigma_i^{-2}\sigma_{i+1}\sigma_i^2,\quad
(3)\ u_i\sigma_ju_i^{-1}=\sigma_j \textrm{\ for\ } j\ne i,i+1,\\
&(4)\ a_1^{-1}\sigma_1a_1=\sigma_2\sigma_1^2,\quad
(5)\ a_1^{-1}\sigma_2a_1=\sigma_2\sigma_1^{-1}\sigma_2^{-1},\quad
(6)\ a_1^{-1}\sigma_ia_1=\sigma_i\textrm{\ for\ }i>2,\\
&(7)\ b^{-1}\sigma_jb=\sigma_j \textrm{\ for\ }j>4,\quad 
(8)\ b^{-1}\sigma_1b=\delta\sigma_1,\quad
(9)\ b^{-1}\sigma_2\sigma_1^2b=\delta\sigma_2\sigma_1^2,\\ 
&(10)\ b^{-1}\sigma_3\sigma_2^2\sigma_1^2b=\delta\sigma_3\sigma_2^2\sigma_1^2,\quad
(11)\  b^{-1}\sigma_4\sigma_3^2\sigma_2^2\sigma_1^2b=\delta\sigma_4\sigma_3^2\sigma_2^2\sigma_1^2,
\quad\textrm{where\ }\delta=\sigma_4\sigma_3\sigma_2\sigma_1.
\end{align*}
For $i>1$ and $j\notin\{i,i+1\}$ we have
\[a_i^{-1}\sigma_ia_i=\sigma_{i+1}\sigma_i^2,\quad a_i^{-1}\sigma_{i+1}a_i=\sigma_{i+1}\sigma_i^{-1}\sigma_{i+1}^{-1},\quad a_i^{-1}\sigma_ja_i=\sigma_j.\]
Since $a_i$ can be expressed in terms of $a_1$ and the $u_k$'s by (C2) or (C3), these relations follow from (C2,C3), (1--6) and (K). As a consequence of the above relations we have
\begin{align*}
&a_1^{-1}\sigma_1a_1=\sigma_2\sigma_1^2,\quad a_2^{-1}a_1^{-1}\sigma_1a_1a_2=\sigma_3\sigma_2^2\sigma^2_1\\
&a_3^{-1}a_2^{-1}a_1^{-1}\sigma_1a_1a_2a_3=\sigma_4\sigma_3^2\sigma^2_2\sigma_1^2,\quad
a_j^{-1}\delta a_j=\delta,\textrm{\ for\ }j=1,2,3. 
\end{align*}
These relations together with (8) and (A3) imply (9,10,11). Hence the last relations are redundant. We also have 
\begin{align*}
&(a_2a_1a_3a_2)^{-1}\sigma_2\sigma_1(a_2a_1a_3a_2)=
(a_1a_3a_2)^{-1}(\sigma_3\sigma_2^2\sigma_1)(a_1a_3a_2)=\\
&(a_3a_2)^{-1}(\sigma_3\sigma_2)(a_3a_2)=
a_2^{-1}(\sigma_4\sigma_3^2\sigma_2)a_2=\sigma_4\sigma_3.
\end{align*}
It follows that (8) can be replaced by
\[\textrm{(8')\ }  b^{-1}\sigma_1b=(a_2a_1a_3a_2)^{-1}\sigma_2\sigma_1(a_2a_1a_3a_2)\sigma_2\sigma_1^2.\] 
Similarly as in the proof of Theorem \ref{pres_Stab_mu_g} it can be proved that  (2,3,6,7) can be replaced by
\begin{align*}
&\textrm{(2')\ } u_1\sigma_1 u_1^{-1}=\sigma_1^{-2}\sigma_2\sigma_1^2,\quad
\textrm{(3')\ } u_i\sigma_1 u_i^{-1}=\sigma_1\textrm{\ for\ }i\ge 2,\\
&\textrm{(6')\ } a_1^{-1}\sigma_3 a_1=\sigma_3,\quad
\textrm{(7')\ } b\sigma_5 b^{-1}=\sigma_5.
\end{align*}
We have
\[a_1^{-1}\sigma_3 a_1\stackrel{(1)}{=}a_1^{-1}u_2^{-1}u_1^{-1}\sigma_1u_1u_2a_1\stackrel{(C3)}{=}u_2^{-1}u_1^{-1}a_2^{-1}\sigma_1a_2u_1u_2.\] 
Therefore we can replace (6') by (6'') $a_2^{-1}\sigma_1 a_2=\sigma_1$.
By (1), the relation $(5)$ is equivalent to
\begin{align*}
&a_1^{-1}u_1^{-1}\sigma_1u_1a_1=u_1^{-1}\sigma_1u_1\sigma_1^{-1}u_1^{-1}\sigma_1^{-1}u_1\stackrel{(C4)}{\Longleftrightarrow}\\
&a_1\sigma_1a_1^{-1}=\sigma_1u_1\sigma_1^{-1}u_1^{-1}\sigma_1^{-1}\stackrel{(1)}{\Longleftrightarrow}
a_1\sigma_1a_1^{-1}=\sigma_1u_1\sigma_1^{-1}\sigma_2^{-1}u_1^{-1} \textrm{\ (5')}.
\end{align*}
Let $z=a_4a_3a_2a_1u_1u_2u_3u_4$. We have 
\[\sigma_5\stackrel{(1)}{=}(u_1u_2u_3u_4)^{-1}\sigma_1(u_1u_2u_3u_4)=
z^{-1}(a_4a_3a_2a_1)\sigma_1(a_4a_3a_2a_1)^{-1}z.\]  Since $z$ commutes with $b$ by (C9), we can replace (7') by
\[\textrm{(7'')\ }  b^{-1}(a_4a_3a_2a_1)\sigma_1(a_4a_3a_2a_1)^{-1}b=(a_4a_3a_2a_1)\sigma_1(a_4a_3a_2a_1)^{-1}.\]
Summarising, we have reduced the conjugation relations to the following ones, which are rewritten in a convenient way.
\begin{itemize}
\item[(R1)] $u_i\sigma_{i+1}u_i^{-1}=\sigma_i$ for $i=1,\dots,g-1$
\item[(R2)] $\sigma_1u_1\sigma_1^{-1}=(\sigma_2\sigma_1)^{-1}\sigma_1^2u_1$
\item[(R3)] $\sigma_1u_i\sigma_1^{-1}=u_i$ for $i\ge 2$
\item[(R4)] $\sigma_1a_1\sigma_1^{-1}=a_1(\sigma_2\sigma_1)$
\item[(R5)] $\sigma_1^{-1}a_1\sigma_1=u_1(\sigma_2\sigma_1)^{-1}u_1^{-1}a_1$
\item[(R6)] $\sigma_1a_2\sigma_1^{-1}=a_2$
\item[(R7)] $\sigma_1(a_4a_3a_2a_1)^{-1}b(a_4a_3a_2a_1)\sigma_1^{-1}=(a_4a_3a_2a_1)^{-1}b(a_4a_3a_2a_1)$
\item[(R8)] $\sigma_1b\sigma_1^{-1}=b(a_2a_1a_3a_2)^{-1}\sigma_2\sigma_1(a_2a_1a_3a_2)\sigma_2\sigma_1$
\end{itemize}
Since all defining relations of $\mathcal{G}_{g,0}^1$ appear as cokernel relations in our presentation, the relations (E2a, E3) from Lemma \ref{useful_T} are consequences of the cokernel relations. Let $r=r_g$ and note that
($\widetilde{\textrm{D}}$) can be rewritten as 
\[\sigma_2\sigma_1=a_1^{-1}ru_1^{-1}a_1^{-1}u_1r^{-1}\stackrel{C4, E2a}{=}a_1^{-1}ra_1r.\]

{\bf Claim 1:} (R7) is redundant.

{\it Proof of Claim 1.} 
By (A1--A4) we have 
\[(a_4a_3a_2a_1)^{-1}b(a_4a_3a_2a_1)=(ba_4a_3a_2)a_1(ba_4a_3a_2)^{-1}\]
thus (R7) is equivalent to 
\[\sigma_1(ba_4a_3a_2)a_1(ba_4a_3a_2)^{-1}\sigma_1^{-1}=(ba_4a_3a_2)a_1(ba_4a_3a_2)^{-1}\]
It is a consequence of (R3), (R6) and (C3) that $\sigma_1$ commutes with $a_2$, $a_3$ and $a_4$. It follows that the relation above is equivalent to
\[
(b^{-1}\sigma_1b\sigma_1^{-1})a_4a_3a_2(\sigma_1a_1\sigma_1^{-1})(a_4a_3a_2)^{-1}(\sigma_1b^{-1}\sigma_1^{-1}b)=(a_4a_3a_2)a_1(a_4a_3a_2)^{-1}\]
Let $L$ denote the left hand side of the last relation.
By ($\widetilde{D}$, R4, R8) we have
\begin{align*}
L&=((a_2a_1a_3a_2)^{-1}a_1^{-1}ra_1r(a_2a_1a_3a_2)a_1^{-1}ra_1r)a_4a_3a_2(ra_1r)\cdot\\
&a_2^{-1}a_3^{-1}a_4^{-1}(ra_1^{-1}ra_1(a_2a_1a_3a_2)^{-1}ra_1^{-1}ra_1(a_2a_1a_3a_2)),
\end{align*}
and since
\begin{align*}
&(a_2a_1a_3a_2)a_1^{-1}ra_1\underline{ra_4a_3a_2r}a_1\underline{ra_2^{-1}a_3^{-1}a_4^{-1}r}a_1^{-1}ra_1(a_2a_1a_3a_2)^{-1}\stackrel{(E2a, E3)}{=}\\
&(a_2a_1a_3a_2)a_1^{-1}r\underline{a_1a_4a_3a_2a_1a_2^{-1}a_3^{-1}a_4^{-1}a_1^{-1}}ra_1(a_2a_1a_3a_2)^{-1}\stackrel{(A1,A2)}{=}\\
&(a_2a_1a_3a_2)a_1^{-1}\underline{ra_2^{-1}a_3^{-1}a_4a_3a_2r}a_1(a_2a_1a_3a_2)^{-1}\stackrel{(E2a, E3)}{=}\\
&(a_2a_1a_3a_2)a_1^{-1}a_2^{-1}a_3^{-1}a_4a_3a_2a_1(a_2a_1a_3a_2)^{-1}\stackrel{(A1,A2)}{=}a_3^{-1}a_4a_3,
\end{align*}
thus
\begin{align*} 
&L=a_2^{-1}a_3^{-1}a_1^{-1}a_2^{-1}\underline{a_1^{-1}ra_1ra_3^{-1}a_4a_3ra_1^{-1}ra_1}a_2a_1a_3a_2\stackrel{(E2a, E3, A1)}{=}\\
&a_2^{-1}a_3^{-1}a_1^{-1}a_2^{-1}a_3^{-1}a_4a_3a_2a_1a_3a_2\stackrel{(A1,A2)}{=}
(a_4a_3a_2)a_1(a_4a_3a_2)^{-1}.
\end{align*}
This ends the proof of Claim 1, which allows us to rule out (R7) from the presentation. 
Then we make the following transformations.

{\bf (1)} By using the relations (R1) and ($\widetilde{\textrm{B4b}}$) we rewrite (K) in the generators $u_i$. By Lemma \ref{shortcut_AB}, (K) can be removed from the presentation.

{\bf (2)} In  (C8) we replace $a_4a_3a_2a_1u_1u_2u_3u_4$ by the right hand side of the following equation, which is a consequence of (B1, B2, C2).
\begin{align*}
&a_4u_4(u_4^{-1}a_3u_3u_4)(u_4^{-1}u_3^{-1}a_2u_2u_3u_4)(u_4^{-1}u_3^{-1}u_2^{-1}a_1u_1u_2u_3u_4)=\\
&a_4u_4(u_3a_4u_4u_3^{-1})(u_2u_3a_4u_4u_3^{-1}u_2^{-1})(u_1u_2u_3a_4u_4u_3^{-1}u_2^{-1}u_1^{-1})
\end{align*}
In this way (C8) is expressed in the generators of $\Stab[\mu_g]$.

{\bf (3)} In (C6) we replace $(a_1a_2a_3)^2(u_1u_2u_3)^2$ by its expression in  the generators of $\Stab[\mu_g]$ resulting from the following equations.
\begin{align*}
&(a_1a_2a_3)^2(u_1u_2u_3)^2\stackrel{(C1a,C3)}{=}a_1a_2a_3a_1a_2(u_1u_2u_3)^2a_1
\stackrel{(A1, B1, B2)}{=}\\
&a_1a_2a_1a_3a_2(u_2u_3u_1u_2u_3^2)a_1=a_1a_2a_1a_3u_3(u_3^{-1}a_2u_2u_3)u_1u_2u_3^2a_1\stackrel{(B2, C2)}{=}\\
&a_1a_2a_1a_3u_3(u_2a_3u_3u_2^{-1})u_1u_2u_3^2a_1\stackrel{(C4a)}{=}a_1a_2a_1(a_3u_3)u_2(a_3u_3)u_2^{-1}u_1u_2(a_3u_3)^2a_1
\end{align*}

{\bf (4)} Using Lemma \ref{Delta_in_stab}, we replace 
$(u_1\cdots u_{g-1})^g$ by \[u_{g-1}^2(u_{g-2}u_{g-1}^2u_{g-2})\cdots(u_1\cdots u_{g-1}^2\cdots u_1)\] in (B3), and 
$(u_2\cdots u_{g-1})^{g-1}$ by
\[u_{g-1}^2(u_{g-2}u_{g-1}^2u_{g-2})\cdots(u_2\cdots u_{g-1}^2\cdots u_2)\]
in ($\widetilde{\textrm{B4b}}$). Note that these are expressions in the generators of
$\Stab[\mu_g]$.

{\bf (5)}
We replace $(\sigma_2\sigma_1)$  by $a_1^{-1}ra_1r$ in (R2--R6, R8) and by
$u_1^{-1}\sigma_1u_1\sigma_1^{-1}\sigma_1^2$ in 
($\widetilde{\textrm{D}}$),
then we rule out the generators $\sigma_i$ for $i>1$ together with (R1).

{\bf (6)} If $g=4$, then we replace the right hand side of (R8) by
$b(a_1a_2a_3)^{-4}$. We have to prove that this yields an equivalent relation.
Let $w=a_1a_2a_3$, $z=u_3u_2u_1$. We have $r=wz$ and
\begin{align*}
ra_1^{-1}ra_1&=wza_2a_3za_1\stackrel{(C1a,C2)}{=}wa_1a_2z^2a_1\stackrel{(B1,B2)}{=}
wa_1a_2(u_3^2u_2u_3u_1u_2)a_1\\
&\stackrel{(C1a,C3)}{=}wa_1a_2a_3(u_3^2u_2u_3u_1u_2)=w^2z^2.
\end{align*}
Hence, the inverse of the right hand side of (R8) times $b$ equals
\begin{align*}
&w^2z^2\underline{(a_2a_1a_3a_2)^{-1}w^2}z^2(a_2a_1a_3a_2)
\stackrel{(A1,A2)}{=}w^2z^2a_3^2z^2(a_2a_1a_3a_2)\\
&\stackrel{(B1,B2)}{=}w^2\underline{(u_3^2u_2u_1u_3u_2)a_3^2}z^2(a_2a_1a_3a_2)
\stackrel{(C1,C2)}{=}w^2a_1^2z^4a_2a_1a_3a_2\\
&\stackrel{(B3)}{=}w^2a_1^2a_2a_1a_3a_2\stackrel{(A1,A2)}{=}(a_1a_2a_3)^4.
\end{align*}

{\bf (7)} In (A9) we replace $b_\rho$ by the expression in the generators $b$ and $a_i$ resulting from (A7,A8).
Then we rule out the generators $b_j$ for $j\ne 1$ together with  (A7, A8).

As a result of these transformations we obtain a presentation with generators
$a_i, u_i$ for $i=1,\dots,g-1$, $b$ and $\sigma_1$, which we will denote simply
as $\sigma$. Until the end of this proof, we will denote $\M(N_{g,0},P)$ as $\M$,   $\M^+(N_{g,0},P)$ as $\M^+$, $\mathcal{G}_{g,0}^1$ as $\G$, and
$\varphi_{g,0}^1$ as $\varphi$.

In the next step we obtain a presentation for $\M^+$ by  the Reidemeister-Shreier method, for which we take $\{1,\sigma\}$ as a transversal of $\M/\M^+$. The generators of $\M^+$ are:
\[b,\, a_i,\, u_i,\, b'=\sigma b\sigma^{-1},\, a_i'=\sigma a_i\sigma^{-1},\, u_i'=\sigma u_i\sigma^{-1},\, \sigma^2,\]
for $i=1,\dots,g-1$. 
Note that every defining relation (Rel) of $\M$ can be regarded as a relation in the generators of  $\M^+$. Conjugating (Rel) by $\sigma$ yields another relation (Rel'), obtained by replacing in (Rel) every 
$x\in\{a_i, u_i, b\}$ by $x'$, and every $x'$ by $\sigma^2x\sigma^{-2}$.
The relations (Rel) and (Rel'), for all defining relations (Rel) of $\M$, are the defining relations of $\M^+$.

Let $G_{\M^+}=\{a_i, u_i, a_i', u_i'\,|\,1\le i\le g-1\}\cup\{b, b', \sigma^2\}$ denote the set of generators of $\M^+$. We define $\psi\colon G_{\M^+}\to\G$  as
\begin{align*}
&\psi(x)=x\quad\textrm{for\ }x\in\{a_i, u_i\,|\,i=1,\dots,g-1\}\cup\{b\},\\ 
&\psi(a'_i)=a_i,\quad \psi(u'_i)=u_i\quad\textrm{for\ }i=2,\dots,g-1,\\ 
&\psi(\sigma^2)=u_{g-1}^2(u_{g-2}u_{g-1}^2u_{g-2})\cdots(u_2\cdots u_{g-1}^2\cdots u_2),\\
&\psi(a'_1)=ra_1r,\\
&\psi(u_1')=ra_1^{-1}ra_1\psi(\sigma^2)u_1,\\
&\psi(b')=b(a_2a_1a_3a_2)^{-1}a_1^{-1}ra_1r(a_2a_1a_3a_2)a_1^{-1}ra_1r
\end{align*}
Observe that for $x\in G_{\M^+}$ we have $\varphi(\psi(x))=x$. In order to show that $\psi$ can be extended to a homomorphism $\psi\colon\M^+\to\G$, we have to show that it respects the defining  relations of 
$\M^+$.

It is obvious that $\psi$ respects the relations
(A1--A6, A9, B1--B3, C1--C8, $\widetilde{\textrm{B4b}}$, R2, R3, R4, R6, R8).

Let $\mathcal{S}=\mathcal{S}_{g,0}^1(v_2)$ and recall that this is the subgroup of $\G$ generated by $a_i$, $u_i$ for $i\in\{1,\dots,g-2\}$, $a_{g-1}u_{g-1}$ and if $g>4$ then also $b$.
Suppose that $w=x_1^{\varepsilon_1}\cdots x_k^{\varepsilon_k}$, where $x_i\in G_{\M^+}$ and $\varepsilon_i\in\{1,-1\}$. We say that $w$ {\it is expressible in} $\mathcal{S}$ if 
$\psi(x_1)^{\varepsilon_1}\cdots \psi(x_k)^{\varepsilon_k}\in\mathcal{S}$, and a relation in $\M^+$ is expressible in $\mathcal{S}$ if its both sides are expressible in $\mathcal{S}$. If $x_1^{\varepsilon_1}\cdots x_k^{\varepsilon_k}=1$ is a relation expressible in $\mathcal{S}$, then since $\varphi(\psi(x_i))=x_i$ and the restriction of  $\varphi$ to $\mathcal{S}$ is an isomorphism, $\psi(x_1)^{\varepsilon_1}\cdots \psi(x_k)^{\varepsilon_k}=1$. Thus $\psi$ respects the relations expressible in $\mathcal{S}$.

 Set
$G_{\cS^+}=\{a_i, u_i, a_i', u_i'\,|\,1\le i\le g-2\}\cup\{\sigma^2, a_{g-1}u_{g-1}, a'_{g-1}u'_{g-1}, u^2_{g-1}, {u'}^2_{g-1}\}$ plus $\{b,b'\}$ if $g>4$. Observe that every element of $G_{\cS^+}$ is expressible in $\cS$, hence every word on $G_{\M+}\cup G_{\M+}^{-1}$ obtained as a concatenation of elements of $G_{\cS+}\cup G_{\cS+}^{-1}$ is expressible in $\cS$. It follows that the following relations are expressible in $\mathcal{S}$, hence are respected by $\psi$:
(R2', R4', R5, R5', R6', $\widetilde{\textrm{D}}$, $\widetilde{\textrm{D}}$', B3', $\widetilde{\textrm{B4b}}$', C4', C5', C8') and (R8') if $g>4$.
(R3') is expressible in $\mathcal{S}$ for $i<g-1$, and for $i=g-1$ it is
$\sigma^2u_{g-1}\sigma^{-2}=u'_{g-1}$. Since $\psi(u'_{g-1})=\psi(u_{g-1})=u_{g-1}$ and $\psi(\sigma^2)$ is a word in the $u_i$'s, thus $\psi$ respects (R3') for $i=g-1$ by Lemma \ref{shortcut_AB}.

\medskip
{\bf Claim 2.} (C6') is expressible in $\mathcal{S}$.

{\it Proof of Claim 2.}
The right hand side of (C6') (after transformation (3)) is a concatenation of elements of $G_{\cS+}\cup G_{\cS+}^{-1}$, thus it is expressible in $\cS$. It suffices to show that the same is true for the left hand side.
There is nothing to do if $g>4$, so we assume $g=4$. 
Let $w=(a_1a_2a_3)$.
\begin{align*}
&(\psi(u_3')\psi(b'))^2=(u_3bw^{-4})^2\stackrel{(A3)}{=}u_3w^{-4}(bu_3)^2u_3^{-1}w^{-4}
\stackrel{(C6a)}{=}
u_3w^{-2}(u_1u_2u_3u_1u_2)w^{-4}\\
&=u_3\underline{w^{-2}\Delta_4} u_1^{-1}w^{-4}\stackrel{(E1)}{=}u_3\Delta_4 \underline{w^{2}u_1^{-1}w^{-4}}
\stackrel{(C1a, C5a)}{=}
u_3\Delta_4 u_3^{-1}w^{-2}=u_3^2u_2u_1u_3u_2w^{-2}\\
&\stackrel{(A1,A2)}{=}
u_3^2u_2u_1(u_3u_2a_2^{-1}a_3^{-1})(a_1a_2)^{-2}=
u_3^2u_2u_1(u_3u_2a_2^{-1}u_3^{-1})u_3a_3^{-1}(a_1a_2)^{-2}\\
&\stackrel{(B2, C3, C4a)}{=}
u_3^2u_2u_1(u_2^{-1}a_3u_3u_2)a_3u_3(a_1a_2)^{-2}\in\mathcal{S}
\end{align*}

{\bf Claim 3.} We have $\psi(b')=rb^{-1}(a_1a_2a_3)^4r$.

{\it Proof of Claim 3.}
If $g=4$ then we have $\psi(b')=b(a_1a_2a_3)^{-4}$ and the claim follows from
(G3) in Lemma \ref{useful_g4}.
By looking at the effect of $r\sigma$ on the curve
$\beta$ it can be checked that
$r\sigma(\beta)$ and $\beta$ are isotopic to the boundary curves of a regular neighbourhood of the union of $\alpha_i$ for $i\in\{1,2,3\}$. Hence the chain relation 
$brb'r=(a_1a_2a_3)^4$ holds in $\M^+(N_{g,0},P)$. If $g>4$ then the last relation is expressible in $\mathcal{S}$ and the claim follows.

It follows from Claim 3 and  $\psi(a_i')=ra_ir$ (E3), that if $w'$ is a word in 
$a_i'$, $b'$ and their inverses, then $\psi(w')=rwr$, where $w$ is a word in  $a_i$, $b$ and their inverses. If $w'$ represents the trivial element of $\M^+$, then so does $w$, and by Lemma \ref{shortcut_AB} $\psi(w')=1$ in $\G$. Thus $\psi$ respects the relations (A1'--A6',A9').  

We check that $\psi$ respects the remaining defining relations of $\M^+$.

(B1'): $u_i'u_j'=u_j'u_i'$. If $i,j>1$ then (B1') is the same as (B1), and if $i,j<g-1$ then it is expressible in $\mathcal{S}$. 
It remains to show that $\psi(u_{g-1}')=u_{g-1}$ commutes in $\G$ with
$\psi(u_1')=ra_1^{-1}ra_1\psi(\sigma^2)u_1$, which is true, because
$ru_{g-1}=u_{g-1}^{-1}r$ (E4),  $u_{g-1}$ commutes with $a_1$, $u_1$ (B1, C1) and
$\psi(\sigma^2)=(u_2\cdots u_{g-1})^{g-1}$ is central in the subgroup of $\G$ generated by $u_i$ for $i>1$ (B5).  

(C1'): $a_1'u_i'=u_i'a_1'$ for $i>1$. This relation is respected, because $\psi(u_i')=u_i$ commutes with $\psi(a_1')=ra_1r$ by (C1, E4)

(B2', C2', C3') are either the same as (B2, C2, C3) if $i>1$, or they are expressible in $\mathcal{S}$ if $i=1$.

(C7'): $u'_5b'=b'u_5'$. This relation is respected, because $\psi(u_5')=u_5$ commutes with
$\psi(b')=rb^{-1}(a_1a_2a_3)^4r$ by (C1a, C7, E4).

(R8') for $g=4$ is 
$\sigma^2b\sigma^{-2}=b'(a_1'a_2'a_3')^{-4}$. Let $w=(a_1a_2a_3)$. Because $\psi$ respects (A3'), we have 
\[\psi(b')(\psi(a_1')\psi(a_2')\psi(a_3'))^{-4}=(\psi(a_1')\psi(a_2')\psi(a_3'))^{-4}\psi(b')=rw^{-4}rw^{-4}b,\] 
$\psi(\sigma^2)=(u_3u_2)^3$, and
\begin{align*}
&(u_3u_2)^3b(u_3u_2)^{-3}=rw^{-4}rw^{-4}b \iff
w^4rw^4r(u_3u_2)^3=b(u_3u_2)^3b^{-1}
\stackrel{(G1)}{\iff}\\
&w^4rw^4r(u_3u_2)^3=(w^3u_2u_3w^{-1})^3\iff
w^4rw^4r(u_3u_2)^3=w^3u_2u_3w^2u_2u_3w^2u_2u_3w^{-1}\\
&\iff wrw^5(u_3u_2u_1u_3u_2u_3)\underline{u_2u_3u_2w}=u_2u_3w^2u_2\underline{u_3w^2}u_2u_3\stackrel{(C1a, C5a)}{\iff}\\
&wrw^5\underline{\Delta_4 w}u_1^{-1}u_2^{-1}u_1^{-1}=u_2u_3w^2u_2w^2u_1u_2u_3\stackrel{(E1)}{\iff}
wrw^4\Delta_4=u_2u_3w^2u_2w^2\Delta_4\iff\\
&wrw^2=u_2u_3w^2u_2\stackrel{(C1a, C5a)}{\iff}
wrw^2=wu_1^{-1}u_2^{-1}u_3^{-1}w\iff r^2=1.
\end{align*}
Since $\psi$ respects the defining relations of $\M^+$, it extends to a homomorphism, which is the inverse of $\varphi$.\end{proof}
\section{The stabilisers of $[\alpha_1]$ and $[\xi]$.}\label{sec_v13}
In this section we assume that $g\ge 5$ is fixed, Theorems \ref{mainA} and
\ref{mainB} are true for genera less then $g$ and consequently  Theorem \ref{pres_Stab_mu_g} and Lemma \ref{relC9} are true for $g$.
\subsection{The stabiliser of $[\xi]$.}
\begin{lemma}\label{two-holed-torus}
Let $g>4$. In $\G_{g,0}$ we have
$b^{-1}(a_1a_2a_3)^4=\Delta_4b^{-1}\Delta_4^{-1}=r_gbr_g$.
\end{lemma}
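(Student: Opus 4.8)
The plan is to prove the two equalities separately.

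\emph{First equality: $\Delta_4 b^{-1}\Delta_4^{-1}=b^{-1}(a_1a_2a_3)^4$.} Since $g>4$, the twist $b=b_1$ is among the generators of $\Stab[\mu_g]=\Stab_{\M(N_{g,0})}[\mu_g]$ listed in Theorem~\ref{pres_Stab_mu_g} (as $2\le 2\cdot 1\le g-3$), and so are $a_1,a_2,a_3,u_1,u_2,u_3$; hence $b$, $a_1a_2a_3$ and $\Delta_4=u_1u_2u_3u_1u_2u_1$ all lie in the subgroup $\mathcal{S}_{g,0}(v_2)=\psi_{v_2}(\Stab[\mu_g])$ of $\G_{g,0}$. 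By Theorem~\ref{pres_Stab_mu_g} the restriction of $\varphi_{g,0}$ to $\mathcal{S}_{g,0}(v_2)$ is an isomorphism onto $\Stab[\mu_g]$, so it suffices to verify the identity after applying $\varphi_{g,0}$, i.e.\ in $\M(N_{g,0})$, where $\Delta_4$ becomes the blowup of the braid half-twist on the first four crosscaps. There it is a computation in the orientable subsurface: the chain relation for the $3$-chain $\alpha_1,\alpha_2,\alpha_3$ gives $(T_{\alpha_1}T_{\alpha_2}T_{\alpha_3})^4=T_\delta T_{\delta'}$, where $\delta,\delta'$ bound a regular neighbourhood (a two-holed torus) of $\alpha_1\cup\alpha_2\cup\alpha_3$, and a picture argument tracking the action of $\Delta_4$ on $\beta$ and on the $\alpha_i$ shows that $\Delta_4 b^{-1}\Delta_4^{-1}=b^{-1}T_\delta T_{\delta'}$.

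\emph{Second equality: $\Delta_4 b^{-1}\Delta_4^{-1}=r_g b r_g$.} Here $r_g\notin\Stab[\mu_g]$, so I argue directly in $\G_{g,0}$. By (B6) one has $\Delta_4=\Delta_3(u_3u_2u_1)=(u_1u_2u_1)(u_3u_2u_1)$, and from (E4a) together with $r_g^2=1$ (E2a) it follows that $r_g u_i r_g=u_i^{-1}$ for all $i$, whence $r_g\Delta_4 r_g=\Delta_4^{-1}$. Therefore $\rho:=\Delta_4^{-1}r_g$ satisfies $\rho^2=1$, and, writing $r_g=\Delta_4\rho$ and using $r_g^{-1}=r_g$, the claimed equality $r_g b r_g=\Delta_4 b^{-1}\Delta_4^{-1}$ is equivalent to $\rho b\rho=b^{-1}$, that is to
\[(\Delta_4^{-1}r_g b)^2=1.\]
This is a relation of the same type as (G3a), and I would establish it by a direct calculation: using $r_g=(a_1\cdots a_{g-1})(u_{g-1}\cdots u_1)$, the identity $(a_1\cdots a_{g-1})(u_{g-1}\cdots u_1)(a_1\cdots a_{g-1})=\Delta_g\Delta_{g-1}$ from the proof of (E2), the relations $\Delta_g^2=\Delta_{g-1}^2=1$ (B3, B4), (E1), and the fact that $u_1\cdots u_{g-1}$ conjugates $a_i$ to $a_{i+1}$ and $u_i$ to $u_{i+1}$ for $i\le g-2$, one rewrites $\Delta_4^{-1}r_g$ in a form in which $b$ can be conjugated through explicitly; the resulting relation then reduces, with the help of (A1--A4), (B1), (B2), (C1a), (C2), (C3) and the first equality already proved, to the trivial relation.

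\emph{Main obstacle.} The conceptual core is the picture computation in $\M(N_{g,0})$ behind the first equality --- identifying which boundary curve of the two-holed-torus neighbourhood of $\alpha_1\cup\alpha_2\cup\alpha_3$ is isotopic to $\beta$, computing $\Delta_4(\beta)$, and getting all twist directions right. For the second equality the reduction to $(\Delta_4^{-1}r_g b)^2=1$ is clean, but the ensuing bookkeeping --- pushing $b$ through the long word $\Delta_4^{-1}r_g$ and collapsing everything by the braid-type relations --- is the step most likely to be delicate.
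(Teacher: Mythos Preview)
Your treatment of the first equality is essentially the paper's argument: verify geometrically in $\M(N_{g,0})$ via the $3$-chain relation on a two-holed torus neighbourhood of $\alpha_1\cup\alpha_2\cup\alpha_3$, then transfer to $\G_{g,0}$ using that all the elements lie in $\mathcal{S}_{g,0}(v_2)$ and that $\varphi_{g,0}$ restricts to an isomorphism there.

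The gap is in the second equality, and it stems from a false premise: you assert that $r_g\notin\Stab[\mu_g]$, but in fact
\[
r_g=a_1\cdots a_{g-1}u_{g-1}\cdots u_1=a_1\cdots a_{g-2}\,(a_{g-1}u_{g-1})\,u_{g-2}\cdots u_1,
\]
which is visibly a product of the generators of $\Stab[\mu_g]$ listed in Theorem~\ref{pres_Stab_mu_g} (namely $a_1,\dots,a_{g-2}$, $a_{g-1}u_{g-1}$, and $u_1,\dots,u_{g-2}$). Geometrically this is also clear: $r_g$ is the reflection fixing every crosscap, in particular $\mu_g$. Consequently $r_g\in\mathcal{S}_{g,0}(v_2)$, and the \emph{same} transfer principle you used for the first equality applies verbatim: one checks in $\M(N_{g,0})$ that $r_g$ preserves the two-holed torus $\Sigma$, maps $\beta$ to the other boundary curve $\beta'$, and preserves the orientation of $\Sigma$ (whereas $\Delta_4$ reverses it), giving $r_gbr_g=T_{\beta'}=\Delta_4 b^{-1}\Delta_4^{-1}$; then $\psi_{v_2}$ pulls the identity back to $\G_{g,0}$. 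This is exactly what the paper does.

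Your proposed alternative route --- reducing to $(\Delta_4^{-1}r_gb)^2=1$ and attacking it by pushing $b$ through $\Delta_4^{-1}r_g$ --- is therefore unnecessary, and as written it is only a sketch with no actual computation carried out. Once you correct the premise about $r_g$, the whole ``delicate bookkeeping'' you flag in your main obstacle disappears.
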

\begin{proof}
First we show that these relations hold in $\M(N_{g,0})$. Let $\Sigma$ be a regular neighbourhood of the union of the curves $\alpha_i$ for $i\in\{1,2,3\}$ orientated so that $a_i$ are right Dehn twists.
$\Sigma$ is a two holed torus, whose one boundary component is isotopic to $\beta$. Let $\beta'$ be the other boundary component and denote as $b'$ the right Dehn twist about $\beta'$. We have the well known relation
$(a_1a_2a_3)^4=bb'$ (called two holed torus or $3$-chain relation).
It can be checked that $\Delta_4$ and $r_g$ preserve $\Sigma$ up to isotopy and map $\beta$ on $\beta'$. Moreover, $\Delta_4$ reverses and $r_g$ preserves the orientation of $\Sigma$. Thus $b'=\Delta_4b^{-1}\Delta_4^{-1}=r_gbr_g$ (recall that $r_g$ has order two in $\M(N_{g,0})$ by (E2a)) and the relations from the lemma are satisfied in $\M(N_{g,0})$. To see that they are also satisfied in $\G_{g,0}$ note that they are composed of elements of $\Stab_{\M(N_{g,0})}[\mu_g]$ (because $g>4$) and hence it suffices to apply the homomorphism $\psi_{v_2}$ from Theorem \ref{pres_Stab_mu_g}.
\end{proof}
\begin{theorem}\label{pres_Stab_xi}
Let $g\in\{5,6\}$. The stabiliser $\Stab[\xi]=\Stab_{\M(N_{g,0})}[\xi]$ is generated by $a_i$, $b_j$ for $1\le i\le g-1$, $2\le 2j\le g-2$ and $\Delta_4$ if $g=5$, or  $u_5^{-1}\Delta_4$ and $r_6$ if $g=6$. There is 
a homomorphism $\psi_{v_3}\colon\Stab[\xi]\to\G_{g,0}$ such that
$\varphi_{g,0}\circ\psi_{v_3}=\mathrm{id}_{\Stab[\xi]}$ and
$\psi_{v_3}(x)=x$ for each generator $x$ of $\Stab[\xi]$.
\end{theorem}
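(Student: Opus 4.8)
The plan is to present $\Stab[\xi]$ by combining the exact sequences of Subsection~\ref{struct_stab} with the presentation of the orientable mapping class group from Theorem~\ref{presS}, and then to write down the homomorphism $\psi_{v_3}$ directly on generators and check that it respects the resulting defining relations. First I would identify the cut surface: since $\xi$ has orientable complement, $N_\xi$ is an orientable surface with $\chi(N_\xi)=\chi(N_{g,0})=2-g$, and inspecting the number of boundary components one finds $N_\xi\cong S_{2,1}$ when $g=5$ (here $\xi$ is one-sided, so in the notation of (\ref{cut_es}) we have $m=1$, $r=0$) and $N_\xi\cong S_{2,2}$ when $g=6$ (here $\xi$ is two-sided, so $m=r=1$). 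Thus (\ref{cut_es}) reads $1\to\Z\to\M(S_{\rho,r})\xrightarrow{\rho_\xi}\Stab^+[\xi]\to 1$, the kernel being generated by the twist $T_{\epsilon_1}$ about the boundary of a M\"obius neighbourhood of $\xi$ when $g=5$, and by $T_{\delta_1}^{-1}T_{\delta'_1}$ when $g=6$. Plugging the presentation of $\M(S_{\rho,r})$ (Theorem~\ref{presS}, with $\rho=2$) and the trivial cokernel into Lemma~\ref{ext_pres} yields a presentation of $\Stab^+[\xi]$ on the generators $a_i$ ($1\le i\le g-1$), $b_j$ ($2\le 2j\le g-2$) together with one extra ``kernel'' relation: the generator of $\ker\rho_\xi$, rewritten as a word in the $a_i$, $b_j$ via the chain relation inside $\M(S_{\rho,r})$, is set equal to $1$.

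Next I would pass from $\Stab^+[\xi]$ to $\Stab[\xi]$ via (\ref{stab_es}). Since $m=1$, the obstruction of Remark~\ref{eta_onto} is vacuous and I expect $\eta$ to be onto $\Z_2^{m+r}$; the missing $\Z_2$-factor(s) are realised by $\Delta_4$ when $g=5$, and by $u_5^{-1}\Delta_4$ and $r_6$ when $g=6$ — one checks that each of these fixes the isotopy class of $\xi$ and that it reverses, respectively, the orientation or the sides of $\xi$, computing its $\eta$-value explicitly. Feeding the presentation of $\mathrm{im}(\eta)$, the above presentation of $\Stab^+[\xi]$, and these chosen lifts into Lemma~\ref{ext_pres} again produces a finite presentation of $\Stab[\xi]$ whose generating set is exactly the one in the statement; the new relations are the cokernel relations (expressing $\Delta_4^2$, or $r_6^2$, $(u_5^{-1}\Delta_4)^2$ and, for $g=6$, the commutator of the two lifts, as explicit words in $\Stab^+[\xi]$) and the conjugation relations describing how $\Delta_4$, $u_5^{-1}\Delta_4$, $r_6$ act by conjugation on the $a_i$ and $b_j$.

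Finally I would let $\psi_{v_3}$ send each listed generator to the element of $\G_{g,0}$ carrying the same name and verify that all the defining relations above hold in $\G_{g,0}$. The relations inherited from $\M(S_{\rho,r})$ are literally among the defining relations (A1)--(A9) of $\G_{g,0}$, so they hold automatically. The kernel relation identifies the two relevant boundary twists (or trivialises the M\"obius-band boundary twist); once these twists are expressed through the $a_i$ this follows from the consequences of the defining relations established in Section~\ref{sec_pres} and Section~\ref{sec_v2} — in particular from (E5), (E6), and in favourable cases by pushing the element into $\Stab_{\M(N_{g,0})}[\mu_g]$ and applying $\psi_{v_2}$, exactly as in the proof of Lemma~\ref{two-holed-torus}. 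The cokernel and conjugation relations are checked by direct manipulation with (B1)--(B4a), (C1)--(C8), (D) and the consequences (E1)--(E6), typically after conjugating by $\Delta_k$ (using (B5), (E1)) or by $r_g$ (using (E2a), (E3a), (E4a)) to reduce to identities already known in $\G_{g,0}$, in $\Stab[\mu_g]$ via Theorem~\ref{pres_Stab_mu_g}, or in a genus-$3$ subgroup via Corollary~\ref{genus3_shortcut}. Once all defining relations are respected, $\psi_{v_3}$ is a homomorphism, and $\varphi_{g,0}\circ\psi_{v_3}=\mathrm{id}_{\Stab[\xi]}$ holds since it holds on generators.

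The main obstacle will be the genus-$6$ case: there the cokernel is $\Z_2^2$ rather than $\Z_2$, the cut surface is $S_{2,2}$ with its extra generator $b_2$, and the two lifts $u_5^{-1}\Delta_4$ and $r_6$ multiply both the cokernel and the conjugation relations that must be verified. Pinning down $\mathrm{im}(\eta)$ precisely and finding the correct expressions for $r_6^2$, $(u_5^{-1}\Delta_4)^2$ and the commutator of the two lifts inside $\Stab^+[\xi]$, and then confirming each of these in $\G_{6,0}$, is where the bulk of the computational work lies.
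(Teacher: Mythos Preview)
Your proposal is correct and follows essentially the same architecture as the paper's proof: compute $\Stab^+[\xi]$ from the cut surface $N_\xi\cong S_{2,g-4}$, extend to $\Stab[\xi]$ via (\ref{stab_es}) using the cokernel generators $\Delta_4$ (for $g=5$) or $u_5^{-1}\Delta_4$, $r_6$ (for $g=6$), and then verify that all cokernel and conjugation relations hold in $\G_{g,0}$ using (E1)--(E6), (B4), (Da), and Lemma~\ref{two-holed-torus}.

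The one place where the paper is more economical than your plan is in handling $\Stab^+[\xi]$. You propose to build a presentation of $\Stab^+[\xi]$ from (\ref{cut_es}) and Lemma~\ref{ext_pres}, which forces you to identify and then verify the kernel relation (the boundary twist of $S_{2,1}$, resp.\ the difference of the two boundary twists of $S_{2,2}$). The paper instead observes directly that $\Stab^+[\xi]=\jmath_\ast(\M(S_{2,g-4}))$ is the image of the orientable-subsurface inclusion, and then invokes Lemma~\ref{shortcut_AB}: any relation among the $a_i$, $b_j$ that holds in $\M(N_{g,0})$ automatically holds in $\G_{g,0}$. This dispenses with finding a presentation of $\Stab^+[\xi]$ altogether --- one only needs generators --- and absorbs your ``kernel relation'' step for free. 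Your route would still succeed (the relevant boundary-twist identities do follow from (E6)), but the shortcut via Lemma~\ref{shortcut_AB} is cleaner and is worth adopting. A second small simplification in the paper that you may want to anticipate: for $g=6$, the conjugation relations for $b_2$ need not be checked separately, since $b_2$ is expressed through $b$ and the $a_i$ by (A8), so its conjugation relations follow from those already verified.
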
 
\begin{proof}
Let $\Stab=\Stab[\xi]$, $\Stab^+=\Stab^+[\xi]$, 
$\G=\G_{g,0}$, $\varphi=\varphi_{g,0}$.
Recall from Subsection \ref{struct_stab} that
$\Stab^+=\rho_{\xi}(\M(N_{\xi}))$, where
$N_{\xi}$ is obtained by cutting $N_{g,0}$ along $\xi$ 
and $\rho_{\xi}$ is the homomorphism induced by the gluing map.
$N_{\xi}$ is homeomorphic to $S_{2,g-4}$ and it is easy to see that
$\Stab^+=\rho_{\xi}(\M(N_{\xi}))=\jmath_\ast(\M(S_{2,g-4}))$,
where $\jmath_\ast$ is the map induced by the inclusion in $N_{g,0}$ of a regular neighbourhood of the union of the curves $\alpha_i$ for $i\in\{1,\dots,g-1\}$. It follows that $\Stab^+$ is generated by  $a_i$, $b_j$ for $1\le i\le g-1$, $2\le 2j\le g-2$. Moreover, by Lemma \ref{shortcut_AB} every relation in $\Stab^+$ between these generators is also a relation in $\G$.  By applying Lemma \ref{ext_pres}  to the sequence (\ref{stab_es})
\[1\to\Stab^+\to\Stab\to\Z_2^{g-4}\to 1,\]
we see that $\Stab$ is generated by $\Stab^+$ and $g-4$ cokernel generators. 
If $g=5$, then for the cokernel generator we take $\Delta_4$, which preserves the curve $\xi$ and reverses its orientation. If $g=6$, then for the cokernel generators we take 
$u_5^{-1}\Delta_4$, which reverses orientation of $\xi$ and preserves its sides, and $r=r_6$
which preserves orientation of $\xi$ and swaps its sides. 
It remains to check that the mapping $\psi_{v_3}(x)=x$ for $x$ a generator of $\Stab[\xi]$ respects the cokernel and conjugation relations. 

{\bf Case $g=5$.} The cokernel relation is (B4) $\Delta_4^2=1$.
The conjugation relations are (E1) $\Delta_4 a_i\Delta_4=a^{-1}_{4-i}$ for $i\in\{1,2,3\}$ and
\[(1)\ \Delta_4 b\Delta_4=b(a_1a_2a_3)^{-4},\quad
(2)\ \Delta_4 a_4\Delta_4=(a_1a_2a_3)a_4^{-1}(a_1a_2a_3)^{-1}.\]
These relations are satisfied in $\G$, (1) by Lemma \ref{two-holed-torus} and (2)   because
\begin{align*} 
&(a_1a_2a_3)a_4^{-1}(a_1a_2a_3)^{-1}\stackrel{(Da)}{=}
(u_3u_2u_1)^{-1}a_4(u_3u_2u_1)\\
&\stackrel{(C1a)}{=}(u_3u_2u_1)^{-1}\Delta_3^{-1}a_4\Delta_3(u_3u_2u_1)
=\Delta_4 a_4\Delta_4.
\end{align*}
{\bf Case $g=6$.}  
The cokernel relations are (E2a) $r^2=1$ and 
\[(3)\quad(u_5^{-1}\Delta_4)^2=1,\qquad (4)\quad(ru_5^{-1}\Delta_4)^2=1.\]
They are satisfied in $\G$ because $u_5$ commutes with $\Delta_4$ by (B1) and
\begin{align*}
&u_5^{-2}\stackrel{(B4a)}{=}(u_4\cdots u_1)(u_1\cdots u_4)\stackrel{(B8)}{=}\Delta_5^2\Delta_4^{-2}\stackrel{(B3)}{=}\Delta_4^{-2}\\
&(ru_5^{-1}\Delta_4)^2\stackrel{(E4a)}{=}u_5\Delta_4^{-1}u_5^{-1}\Delta_4=1.
\end{align*}
The conjugation relations are (E3a) $r a_ir=a_i$ for $i\in\{1,\dots,5\}$ and
\begin{align*}
&(5)\ rbr=b^{-1}(a_1a_2a_3)^4,\quad (6)\ u_5^{-1}\Delta_4b\Delta_4^{-1}u_5=b(a_1a_2a_3)^{-4}\\
&(7)\ u_5^{-1}\Delta_4a_i\Delta_4^{-1}u_5=a^{-1}_{4-i}\textrm{\ for\ } i\in\{1,2,3\},\quad (8)\  u_5^{-1}\Delta_4a_5\Delta_4^{-1}u_5=a^{-1}_5\\
&(9)\ u_5^{-1}\Delta_4a_4\Delta_4^{-1}u_5=(a_1a_2a_3a_4)a_5^{-1}(a_1a_2a_3a_4)^{-1} 
\end{align*}
(5, 6) follow from Lemma \ref{two-holed-torus} and (C1a, C7),  
(7) from (C1a, E1) and (8) from (C1a, C4a). We have
\begin{align*}
&u_5^{-1}\Delta_4a_4\Delta_4^{-1}u_5\stackrel{(3)}{=}\Delta_4^{-1}u_5a_4u_5^{-1}\Delta_4
\stackrel{(C3)}{=}\Delta_4^{-1}u_4^{-1}a_5u_4\Delta_4\stackrel{(B1, C1a)}{=}\\
&(u_4u_3u_2u_1)^{-1}a_5(u_4u_3u_2u_1)\stackrel{(Da)}{=}(a_1a_2a_3a_4)a_5^{-1}(a_1a_2a_3a_4)^{-1}
\end{align*}
which proves that (9) holds in $\G$. We do not need to check that $\psi_{v_3}$ respects the conjugation relations expressing $rb_2r$ and $u_5^{-1}\Delta_4b_2\Delta_4^{-1}u_5$ in the generators of $\Stab^+$, because they follow from (A8) and the remaining conjugation relations.
\end{proof}
\subsection{The stabiliser of $[\alpha_1]$.}
\begin{theorem}\label{pres_stab_alpha1}
Let $g\ge 5$. The stabiliser $\Stab[\alpha_1]=\Stab_{\M(N_{g,0})}[\alpha_1]$ is generated by $u_i$, $a_i$ for $i\in\{1,3,\dots,g-1\}$, $b_j$ for $2\le 2j\le g$, $c=T_{\gamma_{\{3,4,5,6\}}}$ (if $g\ge 6$), $v=Y_{\mu_4,\beta}$ and $r_g$. 
There is a homomorphism $\psi_{v_1}\colon\Stab_{[\alpha_1]}\to\G_{g,0}$ such that $\varphi_{g,0}\circ\psi_{v_1}=\mathrm{id}_{\Stab[\alpha_1]}$ and
$\psi_{v_1}(c)=(a_1\cdots a_5)^2b(a_1\cdots a_5)^{-2}$, $\psi_{v_1}(v)=a_3a_2a_1u_1u_2u_3$ and $\psi_{v_1}(x)=x$ for the remaining generators of $\Stab[\alpha_1]$.
\end{theorem}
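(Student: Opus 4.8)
The plan is to follow the same scheme used for $\Stab[\mu_g]$ and $\Stab[\xi]$: realize $\Stab[\alpha_1]$ as the top group of one of the exact sequences from Subsection~\ref{struct_stab}, compute a presentation via Lemma~\ref{ext_pres}, and then verify that the map sending each generator to the prescribed element of $\G_{g,0}$ respects every defining relation. Since $\alpha_1$ is a two-sided, nonseparating curve with nonorientable complement, cutting $N_{g,0}$ along $\alpha_1$ yields $N_{s(e_2)}=N_{g-4,4}$ (cf.\ Table~\ref{tabE}), and we have the exact sequence (\ref{stab_es}) with $m=1$, $r=1$:
\[1\to\Stab^+[\alpha_1]\to\Stab[\alpha_1]\stackrel{\eta}{\to}\Z_2^{2},\]
together with (\ref{cut_es}) describing $\Stab^+[\alpha_1]$ as a $\Z$-central extension of $\M(N_{g-2,2})$ (the surface obtained by gluing back an annulus, equivalently $N_{g,0}$ cut along $\alpha_1$ has its two boundary circles, and $\ker\rho_{\alpha_1}\cong\Z$ is generated by $T_{\delta_1}^{-1}T_{\delta_2}$). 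First I would identify a convenient copy of $N_{g-2,2}$ inside $N_{g,0}$ disjoint from $\alpha_1$ and apply the induction hypothesis (Theorems~\ref{mainA}, \ref{mainB} for smaller genus, via the punctured/boundary versions) to get a presentation of $\M(N_{g-2,2})$; pushing this up through (\ref{cut_es}) and then (\ref{stab_es}) gives generators and relations for $\Stab[\alpha_1]$. The cokernel generators for $\eta$ are the ones that reverse orientation of $\alpha_1$ or swap its sides; these are exactly $v=Y_{\mu_4,\beta}$-type crosscap slides and $r_g$, which accounts for the appearance of $v$ and $r_g$ in the generating set, while $c=T_{\gamma_{\{3,4,5,6\}}}$ and the $b_j$ come from the subsurface group.

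The second half is the verification that $\psi_{v_1}$ is well defined. Here one takes each relation of the computed presentation of $\Stab[\alpha_1]$ — the kernel relation, the cokernel relations, the conjugation relations, and the defining relations of $\M(N_{g-2,2})$ — and checks it holds in $\G_{g,0}$ after substituting $\psi_{v_1}(c)=(a_1\cdots a_5)^2 b(a_1\cdots a_5)^{-2}$, $\psi_{v_1}(v)=a_3a_2a_1u_1u_2u_3$, and $\psi_{v_1}(x)=x$ otherwise. The bulk of these will reduce to the consequences of the defining relations already assembled in Lemmas~\ref{useful_C}, \ref{useful_T}, \ref{useful_T_closed}, \ref{Da_replace}, plus the shortcut lemmas: Lemma~\ref{shortcut_AB} disposes of any relation among the $a_i,b_j$ alone, Corollary~\ref{genus3_shortcut} disposes of relations living in a genus-$3$ sub-disc, and Theorem~\ref{pres_Stab_mu_g} (with the isomorphism onto $\mathcal{S}_{g,n}(v_2)$) disposes of relations all of whose terms lie in $\Stab[\mu_g]$. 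The identities $v=Y_{\mu_4,\beta}$ giving $a_3a_2a_1u_1u_2u_3$ as a crosscap slide, and $c$ being a twist conjugate to $b$, should be checked geometrically in $\M(N_{g,0})$ exactly as in the proof that $\varphi_{g,n}$ is well defined, and then transported to $\G_{g,0}$ via Lemma~\ref{shortcut_AB} or Theorem~\ref{pres_Stab_mu_g} whenever the relevant elements lie in $\Stab[\mu_g]$.

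The main obstacle I expect is handling the relations that genuinely involve the cokernel generator $r_g$ together with $b$, $c$, and $v$ — i.e.\ those not expressible inside $\Stab[\mu_g]$ or inside a genus-$3$ subsurface, and hence not covered by any existing shortcut lemma. For these one must argue directly with the presentation of $\G_{g,0}$, and this is where relations (D)/(Da), (E2a), (E3a), (E4a), and the two-holed-torus relation (Lemma~\ref{two-holed-torus}, via $r_gbr_g=b^{-1}(a_1a_2a_3)^4$-type identities) will have to be combined carefully; the conjugation action of $r_g$ on $c$ and on $v$ (reflecting the picture of Figure~\ref{aI}) will require a nontrivial computation analogous to Claim~1 in the proof of Theorem~\ref{mainA_punctured}. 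A secondary bookkeeping difficulty is that the structure of the presentation of $\M(N_{g-2,2})$ — and hence which relations arise — depends on the parity of $g$ and on whether $g=5$, $g=6$, or $g\ge7$; as in Theorems~\ref{pres_Stab_mu_g} and \ref{pres_Stab_xi}, the small cases $g\in\{5,6\}$ (where $\widetilde X=\C^{ord}_0(N)$ and extra curves $c$, $b_j$ enter) will need separate attention, and one should reduce the list of conjugation relations to a manageable subset by the same conjugation-by-$x$ tricks used there before attempting the verification.
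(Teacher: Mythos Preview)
Your overall scheme---use (\ref{stab_es}) to reduce to $\Stab^+[\alpha_1]$, then (\ref{cut_es}) to reduce to $\M(N_{\alpha_1})\cong\M(N_{g-2,2})$, get a presentation of the latter via the induction hypothesis, and check relations using Lemma~\ref{shortcut_AB} and Theorem~\ref{pres_Stab_mu_g}---matches the paper's approach. Two concrete corrections, however.

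First, you have the role of $v$ wrong. The cokernel generators for $\eta$ are $u_1$ (swaps the sides of $\alpha_1$, preserves orientation) and $r_g$ (preserves sides, reverses orientation); $v=Y_{\mu_4,\beta}$ lives in $\Stab^+[\alpha_1]$, not in the cokernel. In the paper's organization, $v$ arises as $\rho(f_\ast(a_1'u_1'))$ where $f\colon N_{g-2,2}\to N_{\alpha_1}$ is an explicit homeomorphism sending the ``primed'' generators $a_1',u_1'$ (from the Reidemeister--Schreier presentation of $\M^+(N_{g-2,1},P)$) onto $b$ and $b^{-1}v$. So $v$, $c$, and the $b_j$ all come from the $\M(N_{g-2,2})$ layer, and the extension by $u_1,r_g$ is the final, comparatively easy step.

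Second, you have the main obstacle in the wrong place. The conjugation relations for $r_g$ and $u_1$ acting on $b,c,v$ are disposed of in a page (they reduce to Lemma~\ref{two-holed-torus}, (E3a), (E4a), and one explicit identity, all inside $\mathcal{S}_{g,0}(v_2)$). The genuinely hard part is earlier: among the defining relations of $\M(N_{g-2,2})$, most are either expressible in $\mathcal{S}_{g,0}(v_2)$ or in the Dehn-twist subgroup $\mathcal{A}$ and hence handled by your shortcut lemmas, but two relations---the lifts $(\widetilde{\textrm{C6}'})$ and $(\widetilde{\textrm{R8}'})$---are not obviously so when $g=6$, because then $b_2$ and $c$ are not in $\mathcal{S}_{g,0}(v_2)$. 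Showing these two are expressible in $\mathcal{S}_{g,0}(v_2)$ (the paper's Lemma~\ref{c6r8}) is a multi-page computation requiring a lantern relation, the commutation $c^{-1}u_5^{-1}du_5c=u_5^{-1}du_5$ (proved via $\Stab[\xi]$, i.e.\ Theorem~\ref{pres_Stab_xi}), and a long double-coset calculation in $\mathcal{S}\backslash\G/\mathcal{S}$. Your plan does not anticipate this; without it the $g=6$ case would not close. You should also not omit the check that $\psi'(\ker\rho_{\alpha_1})=1$ (the paper's Step~5), which is not automatic and again reduces to an $\mathcal{S}$-membership computation.
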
 
\begin{proof}
Let $\Stab=\Stab[\alpha_1]$, $\Stab^+=\Stab^+[\alpha_1]$, $\G=\G_{g,0}$ and
$\varphi=\varphi_{g,0}$. First we are going to define a homomorphism 
$\psi^+\colon\Stab^+\to\G$ such that $\varphi\circ\psi^+=\mathrm{id}_{\Stab^+}$. Recall the exact sequence (\ref{cut_es})
\[1\to\Z\to\M(N_{\alpha_1})\stackrel{\rho}{\to}\Stab^+\to 1,\]
where $N_{\alpha_1}$ is obtained by cutting $N_{g,0}$ along $\alpha_1$, and
$\rho=\rho_{\alpha_1}$ is induced by the gluing. In order to define $\psi^+$
it suffices to construct a homomorphism $\psi'\colon\M(N_{\alpha_1})\to\G$ satisfying $\psi'(\ker\rho)=1$ and $\varphi\circ\psi'=\rho$. 

Since $N_{\alpha_1}$ is homeomorphic to $N_{g-2,2}$, we need a presentation for $\M(N_{g-2,2})$, which can be obtained from the sequence (\ref{Cup_es})
\[1\to\Z\to\M(N_{g-2,2})\to\M^+(N_{g-2,1},P)\to 1,\]
provided that we have a presentation for $\M^+(N_{g-2,1},P)$. 

\medskip

\noindent{\bf Step 1: a presentation of $\M^+(N_{g-2,1},P)$.} We proceed in the same way as in the proof of Theorem \ref{mainA_punctured}. First we apply Lemma \ref{ext_pres} to the Birman exact sequence
\[1\to\pi_1(N_{g-2,1},P)\to\M(N_{g-2,1},P)\to\M(N_{g-2,1})\to 1\]
and the presentation of $\M(N_{g-2,1})$ given in Theorem \ref{mainA}. 
As a result we obtain a presentation of $\M(N_{g-2,1},P)$ on generators
$a_i$, $u_i$ for $1\le i\le g-3$, $b_j$ for $2\le 2j\le g-2$ and $\sigma_l$ for
$1\le l\le g-2$. Since $\pi_1(N_{g-2,1},P)$ is free, there are no kernel relations. The cokernel relations are
(A1--A9, B1, B2, C1--C8) and the conjugation relations are (1--11), the same as in the proof of Theorem \ref{mainA_punctured}, and by repeating the arguments from that proof we can reduce the conjugation relations to
(R1--R8). Then we make the transformations (2, 3, 7) from the proof of Theorem \ref{mainA_punctured} and instead of (5) we  replace $(\sigma_2\sigma_1)$  by 
$a_1^{-1}\sigma_1 a_1\sigma_1^{-1}$ in (R2, R5, R8)
and by $u_1^{-1}\sigma_1u_1\sigma_1^{-1}\sigma_1^2$ in (R4). 
Then we rule out the generators $\sigma_i$ for $i>1$ together with (R1).
By the Reidemeister-Schreier method we obtain a presentation for
$\M^+(N_{g-2,1},P)$ on the generators  $b$, $a_i$, $u_i$, $b'=\sigma b\sigma^{-1}$, $a_i'=\sigma a_i\sigma^{-1}$, $u_i'=\sigma u_i\sigma^{-1}$ and $\sigma^2$, where $\sigma=\sigma_1$. The relations are the defining relations 
of $\M(N_{g-2,1},P)$ (Rel) and their conjugates by $\sigma$ (Rel').
Note that  $a_i'=a_i$ and $u_i'=u_i$ for $i>1$ while $b'$ and $\sigma^2$ may be expressed in the remaining generators by  (R8) and (R2) respectively. We record 
for future reference the following remark.
\begin{rem}\label{gensNg11}
$\M^+(N_{g-2,1},P)$ is generated by $a_i$ for $i=1,\dots,g-3$,
$u_j$ for $j=1,\dots,g-4$, $a_{g-3}u_{g-3}$, $a_1'$, $a_1'u_1'$, and
$b$ if $g-2\ge 4$.
\end{rem}

\noindent{\bf Step 2: a presentation for $\M(N_{g-2,2})$} We apply Lemma \ref{ext_pres} to the sequence (\ref{Cup_es}). 
Let $d_1$ and $d_2$ be Dehn twists about the boundary components of $\M(N_{g-2,2})$, such that $d_1$ generates the kernel of the map
$\M(N_{g-2,2})\to\M^+(N_{g-2,1},P)$. We are assuming that 
$N_{g-2,2}=N_{g-2,1}\backslash U$, where $U$ is a small open neighbourhood of $P$, and we treat $b$, $b'$, $a_i$, $a_i'$ (resp. $u_i$,  $u_i'$ ) for $i\in\{1,\dots,g-3\}$ as Dehn twists (resp.  crosscap transpositions) on $N_{g-2,2}$. These will be our cokernel generators of $\M(N_{g-2,2})$, together with $\sigma^2$ defined according to (R2) as $\sigma^2=a_1^{-1}a_1'u_1'u_1^{-1}$. There are two types of defining relations of $\M(N_{g-2,2})$:
(I) The conjugation relations:  
$d_1x=xd_1$, for which it suffices to take the cokernel generators $x$ from Remark \ref{gensNg11}.
(II) The cokernel relations: for every defining relation $w=1$ of
$\M^+(N_{g-2,1},P)$ we have a relation  
$w=d_1^k$ in $\M(N_{g-2,2})$, where $k$ is some integer depending on $w$. We denote as ($\widetilde{\textrm{Rel}}$) and ($\widetilde{\textrm{Rel'}}$) the cokernel relations corresponding to (Rel) and (Rel') respectively.

\medskip

\noindent{\bf Step 3: definition of $\psi'$.}
There is a homeomorphism $f\colon N_{g-2,2}\to N_{\alpha_1}$ inducing an isomorphism $f_\ast\colon\M(N_{g-2,2})\to\M(N_{\alpha_1})$ such that
$f_\ast(a_i)=a_{i+2}$, $f_\ast(u_i)=u_{i+2}$, for
$i\in\{1,\dots,g-3\}$, $f_\ast(b)=T_{\gamma_{\{3,4,5,6\}}}=c$, $f_\ast(a_1')=b$, $f_\ast(b')=b_2$ and
$f_\ast(u_1')=U_{\mu_4,\beta}=T_{\beta}^{-1}Y_{\mu_4,\beta}=b^{-1}v$.
We assume that the Dehn twists $d_1$, $d_2$ are such that
$\rho(f_\ast(d_1))=a_1=\rho(f_\ast(d_2^{-1}))$.

We define $\psi'\colon\M(N_{\alpha_1})\to\G$ as $\psi'=\theta\circ f_\ast^{-1}$, where $\theta\colon\M(N_{g-2,2})\to\G$ is defined on the generators as:
\begin{align*}
&\theta(d_1)=a_1,\quad\theta(a_i)=a_{i+2},\quad\theta(u_i)=u_{i+2}\quad\textrm{for\ }i\in\{1,\dots,g-3\},\\
&\theta(a_1')=b,\quad\theta(u_1')=b^{-1}a_3a_2a_1u_1u_2u_1,\\
&\theta(b)=(a_1\cdots a_5)^2b(a_1\cdots a_5)^{-2},\quad\theta(b')=b_2,\\
&\theta(\sigma^2)=\theta(a_1)^{-1}\theta(a_1')\theta(u_1')\theta(u_1)^{-1}=a_2a_1u_1u_2.
\end{align*}
For every generator $x$ of $\M(N_{g-2,2})$ we have
$\varphi(\theta(x))=\rho(f_\ast(x))$. This is obvious for all generators except $b$ and $u'_1$. We have $\varphi(\theta(b))=(a_1\cdots a_5)^2b(a_1\cdots a_5)^{-2}$,  $\rho(f_\ast(b))=c$, and since $(a_1\cdots a_5)^2$ maps $\beta$ on $\gamma_{\{3,4,5,6\}}$, the equality holds. We have
$\rho(f_\ast(u_1'))=b^{-1}v$, $\varphi(\theta(u_1'))=b^{-1}a_3a_2a_1u_1u_2u_1$ and
since the crosscap pushing map is a homomorphism, thus
\begin{align*}
v&=Y_{\mu_4,\beta}=Y_{\mu_4,\gamma_{\{3,4\}}}Y_{\mu_4,\gamma_{\{2,4\}}}Y_{\mu_4,\gamma_{\{1,4\}}}=
y_3(u_3^{-1}y_2u_3)(u_3^{-1}u_2^{-1}y_1u_2u_3)\\
&=a_3u_3(u_3^{-1}a_2u_2u_3)(u_3^{-1}u_2^{-1}a_1u_1u_2u_3)=a_3a_2a_1u_1u_2u_3.
\end{align*}
By abuse of notation we are going to denote $\theta(b)$ by $c$ and $b\theta(u_1')$ by $v$. We also set $e=\theta(\sigma^2)=a_2a_1u_1u_2$.
In order to prove that $\theta$ is a homomorphism, we have to check that it respects the defining relations of $\M(N_{g-2,2})$.

\medskip

\noindent{\bf Step 4: proof that $\theta$ is a homomorphism.}
Let $\mathcal{S}=\mathcal{S}_{g,0}(v_2)$ and let $\mathcal{A}$ be the subgroup of 
$\G$ generated by $b$ and the $a_i$'s.
Suppose that $w$ is a word in the generators of $\M(N_{g-2,2})$ and their inverses.
We say that $w$ is expressible in $\mathcal{S}$ or in $\mathcal{A}$ if it is mapped by $\theta$ on an element of  $\mathcal{S}$ or $\mathcal{A}$ respectively.
By similar argument as in the proof of Theorem \ref{mainA_punctured}, $\theta$ respects the relations expressible in $\mathcal{S}$, and by Lemma \ref{shortcut_AB} it also respects the relations expressible in $\mathcal{A}$.
Since $\theta(d_1)=a_1\in \mathcal{S}\cap\mathcal{A}$, a cokernel relation
$w=d_1^k$ is expressible in $\mathcal{S}$ or $\mathcal{A}$ if and only if
$w$ is. By this observation we will be able to deduce that $\theta$ respects some cokernel relations without having to determine the exponent $k$.

The conjugation relations are mapped by $\theta$ on  
$a_1\theta(x)=\theta(x)a_1$ for each cokernel generator $x$ from Remark \ref{gensNg11}. Note that 
$\theta(x)\in\mathcal{S}\cup\mathcal{A}$ and  since 
$a_1\in \mathcal{S}\cap\mathcal{A}$, thus $\theta$ respects the conjugation relations.

Let $s=(u_1\cdots u_{g-1})$. By (C1a, C3) we have
$s^2a_is^{-2}=a_{i+2}=\theta(a_i)$, and by (B1, B2)
$s^2u_is^{-2}=u_{i+2}=\theta(u_i)$ for $i\in\{1,\dots,g-3\}$. Also
\[s^2bs^{-2}\stackrel{(E5)}{=}(a_1\cdots a_{g-1})^2b(a_1\cdots a_{g-1})^{-2}
\stackrel{(A1, A3)}{=}c=\theta(b).
\]
 If (Rel) is one of the relations (A1--A6, A9, B1, B2, C1--C8) then ($\widetilde{\textrm{Rel}}$) is the same as (Rel) and it is 
mapped by $\theta$ on its conjugate by $s^2$. 

If (Rel) is one of the relations (R2, R3, R6) then
($\widetilde{\textrm{Rel}}$) is the same as (Rel) and it is trivially preserved by $\theta$. ($\widetilde{\textrm{R2'}}$, $\widetilde{\textrm{R4}}$, $\widetilde{\textrm{R4'}}$, $\widetilde{\textrm{R5}}$, $\widetilde{\textrm{R5'}}$,
$\widetilde{\textrm{C4'}}$, $\widetilde{\textrm{C8'}}$) are expressible in $\mathcal{S}$. 
($\widetilde{\textrm{A1'}}-\widetilde{\textrm{A6'}}$, $\widetilde{\textrm{A9'}}$,  $\widetilde{\textrm{R7}}$, $\widetilde{\textrm{R8}})$ are expressible in $\mathcal{A}$.

Note that $(\widetilde{\textrm{R7}})$ is 
\[(a_4a_3a_2a_1')^{-1}b'(a_4a_3a_2a_1')=(a_4a_3a_2a_1)^{-1}b(a_4a_3a_2a_1),\]
 and $(\widetilde{\textrm{R7'}})$ is 
\[\sigma^2(a_4a_3a_2a_1)^{-1}b(a_4a_3a_2a_1)\sigma^{-2}=(a_4a_3a_2a_1')^{-1}b'(a_4a_3a_2a_1').\]
We already know that $\theta$ respects $(\widetilde{\textrm{R7}})$ and to prove the same for $(\widetilde{\textrm{R7'}})$ it suffices to show that $e=\theta(\sigma^2)$ commutes in $\G$ with $(a_6a_5a_4a_3)^{-1}c(a_6a_5a_4a_3)$. It follows from earlier part of the proof that in $\G$ we have  \[(a_6a_5a_4a_3)^{-1}c(a_6a_5a_4a_3)=s^2(a_4a_3a_2a_1)^{-1}b(a_4a_3a_2a_1)s^{-2},\] where $s=(u_1\dots u_{g-1})$. Setting $w=(a_4a_3a_2a_1)^{-1}b(a_4a_3a_2a_1)$,   it suffices to show that it  commutes  with $s^{-2}es^2$.
We have
\[e=a_2a_1u_1u_2=a_2u_2(u_2^{-1}a_1u_1u_2)\stackrel{(B1,C2)}{=}(a_2u_2)u_1(a_2u_2)u_1^{-1}.\]
By (B1, B2, B3) $s^{-2}u_1s^2=s^{g-2}u_1s^{2-g}=u_{g-1}$, and as (R7) is valid only for $g-2\ge 5$, $u_{g-1}$ commutes
with $w$ by (C1a, C7a). By (B1, B2, C1a, C3) we  have
\[s^{-2}a_2u_2s^2=s^{-1}a_1u_1s=u_{g-1}^{-1}(u_1\cdots u_{g-2})^{-1}a_1u_1(u_1\cdots u_{g-2})u_{g-1}.\] 
Since $w$ and $(u_1\cdots u_{g-2})^{-1}a_1u_1(u_1\cdots u_{g-2})$ are in $\mathcal{S}$ and they commute in $\M(N_{g,0})$, they also commute in $\G$.
We already  proved that $w$ commutes with $u_{g-1}$, hence it commutes with $s^{-2}es^2$ and $\theta$ respects $(\widetilde{\textrm{R7'}})$.

($\widetilde{\textrm{R3'}}$) and ($\widetilde{\textrm{R6'}}$) are $\sigma^2u_i=u_i\sigma^2$ for $i\ge 2$ and $\sigma^2a_2=a_2\sigma^2$ respectively, and they are mapped on $eu_{i+2}=u_{i+2}e$, $ea_4=a_4e$,  which follow from (A1, B1, C1a).

($\widetilde{\textrm{B1'}}$) is either the same as (B1) or $u_1'u_j=u_ju_1'$ for $j>2$. The last relation is 
mapped by $\theta$ on $b^{-1}vu_i=u_ib^{-1}v$ for $i>4$, which follows from (B1, C1a, C7a). 
($\widetilde{\textrm{B2'}}$) is either the same as (B2) or $u_2u_1'u_2=u_1'u_2u_1'$, which is mapped on
\begin{align*}
& u_4b^{-1}vu_4=b^{-1}vu_4b^{-1}v \iff u_4b^{-1}a_4^{-1}(a_4vu_4)=b^{-1}a_4^{-1}(a_4vu_4)b^{-1}v\\
&\iff ba_4bu_4b^{-1}a_4^{-1}=b(a_4vu_4)b^{-1}v(a_4vu_4)^{-1}.
\end{align*}
Since $b,v\in\mathcal{S}$ and $a_4vu_4\in\mathcal{S}$ (see transformation (5) in the proof of Theorem \ref{mainA_punctured}), thus it suffices to show that the left hand side of the last relation is also in $\mathcal{S}$. This is true because 
\begin{align*}
&\underline{ba_4b}u_4b^{-1}a_4^{-1}\stackrel{(A4)}{=}
a_4\underline{ba_4u_4b^{-1}}a_4^{-1}\stackrel{(C8)}{=}
a_4a_4u_4a_4vu_4a_4^{-1}\stackrel{(C4a)}{=}a_4u_4va_4u_4.
\end{align*}
($\widetilde{\textrm{C1'}}$) is $a_1'u_i=u_ia_1'$ for $i>2$ and it is mapped on 
(C7a) $bu_j=u_jb$ for $j>4$.
($\widetilde{\textrm{C2'}}$) is either the same as (C2) or 
$a_1'u_2u_1'=u_2u_1'a_2$, which is mapped on
\[bu_4b^{-1}v=u_4b^{-1}va_4 \iff ba_4bu_4b^{-1}a_4^{-1}(a_4vu_4)=ba_4u_4b^{-1}va_4u_4.\]
Both sides of the last relation are in $\mathcal{S}$ because
we showed above that $ba_4bu_4b^{-1}a_4^{-1}\in\mathcal{S}$.
($\widetilde{\textrm{C3'}}$) is either the same as (C3) or 
$a_2u_1'u_2=u_1'u_2a_1'$, which is mapped on
\[a_4b^{-1}vu_4=b^{-1}vu_4b\stackrel{(A4)}{\iff} ba_4vu_4=a_4vu_4b.\]
The last relation is equivalent to (C9) from Lemma \ref{relC9}.
($\widetilde{\textrm{C5'}}$) is $u_2a_1'a_2u_1'=a_1'a_2$ and it is mapped on
\[u_4ba_4b^{-1}v=ba_4\stackrel{(A4)}{\iff}u_4a_4^{-1}ba_4v=ba_4\stackrel{(C4a)}{\iff}
a_4u_4ba_4vu_4=ba_4u_4
\]
The last relation follows from (C8, C9).
($\widetilde{\textrm{C7'}}$) is $b'u_5=u_5b'$ and it is mapped on
$b_2u_7=u_7b_2$, which follows from (A8, C1a, C7a).

It remains to show that $\theta$ respects the relations ($\widetilde{\textrm{C6'}}$) and ($\widetilde{\textrm{R8'}}$). This follows from the next lemma, whose proof will be given in the next subsection. Recall that a relation in $\M(N_{g-2,2})$ whose both sides are mapped by $\theta$ on elements of $\cS$ is called expressible in $\cS$.

\begin{lemma}\label{c6r8}
($\widetilde{\textrm{C6'}}$) and ($\widetilde{\textrm{R8'}}$) are expressible in $\mathcal{S}$.
\end{lemma}

\noindent{\bf Step 5: checking that $\psi'(\ker\rho)=1$.} 
Recall that  $d_1$, $d_2$ are Dehn twists about the boundary components of $N_{g-2,2}$  such that $\rho(f_\ast(d_1d_2))=1$, and so $f_\ast(d_1d_2)$ is a generator
of  $\ker\rho$. We have  $\psi'(f_\ast(d_1d_2))=\theta(d_1d_2)$ and to prove 
$\theta(d_1d_2)=1$ it suffices to show
$\theta(d_1d_2)\in\mathcal{S}$.
Let $z=\sigma_{g-2}^2\cdots \sigma_1^2\in\mathfrak{p}(\pi_1(N_{g-2,1},P))$. 
By Lemma \ref{push1}, in $\M^+(N_{g-2,1},P)$ we have the relation
\[z=d_2^{\varepsilon}(u_1\cdots u_{g-3})^{g-2},\]
where $\varepsilon\in\{-1,1\}$,
which gives in  $\M(N_{g-2,2})$
\[z(u_1\cdots u_{g-3})^{2-g}=d_2^\varepsilon d_1^k\] for some $k\in\mathbb{Z}$.
We have \[\theta(d_1d_2)^\varepsilon=\theta(z)(u_3\cdots u_{g-1})^{2-g}a_1^{\varepsilon-k}.\]
Since $(u_3\cdots u_{g-1})^{2-g}\in\mathcal{S}$ by Lemma \ref{Delta_in_stab},  it suffices to prove $\theta(z)\in\mathcal{S}$ and clearly it is enough to show that $\theta(\sigma_{g-2}^2)\in\mathcal{S}$. We have
\begin{align*}
&\theta(\sigma_{g-2}^2)=\theta((u_1\cdots u_{g-3})^{-1}\sigma^2(u_1\cdots u_{g-3}))=\\
&(u_3\cdots u_{g-1})^{-1}a_2a_1u_1u_2(u_3\cdots u_{g-1})=\\
&(u_3\cdots u_{g-1})^{-1}a_2u_2(u_1a_2u_2u_1^{-1})(u_3\cdots u_{g-1})=\\
&(u_2\cdots u_{g-2})a_{g-1}u_{g-1}(u_2\cdots u_{g-2})^{-1}(u_1\cdots u_{g-2})a_{g-1}u_{g-1}(u_1\cdots u_{g-2})^{-1}\end{align*}
where the last equality follows from (B1, B2, C1a, C3).

\medskip

\noindent{\bf Step 6: extending $\psi^+$.} 
We have a homomorphism $\psi^+\colon\Stab^+\to\G$ defined as $\psi^+(\rho(x))=\psi'(x)$ for $x\in\M(N_{\alpha_1})$.
Since $\rho\circ f_\ast\colon\M(N_{g-2,2})\to\Stab^+$ is an epimorphism,  $\Stab^+$ is generated by $u_i$, $a_i$ for $i\in\{3,\dots,g-1\}$, $b$, $c$ ,$v$ and $a_1$.  By the definition of $\psi'$ (Step 3) we have $\psi^+(c)=(a_1\cdots a_5)^2b(a_1\cdots a_5)^{-2}$, $\psi^+(v)=a_3a_2a_1u_1u_2u_3$ and $\psi^+(x)=x$ for the remaining generators of $\Stab^+$. Also $\psi^+(b_2)=b_2$, and by (A8)
$\psi^+(b_j)=b_j$ for $j\ge 3$.

By applying Lemma \ref{ext_pres}  to the sequence (\ref{stab_es})
we see that $\Stab$ is generated by $\Stab^+$ and two cokernel generators, for which we take $u_1$ (preserves orientation of $\alpha_1$ and swaps its sides) and $r=r_g$ (reverses orientation of $\alpha_1$ and preserves its sides). We let $\psi_{v_1}$ be equal to $\psi^+$ on $\Stab^+$ and $\psi_{v_1}(u_1)=u_1$,  
$\psi_{v_1}(r)=r=a_1\cdots a_{g-1}u_{g-1}\cdots u_1$.  Note that
$\varphi(\psi_{v_1}(x))=x$ for every generator $x$ of $\Stab$. It remains to check that $\psi_{v_1}$ respects the cokernel and conjugation relations.
The cokernel relations are (E2a) $r^2=1$, (E4a) $(ru_1)^2=1$ and
$u_1^2=(u_3\cdots u_{g-1})^{g-2}$ which holds in $\G$ by Lemma \ref{shortcut_AB}.

By Remark \ref{gensNg11}, $\Stab^+$ is generated by $u_{g-1}$, $b$, $c$, $v$ and $a_i$ for $i=1,3,\dots,g-1$. 
Set 
\[w=(u_4\cdots u_{g-1})(a_4\cdots a_{g-1})^{-1}=(u_4\cdots u_{g-2})a_{g-1}u_{g-1}(a_4\cdots a_{g-2})^{-1}.\]
The conjugation relations are (E3a) $ra_ir=a_i$, (E4) $ru_{g-1}r=u^{-1}_{g-1}$,
(C1a) $u_1a_iu_1^{-1}=a_i$ for $i=3,\dots,g-1$, (C4) $u_1a_1u_1^{-1}=a_1^{-1}$,
(B1) $u_1u_{g-1}u_1^{-1}=u_{g-1}$ and 
\begin{itemize}
\item[(1)] $rbr=w^{-1}b^{-1}w$
\item[(2)] $rvr=a_3w^{-1}a_3^{-1}va_3wa_3^{-1}$ 
\item[(3)] $rcr=c^{-1}(a_3a_4a_5)^{4}$
\item[(4)] $u_1^{-1}bu_1=u_3^{-1}rb^{-1}ru_3$ 
\item[(5)] $u_1^{-1}vu_1=u_3^{-1}a_3^{-1}rv^{-1}ra_3u_3$
\item[(6)] $u_1^{-1}cu_1=c$ 
\end{itemize}

It can be checked that $wr$ and $a_3wa_3^{-1}r$ preserve the curve $\beta$, preserve its orientation and reverse local orientation of its neighbourhood.
Additionally $a_3wa_3^{-1}r$ preserves $\mu_4$. Since
$b=T_\beta$ and $v=Y_{\mu_4,\beta}$, thus (1, 2) are satisfied in $\M(N_{g,0})$. 
Since $w\in\mathcal{S}$, they are also satisfied in $\G$. 
Similarly, $u_1u_3^{-1}r$ and $u_1u_3^{-1}a_3^{-1}r$ preserve $\beta$, reverse its orientation and local orientation of its neighbourhood.
Additionally $u_1u_3^{-1}a_3^{-1}r$ preserves $\mu_4$. It follows that (4, 5) are satisfied in $\M(N_{g,0})$ and  also in $\G$. 
By Lemma \ref{two-holed-torus},
 $rbr=b^{-1}(a_1a_2a_3)^4$. Conjugating the last relation by $(a_1\cdots a_6)^2$ we obtain (3). 
Recall from Step 4 that in $\G$ we have $c=s^2bs^{-2}$ and $s^{2}u_{g-1}s^{-2}=u_1$ where $s=(u_1\cdots u_{g-1})$.
 Conjugating the relation $bu_{g-1}=u_{g-1}b$ (C7a) by $s^2$ we obtain (6).
\end{proof}
\subsection{Proof o Lemma \ref{c6r8}.}
To finish the proof of Theorem \ref{pres_stab_alpha1}, we yet have prove that the relations
\begin{align*}
&(\widetilde{\textrm{C6'}})\quad (u_3b')^2=a_1'a_2a_1'(a_3u_3)u_2(a_3u_3)u_2^{-1}u_1'u_2(a_3u_3)^2a_1'd_1^{k_1}\\
&(\widetilde{\textrm{R8'}})\quad
\sigma^2b\sigma^{-2}=b'(a_2a_1'a_3a_2)^{-1}{a_1'}^{-1}\sigma^2a_1\sigma^{-2}(a_2a_1'a_3a_2){a_1'}^{-1}\sigma^2a_1\sigma^{-2}d_1^{k_2} 
\end{align*}
where $k_1, k_2$ are some integers, are expressible in $\mathcal{S}=\mathcal{S}_{g,0}(v_2)$. This is obvious if 
$g>6$, therefore in this subsection we assume $g=6$. We denote $\M(N_{6,0})$ by $\M$ and
$\G_{6,0}$ by $\G$.
\begin{lemma}\label{expC}
The following relations hold in $\G$. 
\[(1)\quad b_2a_1a_3a_5=cdb\qquad (2)\quad c^{-1}u_5^{-1}du_5c=u_5^{-1}du_5,\]
where $d=(a_4a_3a_5a_4)^{-1}b(a_4a_3a_5a_4)$.
\end{lemma}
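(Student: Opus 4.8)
The plan is to deduce both identities from the corresponding statements in $\M(N_{6,0})$ and then transfer them back to $\mathcal{G}_{6,0}$ by means of the ``shortcut'' lemmas already available for $g=6$ (the inductive hypothesis gives Theorems \ref{mainA}, \ref{mainB} for genera $<6$, hence Theorem~\ref{pres_Stab_mu_g} and Lemma~\ref{shortcut_AB} for $g=6$).

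For relation (1): after replacing $c$ by its expression $(a_1a_2a_3a_4a_5)^2b(a_1a_2a_3a_4a_5)^{-2}$ used in the proof of Theorem~\ref{pres_stab_alpha1}, $d$ by $(a_4a_3a_5a_4)^{-1}b(a_4a_3a_5a_4)$, and $b_2=b_\rho$ by its defining word in $a_1,\dots,a_5,b$ coming from (A7) and (A8), both sides of (1) become words in the generators $a_i,b_j$ alone. By case (2) of Lemma~\ref{shortcut_AB} it then suffices to check (1) in $\M(N_{6,0})$, where all seven elements are Dehn twists about curves lying in the orientable subsurface $\Sigma$ (the regular neighbourhood of $\alpha_1\cup\dots\cup\alpha_5$; recall $b_2=T_\xi$ with $\xi$ a boundary curve of $\Sigma$). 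I would verify this twist identity geometrically: first identifying $(a_4a_3a_5a_4)^{-1}(\beta)$ with $\gamma_{\{1,2,5,6\}}$ by following the action of the four twists on $\beta=\gamma_{\{1,2,3,4\}}$ (each twist shifting one index of the defining set), and then recognising (1) as an instance of the lantern relation combined with the two-holed torus (three-chain) relation for $\xi,\alpha_1,\alpha_3,\alpha_5$ and $\gamma_{\{1,2,3,4\}},\gamma_{\{1,2,5,6\}},\gamma_{\{3,4,5,6\}}$ --- or, more bluntly, by checking that both sides induce the same permutation on a generating family of isotopy classes of curves on $N_{6,0}$.

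For relation (2): all of $c$, $d$ and $u_5$ fix the curve $\alpha_5=\gamma_{\{5,6\}}$ (for $c$ and $d$ because $\gamma_{\{3,4,5,6\}}$ and $\gamma_{\{1,2,5,6\}}$ can be isotoped off the Klein bottle with a hole bounded by $\alpha_5$; for $u_5=U_{\mu_6,\alpha_5}$ directly from the definition of a crosscap transposition), so (2) is a relation in $\Stab_{\M(N_{6,0})}[\alpha_5]$. Conjugating by $u_5$ turns (2) into the assertion that $u_5cu_5^{-1}$ commutes with $d$, which in $\M(N_{6,0})$ holds because $u_5(\gamma_{\{3,4,5,6\}})$ is disjoint from $\gamma_{\{1,2,5,6\}}$ --- a fact I would settle by a local picture in the neighbourhood of $\mu_6\cup\alpha_5$. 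To lift (2) to $\mathcal{G}_{6,0}$ I would use (C7) and (C1a) to move $u_5$ past $b$ and $a_3$, then (C2), (C3) and (C4a) to rewrite $u_5a_4u_5^{-1}$ and $u_5a_5u_5^{-1}$, obtaining an expression for $u_5cu_5^{-1}$ in the generators $a_i,b,u_4$; applying (C5a) together with (B1), (B2) to clear the remaining occurrences of $u_4$ should reduce (2) either to a relation expressible in $\mathcal{S}_{6,0}(v_2)$ (so that Theorem~\ref{pres_Stab_mu_g} finishes it) or to a word relation in the $a_i,b_j$ (so that Lemma~\ref{shortcut_AB} finishes it).

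The main obstacle is relation (2). Unlike (1), it genuinely involves the crosscap transposition $u_5$, which does not preserve $\Sigma$, so Lemma~\ref{shortcut_AB} does not apply to it directly; the delicate point is carrying out the rewriting of $u_5cu_5^{-1}$ into the ``known'' generators and checking the resulting commutation inside $\mathcal{G}_{6,0}$ without circularly invoking Theorem~\ref{pres_stab_alpha1}, whose proof is completed only through Lemma~\ref{c6r8} and hence through the present lemma. A secondary nuisance is pinning down the precise form of (1) in $\M(N_{6,0})$ --- the directions of the twists and the cyclic order of the three commuting factors --- since twist directions on a nonorientable surface are fixed only by the conventions of the figures.
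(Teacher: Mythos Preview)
Your treatment of (1) is correct and matches the paper: once $c$, $d$, $b_2$ are replaced by words in $a_i,b$, both sides are words in the $a_i,b_j$, and (1) is the lantern relation for the four-holed sphere bounded by $\beta_2,\alpha_1,\alpha_3,\alpha_5$ (with $\beta,\gamma_{\{1,2,5,6\}},\gamma_{\{3,4,5,6\}}$ as the interior curves), so Lemma~\ref{shortcut_AB} transfers it to $\G$.

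For (2), however, you have not identified the tool that actually closes the gap. Observing that the relation lives in $\Stab[\alpha_5]$ is correct but unhelpful: there is no established section of $\Stab[\alpha_5]$ into $\G$ at this point (that is precisely what Theorem~\ref{pres_stab_alpha1} will give, after Lemma~\ref{c6r8}). Your fallback --- reduce to $\mathcal{S}_{6,0}(v_2)$ or to a word in $a_i,b_j$ --- does not work as stated: $u_5\notin\Stab[\mu_6]$ (it swaps $\mu_5$ and $\mu_6$), so neither $u_5^{-1}du_5$ nor its commutator with $c$ naturally lands in $\mathcal{S}_{6,0}(v_2)$; and after any rewriting with (C2)--(C5a), crosscap transpositions remain interlaced with the twists, so Lemma~\ref{shortcut_AB} is not applicable. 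You flag this as the main obstacle but do not overcome it.

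The paper's missing idea is to route (2) through $\Stab[\xi]$, via Theorem~\ref{pres_Stab_xi}, which has already been proved. Concretely: one first shows (using $\mathcal{S}_{6,0}(v_2)$) that $b$ commutes with $w=(u_4u_3u_5u_4)(a_4a_3a_5a_4)^{-1}$ in $\G$, giving $d=(u_4u_3u_5u_4)^{-1}b(u_4u_3u_5u_4)$; from this one computes $c=\Delta_4^{-1}u_1du_1^{-1}\Delta_4$ in $\G$. Since $u_1$ commutes with $u_5$ and with $c$ (the latter by conjugating (C7a) by $s^2=(u_1\cdots u_5)^2$), relation~(2) becomes
\[
c^{-1}\,(u_5^{-1}\Delta_4)\,c\,(u_5^{-1}\Delta_4)^{-1}\,c \;=\; (u_5^{-1}\Delta_4)\,c\,(u_5^{-1}\Delta_4)^{-1}.
\]
Now $c$ and $u_5^{-1}\Delta_4$ are both generators of $\Stab[\xi]$ in Theorem~\ref{pres_Stab_xi}, the relation holds in $\M$, and $\psi_{v_3}$ carries it into $\G$. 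This is the step your proposal lacks.
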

\begin{proof}
It is easy to check that $(a_4a_3a_5a_4)$ maps the curve $\gamma_{\{1,2,5,6\}}$ on $\beta$ and so $d=T_{\gamma_{\{1,2,5,6\}}}$. Observe that $\beta_2$, $\alpha_1$, $\alpha_3$ and $\alpha_5$ bound a $4$-holed sphere in $N_{6,0}$, and so in $\M$ we have the well known lantern relation, which is (1). 
The same relation holds also in $\G$ by Lemma \ref{shortcut_AB}. 

Since the curves 
$u_5(\gamma_{\{3,4,5,6\}})$ and $\gamma_{\{1,2,5,6\}}$ are disjoint up to isotopy,  (2) holds in $\M$. It can be checked that $w=(u_4u_3u_5u_4)(a_4a_3a_5a_4)^{-1}$ preserves the curve $\beta$ and preserves local orientation of its neighbourhood. It follows that $b$ commutes with $w$ in $\M$ and also in $\G$, because by (B1, C3,C4a)
\[w=u_4u_3(u_5a_4u_4u_5^{-1})a_5u_5a_3^{-1}a_4^{-1}=u_4u_3(u_4^{-1}a_5u_5u_4)a_5u_5a_3^{-1}a_4^{-1}\in\mathcal{S}.\]
 It follows that in $\G$ we have
$d=(u_4u_3u_5u_4)^{-1}b(u_4u_3u_5u_4)$
and
\begin{align*}
c&=(a_1\cdots a_5)^2b(a_1\cdots a_5)^{-2}\stackrel{(E5)}{=}(u_5\cdots u_1)^{-2}b(u_5\cdots u_1)^2\\
&\stackrel{(C7,B1)}{=}
(u_4u_3u_5u_4u_2u_3u_1u_2u_1)^{-1}b(u_4u_3u_5u_4u_2u_3u_1u_2u_1)\\
&=(u_2u_3u_1u_2u_1)^{-1}d(u_2u_3u_1u_2u_1)=\Delta_4^{-1}u_1du_1^{-1}\Delta_4.
\end{align*}
Since $u_1$ commutes in $\G$ with $u_5$ and $c$ (see relation (6) in Step 6 above), 
(2) is equivalent in $\G$ to 
\[c^{-1}u_5^{-1}\Delta_4c\Delta_4^{-1}u_5c=u_5^{-1}\Delta_4c\Delta_4^{-1}u_5.\]
The last relation holds in $\G$ by Theorem \ref{pres_Stab_xi}, because $c$ and $u_5^{-1}\Delta_4$ are in the image of $\psi_{v_3}\colon\Stab[\xi]\to\G$.
\end{proof}

Clearly the right hand side of ($\widetilde{\textrm{C6'}}$) is expressible in $\mathcal{S}$ and so it suffices to show $(\theta(u_3)\theta(b'))^2=(u_5b_2)^2\in\mathcal{S}$. We have
\begin{align*}
&(u_5b_2)^2\stackrel{(1)}{=}(u_5cdba_1^{-1}a_3^{-1}a_5^{-1})^2
\stackrel{(A1, C1a)}{=}\\
&u_5cdb\underline{a_5^{-1}u_5a_5^{-1}}cdba_1^{-2}a_3^{-2}\stackrel{(C4a)}{=}
(u_5cdb)^2a_1^{-2}a_3^{-2}.
\end{align*}
Since $b$ commutes with $b_2, a_1, a_2, a_3$  (A3, A9b), it also commutes with $cd$ by (1). By (C7) $b$ commutes with $u_5$ and we have
\begin{align*}
&(u_5b_2)^2=(u_5cd)^2b^2a_1^{-2}a_3^{-2}=
(u_5c)^2(c^{-1}u_5^{-1}du_5cd)b^2a_1^{-2}a_3^{-2}
\stackrel{(2)}{=}(u_5c)^2u_5^{-2}(u_5d)^2b^2a_1^{-2}a_3^{-2}
\end{align*}
By (C6) and the transformation (3) from the proof of Theorem \ref{mainA_punctured},
$(u_3b)^2$ can be expressed in $\G$ in terms of
$a_1$, $u_1$, $a_2$, $u_2$, $a_3u_3$ as
\[(u_3b)^2=a_1a_2a_1(a_3u_3)u_2(a_3u_3)u_2^{-1}u_1u_2(a_3u_3)^2a_1.\]
Conjugating this relation by
$(a_1\cdots a_5)^2$ we obtain an expression of $(u_5c)^2$ in terms of
$a_3$, $u_3$, $a_4$, $u_4$, $a_5u_5$, and conjugating by
$(u_4u_3u_5u_4)^{-1}$ we obtain an expression of $(u_5d)^2$ in terms of
$a_1$, $u_1$, $(u_3u_4)^{-1}a_2(u_3u_4)$, $(u_3u_4)^{-1}u_2(u_3u_4)$, $a_5u_5$.
Hence $(u_5c)^2$ and $(u_5d)^2$ are in $\mathcal{S}$ and so is $(u_5b_2)^2$.

\medskip

The relation ($\widetilde{\textrm{R8'}}$) is mapped by $\theta$ on 
\[
ece^{-1}=b_2(a_4ba_5a_4)^{-1}{b}^{-1}ea_3e^{-1}(a_4ba_5a_4){b}^{-1}ea_3e^{-1}a_1^{k}, 
\]
where $e=\theta(\sigma^2)=a_2a_1u_1u_2$. Since $e$ commutes with $a_1$ by (B1,C3), $a_2a_1$ commutes with
$b_2$, $b$, $a_4$, $a_5$ by Lemma \ref{shortcut_AB} and
$b(a_4ba_5a_4)=(a_4ba_5a_4)a_5$ by (B2, B4), the relation is equivalent to 
\[u_1u_2c=b_2a_5^{-1}(a_4ba_5a_4)^{-1}u_1u_2a_3u_2^{-1}u_1^{-1}(a_4ba_5a_4)b^{-1}u_1u_2a_3a_1^{k}\]
We have to show that $w\in\mathcal{S}$ for $w$ defined as
\[w=c^{-1}u_2^{-1}u_1^{-1}b_2a_5^{-1}(a_4ba_5a_4)^{-1}u_1u_2a_3u_2^{-1}u_1^{-1}(a_4ba_5a_4)b^{-1}u_1u_2a_3a_1^{k}.\]
We define in $\G$ three equivalence relations $\sim_L$, $\sim_R$ and $\approx$ as follows. We set $w_1\sim_L w_2$ if there exists $u\in\mathcal{S}$ such that $w_2=uw_1$. Similarly, we set $w_1\sim_R w_2$ if there exists $u\in\mathcal{S}$ such that $w_2=w_1u$. Finally, we set $w_1\approx w_2$ if there exist $u, u'\in\mathcal{S}$ such that $w_2=uw_1u'$. Observe that the equivalence class of $w$ for the relation $\sim_L$ is the coset $\mathcal{S}w$, its equivalence class  for the relation $\sim_R$ is the coset $w\mathcal{S}$, and its equivalence class  for the relation $\approx$ is the double-coset $\mathcal{S}w\mathcal{S}$. Observe also that $\approx$ is the equivalence relation generated by the union of $\sim_L$ and $\sim_R$. Moreover, we have $w\in\mathcal{S}$ if and only if the equivalence class of $w$ for the relation $\approx$ is $\mathcal{S}$.

By (A1--A4) and (C1a) we have
\begin{align*}
&(a_4ba_5a_4)^{-1}u_1u_2a_3u_2^{-1}u_1^{-1}(a_4ba_5a_4)=\\
&a_4^{-1}b^{-1}\underline{a_5^{-1}a_4^{-1}u_1u_2}a_3\underline{u_2^{-1}u_1^{-1}a_4a_5}ba_4=\\
&a_4^{-1}b^{-1}u_1u_2\underline{a_5^{-1}a_4^{-1}a_3a_4a_5}u_2^{-1}u_1^{-1}ba_4=\\
&a_4^{-1}b^{-1}u_1u_2a_3a_4a_5a_4^{-1}a_3^{-1}u_2^{-1}u_1^{-1}ba_4\sim_R
a_4^{-1}b^{-1}u_1u_2a_3a_4a_5
\end{align*}
Thus
\begin{align*}
&w\sim_R c^{-1}u_2^{-1}u_1^{-1}b_2a_5^{-1}a_4^{-1}b^{-1}u_1u_2a_3a_4a_5\stackrel{(1)}{=}\\
&c^{-1}u_2^{-1}u_1^{-1}\underline{(cdba_5^{-1}a_3^{-1}a_1^{-1})a_5^{-1}a_4^{-1}b^{-1}}u_1u_2a_3a_4a_5=\\
&c^{-1}u_2^{-1}u_1^{-1}ca_5^{-2}a_3^{-1}(a_4a_5a_3a_4)^{-1}b(a_4a_5a_3a_4)ba_4^{-1}b^{-1}a_1^{-1}u_1u_2a_3a_4a_5.
\end{align*}
We have
\begin{align*}
&(a_4a_5a_3\underline{a_4)ba_4^{-1}b^{-1}}a_1^{-1}u_1u_2a_3a_4a_5=
a_4a_5a_3b^{-1}\underline{a_4a_1^{-1}u_1u_2a_3a_4a_5}=\\
&a_4a_5a_3b^{-1}a_1^{-1}u_1u_2a_3a_4a_5a_3\sim_R
a_4\underline{a_5a_3b^{-1}}a_1^{-1}u_1u_2a_3a_4a_5=\\
&a_4b^{-1}\underline{a_5a_3a_1^{-1}u_1u_2a_3a_4a_5}=
a_4b^{-1}a_3a_1^{-1}u_1u_2a_3a_4a_5a_4
\end{align*}
\begin{lemma}\label{inS1}
$a_5a_4a_3a_2^{-1}a_1^{-1}u_1u_2a_3a_4a_5\in\mathcal{S}$.
\end{lemma}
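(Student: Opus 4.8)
The plan is to rewrite $w:=a_5a_4a_3a_2^{-1}a_1^{-1}u_1u_2a_3a_4a_5$ as an explicit product of the generators of $\mathcal{S}=\mathcal{S}_{6,0}(v_2)$. Recall from Theorem~\ref{pres_Stab_mu_g} that $\mathcal{S}$ is the subgroup of $\G_{6,0}$ generated by $a_1,a_2,a_3,a_4$, $u_1,u_2,u_3,u_4$, $b$ and $a_5u_5$, and that $\varphi_{6,0}$ restricts to an isomorphism $\mathcal{S}\to\Stab_{\M(N_{6,0})}[\mu_6]$ with inverse $\psi_{v_2}$. Since $a_5$ commutes with $a_1,a_2,a_3,u_1,u_2$ and $a_4$ commutes with $a_1,a_2,u_1,u_2$ by (A1) and (C1a), one has $w=(a_5a_4)\,V\,(a_4a_5)$ with $V=a_3a_2^{-1}a_1^{-1}u_1u_2a_3$, a word over $a_1,a_2,a_3,u_1,u_2$ only; the whole difficulty is to absorb the two isolated occurrences of $a_5$ (and of $a_4$) into $\mathcal{S}$.

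To do that I would use (C2), in the form $a_4u_5u_4=u_5u_4a_5$, and (C3), in the form $a_5u_4u_5=u_4u_5a_4$, to replace each outer $a_5$ by a conjugate of $a_4$ by a word in $u_4,u_5$; the surplus powers of $u_5$ that then appear are eliminated using $u_5^2=(a_5u_5)^2$ (a consequence of (C4a)) and $\Delta_4^2=(u_1u_2u_3)^4=u_5^{-2}$ (Lemma~\ref{Delta_in_stab} together with (B4a)), both of which lie in $\mathcal{S}$. Carrying this out, moving $u_4$'s across the $a_i$ with $|i-4|>1$ and invoking (C5a), the braid relations (A2), (B2) and (C1a) as needed, should bring $w$ to a word over $a_1,\dots,a_4,u_1,\dots,u_4,a_5u_5$, which is the desired conclusion. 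It is convenient to compute the target of this rewriting in advance: $V$ is supported away from the crosscaps $5$ and $6$, hence fixes $\mu_6$, and a short curve computation (apply $T_{\alpha_5}$ and then $T_{\alpha_4}$ to $\mu_6$, and check that the outer factors undo each other's effect on it) shows $\varphi_{6,0}(w)\in\Stab[\mu_6]$; the word $s:=\psi_{v_2}(\varphi_{6,0}(w))$, written out through the formulas of Theorem~\ref{pres_Stab_mu_g}, is then exactly what the rewriting must reproduce, and $w\in\mathcal{S}$ is precisely the equality $w=s$ in $\G_{6,0}$.

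The main obstacle is this last equality. Because $\varphi_{6,0}$ is not injective, one cannot deduce $w=s$ by applying Lemma~\ref{shortcut_AB} or Corollary~\ref{genus3_shortcut} to $ws^{-1}$, so the cancellation of the two copies of $a_5$ must be produced by an honest (and somewhat long) manipulation of the defining relations, with careful bookkeeping of the letters that genuinely involve crosscap $6$ so that the introduced $u_5$'s pair up into the admissible elements $a_5u_5$ and $u_5^{\pm 2}$ of $\mathcal{S}$. Once $w$ has been expressed as a product of generators of $\mathcal{S}$, the lemma follows.
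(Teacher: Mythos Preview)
Your proposal correctly identifies the goal and the difficulty (the two isolated $a_5$'s), but it remains a sketch rather than a proof: you never actually carry out the rewriting, and the step ``the surplus powers of $u_5$ \dots\ are eliminated'' is asserted, not shown. When you replace each $a_5$ via (C2)/(C3) you introduce $u_5^{\pm1}$ on both sides of a long word in $a_1,\dots,a_4,u_1,\dots,u_4$; there is no a~priori reason these $u_5$'s migrate together into $u_5^{\pm2}$ or next to an $a_5$, and you give no mechanism for making this happen. (Incidentally, in $\G_{6,0}$ one has $\Delta_4^2=u_5^{2}$, not $u_5^{-2}$.) The paragraph about computing $s=\psi_{v_2}(\varphi_{6,0}(w))$ and then proving $w=s$ is circular: knowing $s$ does not help you prove $w=s$ in $\G_{6,0}$, as you yourself note.

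The paper avoids this head-on computation by exploiting the relation (Da), which is exactly the tool you are missing. From (Da) and (C4a) one gets
\[
u_5u_4u_3u_2u_1a_1a_2a_3a_4a_5=(a_5u_5)(u_4u_3u_2u_1a_1a_2a_3a_4)\in\mathcal{S},
\]
an element of $\mathcal{S}$ with a \emph{naked} $a_5$ on the right. Separately, a short manipulation with (B1,\,B2,\,C3,\,C4a) gives $u_5u_4u_3(a_5a_4a_3)^{-1}\in\mathcal{S}$, hence an element of $\mathcal{S}$ starting with $a_5a_4a_3$. Multiplying $w$ on the left by the latter and on the right by the inverse of the former (this is the $\approx$ relation introduced just before the lemma) strips both $a_5$'s at once, reducing the problem to a short word in $a_1,a_2,u_1,u_2,u_3,u_4,u_5$ which collapses to $u_1^2\in\mathcal{S}$ in a few lines. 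The moral is that (Da) already packages the absorption of $a_{g-1}$ into $\mathcal{S}$; trying to synthesize the same effect from (C2)/(C3) alone is possible in principle but amounts to rederiving a consequence of (Da) by hand.
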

\begin{proof}
By (Da) and (C4a) we have 
\[u_5u_4u_3u_2u_1a_1a_2a_3a_4a_5=a_5u_5u_4u_3u_2u_1a_1a_2a_3a_4\in\mathcal{S}\]
and by (B1, B2, C3, C4a)
\[u_5u_4u_3(a_5a_4a_3)^{-1}=u_3^{-1}u_4^{-1}(a_5u_5)u_4u_3u_4^{-1}(a_5u_5)u_4(a_5u_5)\in\mathcal{S}\]
It follows that
\begin{align*}
&a_5a_4a_3a_2^{-1}a_1^{-1}u_1u_2a_3a_4a_5\approx 
(u_5u_4u_3)a_2^{-1}a_1^{-1}u_1u_2(u_5u_4u_3u_2u_1a_1a_2)^{-1}=\\
&(u_5u_4u_3)(u_2u_1a_1\underline{a_2u_2^{-1}}u_1^{-1}a_1a_2)^{-1}(u_5u_4u_3)^{-1}=\\
&(u_5u_4u_3)(\underline{u_2u_1a_1u_2^{-1}}a_2^{-1}\underline{u_1^{-1}a_1}a_2)^{-1}(u_5u_4u_3)^{-1}=\\
&(u_5u_4u_3)(u_1^{-1}u_2a_2\underline{u_1a_2^{-1}a_1^{-1}}u_1^{-1}a_2)^{-1}(u_5u_4u_3)^{-1}=\\
&(u_5u_4u_3)(u_1^{-1}u_2a_1^{-1}u_2^{-1}u_1^{-1}a_2)^{-1}(u_5u_4u_3)^{-1}=(u_5u_4u_3)u_1^2(u_5u_4u_3)^{-1}=u_1^2\approx 1
\qedhere\end{align*}
\end{proof}

\medskip

By Lemma \ref{inS1} $a_1^{-1}u_1u_2a_3a_4a_5\sim_R a_2a_3^{-1}a_4^{-1}a_5^{-1}$ and
\[w\sim_R c^{-1}u_2^{-1}u_1^{-1}ca_5^{-2}a_3^{-1}(a_4a_5a_3a_4)^{-1}ba_4b^{-1}a_3a_2a_3^{-1}a_4^{-1}a_5^{-1}.\]
We have
\begin{align*}
&\underline{ba_4b^{-1}a_3a_2a_3^{-1}}a_4^{-1}a_5^{-1}=
a_4^{-1}ba_4a_2^{-1}a_3a_2a_4^{-1}a_5^{-1}=
a_4^{-1}a_2^{-1}b\underline{a_4a_3a_4^{-1}}a_5^{-1}a_2=\\
&a_4^{-1}a_2^{-1}ba_3^{-1}a_4a_3a_5^{-1}a_2=
a_4^{-1}a_2^{-1}a_3^{-1}ba_4a_5^{-1}a_3a_2
\sim_R a_4^{-1}a_2^{-1}a_3^{-1}ba_4a_5^{-1}
\end{align*}
and thus
\[
w\sim_R c^{-1}u_2^{-1}u_1^{-1}ca_5^{-2}a_3^{-1}(a_4a_5a_3a_4)^{-1}a_4^{-1}a_2^{-1}a_3^{-1}ba_4a_5^{-1}
\]
Let $s=a_1\cdots a_5$. By (A1, A2, C1a, C5a) for $i>1$ we have
$a_is=sa_{i-1}$ and $u_is=su^{-1}_{i-1}$.
We also have $c=s^2bs^{-2}\stackrel{(E6)}{=}s^{-4}bs^4$ and
\begin{align*}
&c^{-1}u_2^{-1}u_1^{-1}c=
s^{-4}b^{-1}\underline{s^4u_2^{-1}u_1^{-1}s^{-4}}bs^4=
s^{-4}b^{-1}su_5u_4s^{-1}bs^4=\\
&s^{-3}a_5^{-1}a_4^{-1}b^{-1}a_4a_5u_5u_4a_5^{-1}a_4^{-1}ba_4a_5s^3=\\
&s^{-3}a_5^{-1}a_4^{-1}a_1a_2a_1b^{-1}a_4a_5u_5u_4a_5^{-1}a_4^{-1}ba_4a_5a_3a_4a_5a_2a_3a_4a_5a_1a_2a_3a_4a_5
\end{align*}
Write $w\approx ABC$ for $A=s^{-3}a_5^{-1}a_4^{-1}a_1a_2a_1$, $B=b^{-1}a_4a_5u_5u_4a_5^{-1}a_4^{-1}b$ and
\[C=(a_4a_5a_3a_4a_5a_2a_3a_4a_5a_1a_2a_3a_4a_5)(a_5^{-2}a_3^{-1}(a_4a_5a_3a_4)^{-1}a_4^{-1}a_2^{-1}a_3^{-1}ba_4a_5^{-1})\]
We have 
\begin{align*}
&A=a_2^{-1}a_1^{-1}s^{-3}a_1a_2a_1\sim_L 
s^{-3}a_1a_2a_1=
s^{-1}(a_3a_4a_5a_2a_3a_4a_5)^{-1}=\\
&(a_2a_3a_4a_1a_2a_3a_4)^{-1}s^{-1}\sim_L s^{-1}
\end{align*}
\begin{align*}
&C=a_4a_5a_3a_4\underline{a_5a_2a_3a_4a_5a_1a_2a_3a_4a_5^{-1}a_3^{-1}}a_4^{-1}a_3^{-1}a_5^{-1}a_4^{-2}a_2^{-1}a_3^{-1}ba_4a_5^{-1}=\\
&a_4a_5a_3a_4a_2a_3a_4\underline{a_5a_1a_2a_3a_4a_5^{-1}a_4^{-1}a_3^{-1}a_5^{-1}}a_4^{-2}a_2^{-1}a_3^{-1}ba_4a_5^{-1}=\\
&a_4a_5a_3a_4a_2a_3a_4a_1a_2a_3\underline{a_5a_4a_5^{-1}a_4^{-1}a_5^{-1}}a_3^{-1}a_4^{-2}a_2^{-1}a_3^{-1}ba_4a_5^{-1}=\\
&a_4a_5a_3a_4a_2a_3a_1a_2\underline{a_4a_3a_4^{-1}a_3^{-1}a_4^{-2}}a_2^{-1}a_3^{-1}ba_4a_5^{-1}=\\
&a_4a_5a_3\underline{a_4a_2a_3a_1}a_2a_3^{-1}a_4^{-1}a_2^{-1}a_3^{-1}ba_4a_5^{-1}=\\
&a_4a_5a_3a_2a_1\underline{a_4a_3a_2a_3^{-1}a_4^{-1}a_2^{-1}a_3^{-1}}ba_4a_5^{-1}=
a_4a_5a_3\underline{a_2a_1a_2^{-1}}a_3^{-1}\underline{a_4ba_4}a_5^{-1}=\\
&a_4a_5a_3a_1^{-1}a_2a_1a_3^{-1}ba_4a_5^{-1}b\sim_R a_1^{-1}a_4a_5\underline{a_3a_2a_3^{-1}}ba_4a_5^{-1}=\\
&a_1^{-1}a_4a_5a_2^{-1}a_3a_2ba_4a_5^{-1}\sim_R a_1^{-1}a_2^{-1}a_4a_5a_3ba_4a_5^{-1}=\\
&a_1^{-1}a_2^{-1}a_4a_3ba_5a_4a_5^{-1}\sim_R a_1^{-1}a_2^{-1}a_4a_3ba_4^{-1}a_5
\end{align*}

\begin{align*}
w\approx &s^{-1}(b^{-1}a_4a_5u_5u_4a_5^{-1}a_4^{-1}b)(a_1^{-1}a_2^{-1}a_4a_3ba_4^{-1}a_5)=\\
&s^{-1}a_1^{-1}a_2^{-1}b^{-1}a_4a_5u_5u_4a_5^{-1}a_4^{-1}\underline{ba_4b}a_3a_4^{-1}a_5=\\
&s^{-1}a_1^{-1}a_2^{-1}b^{-1}a_4a_5u_5u_4ba_5^{-1}\underline{a_4a_3a_4^{-1}}a_5=\\
&s^{-1}a_1^{-1}a_2^{-1}b^{-1}a_4a_5u_5u_4ba_3^{-1}\underline{a_5^{-1}a_4a_5}a_3=\\
&s^{-1}a_1^{-1}a_2^{-1}b^{-1}a_4a_5u_5u_4ba_3^{-1}a_4a_5a_4^{-1}a_3\sim_R\\ &s^{-1}a_1^{-1}a_2^{-1}(b^{-1}a_4a_5u_5u_4b)a_3^{-1}a_4a_5
\end{align*}
\begin{lemma}\label{inS2}In $\G$ we have
\[b^{-1}(a_4a_5u_5u_4)^{-1}b=a_4a_5u_4^{-1}vu_4va_5^{-1}a_4^{-1},\]
where $v=a_3a_2a_1u_1u_2u_3$.
\end{lemma}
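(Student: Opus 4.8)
The plan is to verify the identity $b^{-1}(a_4a_5u_5u_4)^{-1}b = a_4a_5u_4^{-1}vu_4va_5^{-1}a_4^{-1}$ purely algebraically inside $\G=\G_{6,0}$, using the presentation relations available so far, with $v = a_3a_2a_1u_1u_2u_3$. The right-hand side rewriting of $v$ in terms of the generators is exactly the expression that appeared in the definition of $\theta(u_1')$ in the proof of Theorem \ref{pres_stab_alpha1}, so $v$ should be treated as an abbreviation and expanded only where needed.

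First I would conjugate by $a_4a_5$ to reduce the claim to showing $(a_4a_5)^{-1}b^{-1}(a_4a_5u_5u_4)^{-1}b(a_4a_5) = u_4^{-1}vu_4v$ in $\G$. Then, as in the other lemmas of this subsection (Lemma \ref{c6r8}, Lemma \ref{expC}), I would try to recognize both sides as images under $\psi_{v_2}$ of elements of $\Stab[\mu_g]$, i.e. verify first that the corresponding relation holds in $\M(N_{6,0})$ by a geometric argument, and then invoke Theorem \ref{pres_Stab_mu_g} together with Lemma \ref{shortcut_AB} to lift it to $\G$. Geometrically, $v = Y_{\mu_4,\beta}$ and $a_4a_5u_5u_4$ conjugates $\mu_4$ and $\beta$ to nearby curves (recall $u_5u_4(\alpha_4)=\alpha_3$ etc.), so the identity should express a commutation/braid-type relation between the crosscap slide $v$ and a product of a Dehn twist and a crosscap transposition supported in an overlapping subsurface; since all terms avoid $\mu_6$ this lands in $\Stab[\mu_6]$ and the hypothesis that Theorem \ref{pres_Stab_mu_g} holds for $g=6$ applies. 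Alternatively, and perhaps more cleanly for this subsection, I would chase it directly from the defining relations: expand $v$, use (C4a), (C3), (C2), the braid relations (B1, B2) and (A4) to move the $u_i$'s past the $a_i$'s, and use $u_i a_i u_i^{-1} = a_i^{-1}$ and $u_ia_ia_{i+1}u_{i+1}^{-1}=\ldots$ to collapse the product; the key intermediate identity is the one from transformation (5) in the proof of Theorem \ref{mainA_punctured}, namely $a_4vu_4 = (a_5u_5)\cdots$-type rewriting, which was already used above in the proof of $(\widetilde{\textrm{B2'}})$.

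The main obstacle I expect is bookkeeping: the word $a_4a_5u_4^{-1}vu_4va_5^{-1}a_4^{-1}$ has $v$ appearing twice, and once expanded it is a long word in $a_1,a_2,a_3,u_1,u_2,u_3$ together with $a_4,a_5,u_4$, so the cancellations are not purely local and one must be careful about the order in which commutation relations (C1a), (A1), (B1) are applied versus the braid relations. The cleanest route is almost certainly the $\Stab[\mu_6]$ argument: once one checks the relation holds in $\M(N_{6,0})$ (which follows because $b=T_\beta$, $v=Y_{\mu_4,\beta}$, and $a_4a_5u_5u_4$ is a concrete homeomorphism whose effect on $\beta$ and $\mu_4$ can be read off from Figure \ref{aI}, making both sides equal as mapping classes), Theorem \ref{pres_Stab_mu_g} transfers it verbatim to $\G$. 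I would present the lemma's proof in that style, i.e. ``this relation holds in $\M(N_{6,0})$ by inspection of the curves, and since all the factors lie in $\Stab_{\M(N_{6,0})}[\mu_6]$, it also holds in $\G$ by Theorem \ref{pres_Stab_mu_g}'' — mirroring the proofs of Lemma \ref{two-holed-torus} and Lemma \ref{relC9}.
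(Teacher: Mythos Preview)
Your primary approach --- verify the relation in $\M(N_{6,0})$ and transfer it via Theorem~\ref{pres_Stab_mu_g} --- has a genuine gap. That transfer works only when both sides are words in the \emph{generators} of $\Stab[\mu_6]$, namely $a_i,u_i$ for $i\le 4$, $b$, and $a_5u_5$. The left side is fine because $a_4a_5u_5u_4=a_4\cdot(a_5u_5)\cdot u_4$. But the right side $a_4a_5u_4^{-1}vu_4va_5^{-1}a_4^{-1}$ has an isolated $a_5$ and $a_5^{-1}$, and $a_5=T_{\alpha_5}$ does \emph{not} fix $\mu_6$ (the curve $\alpha_5$ meets $\mu_6$). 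Hence the right side, as written, is not a word in those generators, and there is no reason a priori that $\psi_{v_2}$ of the element it represents equals this particular word in $\G$. Your assertion that ``all terms avoid $\mu_6$'' is false for $a_5$; conjugating first by $a_4a_5$ does not help either, since $a_4a_5\notin\Stab[\mu_6]$.

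Your alternative direct computation is on the right track but misses the essential input: the relation involves $b$, and the only defining relations coupling $b$ with $a_4,u_4$ and $v$ are (C8) and (C9), neither of which you name. The paper's proof is a short algebraic derivation using exactly these. Set $y_4=a_4u_4$, $x=u_5y_4u_5^{-1}$, $z=a_4vu_4$; first $a_4a_5u_5u_4=y_4x$ by (B1,C3). Relations (C8,C9) give $y_4^{-1}by_4=bz$; conjugating by $u_5$ (which commutes with $b$ by (C7)) yields $x^{-1}bx=bu_5zu_5^{-1}$; combining gives $b^{-1}(y_4x)^{-1}b=u_5zy_4^{-1}u_5^{-1}zy_4^{-1}$. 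One then simplifies this to $a_4a_5u_4^{-1}vu_4va_5^{-1}a_4^{-1}$ using only (A1, C1a, C5a). The relations you list --- (C4a), (C2), (C3), (B1), (B2), (A4) --- cannot by themselves produce any identity conjugating $b$ by $a_4u_4$; (C8) is indispensable.
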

\begin{proof}
Let $y_4=a_4u_4$, $x=u_5y_4u_5^{-1}$, $z=a_4vu_4$.
By (B1,C3) we have
\[a_4a_5u_5u_4=a_4u_4(u_4^{-1}a_5u_5u_4)=a_4u_4(u_5a_4u_4u_5^{-1})=y_4x\]
and by (C8, C9) $y_4^{-1}by_4=bz$. Conjugating the last relation by $u_5$ and by
$x^{-1}$ we obtain, using (C7)
\begin{align*}
&u_5y_4^{-1}by_4u_5^{-1}=bu_5zu_5^{-1}\iff x^{-1}bx=bu_5zu_5^{-1}\\
&x^{-1}y_4^{-1}by_4x=x^{-1}bzx=bu_5zu_5^{-1}x^{-1}zx
\end{align*}
The last relation is equivalent to
\begin{align*}
&b^{-1}(y_4x)^{-1}b=u_5zu_5^{-1}x^{-1}zy_4^{-1}=
u_5zy_4^{-1}u_5^{-1}zy_4^{-1}=
u_5a_4v\underline{a_4^{-1}u_5^{-1}a_4}va_4^{-1}\stackrel{(C5a)}{=}\\
&u_5a_4\underline{va_5}u_4\underline{a_5^{-1}v}a_4^{-1}\stackrel{(A1,C1a)}{=}
\underline{u_5a_4a_5}vu_4va_5^{-1}a_4^{-1}\stackrel{(C5a)}{=}a_4a_5u_4^{-1}vu_4va_5^{-1}a_4^{-1}
\qedhere\end{align*}
\end{proof}

By Lemma \ref{inS2}
\begin{align*}
&w^{-1}\approx \underline{a_5^{-1}a_4^{-1}a_3a_4a_5}u_4^{-1}vu_4v\underline{a_5^{-1}a_4^{-1}a_2a_1s}=\\
&a_3a_4a_5\underline{a_4^{-1}a_3^{-1}u_4^{-1}}vu_4va_2a_1sa_4^{-1}a_3^{-1}\stackrel{(C5a)}{=}
a_3a_4a_5u_3a_4^{-1}a_3^{-1}vu_4va_2a_1sa_4^{-1}a_3^{-1}\\
&\approx a_5a_4^{-1}a_2a_1u_1u_2u_3u_4a_3a_2a_1u_1u_2u_3a_2a_1s\sim_L a_5\underline{a_4^{-1}u_3u_4a_3}a_2a_1\underline{u_1u_2u_3a_2a_1}s\\
&=\underline{a_5u_3u_4a_2a_1}a_3a_2u_1\underline{u_2u_3s}=u_3a_2a_1a_5u_4a_3a_2u_1su_1^{-1}u_2^{-1}
\approx a_5u_4a_3a_2u_1s
\end{align*}
Since $s\sim_R(u_5u_4u_3u_2u_1)^{-1}$ (see the proof of Lemma \ref{inS1}) thus
\begin{align*}
&w\approx a_5u_4a_3a_2u_2^{-1}u_3^{-1}u_4^{-1}u_5^{-1}=
a_5u_5^{-1}(u_5u_4a_3u_3^{-1}u_4^{-1}u_5^{-1})
(u_5u_4u_3a_2u_2^{-1}u_3^{-1}u_4^{-1}u_5^{-1})=\\
&(a_5u_5)^{-1}u_3^{-1}u_4^{-1}(a_5u_5)^{-1}u_4u_3
u_2^{-1}u_3^{-1}u_4^{-1}(a_5u_5)^{-1}u_4u_3u_2\approx 1. 
\end{align*}
Thus ($\widetilde{\textrm{R8'}}$) is expressible in $\mathcal{S}$, which completes the proof of Lemma \ref{c6r8} and the proof of Theorem \ref{pres_stab_alpha1}.
\section{Edges.}\label{sec_edges}
In this section we assume that $g\ge 5$ is fixed and denote 
$\M(N_{g,0})$ as $\M$, $\G_{g,0}$ as $\G$, $\varphi_{g,0}$ as $\varphi$, and
$\Stab_{\M}\sigma$ as $\Stab\,\sigma$ for each simplex $\sigma$ of $\widetilde{X}$.
We are ready to define $\psi\colon\M\to\G$ on the generators of $\M$ given in Theorem \ref{Brown}.
In previous sections we defined homomorphisms
$\psi_{v_i}\colon\Stab\, s(v_i)\to\G_{g,0}$  and we let
$\psi$ be equal to $\psi_{v_i}$ on $\Stab\,s(v_i)$ for $i\in\{1,2,3\}$. For
$j\in\{1,\dots,7\}$ we define $\psi(h_{e_i})$ to be the element of $\G$ represented by the word in the generators of $\G$ given in the fourth column of Table \ref{tabE}, and $\psi(h_{\overline{e_i}})=\psi(h_{e_i})^{-1}$.
Observe that 
$\varphi\circ\psi$ is the identity on the generators of $\M$.
In this section we show that $\psi$ respects the relations associated to the edges of $X$. Namely, we show that for $e\in\cS_1(X)$ we have
\[(\ast)\quad \psi(h_e)^{-1}\psi_{i(e)}(x)\psi(h_e)=\psi_{t(e)}(h_e^{-1}xh_e)\] 
for $x\in\Stab\, s(e)$. Since $\Stab\,s(\overline{e})=h_e^{-1}\Stab\,s(e)h_e$ and
$h_{\overline{e}}=h^{-1}_e$, thus it suffices to check $(\ast)$ for
$e=e_i$, $i\in\{1,\dots,7\}$. 

To prove 
$(\ast)$ it suffices to show that its left hand side is equal in $\G$ to $\psi_{t(e)}(z)$ for some $z\in\Stab\,s(t(e))$, because then by applying $\varphi$ to both sides we get $z=h_e^{-1}xh_e$. 
\begin{lemma}\label{RelE1}
For $x\in\Stab[\alpha_1,\mu_g]$ we have $(\ast)\ \psi_{v_1}(x)=\psi_{v_2}(x)$.
\end{lemma}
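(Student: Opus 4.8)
The plan is to reduce the required equality $\psi_{v_1}(x)=\psi_{v_2}(x)$ to the single inclusion
\[
\psi_{v_1}\bigl(\Stab[\alpha_1,\mu_g]\bigr)\subseteq\mathcal{S},\qquad \mathcal{S}:=\mathcal{S}_{g,0}(v_2),
\]
where $\mathcal{S}$ is the image of $\psi_{v_2}$ in $\G$. Once this is proved, for any $x\in\Stab[\alpha_1,\mu_g]\subseteq\Stab[\mu_g]$ both $\psi_{v_1}(x)$ and $\psi_{v_2}(x)$ lie in $\mathcal{S}$ and satisfy $\varphi(\psi_{v_1}(x))=x=\varphi(\psi_{v_2}(x))$; since by Theorem \ref{pres_Stab_mu_g} the restriction of $\varphi$ to $\mathcal{S}$ is an isomorphism, and in particular injective, this forces $\psi_{v_1}(x)=\psi_{v_2}(x)$.

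To prove the inclusion I would first fix a generating set $G$ of $\Stab[\alpha_1,\mu_g]$, obtained from the structural exact sequences of Subsection \ref{struct_stab} applied to the disjoint pair $(\alpha_1,\mu_g)$ (whose cut surface is $N_{g-3,3}$, cf. Table \ref{tabE}). The point is to choose $G$ so that each of its members is, as a mapping class, a word in those generators of $\Stab[\alpha_1]$ whose $\psi_{v_1}$-image manifestly lies in $\mathcal{S}$: namely $a_i,u_i$ for small indices $i\le g-2$ with $\alpha_i$ (resp. the support of $u_i$) disjoint from $\mu_g$; the twists $b_j$ with $\beta_j$ disjoint from $\mu_g$, i.e. $2j\le g-3$; the crosscap slides $y_{g-1}=a_{g-1}u_{g-1}$ and $v=Y_{\mu_4,\beta}$; the element $r_g$; and, when $g\ge 7$, $c=T_{\gamma_{\{3,4,5,6\}}}$. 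For each of these one reads off $\psi_{v_1}$ from Theorem \ref{pres_stab_alpha1}: $\psi_{v_1}(a_i)=a_i$, $\psi_{v_1}(u_i)=u_i$, $\psi_{v_1}(b_j)=b_j$, $\psi_{v_1}(y_{g-1})=a_{g-1}u_{g-1}$, $\psi_{v_1}(v)=a_3a_2a_1u_1u_2u_3$, $\psi_{v_1}(r_g)=r_g=(a_1\cdots a_{g-2})(a_{g-1}u_{g-1})(u_{g-2}\cdots u_1)$, and $\psi_{v_1}(c)=(a_1\cdots a_5)^2 b(a_1\cdots a_5)^{-2}$. In each case the resulting word lies in $\mathcal{S}$, since $\mathcal{S}$ is generated by $a_i,u_i$ ($1\le i\le g-2$), $b_j$ ($2\le 2j\le g-3$) and $a_{g-1}u_{g-1}$; this yields $\psi_{v_1}(G)\subseteq\mathcal{S}$, hence the inclusion and the lemma.

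The small genera $g=5$ and $g=6$ must be treated separately with the appropriate genus-dependent generating set: here $c$ does not occur (its support meets $\mu_g$), while extra crosscap slides such as $y_4$ and $y_5$ may appear among the generators of $\Stab[\alpha_1,\mu_g]$; these are written as products $a_4u_4$, $a_5u_5$ of generators of $\Stab[\alpha_1]$ and handled exactly as $y_{g-1}$, and the twists $b_j$ surviving in $\Stab[\mu_g]$ are again generators of $\mathcal{S}$. The main obstacle is the second step: one must pin down a correct and sufficiently economical generating set of $\Stab[\alpha_1,\mu_g]$ — correct in that it really generates, economical in that its members are visibly products of the "good" generators of $\Stab[\alpha_1]$ listed above — which requires some bookkeeping with the orientations and sides of $\alpha_1$ and $\mu_g$ in the cut surface. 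Once that list is fixed, the membership checks in $\mathcal{S}$ are immediate from the explicit formulas for $\psi_{v_1}$.
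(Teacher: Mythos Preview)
Your reduction and strategy are exactly the paper's; the paper fills the gap you flag by obtaining generators of $\Stab[\alpha_1,\mu_g]$ from the exact sequence (\ref{stab_bir_es}), finding that beyond the expected $u_1$, $r_g$, $a_1$, $a_i,u_i$ ($3\le i\le g-2$), $a_{g-1}u_{g-1}$, $b$, $v$, and $c$ (for $g\ge7$) there is one further kernel generator $Y_{\mu_g,\gamma_{\{1,2,g\}}}$ coming from the loop in $\pi_1(N'_{\alpha_1},P)$ encircling both boundary circles produced by $\alpha_1$. The only non-immediate membership check is $\psi_{v_1}(Y_{\mu_g,\gamma_{\{1,2,g\}}})\in\mathcal S$: writing this element as $(u_4\cdots u_{g-1})^{-1}(a_3u_3)^{-1}v\,(u_4\cdots u_{g-1})$ gives a $\psi_{v_1}$-image a priori involving $u_{g-1}$ alone, which the paper then rewrites (via B1, B2, C1a, C3) as a word in $a_{g-1}u_{g-1}$ and $u_i$ for $i\le g-2$; no separate low-genus argument is required.
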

\begin{proof}
By the proof of Theorem \ref{pres_stab_alpha1}, $\Stab[\alpha_1]$ is generated
by $\Stab^+[\alpha_1]$ and $\{u_1, r_g\}$. Note that 
$\{u_1, r_g\}\subset\Stab[\mu_g]$ and $\psi_{v_1}(x)=\psi_{v_2}(x)$ for
$x\in\{u_1, r_g\}$. It remains to show that the same is true for
$x\in H=\Stab^+[\alpha_1]\cap\Stab[\mu_g]$.
Let $N'$ be the surface obtained from $N=N_{g,0}$ by cutting along $\mu_g$
and gluing a disc with puncture $P$ along the resulting boundary component.
We have the exact sequence
(\ref{stab_bir_es}):
\[1\to\pi_1(N'_{\alpha_1},P)\stackrel{\mathfrak{c}}\to\Stab_{\M(N_{\alpha_1})}[\mu_g]\stackrel{\zeta}{\to}\M(N'_{\alpha_1})\to 1,\]
where $N'_{\alpha_1}$ and $N_{\alpha_1}$ are the surfaces obtained respectively from $N'$ and $N$ by cutting along $\alpha_1$. Set $G=\Stab_{\M(N_{\alpha_1})}[\mu_g]$ and note that
$\rho_{\alpha_1}(G)=H$ and $\rho_{\alpha_1}(\M(N'_{\alpha_1}))=\Stab^+_{\M(N')}[\alpha_1]$.
Observe that $\zeta$ maps $\ker\rho_{\alpha_1}\subset G$ isomorphically onto 
$\ker\rho_{\alpha_1}\subset\M(N'_{\alpha_1})$. It follows that $\zeta$ induces a map $\zeta'\colon H\to\Stab^+_{\M(N')}[\alpha_1]$, which fits in the following commutative diagram
\[
\begin{CD}
1 @>>> \pi_1(N'_{\alpha_1},P) @>\mathfrak{c}>> G @>\zeta>> \M(N'_{\alpha_1}) @>>> 1\\
@. @| @VV\rho_{\alpha_1}V @VV\rho_{\alpha_1}V \\
1 @>>> \pi_1(N'_{\alpha_1},P) @>\rho_{\alpha_1}\circ\mathfrak{c}>> H @>\zeta'>> \Stab^+_{\M(N')}[\alpha_1] @>>> 1, 
\end{CD}\]
whose both rows are exact. We can obtain generators of $H$ from the bottom sequence. Note that $N'$ is homeomorphic to $N_{g-1,0}$ and
by the proof of Theorem \ref{pres_stab_alpha1} (see Step 6), $\Stab^+_{\M(N')}[\alpha_1]$ is
generated by $u_i$, $a_i$ for $i=3,\dots,g-2$, $a_1$, $b$, $v$ and $c$ (if $g\ge 7$). The standard generators of $\pi_1(N'_{\alpha_1},P)$ are mapped by $\rho_{\alpha_1}\circ\mathfrak{c}$ on the crosscap slides
$Y_{\mu_g,\alpha_{g-1}}=a_{g-1}u_{g-1}$, $Y_{\mu_g,\gamma_{\{i,g\}}}=u_iY_{\mu_g,\gamma_{\{i+1,g\}}}u_i^{-1}$ for
$i=3,\dots,g-2$ and $Y_{\mu_g,\gamma_{\{1,2,g\}}}$.
It follows that $H$ is generated by
$Y_{\mu_g,\gamma_{\{1,2,g\}}}$, $a_{g-1}u_{g-1}$, $u_i$, $a_i$ for $i=3,\dots,g-2$, $a_1$, $b$, $v$ and $c$ (if $g\ge 7$).
By the definitions of $\psi_{v_1}$ and $\psi_{v_2}$ given in Theorems \ref{pres_stab_alpha1} and \ref{pres_Stab_mu_g}, it is easy to check that
$\psi_{v_1}(x)=\psi_{v_2}(x)$ for every generator $x$ of $H$, except for
$x=Y_{\mu_g,\gamma_{\{1,2,g\}}}$.
We have 
$Y_{\mu_g,\gamma_{\{1,2,g\}}}=(u_4\cdots u_{g-1})^{-1}Y_{\mu_4,\gamma_{\{1,2,4\}}}
(u_4\cdots u_{g-1})$
and\\
$Y_{\mu_4,\gamma_{\{1,2,4\}}}=Y_{\mu_4,\alpha_3}^{-1}Y_{\mu_4,\beta}=(a_3u_3)^{-1}v$.
Thus, by Theorem \ref{pres_stab_alpha1} we have
\begin{align*}
\psi_{v_1}(Y_{\mu_g,\gamma_{\{1,2,g\}}})&=(u_4\cdots u_{g-1})^{-1}(a_3u_3)^{-1}a_3a_2a_1u_1u_2u_3
(u_4\cdots u_{g-1})\\
&=(u_3\cdots u_{g-1})^{-1}a_2a_1u_1u_2(u_3\cdots u_{g-1})\\
&=(u_3\cdots u_{g-1})^{-1}(a_2u_2)u_2^{-1}(a_1u_1)u_2
(u_3\cdots u_{g-1})
\end{align*}
and it follows from (B1, B2, C3) that 
\begin{align*}
&\psi_{v_1}(Y_{\mu_g,\gamma_{\{1,2,g\}}})=
(u_2\cdots u_{g-2})a_{g-1}u_{g-1}(u_2\cdots u_{g-2})^{-1}\cdot\\
&(u_1\cdots u_{g-2})a_{g-1}u_{g-1}(u_1\cdots u_{g-2})^{-1}
\end{align*}
It follows that $\psi_{v_1}(Y_{\mu_g,\gamma_{\{1,2,g\}}})$ is in the image of $\psi_{v_2}$ and thus it is equal to
$\psi_{v_2}(Y_{\mu_g,\gamma_{\{1,2,g\}}})$ by the remark before Lemma \ref{RelE1}. 
\end{proof}

\begin{lemma}\label{RelE3}
For $x\in\Stab[\mu_{g-1},\mu_g]$ we have \[(\ast)\quad \psi(h_{e_3})^{-1}\psi_{v_2}(x)\psi(h_{e_3})=\psi_{v_2}(h^{-1}_{e_3}xh_{e_3}).\]
\end{lemma}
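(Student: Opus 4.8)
The plan is to verify the relation $(\ast)$ by expressing both sides in terms of the known homomorphism $\psi_{v_2}$, using the same strategy explained just before Lemma \ref{RelE1}: it suffices to show that the left hand side $\psi(h_{e_3})^{-1}\psi_{v_2}(x)\psi(h_{e_3})$ lies in the image of $\psi_{v_2}$, i.e. in $\mathcal{S}_{g,0}(v_2)$, for every $x$ in a generating set of $\Stab[\mu_{g-1},\mu_g]$, because then applying $\varphi=\varphi_{g,0}$ to both sides and using that $\varphi$ restricted to $\mathcal{S}_{g,0}(v_2)$ is the inverse of $\psi_{v_2}$ forces the element to be $\psi_{v_2}(h_{e_3}^{-1}xh_{e_3})$. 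Here $h_{e_3}=a_{g-1}^{-1}$ by Table \ref{tabE}, so $\psi(h_{e_3})=a_{g-1}^{-1}$, and since $a_{g-1}\notin\mathcal{S}_{g,0}(v_2)$ one must check the conjugates genuinely land back in $\mathcal{S}_{g,0}(v_2)$.

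First I would determine a convenient generating set for $\Stab[\mu_{g-1},\mu_g]=\Stab_{\M(N_{g,0})}[\mu_{g-1},\mu_g]$. The natural tool is the exact sequence (\ref{stab_bir_es}) applied with $C=(\mu_{g-1},\mu_g)$: cutting $N_{g,0}$ along $\mu_g$ and capping with a punctured disc gives $N_{g-1,0}$ with a puncture, and then $\Stab[\mu_{g-1},\mu_g]$ sits in a Birman-type sequence over $\Stab_{\M(N_{g-1,0})}[\mu_{g-1}]$, which is itself described by Theorem \ref{pres_Stab_mu_g} applied in genus $g-1$. One reads off that $\Stab[\mu_{g-1},\mu_g]$ is generated by the crosscap slides $\sigma_i=Y_{\mu_g,\gamma_{\{i,g\}}}$ for $i=1,\dots,g-2$ together with the generators of $\Stab_{\M(N_{g-1,0})}[\mu_{g-1}]$ pulled back, namely $u_i$, $a_i$ (for $i\le g-3$), $b_j$, and $a_{g-2}u_{g-2}$, plus $a_{g-1}u_{g-1}$. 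For most of these generators $x$ one has that $x$ commutes with $a_{g-1}$ up to easily controlled terms — e.g. $u_i$ for $i<g-2$ commutes with $a_{g-1}$ by (C1a), $b$ commutes with $a_{g-1}$ for $g\ge 7$ by (A3) — so $a_{g-1}^{-1}x a_{g-1}\in\mathcal{S}_{g,0}(v_2)$ trivially or after a short computation.

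The cases requiring actual work are the generators $x$ that interact nontrivially with $a_{g-1}$: the crosscap slide $\sigma_{g-2}=Y_{\mu_g,\gamma_{\{g-2,g\}}}$, the element $a_{g-2}u_{g-2}$, and $a_{g-1}u_{g-1}$ itself. For $a_{g-1}u_{g-1}$ one has $a_{g-1}^{-1}(a_{g-1}u_{g-1})a_{g-1}=u_{g-1}a_{g-1}=a_{g-1}^{-1}(a_{g-1}u_{g-1}a_{g-1})a_{g-1}^{-1}\cdot a_{g-1}$, and using (C4a) $a_{g-1}u_{g-1}a_{g-1}=u_{g-1}$, so the conjugate is $a_{g-1}^{-1}u_{g-1}a_{g-1}^{-1}\cdot a_{g-1}=a_{g-1}^{-1}u_{g-1}$, which by (C4a) again equals $u_{g-1}a_{g-1}^{-1}u_{g-1}^{-1}\cdot u_{g-1}=(a_{g-1}u_{g-1})^{-1}\cdot$(something in $\mathcal S$)$\cdot$; more cleanly, $a_{g-1}^{-1}u_{g-1}=(u_{g-1}a_{g-1})^{-1}\cdot u_{g-1}^2=(a_{g-1}u_{g-1})^{-1}u_{g-1}^2$ using $u_{g-1}a_{g-1}=a_{g-1}u_{g-1}$ is false, so one instead writes $a_{g-1}^{-1}u_{g-1}=(a_{g-1}u_{g-1})^{-1}\cdot u_{g-1}^2$ which is wrong too — the correct manipulation is to note $a_{g-1}^{-1}(a_{g-1}u_{g-1})a_{g-1}=u_{g-1}a_{g-1}$, and $u_{g-1}a_{g-1}=(a_{g-1}u_{g-1})^{-1}\cdot(a_{g-1}u_{g-1})u_{g-1}a_{g-1}=(a_{g-1}u_{g-1})^{-1}(a_{g-1}u_{g-1}^2a_{g-1})$, and $(a_{g-1}u_{g-1}^2a_{g-1})=(a_{g-1}u_{g-1})^2\in\mathcal S$ by (C4a). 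I expect the main obstacle to be precisely these last few generators, in particular expressing $a_{g-1}^{-1}\sigma_{g-2}a_{g-1}$ in $\mathcal{S}_{g,0}(v_2)$; this will require using the conjugation relations (1)--(6) for $\sigma_i$ under $a_{g-2}$ from the proof of Theorem \ref{pres_Stab_mu_g}, together with the braid relations (C2), (C3), to push $a_{g-1}$ past the $\sigma$'s, and then recognizing the result as a word in $u_1,\dots,u_{g-2}$, $a_{g-1}u_{g-1}$ and the $a_i$, $b_j$ with $i\le g-2$. As in the proof of Lemma \ref{RelE1}, once every generator is handled the conclusion follows from the fact that $\varphi|_{\mathcal{S}_{g,0}(v_2)}$ is injective.
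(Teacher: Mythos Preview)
Your overall strategy---find a generating set for $\Stab[\mu_{g-1},\mu_g]$ and verify $(\ast)$ on each generator---is correct, and your generating set is close to the paper's (though note that $a_{g-1}u_{g-1}=Y_{\mu_g,\alpha_{g-1}}$ does \emph{not} preserve $\mu_{g-1}$, so it is not in $\Stab[\mu_{g-1},\mu_g]$; the correct kernel generator here is $u_{g-2}a_{g-1}u_{g-1}u_{g-2}^{-1}=\sigma_{g-2}$). The paper's generating set is
\[Z=\{u_i,a_i\mid 1\le i\le g-3\}\cup\{b,\ a_{g-2}u_{g-2},\ u_{g-2}a_{g-1}u_{g-1}u_{g-2}^{-1}\}.\]

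Where you diverge from the paper is in how to handle the ``hard'' generators. You try to show directly that $a_{g-1}\psi_{v_2}(x)a_{g-1}^{-1}\in\mathcal S_{g,0}(v_2)$ by manipulating words, and you visibly get tangled (your computation for the conjugate of $a_{g-1}u_{g-1}$ goes in circles, and you leave $\sigma_{g-2}$ as an ``expected obstacle'' with no concrete plan). This is the gap: the direct computations you propose are not carried out, and there is no indication they would succeed without substantial further work.

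The paper sidesteps all of this with a single observation: every element of $Z\setminus\{a_2,u_2\}$ lies in $\Stab[\alpha_1]$ (since $g\ge 5$ forces all the relevant curves to be disjoint from $\alpha_1$), and so does $h_{e_3}=a_{g-1}^{-1}$. By Lemma~\ref{RelE1} one has $\psi_{v_2}=\psi_{v_1}$ on $\Stab[\alpha_1,\mu_g]$, and $\psi(h_{e_3})=\psi_{v_1}(h_{e_3})$. Since $\psi_{v_1}$ is a homomorphism on $\Stab[\alpha_1]$, for such $x$ one gets immediately
\[\psi(h_{e_3})^{-1}\psi_{v_2}(x)\psi(h_{e_3})=\psi_{v_1}(h_{e_3}^{-1}xh_{e_3})=\psi_{v_2}(h_{e_3}^{-1}xh_{e_3}),\]
the last equality again by Lemma~\ref{RelE1} (the conjugate $h_{e_3}^{-1}xh_{e_3}$ lies in $\Stab[\alpha_1,\mu_g]$ because $h_{e_3}$ maps $\mu_{g-1}\mapsto\mu_g$). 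The remaining generators $a_2,u_2$ commute with $a_{g-1}$ in $\mathcal G$ by (A1), (C1a), so $(\ast)$ is trivial for them. This is the idea you are missing: rather than brute-force computations in $\mathcal S_{g,0}(v_2)$, route everything through the already-established homomorphism $\psi_{v_1}$ via Lemma~\ref{RelE1}.
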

\begin{proof}
To obtain generators of $\Stab[\mu_{g-1},\mu_g]$ we use the exact sequence (\ref{stab_bir_es})
\[1\to\pi_1(N'\backslash\{P_1\},P_2)\to\Stab[\mu_{g-1},\mu_{g}]\to\Stab_{\M(N_{g-1,0})}[\mu_{g-1}]\to 1\]
where $N'$ is obtained from $N_{g,0}$ by cutting along $\mu_{g-2+i}$ and gluing a disc with puncture $P_i$ along the resulting boundary component for $i\in\{1,2\}$. By Lemma \ref{ext_pres} and the proof of Theorem \ref{pres_Stab_mu_g},  $\Stab[\mu_{g-1},\mu_g]$ is generated by
\[Z=\{u_i, a_i\,|\,i=1,\dots,g-3\}\cup\{b, a_{g-2}u_{g-2}, u_{g-2}a_{g-1}u_{g-1}u_{g-2}^{-1}\}.\]
We have $h_{e_3}=a_{g-1}^{-1}$ and
\[
\psi(h_{e_3})^{-1}\psi_{v_2}(u_2)\psi(h_{e_3})=a_{g-1}u_2a_{g-1}^{-1}=u_2=\psi_{v_2}(u_2)=\psi_{v_2}(h^{-1}_{e_3}u_2h_{e_3}).
\]
Analogously $\psi(h_{e_3})^{-1}\psi_{v_2}(a_2)\psi(h_{e_3})=a_2=\psi_{v_2}(h^{-1}_{e_3}u_2h_{e_3})$.
For $x\in Z\backslash\{a_2, u_2\}$ we have $x\in\Stab[\alpha_1]$ and 
$\psi_{v_2}(x)=\psi_{v_1}(x)$ by Lemma \ref{RelE1}. Since also $h_{e_3}\in\Stab[\alpha_1]$ and 
$\psi(h_{e_3})=\psi_{v_1}(h_{e_3})$, thus
\[
\psi(h_{e_3})^{-1}\psi_{v_2}(x)\psi(h_{e_3})=\psi_{v_1}(h^{-1}_{e_3}xh_{e_3})=\psi_{v_2}(h^{-1}_{e_3}xh_{e_3}).
\qedhere\]
\end{proof}
The next lemma follows from \cite[Proposition 2.10]{LabPar}.
\begin{lemma}\label{torus_gens}
Let $S=S_{1,r}$ be a torus with $r>1$ boundary components
$\delta_1,\dots,\delta_r$. Suppose that
$\alpha_1,\dots,\alpha_r$ and $\beta$ are simple closed curves on $S$ such that
(1) 
$\alpha_i$, $\alpha_{i+1}$, $\delta_i$ bound a pair of pants for $i=1,\dots,r$ and
$\alpha_{r+1}=\alpha_1$; (2)
$\beta$ intersects each of the curves $\alpha_i$ in one point. Then $\M(S)$ is generated by Dehn twists about
$\beta$, $\alpha_i$, $\delta_i$ for $i=1,\dots, r$.\hfill{$\Box$}
\end{lemma}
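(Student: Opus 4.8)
The plan is to deduce the statement directly from the generating set for $\M(S_{1,r})$ recorded in \cite[Proposition 2.10]{LabPar}, by using the change of coordinates principle to reduce the configuration of curves in the hypothesis to a standard one.

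First I would recall the content of \cite[Proposition 2.10]{LabPar}: it exhibits an explicit family of simple closed curves $a_1,\dots,a_r,b$ on $S_{1,r}$, together with the boundary curves $d_1,\dots,d_r$, such that $\M(S_{1,r})$ is generated by the Dehn twists $T_{a_i}$, $T_b$, $T_{d_i}$, where the family $(a_1,\dots,a_r,b)$ satisfies exactly the incidence conditions (1) and (2) of the lemma (with $d_i$ the boundary curve of the pair of pants cut off by $a_i$ and $a_{i+1}$). If \cite{LabPar} phrases this result for a punctured model $S_{1}$ with $r$ punctures rather than with boundary, one first passes to $S_{1,r}$ by the standard capping argument based on the exact sequence (\ref{Cup_es}); the relevant kernel is generated by the boundary twists $T_{d_i}$, which are already among our generators, so this does not affect the conclusion.

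Next I would invoke the change of coordinates principle (equivalently, the Alexander method together with the classification of compact surfaces): two ordered families of disjoint generic simple closed curves on a fixed surface that have the same topological type --- the same pattern of pairwise intersections, and homeomorphic complements under a homeomorphism respecting boundary components --- are carried one to the other by a homeomorphism of the surface. For $r>1$, condition (1) forces $(\alpha_1,\dots,\alpha_r)$ to cut $S_{1,r}$ into $r$ pairs of pants glued cyclically, and by (2) the curve $\beta$ meets each $\alpha_i$ in one point; the standard family $(a_1,\dots,a_r,b)$ of \cite[Proposition 2.10]{LabPar} has exactly the same type. Hence there is a homeomorphism $\phi\colon S_{1,r}\to S_{1,r}$, which may be taken to be the identity on $\bdr{S_{1,r}}$, with $\phi(a_i)=\alpha_{\tau(i)}$ and $\phi(b)=\beta$ for some permutation $\tau\in\Sym_r$, and correspondingly $\phi(d_i)=\delta_{\sigma(i)}$ up to isotopy for some $\sigma\in\Sym_r$. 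Writing $h=[\phi]\in\M(S_{1,r})$, conjugating the generating set of \cite[Proposition 2.10]{LabPar} by $h$ and using the formula $hT_\gamma h^{-1}=T_{h(\gamma)}$, we obtain that $\M(S_{1,r})$ is generated by the Dehn twists about $\beta$, the $\alpha_i$ and the $\delta_i$ for $i=1,\dots,r$.

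The only delicate point is the bookkeeping of boundary components: the change of coordinates homeomorphism may permute the $\delta_i$, but this is harmless since all $r$ boundary twists $T_{\delta_i}$ are simultaneously allowed generators, so $T_{\delta_{\sigma(i)}}$ being a generator is the same as $T_{\delta_i}$ being one. One also checks that $\phi$ can be taken to be the identity on $\bdr{S_{1,r}}$, not merely to preserve it componentwise, which follows by absorbing the resulting self-homeomorphism of a collar neighbourhood of the boundary (isotopic to the identity rel the outer boundary) into $\phi$. The remaining, purely expository, obstacle is matching the labelling conventions of \cite[Proposition 2.10]{LabPar} with conditions (1)--(2); this is a routine verification once the curves there are drawn.
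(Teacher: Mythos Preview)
Your proposal is correct and is exactly how the paper's citation is meant to be read: the paper gives no proof beyond the line ``follows from \cite[Proposition 2.10]{LabPar}'', and your change-of-coordinates argument is the routine deduction of the general statement from the particular curve configuration in that reference. One small remark: the claim that $\phi$ can be arranged to be the identity on $\bdr{S_{1,r}}$ is unnecessary (and would require matching the boundary labelling), since conjugation by \emph{any} self-homeomorphism of $S$, boundary-permuting or not, already induces an automorphism of $\M(S)$ carrying $T_\gamma$ to $T_{\phi(\gamma)}$; your earlier paragraph on the bookkeeping of the $\delta_i$ already handles this, so you can simply drop that final sentence.
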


\begin{lemma}\label{RelE4}
If $g\in\{5,6\}$ then
$(\ast)\ \psi_{v_1}(x)=\psi_{v_3}(x)$ for $x\in\Stab[\alpha_1,\xi]$.
\end{lemma}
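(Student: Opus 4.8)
The strategy mirrors the proofs of Lemmas \ref{RelE1} and \ref{RelE3}: I will exhibit a generating set for $\Stab[\alpha_1,\xi]$ and check that $\psi_{v_1}$ and $\psi_{v_3}$ agree on each generator, using the fact (the remark preceding Lemma \ref{RelE1}, together with the definitions in Theorems \ref{pres_stab_alpha1} and \ref{pres_Stab_xi}) that two elements of the respective stabiliser images coincide in $\G$ once $\varphi$ sends them to the same mapping class. First I would analyse the geometry of the pair $[\alpha_1,\xi]$: cutting $N_{g,0}$ along $\xi$ produces an orientable surface $N_\xi\cong S_{2,g-4}$, and $\alpha_1$ lies inside $N_\xi$, so $\Stab[\alpha_1,\xi]$ is closely related to the stabiliser of (the image of) $\alpha_1$ in $\M(S_{2,g-4})$ together with the $\Z_2$-factors recording the behaviour on $\xi$ and its sides. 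Concretely, from the exact sequence (\ref{stab_es}) applied to $[\xi]$, the intersection $\Stab^+[\xi]\cap\Stab[\alpha_1]=\jmath_\ast(\Stab_{\M(S_{2,g-4})}[\alpha_1])$, and the extra cokernel generators ($\Delta_4$ for $g=5$, and $u_5^{-1}\Delta_4, r_6$ for $g=6$) already lie in $\Stab[\alpha_1]$ with $\psi_{v_1}$ and $\psi_{v_3}$ agreeing on them by inspection of the two theorems.

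\textbf{Key steps.} (1) Identify $H:=\Stab^+[\xi]\cap\Stab[\alpha_1]$ with $\jmath_\ast$ of a stabiliser of a nonseparating curve in $\M(S_{2,g-4})$; since the complement of $\alpha_1$ in $N_\xi$ is a torus with $g-2$ boundary components (two coming from $\alpha_1$ and $g-4$ from $\xi$), Lemma \ref{torus_gens} gives an explicit generating set of this stabiliser in terms of Dehn twists about curves of the form $\alpha_i$, $\beta_j$, and boundary-parallel curves. (2) Translate this generating set into elements of the $a_i$'s, $b_j$'s, and $r_g$; for all of these, both $\psi_{v_1}$ and $\psi_{v_3}$ are defined to be the identity (or, in the single exceptional case $c$, to be the same conjugate $(a_1\cdots a_5)^2b(a_1\cdots a_5)^{-2}$ — note $c$ only appears for $g=6$ and $\psi_{v_3}(c)=c$, $\psi_{v_1}(c)=(a_1\cdots a_5)^2b(a_1\cdots a_5)^{-2}$, which are equal in $\G$ by the geometric identification used in Theorem \ref{pres_stab_alpha1}), so agreement is immediate, or follows from the remark before Lemma \ref{RelE1}. (3) Handle the cokernel generators of $\Stab[\alpha_1,\xi]$ over $\Stab^+[\xi]\cap\Stab[\alpha_1]$: these are $\Delta_4$ (if $g=5$) or $u_5^{-1}\Delta_4$ and $r_6$ (if $g=6$), all of which lie in $\Stab[\alpha_1]$ and on which $\psi_{v_1}$ and $\psi_{v_3}$ visibly agree ($\psi_{v_1}(r_g)=r_g=\psi_{v_3}(r_g)$, $\psi_{v_1}(\Delta_4)=\Delta_4$ since $\Delta_4$ is a word in the $u_i$ with $i\le g-2$, and $\psi_{v_3}(\Delta_4)=\Delta_4$). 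Combining (1)–(3), $\psi_{v_1}$ and $\psi_{v_3}$ agree on a generating set of $\Stab[\alpha_1,\xi]$, hence everywhere.

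\textbf{Main obstacle.} The delicate point is step (1)–(2): producing a generating set of $H$ that is simultaneously (a) concrete enough to evaluate $\psi_{v_1}$ and $\psi_{v_3}$ on, and (b) visibly contained in the intersection $\Stab[\alpha_1]\cap\Stab[\xi]$. The curves furnished by Lemma \ref{torus_gens} live in $N_\xi\cong S_{2,g-4}$, and I must check that each corresponding Dehn twist is already among the twists $a_i, b_j$ (up to the conjugation by known elements used to define $\psi_{v_1}$ on $\Stab^+[\alpha_1]$ in Step 6 of the proof of Theorem \ref{pres_stab_alpha1}, e.g.\ the generators $c$, $v$), or can be expressed in them via the relations (A1)–(A9). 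Once the generating set is pinned down, the verification that $\psi_{v_1}$ and $\psi_{v_3}$ coincide on it should be routine, as in Lemmas \ref{RelE1} and \ref{RelE3}: for each generator $x$ one observes that both $\psi_{v_1}(x)$ and $\psi_{v_3}(x)$ are expressed by the same word in the $a_i,b_j$ (or are conjugate words representing the same element of $\M$), and then invokes the remark before Lemma \ref{RelE1} to conclude equality in $\G$.
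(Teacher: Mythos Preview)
Your overall strategy is right, but step (3) contains a genuine error that breaks the argument. You claim that the cokernel generators $\Delta_4$ (for $g=5$) and $u_5^{-1}\Delta_4$ (for $g=6$) of $\Stab[\xi]$ over $\Stab^+[\xi]$ lie in $\Stab[\alpha_1]$. They do not: by (E1) one has $\Delta_4 a_1\Delta_4^{-1}=a_3^{-1}$, so $\Delta_4(\alpha_1)=\alpha_3$, and since $u_5$ is supported away from $\alpha_1$ the same holds for $u_5^{-1}\Delta_4$. Consequently $\psi_{v_1}(\Delta_4)$ is not even defined, and your sentence ``$\psi_{v_1}(\Delta_4)=\Delta_4$ since $\Delta_4$ is a word in the $u_i$ with $i\le g-2$'' is doubly wrong: $\Delta_4$ involves $u_2$, which is not among the generators of $\Stab[\alpha_1]$ listed in Theorem \ref{pres_stab_alpha1} (those are $u_i$ for $i\in\{1,3,\dots,g-1\}$).

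The paper fixes exactly this issue by working with the exact sequence (\ref{stab_es}) for the pair $[\alpha_1,\xi]$ rather than for $[\xi]$ alone. The cokernel generators must be chosen inside $\Stab[\alpha_1,\xi]$; the paper takes $x_1=a_2a_1^2a_2$, $x_2=a_2a_1a_3a_2\Delta_4$ (or $a_2a_1a_3a_2(a_5u_5)^{-1}\Delta_4$ for $g=6$), and $x_3=r_6$, i.e.\ it premultiplies $\Delta_4$ by an element of $\Stab^+[\xi]$ so as to land back in $\Stab[\alpha_1]$. These $x_i$ are not obviously in the image of $\psi_{v_1}$ on generators, so the paper verifies $\psi_{v_1}(x_i)=\psi_{v_3}(x_i)$ indirectly: each $x_i$ lies in $\Stab[\mu_g]$, one checks $\psi_{v_3}(x_i)=\psi_{v_2}(x_i)$ by inspection, and then invokes Lemma \ref{RelE1} to get $\psi_{v_2}(x_i)=\psi_{v_1}(x_i)$. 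Your approach can be salvaged along these lines, but as written the cokernel step fails.
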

\begin{proof}
Denote by $S$ the surface obtained by cutting $N_{g,0}$ along $\alpha_1\cup\xi$. Note that $S$ is homeomorphic to $S_{1,g-2}$.
Recall the exact sequence (\ref{stab_es})
\[1\to\Stab^+[\alpha_1,\xi]\to\Stab[\alpha_1,\xi]\stackrel{\eta}{\to}\Z_2^{g-2}.\]
By Remark \ref{eta_onto}  $\eta$ is not onto and hence its image has rank at most $g-3$. 
It follows that this image is spanned by the images of  the following elements of $\Stab[\alpha_1,\xi]$:
$x_1=a_2a_1^2a_2$ (swaps sides and reverses orientation of $\alpha_1$,
preserves sides and orientation of $\xi$), 
 $x_2=a_2a_1a_3a_2\Delta_4$ for $g=5$ or  $x_2=a_2a_1a_3a_2(a_5u_5)^{-1}\Delta_4$ for $g=6$ (swaps sides and preserves  orientation of $\alpha_1$, reverses orientation of $\xi$ and swaps its sides if $g=6$), 
$x_3=r_6$ for $g=6$ (swaps sides and reverses orientation of $\alpha_1$,
swaps sides  and preserves orientation of $\xi$).
By Lemma \ref{ext_pres} $\Stab[\alpha_1,\xi]$ is generated by  $x_i$ for $i=1,2,3$  and $\Stab^+[\alpha_1,\xi]$.
It follows from Lemma \ref{torus_gens} that  $\Stab^+[\alpha_1,\xi]$ is generated by 
$a_i$ for $i=1,3,\dots,g-1$, $b$, 
$x_2bx_2^{-1}$
and if $g=6$ then also $b_2$. The generator $x_2bx_2^{-1}$ is redundant and it is trivial to check that $\psi_{v_1}$ and $\psi_{v_3}$ are equal on the remaining generators of $\Stab^+[\alpha_1,\xi]$. For $i=1,2,3$ we have $x_i\in\Stab[\mu_g]$, it is easy to check that
$\psi_{v_3}(x_i)=\psi_{v_2}(x_i)$ and by Lemma \ref{RelE1} we have
$\psi_{v_2}(x_i)=\psi_{v_1}(x_i)$.
\end{proof}
\begin{lemma}\label{gensE2}
$\Stab[\alpha_1,\alpha_3]$ is generated by
\begin{itemize}
\item $\Stab[\alpha_1,\alpha_3,\mu_g]\cup\Stab[\alpha_1,\alpha_3,\xi]$ if $g=5$ or $g=6$,
\item $\Stab[\alpha_1,\alpha_3,\mu_g]\cup\{a_{g-1}\}$ if $g=7$ or $g\ge 9$.
\item $\Stab[\alpha_1,\alpha_3,\mu_g]\cup\{a_7, b_3, T_{\gamma_{\{5,6,7,8\}}}, T_{\gamma_{\{1,2,5,6,7,8\}}}\}$  if $g=8$.
\end{itemize}
\end{lemma}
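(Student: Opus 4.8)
The statement describes generators of $\Stab[\alpha_1,\alpha_3]=\Stab_{\M(N_{g,0})}[\alpha_1,\alpha_3]$ in three ranges of $g$. The natural tool is the exact sequence (\ref{stab_bir_es}) applied to the curve $\mu_g$, which is disjoint from $\alpha_1$ and $\alpha_3$, together with the exact sequence (\ref{stab_es}). The plan is as follows. First I would cut $N_{g,0}$ along $\mu_g$ and glue a punctured disc, obtaining $N'\cong N_{g-1,0}$ and the exact sequence
\[1\to\pi_1(N',P)\stackrel{\mathfrak{c}}{\to}\Stab_{\M(N_{g,0})}[\mu_g]\stackrel{\zeta}{\to}\M(N_{g-1,0})\to 1.\]
Since $\alpha_1,\alpha_3$ survive in $N'$ as a generic pair of disjoint curves, restricting $\zeta$ gives a short exact sequence
\[1\to\pi_1(N'_{\alpha_1,\alpha_3},P)\to\Stab_{\M(N_{g,0})}[\alpha_1,\alpha_3,\mu_g]\to\Stab_{\M(N_{g-1,0})}[\alpha_1,\alpha_3]\to 1,\]
where the kernel is the point-pushing subgroup. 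This already tells me that a generating set of $\Stab[\alpha_1,\alpha_3,\mu_g]$ together with a lift of any element of $\Stab[\alpha_1,\alpha_3]$ that moves $\mu_g$ will generate $\Stab[\alpha_1,\alpha_3]$, so I need to understand the action of $\Stab[\alpha_1,\alpha_3]$ on the set of one-sided curves disjoint from $\alpha_1\cup\alpha_3$ with nonorientable complement.

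\textbf{Key steps.} (1) Determine the surface $N_{(\alpha_1,\alpha_3)}$: cutting $N_{g,0}$ along $\alpha_1\cup\alpha_3$ yields $N_{g-4,4}$ (as recorded in Table \ref{tabE} for $e_2$). So $\Stab^+[\alpha_1,\alpha_3]=\rho_{(\alpha_1,\alpha_3)}(\M(N_{g-4,4}))$ by (\ref{cut_es}). (2) Analyze the orbit of $[\mu_g]$ under $\Stab[\alpha_1,\alpha_3]$ using Proposition \ref{orbits_descr}: a one-sided curve $\gamma$ disjoint from $\alpha_1\cup\alpha_3$ either has nonorientable complement, or (for $g\in\{5,6\}$) lies in the orientable piece and after cutting gives an orientable surface — this is exactly the dichotomy that produces the vertices $v_2$ and $v_3$ of $X$. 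For $g\in\{5,6\}$ there is an element of $\Stab[\alpha_1,\alpha_3]$ sending $\mu_g$ to a curve with orientable complement (namely a curve of type $\xi$ inside $N_{g-4,4}$), which is why $\Stab[\alpha_1,\alpha_3,\xi]$ appears. For $g\ge 7$ all relevant one-sided curves have nonorientable complement and the orbit of $[\mu_g]$ under $\Stab[\alpha_1,\alpha_3]$ is controlled by a single Dehn twist: for $g=7$ and $g\ge 9$ the twist $a_{g-1}$ about $\alpha_{g-1}$ together with $\Stab[\alpha_1,\alpha_3,\mu_g]$ suffices; for $g=8$ one needs more curves ($a_7$, $b_3=T_{\beta_3}$, $T_{\gamma_{\{5,6,7,8\}}}$, $T_{\gamma_{\{1,2,5,6,7,8\}}}$) to generate the relevant part of $\M(N_{g-4,4})=\M(N_{4,4})$, because in this dimension the orientable subsurface generated by $\alpha_5,\dots,\alpha_7$ is a $1$-holed torus whose mapping class group requires the extra twists from Lemma \ref{torus_gens}. (3) In each case, invoke Theorem \ref{pres_Stab_mu_g} (or its analog) for the generators of $\Stab[\alpha_1,\alpha_3,\mu_g]$ and combine with the exact-sequence argument; the specific lifts $a_{g-1}$, $a_7$, $b_3$, $T_{\gamma_{\{5,6,7,8\}}}$, $T_{\gamma_{\{1,2,5,6,7,8\}}}$ are produced by writing down explicit homeomorphisms of $N_{g-4,4}$ that together with the point-pushing subgroup and $\Stab^+$ generate everything, using Lemma \ref{torus_gens} to handle the torus-with-holes factor.

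\textbf{Main obstacle.} The delicate point is the case $g=8$: here $N_{(\alpha_1,\alpha_3)}=N_{4,4}$ and the orientable subsurface spanned by $\alpha_5,\alpha_6,\alpha_7$ is a one-holed torus sitting inside a surface with several boundary components. The generating set of $\M(N_{4,4})$ restricted to classes stabilizing the images of $\alpha_1,\alpha_3$ is not simply a chain of $a_i$'s; one genuinely needs twists about the curves $\beta_3$, $\gamma_{\{5,6,7,8\}}$ and $\gamma_{\{1,2,5,6,7,8\}}$, and verifying that exactly these four curves (plus $a_7$ and $\Stab[\alpha_1,\alpha_3,\mu_g]$) generate requires a careful application of Lemma \ref{torus_gens} together with the description of how one-sided curves disjoint from $\alpha_1\cup\alpha_3$ distribute among the $\M(N_{8,0})$-orbits. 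Once the $g=8$ bookkeeping is done, the remaining cases are routine consequences of the exact sequences and Lemma \ref{torus_gens}.
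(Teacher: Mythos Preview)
Your high-level framework --- reduce $\Stab[\alpha_1,\alpha_3]$ to $\Stab[\alpha_1,\alpha_3,\mu_g]$ plus a few extra elements --- matches the paper's goal, but both the method and the execution diverge from the paper, and there is a genuine gap.

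The paper does \emph{not} argue via the orbit of $[\mu_g]$ under $\Stab[\alpha_1,\alpha_3]$. Instead it first passes to $\Stab^+[\alpha_1,\alpha_3]$ via the sequence (\ref{stab_es}) (the four cokernel generators $u_1,u_3,r_g,a_4a_3^2a_4$ are checked directly to lie in $\Stab[\mu_g]$ or $\Stab[\xi]$), realises $\Stab^+$ as the image of $\M(N_{g-4,4})$, and then for $g>5$ produces an explicit generating set for $\M(N_{g-4,4})$ by capping off three boundary components with punctures and iterating the Birman sequence (\ref{Bir_red}) through the groups $H_1\subset G_2\supset H_2\subset G_3\supset H_3$. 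One then simply inspects which of the resulting generators already fix $\mu_g$. The exceptional set for $g=8$ arises because the generator $T_{\gamma_{\{5,6,7,8\}}}$ of $H_1$ (coming from Remark \ref{gensNg11}) fails to fix $\mu_8$ but does fix $\mu_g$ for all $g\ge 9$; conjugating it by the point-pushing elements $x_2,x_3$ yields the remaining twists, and a \emph{lantern relation} --- not Lemma \ref{torus_gens} --- is what expresses $T_{\gamma_{\{3,\dots,8\}}}$ in terms of the listed generators.

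In your approach, the sentence ``a lift of any element of $\Stab[\alpha_1,\alpha_3]$ that moves $\mu_g$ will generate $\Stab[\alpha_1,\alpha_3]$'' is false as stated, and although you immediately back off to ``understand the action on one-sided curves'', this is exactly where the work lies and you never carry it out. To make the orbit argument run you would have to show that the proposed elements, together with $\Stab[\alpha_1,\alpha_3,\mu_g]$, act transitively on the $\Stab[\alpha_1,\alpha_3]$-orbit of $[\mu_g]$; that is a connectivity statement for a complex of one-sided curves in $N_{g-4,4}$, and nothing in the paper supplies it. Your diagnosis of why $g=8$ is special is also off: the regular neighbourhood of $\alpha_5\cup\alpha_6\cup\alpha_7$ is $S_{1,2}$, not a one-holed torus, and in any case the anomaly has nothing to do with generating a torus factor via Lemma \ref{torus_gens} --- it is simply that $\gamma_{\{5,6,7,8\}}$ passes through the $g$-th crosscap precisely when $g=8$. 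Likewise for $g=6$ you cannot send $\mu_6$ to $\xi$ (the latter is two-sided), so the role of $\Stab[\alpha_1,\alpha_3,\xi]$ is not what you suggest; in the paper it enters because certain explicit generators of $\Stab^+$ (such as $b_2$, $c$, $T_{\gamma_{\{1,2,5,6\}}}$) happen to fix $\xi$ rather than $\mu_g$.
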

\begin{proof}
Let $\Stab=\Stab[\alpha_1,\alpha_3]$.
By Lemma \ref{ext_pres} applied to the sequence (\ref{stab_es}) $\Stab$ is generated by $\Stab^+$ and $4$ cokernel generators, for which we take  $u_1$, $u_3$, $r_g$ and $a_4a_3^2a_4$. Note that all of them are in $\Stab[\mu_g]$ if $g>5$ and if $g=5$ then the last one is in $\Stab[\xi]$. 
Let $N'$ be the surface obtained by cutting $N_{g,0}$ along
$\alpha_1\cup\alpha_3$. Note that $N'$ is homeomorphic to $N_{g-4,4}$. Denote by $\alpha_1'$ and $\alpha_1''$ (resp. $\alpha_3'$ and $\alpha_3''$) the boundary components of $N'$ resulting from cutting along $\alpha_1$ (resp. $\alpha_3$), where $\alpha_1'$, $\alpha_3'$ and $\beta$ bound a pair of pants in $N'$. 
To obtain generators of $\Stab^+$ we use the epimorphism $\rho\colon\M(N')\to\Stab^+$ induced by the gluing map. We consider cases according to the genus.
\begin{figure}
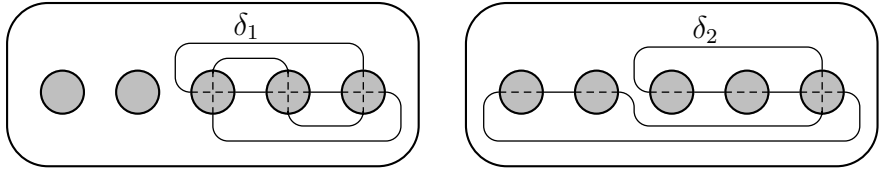

\begin{tabular}{cc}
\pspicture*(5.75,2.2)
\psframe[framearc=.5](0,0)(5.5,2.2)
\pscircle*[linecolor=lightgray](.75,1){.3}
\pscircle(.75,1){.3}
\pscircle*[linecolor=lightgray](1.75,1){.3}
\pscircle(1.75,1){.3}
\pscircle*[linecolor=lightgray](2.75,1){.3}
\pscircle(2.75,1){.3}
\psline[linewidth=.5pt,linestyle=dashed,dash=3pt 2pt](2.45,1)(3.05,1)
\psline[linewidth=.5pt,linestyle=dashed,dash=3pt 2pt](2.75,.7)(2.75,1.3)
\pscircle*[linecolor=lightgray](3.75,1){.3}
\pscircle(3.75,1){.3}
\psline[linewidth=.5pt,linestyle=dashed,dash=3pt 2pt](3.45,1)(4.05,1)
\psline[linewidth=.5pt,linestyle=dashed,dash=3pt 2pt](3.75,.7)(3.75,1.3)
\pscircle*[linecolor=lightgray](4.75,1){.3}
\pscircle(4.75,1){.3}
\psline[linewidth=.5pt,linestyle=dashed,dash=3pt 2pt](4.45,1)(5.05,1)
\psline[linewidth=.5pt,linestyle=dashed,dash=3pt 2pt](4.75,.7)(4.75,1.3)
\psline[linewidth=.5pt,linearc=.2](2.75,.7)(2.75,.35)(5.25,.35)(5.25,1)(5.05,1)
\psline[linewidth=.5pt,linearc=.2](3.75,.7)(3.75,.55)(4.75,.55)(4.75,.7)
\psline[linewidth=.5pt,linearc=.2](3.75,1.3)(3.75,1.45)(2.75,1.45)(2.75,1.3)
\psline[linewidth=.5pt,linearc=.2](4.75,1.3)(4.75,1.65)(2.25,1.65)(2.25,1)(2.45,1)
\psline[linewidth=.5pt,linearc=.2](3.05,1)(3.45,1)
\psline[linewidth=.5pt,linearc=.2](4.05,1)(4.45,1)
\rput[b](3.2,1.7){$\delta_1$}
%
%
%
%
%
%
\endpspicture&\pspicture*(5.75,2.2)
\psframe[framearc=.5](0,0)(5.5,2.2)
\pscircle*[linecolor=lightgray](.75,1){.3}
\pscircle(.75,1){.3}
\psline[linewidth=.5pt,linestyle=dashed,dash=3pt 2pt](.45,1)(1.05,1)
\pscircle*[linecolor=lightgray](1.75,1){.3}
\pscircle(1.75,1){.3}
\psline[linewidth=.5pt,linestyle=dashed,dash=3pt 2pt](1.45,1)(2.05,1)
\pscircle*[linecolor=lightgray](2.75,1){.3}
\pscircle(2.75,1){.3}
\psline[linewidth=.5pt,linestyle=dashed,dash=3pt 2pt](2.45,1)(3.05,1)
\pscircle*[linecolor=lightgray](3.75,1){.3}
\pscircle(3.75,1){.3}
\psline[linewidth=.5pt,linestyle=dashed,dash=3pt 2pt](3.45,1)(4.05,1)
\pscircle*[linecolor=lightgray](4.75,1){.3}
\pscircle(4.75,1){.3}
\psline[linewidth=.5pt,linestyle=dashed,dash=3pt 2pt](4.45,1)(5.05,1)
\psline[linewidth=.5pt,linestyle=dashed,dash=3pt 2pt](4.75,.7)(4.75,1.3)
\psline[linewidth=.5pt,linearc=.2](.45,1)(.25,1)(.25,.35)(5.25,.35)(5.25,1)(5.05,1)
\psline[linewidth=.5pt,linearc=.2](1.05,1)(1.45,1)
\psline[linewidth=.5pt,linearc=.2](2.05,1)(2.25,1)(2.25,.55)(4.75,.55)(4.75,.7)
\psline[linewidth=.5pt,linearc=.2](4.75,1.3)(4.75,1.6)(2.25,1.6)(2.25,1)(2.45,1)
\psline[linewidth=.5pt,linearc=.2](3.05,1)(3.45,1)
\psline[linewidth=.5pt,linearc=.2](4.05,1)(4.45,1)
\rput[b](3.2,1.65){$\delta_2$}
%
%
%
%
%
%
\endpspicture
\end{tabular}
\caption{\label{fig_N14}Curves from the proof of Lemma \ref{gensE2}.}
\end{figure}

{\bf Case $g=5$.}
In \cite[Theorem 7.6]{Szep1} a presentation 
of $\M(N_{1,4})$ is given, from which we deduce that $\Stab^+$ is generated by Dehn twists about curves disjoint from $\mu_g$  and
$T_{\delta_1}$, $T_{\delta_2}$, $u_3^{-1}T_{\delta_2}u_3$, where $\delta_1$, $\delta_2$ are shown on Figure \ref{fig_N14}. Since $\delta_1$ and $\delta_2$ are disjoint from $\xi$, the lemma is proved in this case.

{\bf Case $g>5$.} Let $N''$ be the surface obtained from $N'$ by gluing a disc with puncture $P_1$ along $\alpha_1'$, a disc with puncture $P_2$ along $\alpha_3'$, and a disc with puncture $P_3$ along $\alpha_3''$. For $i=1,2,3$ we set $\mathcal{P}_i=\{P_1,\dots, P_i\}$ and $H_i=\PM^+(N'',\mathcal{P}_i)$. For $i=2,3$ we set $K_i=\pi_1(N''\backslash\mathcal{P}_{i-1},P_i)$ and define $G_i$ to be the subgroup of $\PM(N'',\mathcal{P}_i)$ consisting of the isotopy classes of homeomorphisms preserving local orientation at each puncture in $\mathcal{P}_{i-1}$. Note that $H_i$ is an index-two subgroup of $G_i$ and we have the following short exact sequence
\begin{equation}\label{Bir_red}
1\to K_i\stackrel{\mathfrak{p}}{\to}G_i\stackrel{\mathfrak{f}}{\to}H_{i-1}\to 1
\end{equation} 
which is a restriction of the Birman sequence (\ref{Bir_es}).
We also have the exact sequence (\ref{Cup_es})
\[1\to\Z^3\to\M(N')\stackrel{\imath_\ast}{\to}H_3\to 1.\]
The kernel generators of $\M(N')$ are Dehn twists about the boundary components and they are mapped by $\rho$ on $a_1$ and $a_3$, which are in $\Stab[\mu_g]$. Also observe that if $\imath_\ast(x)\in \Stab_{H_3}[\mu_g]$ then
$\rho(x)\in\Stab[\mu_g]$.
\begin{figure}
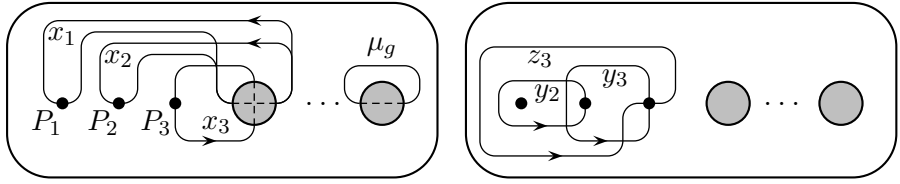

\begin{tabular}{cc}
\input{fig12}&\input{fig13}
\end{tabular}
\caption{\label{fig_loops}Loops from the proof of Lemma \ref{gensE2}.}
\end{figure}
Our next goal is to find generators for $H_3$. In fact we will obtain generators for the groups $H_i$, $G_i$ for $i=2,3$ by using the sequence (\ref{Bir_red}) and the method from Step 1 of the proof of Theorem \ref{pres_stab_alpha1}.
We set \[w=Y_{\mu_4,\alpha_4}Y_{\mu_3,\gamma_{\{3,5\}}}=Y_{\mu_4,\alpha_4}u_3Y_{\mu_4,\alpha_4}u_3^{-1}=u_4a_4u_3u_4a_4u_3^{-1}=a_4^{-1}a_3^{-1}u_3u_4.\]
Let $x_1, x_2, x_3$ and $y_2, y_3, z_3$ be the elements of
$\PM(N'',\mathcal{P}_i)$ obtained by pushing the punctures once along the loops represented in Figure \ref{fig_loops}, which we take so that for each
$I\subseteq\{5,\dots,g\}$ such that $5\in I$, the following equalities are satisfied, up to isotopy on $N''\backslash\mathcal{P}_3$.
\begin{align*}
& x_1(\gamma_I)=\gamma_{\{1,2\}\cup I},\quad x_2(\gamma_I)=\gamma_{\{3,4\}\cup I},\quad x_3(\gamma_I)=u_3^{-1}x_2(\gamma_I),\quad 
x_2x_1(\gamma_I)=\gamma_{\{1,2,3,4\}\cup I},\\
& x_3x_1(\gamma_I)=u_3^{-1}x_2x_1(\gamma_I),\quad 
x_3x_2(\gamma_I)=w(\gamma_I)\quad x_3x_2x_1(\gamma_I)=wx_1(\gamma_I).
\end{align*}
By Remark \ref{gensNg11} $H_1=\M^+(N'',P_1)$ is generated by
$a_i$ for $i=5,\dots,g-1$,
$u_j$ for $j=5,\dots,g-2$, $a_{g-1}u_{g-1}$, $x_1a_5x_1^{-1}$, $x_1a_5u_5x_1^{-1}$, and
$T_{\gamma_{\{5,6,7,8\}}}$ if $g\ge 8$.
We denote this set of generators by $Z_1$ and, by abuse of notation, we will treat it as a subset of $G_i$ for $i=1,2,3$. The image of $K_2$ in $G_2$ is generated by $y_2$ and $t_j$ for $j=5,\dots,g$ defined as $t_5=x_2$ and $t_{j+1}=u_j^{-1}t_ju_j$. From the sequence (\ref{Bir_red}) we obtain that $G_2$ is generated by $y_2$, $x_2$ and $Z_1$, and its index-two subgroup $H_2$ is generated by 
\[Z_2=\{y_2, x_2y_2x_2^{-1}, x_2^2\}\cup Z_1\cup x_2 Z_1x_2^{-1}.\]
Similarly, $G_3$ is generated by $y_3$, $z_3$, $x_3$ and $Z_2$, and  $H_3$ is generated by 
\[Z_3=\{y_3, z_3, x_3y_3x_3^{-1}, x_3z_3x_3^{-1},  x_3^2\}\cup Z_2\cup x_3 Z_2x_3^{-1}.\]
Set $Z_1'=Z_1\backslash\Stab_{H_3}[\mu_g]$ and observe that $H_3$ is generated
by \[\Stab_{H_3}[\mu_g]\cup Z_1'\cup x_2 Z'_1x_2^{-1}\cup x_3 Z'_1x_3^{-1}\cup x_3x_2 Z'_1x_2^{-1}x_3^{-1}.\]
{\bf Subcase $g=7$ or $g\ge 9$.}
 We have $Z'_1=\{a_{g-1}\}$ and  $a_{g-1}$ commutes with $x_2$, $x_3$. The lemma follows.

{\bf Subcase $g=6$.}
We have $Z'_1=\{a_5, x_1a_5x_1^{-1}\}$ and
\begin{align*}
& x_1a_5x_1^{-1}=T_{\gamma_{\{1,2,5,6\}}},\quad x_2a_5x_2^{-1}=T_{\gamma_{\{3,4,5,6\}}}=c,\quad x_3a_5x_3^{-1}=u_3^{-1}cu_3,\\
& (x_2x_1)a_5(x_2x_1)^{-1}=b_2,\quad (x_3x_1)a_5(x_3x_1)^{-1}=u_3^{-1}b_2u_3,\\
&(x_3x_2)a_5(x_3x_2)^{-1}=wa_5w^{-1}\quad (x_3x_2x_1)a_5(x_3x_2x_1)^{-1}=wT_{\gamma_{\{1,2,5,6\}}}w^{-1}.
\end{align*}
Since $a_5$, $c$, $b_2$ and $T_{\gamma_{\{1,2,5,6\}}}$ are in $\Stab[\alpha_1,\alpha_3,\xi]$ and
$u_3$, $w$ are in $\Stab[\alpha_1,\alpha_3,\mu_g]$, the lemma is proved for $g=6$.

{\bf Subcase $g=8$.} 
We have $Z'_1=\{a_7, T_{\gamma_{\{5,6,7,8\}}}\}$, $a_7$ commutes with $x_2$, $x_3$ and
\begin{align*}
& x_2T_{\gamma_{\{5,6,7,8\}}}x_2^{-1}=T_{\gamma_{\{3,4,5,6,7,8\}}},\quad x_3T_{\gamma_{\{5,6,7,8\}}}x_3^{-1}=u_3^{-1}T_{\gamma_{\{3,4,5,6,7,8\}}}u_3,\\
&(x_3x_2)T_{\gamma_{\{5,6,7,8\}}}(x_3x_2)^{-1}=wT_{\gamma_{\{5,6,7,8\}}}w^{-1}
\end{align*}
We have $u_3, w\in\Stab[\alpha_1,\alpha_3,\mu_g]$, and
to finish the proof it suffices to express $T_{\gamma_{\{3,\dots,8\}}}$ in terms of $b_3$ and the remaining generators. 
Observe that $\beta_3$, $\alpha_1$, $\alpha_3$ and $\gamma_{\{5,6,7,8\}}$ bound a four holed sphere and we have the lantern relation:
\[b_3a_1a_3T_{\gamma_{\{5,6,7,8\}}}=bT_{\gamma_{\{3,4,5,6,7,8\}}}T_{\gamma_{\{1,2,5,6,7,8\}}}\]
which does the job, because $a_1$, $a_3$ and $b$ are in $\Stab[\mu_8]$.
\end{proof}
\begin{lemma}\label{RelE2}
For $x\in\Stab[\alpha_1,\alpha_3]$ we have \[(\ast)\quad\psi(h_{e_2})^{-1}\psi_{v_1}(x)\psi(h_{e_2})=\psi_{v_1}(h^{-1}_{e_2}xh_{e_2}).\]
\end{lemma}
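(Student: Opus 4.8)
The strategy mirrors the proofs of Lemmas \ref{RelE1}, \ref{RelE3}, \ref{RelE4}: reduce the verification of $(\ast)$ to a generating set of $\Stab[\alpha_1,\alpha_3]$ supplied by Lemma \ref{gensE2}, and on each generator either recognise both sides as lying in the image of one of the already-constructed homomorphisms $\psi_{v_i}$, or verify the identity by a direct computation in $\G$. By the remark preceding Lemma \ref{RelE1}, it suffices to show that the left-hand side of $(\ast)$ equals $\psi_{v_1}(z)$ for some $z\in\Stab[\alpha_1,\alpha_3]=\Stab\,s(t(e_2))$; applying $\varphi$ then forces $z=h_{e_2}^{-1}xh_{e_2}$. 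Recall $h_{e_2}=a_2a_3a_1a_2$, so $\psi(h_{e_2})=a_2a_3a_1a_2\in\G$, and note that $h_{e_2}\in\Stab[\mu_g]$, so $\psi(h_{e_2})=\psi_{v_2}(h_{e_2})=\psi_{v_1}(h_{e_2})$ by Lemma \ref{RelE1}.

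First I would dispose of the generators coming from $\Stab[\alpha_1,\alpha_3,\mu_g]$. For $x$ in this subgroup, $x$, $h_{e_2}$ and $h_{e_2}^{-1}xh_{e_2}$ all lie in $\Stab[\mu_g]$, and $\psi_{v_1}$ agrees with $\psi_{v_2}$ on $\Stab[\alpha_1]\cap\Stab[\mu_g]$ by Lemma \ref{RelE1}; since $\psi_{v_2}$ is the restriction of the inverse of $\varphi_{g,0}$ on $\cS_{g,0}(v_2)$, it is a genuine homomorphism and $(\ast)$ holds automatically for these $x$. Next, the extra generators listed in Lemma \ref{gensE2} for $g\in\{5,6\}$ lie in $\Stab[\alpha_1,\alpha_3,\xi]$; by Lemma \ref{RelE4} (valid exactly for $g\in\{5,6\}$) $\psi_{v_1}$ and $\psi_{v_3}$ agree there, and it remains only to check that conjugation by $h_{e_2}$ carries $\Stab[\alpha_1,\alpha_3,\xi]$ into itself up to the relevant identifications — this is immediate since $h_{e_2}$ fixes $\alpha_1$, $\alpha_3$ and $\xi$. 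So for $g\in\{5,6\}$ the lemma follows from Lemmas \ref{RelE1} and \ref{RelE4} together with Lemma \ref{gensE2}.

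It remains to treat the extra generators for $g\ge 7$. For $g=7$ or $g\ge 9$ the only extra generator is $a_{g-1}$; I would compute $\psi(h_{e_2})^{-1}a_{g-1}\psi(h_{e_2})$ directly in $\G$ using (C1a) — since $|i-j|>1$ for $i=g-1$ and $j\in\{1,2,3\}$, the element $a_2a_3a_1a_2$ commutes with $a_{g-1}$, so the left-hand side is simply $a_{g-1}=\psi_{v_1}(a_{g-1})$ and $(\ast)$ holds. For $g=8$ the extra generators are $a_7$, $b_3$, $T_{\gamma_{\{5,6,7,8\}}}$ and $T_{\gamma_{\{1,2,5,6,7,8\}}}$; here $a_7$ again commutes with $h_{e_2}$ by (C1a), while for the three remaining twists I would express each as an element of $\cS_{g,0}(v_1)=\mathrm{im}\,\psi_{v_1}$ (they all lie in $\Stab[\alpha_1]$ by construction, and $b_3=\psi_{v_1}(b_3)$, $T_{\gamma_{\{5,6,7,8\}}}$ and $T_{\gamma_{\{1,2,5,6,7,8\}}}$ are among the twists $\psi_{v_1}$ fixes by the recipe in Theorem \ref{pres_stab_alpha1}) and check that conjugation by $h_{e_2}=a_2a_3a_1a_2$ commutes past them in $\G$ using (A1), (A3) and (C1a): the curves $\gamma_{\{5,6,7,8\}}$ and $\gamma_{\{1,2,5,6,7,8\}}$ are disjoint from $\alpha_1,\alpha_2,\alpha_3$ up to isotopy, so the corresponding twists commute with $a_1,a_2,a_3$, and $b_3$ commutes with $a_1,a_2,a_3$ by (A3), (A9). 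In every case the left-hand side of $(\ast)$ lands in $\mathrm{im}\,\psi_{v_1}$, so applying $\varphi$ finishes.

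\textbf{Main obstacle.} The genuinely delicate point is not the group-theoretic bookkeeping but making sure, for the $g=8$ subcase, that the four extra generators of Lemma \ref{gensE2} really do lie in $\mathrm{im}\,\psi_{v_1}$ and that $\psi_{v_1}$ sends them to the ``obvious'' words — this requires unwinding the definition of $\psi_{v_1}$ in Theorem \ref{pres_stab_alpha1} (which is built through several exact sequences) and checking compatibility of the chosen conjugating elements. Everything else reduces to the commutation relations (A1), (A3), (A9), (C1a) and to Lemmas \ref{RelE1} and \ref{RelE4}, which are already available.
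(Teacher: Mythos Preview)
Your overall architecture is exactly that of the paper: reduce to the generating set of Lemma~\ref{gensE2}, use Lemma~\ref{RelE1} for the $\Stab[\alpha_1,\alpha_3,\mu_g]$ piece, Lemma~\ref{RelE4} for the $\Stab[\alpha_1,\alpha_3,\xi]$ piece when $g\in\{5,6\}$, and handle the leftover generators for $g\ge 7$ directly. For $g\in\{5,6\}$ and for $g=7$ or $g\ge 9$ your argument is correct (though the commutation of $a_{g-1}$ with $a_1,a_2,a_3$ is relation (A1), not (C1a)).

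The genuine gap is in the $g=8$ case, specifically for $x=T_{\gamma_{\{1,2,5,6,7,8\}}}$. Your assertion that ``the curves $\gamma_{\{5,6,7,8\}}$ and $\gamma_{\{1,2,5,6,7,8\}}$ are disjoint from $\alpha_1,\alpha_2,\alpha_3$'' is false for the second curve: $\gamma_{\{1,2,5,6,7,8\}}$ passes through crosscaps $1$ and $2$, and $\alpha_2=\gamma_{\{2,3\}}$, $\alpha_3=\gamma_{\{3,4\}}$ cannot be isotoped off it. In fact $h_{e_2}=a_2a_3a_1a_2$ swaps $\alpha_1$ with $\alpha_3$, so
\[
h_{e_2}^{-1}\,T_{\gamma_{\{1,2,5,6,7,8\}}}\,h_{e_2}=T_{\gamma_{\{3,4,5,6,7,8\}}}\neq T_{\gamma_{\{1,2,5,6,7,8\}}},
\]
and your commutation argument collapses. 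A second, related issue is that $T_{\gamma_{\{5,6,7,8\}}}$ and $T_{\gamma_{\{1,2,5,6,7,8\}}}$ are \emph{not} among the listed generators of $\Stab[\alpha_1]$ in Theorem~\ref{pres_stab_alpha1}, so you cannot simply say ``$\psi_{v_1}$ fixes them''; you have to compute $\psi_{v_1}$ on them.

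The paper's fix is to write, with $w=a_6a_7a_5a_6a_4a_5a_3a_4$,
\[
T_{\gamma_{\{5,6,7,8\}}}=w^{-1}cw,\qquad T_{\gamma_{\{1,2,5,6,7,8\}}}=w^{-1}b_2w,
\]
and to use the lantern relation $b_3a_1a_3T_{\gamma_{\{5,6,7,8\}}}=b\,T_{\gamma_{\{3,4,5,6,7,8\}}}T_{\gamma_{\{1,2,5,6,7,8\}}}$ to express $T_{\gamma_{\{3,4,5,6,7,8\}}}$ likewise. Since $\psi_{v_1}(c)=(a_1\cdots a_5)^2b(a_1\cdots a_5)^{-2}$ and $\psi_{v_1}(b_j)=b_j$, all of $\psi(h_{e_2})$, $\psi_{v_1}(x)$ and $\psi_{v_1}(h_{e_2}^{-1}xh_{e_2})$ land in the subgroup of $\G$ generated by the $a_i$'s and $b_j$'s, and $(\ast)$ then follows from Lemma~\ref{shortcut_AB}. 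You correctly identified this subcase as the ``main obstacle'', but the proposed resolution (commutation) is wrong; what is actually needed is this reduction to Lemma~\ref{shortcut_AB}.
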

\begin{proof} Let $h=h_{e_2}=a_2a_3a_1a_2$.
We have $h\in\Stab[\mu_g]\cap\Stab[\xi]$ and
$\psi(h)=\psi_{v_2}(h)=\psi_{v_3}(h)$. Therefore for $x\in\Stab[\alpha_1,\alpha_3,\mu_g]$ we have by Lemma \ref{RelE1}
\[\psi(h)^{-1}\psi_{v_1}(x)\psi(h)=
\psi_{v_2}(h^{-1}xh)=
\psi_{v_1}(h^{-1}xh).\]
Analogously, if $g\in\{5,6\}$ then $(\ast)$ holds for $x\in\Stab[\alpha_1,\alpha_3,\xi]$ by Lemma \ref{RelE4}.
By Lemma \ref{gensE2} this finishes the proof for $g\in\{5,6\}$.
For $g\ge 7$ we have 
\[\psi(h)^{-1}\psi_{v_1}(a_{g-1})\psi(h)=(a_2a_1a_3a_2)^{-1}a_{g-1}(a_2a_1a_3a_2)=a_{g-1}=\psi_{v_1}(a_{g-1}).\]
It can be checked that for $g=8$ in $\M$ we have
\begin{align*}
&h^{-1}b_3h=b_3, \qquad h^{-1}T_{\gamma_{\{5,6,7,8\}}}h=T_{\gamma_{\{5,6,7,8\}}}\\ 
&h^{-1}T_{\gamma_{\{1,2,5,6,7,8\}}}h=T_{\gamma_{\{3,4,5,6,7,8\}}}=b^{-1}b_3a_1a_3T_{\gamma_{\{5,6,7,8\}}}T^{-1}_{\gamma_{\{1,2,5,6,7,8\}}}
\end{align*}
and for  $w=a_6a_7a_5a_6a_4a_5a_3a_4$,
\[T_{\gamma_{\{5,6,7,8\}}}=w^{-1}cw,\qquad
T_{\gamma_{\{1,2,5,6,7,8\}}}=w^{-1}b_2w,\]
It follows that for $x\in\{b_3, T_{\gamma_{\{5,6,7,8\}}}, T_{\gamma_{\{1,2,5,6,7,8\}}}\}$,
$\psi(h)$, $\psi_{v_1}(x)$ and $\psi_{v_1}(h^{-1}xh)$ are in the subgroup of $\G$ generated by $a_i$'s and $b_j$'s and so  $(\ast)$ is satisfied in $\G$ by  Lemma \ref{shortcut_AB}. 
\end{proof}

\begin{lemma}\label{RelE5}
If $g=5$ then for $x\in\Stab[\mu_5,\beta]$
\[(\ast)\ \psi(h_{e_5})^{-1}\psi_{v_2}(x)\psi(h_{e_5})=\psi_{v_1}(h^{-1}_{e_5}xh_{e_5})\] 
\end{lemma}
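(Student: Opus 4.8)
The plan is to follow the same template used for Lemmas \ref{RelE1}--\ref{RelE2}: reduce the relation $(\ast)$ to checking it on a convenient generating set of $\Stab[\mu_5,\beta]$, and then handle each generator by recognising that either it lies in a stabiliser where we already know $\psi_{v_1}$ and $\psi_{v_2}$ agree, or that both sides of $(\ast)$ land in a subgroup on which $\varphi$ is injective (so equality in $\M$ forces equality in $\G$, via Lemma \ref{shortcut_AB} or the isomorphism statements in Theorems \ref{pres_Stab_mu_g}, \ref{pres_Stab_xi}, \ref{pres_stab_alpha1}). Recall the general principle stated before Lemma \ref{RelE1}: it suffices to show that the left hand side of $(\ast)$ equals $\psi_{v_1}(z)$ for some $z\in\Stab\,s(t(e_5))=\Stab[\alpha_1]$, since applying $\varphi$ then identifies $z$ with $h_{e_5}^{-1}xh_{e_5}$.

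First I would obtain generators of $\Stab[\mu_5,\beta]$. Since $s(e_5)=[\mu_5,\beta_1]$ consists of a one-sided curve $\mu_5$ and a two-sided curve $\beta$ whose union has a complement $S_{1,3}$ (the surface $N_{s(e_5)}=S_{1,3}$ from Table \ref{tabE}), I would use the exact sequences from Subsection \ref{struct_stab}: first peel off $\mu_5$ via the crosscap-pushing sequence (\ref{stab_bir_es}) to reduce to $\Stab_{\M(N_{4,0})}[\beta]$, and describe the latter using the cut sequence (\ref{cut_es}) for the two-sided curve $\beta$ together with Lemma \ref{torus_gens} (note $N_{5,0}$ cut along $\beta$ is $S_{1,3}$-like, which is exactly the situation of that lemma). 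As in the proofs of Lemmas \ref{gensE2}--\ref{RelE2}, this produces an explicit finite generating set consisting of Dehn twists about curves that are visibly disjoint from $\mu_5$ (hence in $\Stab[\mu_5]$), curves disjoint from both $\mu_5$ and (some conjugate of) $\alpha_1$, and a few crosscap slides/point-pushing elements which can be written as words in the $a_i,u_i$ using the identities already established (e.g. $Y_{\mu_4,\beta}=v$, and the conjugation-by-$(u_4\cdots u_{g-1})$ tricks used in Lemma \ref{RelE1}).

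Then for each generator $x$ I would argue as follows. If $x\in\Stab[\mu_5,\beta]\cap\Stab[\alpha_1]$ — which will be the case for most of the Dehn-twist generators, after possibly replacing $\alpha_1$ by an $\M$-equivalent curve and conjugating — then since $h_{e_5}=a_4ba_3a_4a_2a_3a_1a_2\in\Stab[\mu_5]$ (one checks each factor fixes $\mu_5$) we have $\psi(h_{e_5})=\psi_{v_1}(h_{e_5})=\psi_{v_2}(h_{e_5})$ by Lemma \ref{RelE1}, and the identity $(\ast)$ follows from $\psi_{v_1}=\psi_{v_2}$ on $\Stab[\alpha_1]\cap\Stab[\mu_5]$ (Lemma \ref{RelE1}) exactly as in Lemma \ref{RelE2}. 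For the remaining generators, which will be words in the $a_i,u_i$ (the crosscap-slide type generators), both $\psi_{v_2}(x)$ and $h_{e_5}$ lie in $\mathcal{S}_{g,0}(v_2)$ and the conjugate $h_{e_5}^{-1}xh_{e_5}$ can be computed in $\M$ and then lifted by Lemma \ref{shortcut_AB} or by $\psi_{v_1}$ being a section on $\Stab[\alpha_1]$. I expect the main obstacle to be the generators-finding step: correctly tracking which boundary components of the cut surfaces correspond to which curves, and verifying the disjointness claims $x\in\Stab[\alpha_1]$ after the cut-and-reglue identifications — essentially a careful bookkeeping argument of the same flavour as, but somewhat more delicate than, Lemma \ref{gensE2}, because the base surface here ($N_{3,1}$ after peeling $\mu_5$, embedded suitably) is small and the curves $\mu_5,\beta,\alpha_1$ interact nontrivially.
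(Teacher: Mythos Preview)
Your proposal has a genuine gap: the claim that $h_{e_5}=a_4ba_3a_4a_2a_3a_1a_2\in\Stab[\mu_5]$ is false. The factor $a_4=T_{\alpha_4}$ does \emph{not} fix $\mu_5$, since $\alpha_4=\gamma_{\{4,5\}}$ meets $\mu_5=\gamma_{\{5\}}$ transversally in one point. Consequently the expression $\psi_{v_2}(h_{e_5})$ is not even defined, and the reduction ``$\psi(h_{e_5})=\psi_{v_1}(h_{e_5})=\psi_{v_2}(h_{e_5})$'' that you propose as the backbone of the argument collapses. Everything downstream --- handling generators by intersecting with $\Stab[\alpha_1]$ and applying Lemma~\ref{RelE1} --- relies on this identification and therefore does not go through.

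The paper's proof avoids this by routing through $\Stab[\xi]$ and the map $\psi_{v_3}$ (Theorem~\ref{pres_Stab_xi} and Lemma~\ref{RelE4}), a tool you never invoke. Since every factor of $h_{e_5}$ is a twist about a curve disjoint from $\xi=\gamma_{\{1,\dots,5\}}$, one has $h_{e_5}\in\Stab[\xi]$ and $\psi(h_{e_5})=\psi_{v_3}(h_{e_5})$. Using (\ref{stab_es}) and Lemma~\ref{torus_gens}, $\Stab[\mu_5,\beta]$ is generated by $b,a_1,a_2,a_3$, an element $e=a_4u_4a_3u_4^{-1}a_4^{-1}$, and two cokernel elements $h_{e_5}u_1h_{e_5}^{-1}$ and $r_5\Delta_4$. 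The generators in $\Stab[\xi]$ (namely $b,a_1,a_2,a_3,r_5\Delta_4$) are handled via Lemma~\ref{RelE4}; the remaining generator $e$ is dealt with by factoring $h_{e_5}=(a_4ba_3a_4)h_{e_2}$ with $a_4ba_3a_4\in\Stab[\alpha_1]$ and applying Lemma~\ref{RelE2}, and $h_{e_5}u_1h_{e_5}^{-1}$ is handled similarly. Your plan of attacking everything through $\Stab[\mu_5]$ alone cannot succeed here because $h_{e_5}$ genuinely moves $\mu_5$; the vertex $v_3$ and Lemma~\ref{RelE4} are essential.
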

\begin{proof}
By similar argument as in the proof of Lemma \ref{RelE4}, $\Stab[\mu_5,\beta]$ is
generated by $\Stab^+[\mu_5,\beta]$ together with
$h_{e_5}u_1h_{e_5}^{-1}$ (preserves orientation of $\mu_5$, preserves orientation and swaps sides of $\beta$) and
$r_5\Delta_4$ (reverses orientation of $\mu_5$, reverses orientation and preserves sides of $\beta$). 
By Lemma  \ref{torus_gens} $\Stab^+[\mu_5,\beta]$ is generated by $b$, $a_1$, $a_2$, $a_3$ and $e=a_4u_4a_3u_4^{-1}a_4^{-1}$.
Note that $\{b, a_1, a_2, a_3, r_5\Delta_4\}\subset\Stab[\xi]$ and for $x=b, a_1, a_2, a_3$ we have $\psi_{v_2}(x)=\psi_{v_3}(x)$. Also $\psi_{v_2}(\Delta_4)=\psi_{v_3}(\Delta_4)$ and because $r_5\in\Stab[\alpha_1,\mu_5]\cap\Stab[\xi]$, thus $\psi_{v_2}(r_5)=\psi_{v_1}(r_5)=\psi_{v_3}(r_5)$ by Lemmas \ref{RelE1} and \ref{RelE4}. Also $h_{e_5}=a_4ba_3a_4a_2a_1a_3a_2\in\Stab[\xi]$ and $\psi(h_{e_5})=\psi_{v_3}(h_{e_5})$ and thus $(\ast)$ holds for $x\in\{b, a_1, a_2, a_3, r_5\Delta_4\}$  by Lemma \ref{RelE4}. 
Note that $h_{e_5}=a_4ba_3a_4h_{e_2}$ and $\psi(h_{e_5})=\psi_{v_1}(a_4ba_3a_4)\psi(h_{e_2})$. Also $\psi_{v_2}(e)=\psi_{v_1}(e)$ and 
\[\psi(h_{e_5})^{-1}\psi_{v_2}(e)\psi(h_{e_5})=\psi(h_{e_2})^{-1}\psi_{v_1}((a_4ba_3a_4)^{-1}e(a_4ba_3a_4))\psi(h_{e_2}).\]
It can be checked that $(a_4ba_3a_4)^{-1}e(a_4ba_3a_4)\in\Stab[\alpha_1,\alpha_3]$
and thus  $(\ast)$ holds for $x=e$ by Lemma \ref{RelE2}. Similarly, since $u_1\in\Stab[\alpha_1,\alpha_3]$ 
\[\psi(h_{e_5})\psi_{v_1}(u_1)\psi(h_{e_5})^{-1}=\psi_{v_1}(h_{e_5}u_1h_{e_5}^{-1})=\psi_{v_2}(h_{e_5}u_1h_{e_5}^{-1})\]
by Lemmas \ref{RelE1} and \ref{RelE2}.
\end{proof}

\begin{lemma}\label{RelE6}
If $g=6$ then for $x\in\Stab[\alpha_1,\gamma_{\{3,4,5,6\}}]$
\[(\ast)\ \psi(h_{e_6})^{-1}\psi_{v_1}(x)\psi(h_{e_6})=\psi_{v_1}(h^{-1}_{e_6}xh_{e_6})\]
\end{lemma}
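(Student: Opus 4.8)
The argument follows the same template used for the edges $e_2$ through $e_5$: reduce the verification to a generating set of $\Stab[\alpha_1,\gamma_{\{3,4,5,6\}}]$, and for each generator recognise that $h_{e_6}$, the generator itself, and its $h_{e_6}$-conjugate all lie in a subgroup of $\M$ on which the relevant $\psi_{v_j}$ are already known to agree (via Lemmas \ref{RelE1}, \ref{RelE2}, \ref{RelE4}, \ref{shortcut_AB}). First I would record that $h_{e_6}=a_2ca_1a_2$ where $c=T_{\gamma_{\{3,4,5,6\}}}$, and observe that $h_{e_6}$ stabilises both $\alpha_1$ and $\gamma_{\{3,4,5,6\}}$, and moreover lies in $\Stab[\mu_6]\cap\Stab[\xi]$ (the curves $\alpha_1$, $\alpha_2$, $\gamma_{\{3,4,5,6\}}$ all live in the orientable subsurface $\Sigma$, so $h_{e_6}\in\jmath_\ast(\M(S_{3,0}))\subset\Stab[\xi]$, and $h_{e_6}$ fixes $\mu_6$ since all its constituent curves are disjoint from $\mu_6$); hence $\psi(h_{e_6})=\psi_{v_1}(h_{e_6})=\psi_{v_2}(h_{e_6})=\psi_{v_3}(h_{e_6})$.

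\textbf{Generators of the stabiliser.} Using Lemma \ref{ext_pres} applied to the sequence (\ref{stab_es}) for the simplex $[\alpha_1,\gamma_{\{3,4,5,6\}}]$, whose complement in $N_{6,0}$ is a torus with four boundary components (so $\Z_2^{4}$ in the cokernel, whose image has rank at most $3$ by Remark \ref{eta_onto}), I would exhibit cokernel generators analogous to those in Lemma \ref{gensE2}: something like $a_2a_1^2a_2$ (swaps sides and reverses orientation of $\alpha_1$), a suitable element involving a half-twist interchanging the two crosscaps bounded off by $\gamma_{\{3,4,5,6\}}$ together with a product of $a_i$'s, and $r_6$. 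The plus-part $\Stab^+[\alpha_1,\gamma_{\{3,4,5,6\}}]$ is the image under $\jmath_\ast$ (for the appropriate orientable subsurface) of the mapping class group of $S_{1,4}$, which by Lemma \ref{torus_gens} is generated by Dehn twists: $b$, $c$, $a_1$, and two further twists about curves meeting the core once. The key point, to be verified by inspecting the figure, is that each of these generators lies in $\Stab[\mu_6]$ or $\Stab[\xi]$ (or in the subgroup generated by the $a_i$'s, $b_j$'s and $c$, to which Lemma \ref{shortcut_AB} applies after extending it to include $c$ — or, more safely, rewriting $c$ via the conjugation $c=(a_1\cdots a_5)^2 b (a_1\cdots a_5)^{-2}$ so it sits inside $\Stab[\xi]$ as used in Theorem \ref{pres_stab_alpha1}).

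\textbf{Verification for each generator.} For a generator $x$ together with $h_{e_6}^{-1}xh_{e_6}$ lying in $\Stab[\mu_6]$ (resp. $\Stab[\xi]$), the desired identity $(\ast)$ becomes $\psi(h_{e_6})^{-1}\psi_{v_2}(x)\psi(h_{e_6})=\psi_{v_2}(h_{e_6}^{-1}xh_{e_6})$ (resp. with $\psi_{v_3}$), and this holds because $\psi_{v_2}$ (resp. $\psi_{v_3}$) is a homomorphism whose composition with $\varphi$ is the identity on $\Stab[\mu_6]$ (resp. $\Stab[\xi]$) — applying $\varphi$ to both sides forces equality. When $x$ is a cokernel generator built from $r_6$ or from the crosscap-interchange element, I would instead route through Lemma \ref{RelE1} or Lemma \ref{RelE4} exactly as in the proof of Lemma \ref{RelE5}: $r_6$ lies in $\Stab[\alpha_1,\mu_6]$, so $\psi_{v_1}(r_6)=\psi_{v_2}(r_6)$, etc. For the twist generators of $\Stab^+$ not visibly fixing $\mu_6$ or $\xi$, I expect to use that conjugating by a further element of $\Stab[\alpha_1]$ brings them into $\Stab[\alpha_1,\alpha_3]$ and invoke Lemma \ref{RelE2}, as was done for the generator $e$ in Lemma \ref{RelE5}.

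\textbf{Main obstacle.} The genuine work is geometric bookkeeping: correctly identifying a generating set of $\Stab[\alpha_1,\gamma_{\{3,4,5,6\}}]$ small enough that every element (and its $h_{e_6}$-conjugate) can be seen to lie in one of the subgroups $\Stab[\mu_6]$, $\Stab[\xi]$, $\Stab[\alpha_1,\alpha_3]$, or the Dehn-twist subgroup — and checking those memberships against Figure \ref{aI} and Figure \ref{fig_S}. Writing each $h_{e_6}^{-1}xh_{e_6}$ explicitly as a word in named curves (so that one can apply $\varphi$ and read off the answer) is the step most likely to require care, but it is entirely parallel to the computations already carried out in Lemmas \ref{RelE2}, \ref{RelE4}, \ref{RelE5}; no new idea is needed.
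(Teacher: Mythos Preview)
Your template is the same as the paper's, but two of your concrete geometric claims are false and they hide the one genuinely delicate step.

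First, $h_{e_6}=a_2ca_1a_2$ does \emph{not} lie in $\Stab[\mu_6]$: the curve $\gamma_{\{3,4,5,6\}}$ passes through crosscap~$6$ and hence meets $\mu_6$, so $c$ moves $\mu_6$. Thus you cannot route anything through $\psi_{v_2}$ via conjugation by $h_{e_6}$. (You do have $h_{e_6}\in\Stab[\xi]$, and that is what the paper uses.) Second, $r_6$ is not a cokernel generator here: the reflection sends $\gamma_{\{3,4,5,6\}}$ to $\beta=\gamma_{\{1,2,3,4\}}$, so $r_6\notin\Stab[\alpha_1,\gamma_{\{3,4,5,6\}}]$. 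The paper instead uses $u_1a_1$, $h_{e_6}u_1a_1h_{e_6}^{-1}$, and $u_1r_6d$ with $d=u_3u_4u_5u_3u_4u_3$ as cokernel generators; only the last of these and the $\Stab^+$ generators $c,a_1,a_3,a_4,a_5,r_6br_6$ route cleanly through $\Stab[\xi]$ via Lemma~\ref{RelE4}.

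The real obstacle is the generator $u_1a_1$ (a crosscap slide, not a twist). It is \emph{not} handled by any of the routing lemmas you list. The paper first treats $x=h_{e_6}u_1a_1h_{e_6}^{-1}$: one computes $\psi_{v_1}(x)=u_3u_4u_5a_5a_4a_3$ from the crosscap-slide decomposition $Y_{\mu_3,\gamma}=Y_{\mu_3,\gamma_{\{3,6\}}}Y_{\mu_3,\gamma_{\{3,5\}}}Y_{\mu_3,\gamma_{\{3,4\}}}$, then expands $\psi(h_{e_6})\psi_{v_1}(u_1a_1)\psi(h_{e_6})^{-1}$ using $c=s^2bs^{-2}$ with $s=a_1\cdots a_5$, and finally shows that the resulting identity has both sides in $\mathcal{S}(v_2)$ (so it holds because $\varphi|_{\mathcal{S}(v_2)}$ is injective). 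The case $x=u_1a_1$ is then deduced from this via the observation $h_{e_6}^2\in\Stab[\alpha_1]\cap\Stab[\xi]$, giving $\psi(h_{e_6})^2=\psi_{v_1}(h_{e_6}^2)$. This direct computation in $\G$ is the crux of the lemma; your statement that ``no new idea is needed'' underestimates it.
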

\begin{proof}
Let $\gamma=\gamma_{\{3,4,5,6\}}$ and $d=u_3u_4u_5u_3u_4u_3$. 
By similar argument as in the proof of Lemma \ref{RelE4}, $\Stab[\alpha_1,\gamma]$
is
generated by $\Stab^+[\alpha_1,\gamma]$ together with
$u_1a_1$, $h_{e_6}u_1a_1h_{e_6}^{-1}$  and
$u_1r_6d$ 
(preserves sides  and reverses orientation of $\alpha_1$,
preserves sides and reverses orientation of $\gamma$).

By Lemma \ref{torus_gens} $\Stab^+[\alpha_1,\gamma]$ is generated by $c$, $a_i$ for $i\in\{1,3,4,5\}$, $r_6br_6$ and $d^{-1}bd$. Note that the last element can be expressed in terms of the other generators  of $\Stab[\alpha_1,\gamma]$.
For $x\in\{c, r_6br_6, u_1r_6d\}\cup\{ a_i\,|\,i=1,3,4,5\}$ we have $x\in\Stab[\xi]$, and since also 
$h_{e_6}\in\Stab[\xi]$ and $\psi(h_{e_6})=\psi_{v_3}(h_{e_6})$, thus $(\ast)$ holds by Lemma \ref{RelE4}. 

We have $u_1a_1=Y_{\mu_1,\alpha_1}$ and
\begin{align*}
&h_{e_6}u_1a_1h_{e_6}^{-1}=Y_{\mu_3,\gamma}=
Y_{\mu_3,\gamma_{\{3,6\}}}Y_{\mu_3,\gamma_{\{3,5\}}}Y_{\mu_3,\gamma_{\{3,4\}}}\\
&=(u_3u_4Y_{\mu_5,\alpha_5}u_4^{-1}u_3^{-1})(u_3Y_{\mu_4,\alpha_4}u_3^{-1})Y_{\mu_3,\alpha_3}
=u_3u_4u_5a_5a_4a_3.
\end{align*}
Hence $\psi_{v_1}(h_{e_6}u_1a_1h_{e_6}^{-1})=u_3u_4u_5a_5a_4a_3$. Recall that
$c=s^2bs^{-2}$ for $s=a_1\cdots a_5$. We have
\begin{align*}
&\psi(h_{e_6})\psi_{v_1}(u_1a_1)\psi(h_{e_6})^{-1}=
a_2ca_1a_2u_1a_1a_2^{-1}a_1^{-1}c^{-1}a_2^{-1}=
a_2cu_2^{-1}a_2c^{-1}a_2^{-1}\\
&=a_2s^2bs^{-2}u_2^{-1}a_2s^2b^{-1}s^{-2}a_2^{-1}=
sa_1sbs^{-1}u_1a_1sb^{-1}s^{-1}a_1^{-1}s^{-1}
\end{align*}
Thus for $x=h_{e_6}u_1a_1h_{e_6}^{-1}$, $(\ast)$ is equivalent to
\[a_1sbs^{-1}u_1a_1sb^{-1}s^{-1}a_1^{-1}=s^{-1}u_3u_4u_5a_5a_4a_3s.\]
The last relation holds in $\G$ because its both sides are in $\mathcal{S}(v_2)$. Indeed, we have 
 $sbs^{-1}=a_1a_2a_3a_4ba_4^{-1}a_3^{-1}a_2^{-1}a_1^{-1}$ and 
$s^{-1}u_3u_4u_5a_5a_4a_3s=u_2^{-1}u_3^{-1}u_4^{-1}a_4a_3a_2$.
It can be checked that
$h_{e_6}^2\in\Stab[\alpha_1]\cap\Stab[\xi]$ and by Lemma \ref{RelE4}
$\psi(h_{e_6})^2=\psi_{v_3}(h_{e_6}^2)=\psi_{v_1}(h_{e_6}^2)$. It follows that $(\ast)$ holds for $x=u_1a_1$ because
\[\psi(h_{e_6})^{-1}\psi_{v_1}(u_1a_1)\psi(h_{e_6})=\psi_{v_1}(h_{e_6}^{-2})\psi(h_{e_6})\psi_{v_1}(u_1a_1)\psi(h_{e_6})^{-1}\psi_{v_1}(h_{e_6}^2)\]
and the right hand side is equal to $\psi_{v_1}(h_{e_6}^{-1}u_1a_1h_{e_6})$ by earlier part of the proof.
\end{proof}

\begin{lemma}\label{RelE7}
If $g=6$ then for $x\in\Stab[\mu_6,\gamma_{\{1,2,3,4,5\}}]$
\[(\ast)\ \psi(h_{e_7})^{-1}\psi_{v_2}(x)\psi(h_{e_7})=\psi_{v_2}(h^{-1}_{e_7}xh_{e_7})\]
\end{lemma}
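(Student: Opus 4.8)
The plan is to follow the same template used in Lemmas \ref{RelE3} and \ref{RelE5}: produce a convenient generating set for $\Stab[\mu_6,\gamma_{\{1,2,3,4,5\}}]$ and verify $(\ast)$ on each generator, reducing each verification to a relation already established in $\G$ or to one of the previous edge lemmas. Here $h_{e_7}=b_2^{-1}$, $i(e_7)=t(e_7)=v_2$, and $N_{s(e_7)}=S_{2,2}$, so $\Stab[\mu_6,\gamma_{\{1,2,3,4,5\}}]^+$ is the image in $\M(N_{6,0})$ of $\M(S_{2,2})$ under the inclusion of a regular neighbourhood of $\mu_6\cup\gamma_{\{1,2,3,4,5\}}$; since $\gamma_{\{1,2,3,4,5\}}$ is one-sided and $\mu_6$ is one-sided (both vertices of $e_7$ are of type $v_2$), I should use the structure of the stabiliser from Subsection \ref{struct_stab}. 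First I would use Lemma \ref{ext_pres} applied to the exact sequence (\ref{stab_es}) to reduce to $\Stab^+[\mu_6,\gamma_{\{1,2,3,4,5\}}]$ plus a small number of cokernel generators (the ones flipping orientations/sides of the two curves), exactly as in the proof of Lemma \ref{RelE4}.

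The key steps, in order: (1) identify $\Stab^+$ with $\jmath_\ast(\M(S_{2,2}))$ and, via the chain/Humphries-type generators for a genus-$2$ surface with boundary (or by pushing $\mu_6$ along loops as in Theorem \ref{pres_Stab_mu_g}), write down an explicit generating set consisting of Dehn twists about curves that are visible in Figure \ref{aI} — I expect these to include $a_1, a_2, a_3, a_4$, $b$, $b_2$ and perhaps one crosscap-slide-type element coming from pushing $\mu_6$; (2) observe that $b_2$, the $a_i$'s with $i\le 4$ and $b$ all commute with or are fixed under conjugation by $b_2^{-1}$ up to an element still lying in $\Stab[\mu_6]$ — in fact many of these generators lie in $\Stab[\alpha_1]$ or $\Stab[\xi]$ as well, so $(\ast)$ for them follows from Lemmas \ref{RelE1}, \ref{RelE4}, or directly from the fact that $\psi_{v_2}$ is an isomorphism onto $\mathcal{S}_{g,0}(v_2)$ (Theorem \ref{pres_Stab_mu_g}) together with the remark before Lemma \ref{RelE1}; (3) for the remaining generator(s), express it in terms of elements on which $\psi_{v_1}$ and $\psi_{v_2}$ agree, using the crosscap pushing map decomposition (as in $v=Y_{\mu_4,\beta}=\cdots$ and the computation of $\psi_{v_1}(Y_{\mu_g,\gamma_{\{1,2,g\}}})$ in Lemma \ref{RelE1}); (4) conclude by the general principle stated at the start of Section \ref{sec_edges}: it suffices that the left-hand side of $(\ast)$ equals $\psi_{v_2}(z)$ for some $z\in\Stab\,s(t(e_7))$, since applying $\varphi$ then forces $z=h_{e_7}^{-1}xh_{e_7}$.

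The main obstacle I anticipate is step (1)–(3) for whatever generator does not already sit in one of the previously handled stabilisers: one must find an explicit word for it in the generators of $\mathcal{G}_{6,0}$, check it lies in $\mathcal{S}_{6,0}(v_2)$, and verify the conjugate $b_2^{-1}\,(\cdot)\,b_2$ also lies there and maps to the expected element. This is the kind of computation done in Lemma \ref{c6r8}, using the lantern relation from Lemma \ref{expC} (which relates $b_2$, $a_1$, $a_3$, $a_5$, $c$ and $d$) to rewrite $b_2$-conjugation in terms of the $a_i$'s, $b$, $c$; I would use the relation $b_2a_1a_3a_5=cdb$ and the relations (A3), (A9b) ($b_2b_1=b_1b_2$ when $g=6$) to move $b_2$ past the relevant generators. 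Concretely I expect to reduce $(\ast)$ for the last generator to an identity expressible in $\mathcal{S}$, invoking Lemma \ref{shortcut_AB} or the double-coset manipulations ($\sim_L$, $\sim_R$, $\approx$) at the end of Section \ref{sec_v13}. Once every generator of $\Stab[\mu_6,\gamma_{\{1,2,3,4,5\}}]$ is handled, $(\ast)$ holds on the whole stabiliser and the lemma follows.
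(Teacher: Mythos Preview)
Your template is correct and matches the paper's: find a generating set for $\Stab[\mu_6,\gamma_{\{1,2,3,4,5\}}]$ via the exact sequence (\ref{stab_es}), then verify $(\ast)$ generator by generator using the earlier edge lemmas. However, you overestimate the difficulty and miss the key shortcut that makes the paper's proof very short.

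First, two small corrections. Since both curves are one-sided and the cut surface $S_{2,2}$ is orientable, Remark~\ref{eta_onto} forces the image of $\eta$ to have rank at most~$1$, so a \emph{single} cokernel element suffices; the paper takes $w=a_2a_1a_3a_2a_4a_3\Delta_5(a_5u_5)^{-1}$, which reverses both orientations simultaneously. Second, $b_2=T_\xi$ does not belong to $\Stab[\mu_6]$ (the curve $\xi$ meets $\mu_6$), so it should not appear in your list of generators for $\Stab^+$. The correct extra generator of $\Stab^+$, beyond $b,a_1,a_2,a_3,a_4$, is the Dehn twist $u_4^{-1}a_4^{-1}ba_4u_4$ (this comes from the presentation of $\M(S_{2,2})$ in Theorem~\ref{presS}).

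The real simplification you are missing is this: once the generators are $b,a_1,a_2,a_3,a_4,\,u_4^{-1}a_4^{-1}ba_4u_4$ and $w$, the verification of $(\ast)$ requires no lantern relations, no Lemma~\ref{expC}, and none of the double-coset manipulations from \S\ref{sec_v13}. For $b$ and $a_1,\dots,a_4$, the element $\psi(h_{e_7})=b_2^{-1}$ commutes with $\psi_{v_2}(b)=b$ and $\psi_{v_2}(a_i)=a_i$ directly in $\G$ (all are twists about curves disjoint from $\xi$; use Lemma~\ref{shortcut_AB}). For the two remaining generators $u_4^{-1}a_4^{-1}ba_4u_4$ and $w$, a quick curve check shows that both lie in $\Stab[\alpha_1]$; since $h_{e_7}=b_2^{-1}\in\Stab[\alpha_1]$ as well and $\psi(h_{e_7})=\psi_{v_1}(h_{e_7})$, Lemma~\ref{RelE1} handles $(\ast)$ for these generators immediately. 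Lemma~\ref{RelE4} and the heavier machinery you invoke are not needed here.
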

\begin{proof}
Let $\gamma=\gamma_{\{1,2,3,4,5\}}$. 
By similar argument as in the proof of Lemma \ref{RelE4}, $\Stab[\mu_6,\gamma]$
is
generated by $\Stab^+[\mu_6,\gamma]$ and an element reversing orientation of $\mu_6$ and $\gamma$, for which we can take   
\[w=a_2a_1a_3a_2a_4a_3\Delta_5(a_5u_5)^{-1}.\]
The surface obtained by cutting $N_{6,0}$ along $\mu_6\cup\gamma$ is homeomorphic to $S_{2,2}$ and it can be deduced from Theorem \ref{presS} that
$\Stab^+[\mu_6,\gamma]$ is generated by 
$b$, $a_1$, $a_2$, $a_3$, $a_4$,  $u_4^{-1}a_4^{-1}ba_4u_4$. 
 We have $h_{e_7}=b_2^{-1}$ and $\psi(h_{e_7})=b_2^{-1}$ commutes with 
$b=\psi_{v_2}(b)$ and $a_i=\psi_{v_2}(a_i)$. 
It can be checked that for $x\in\{w, u_4^{-1}a_4^{-1}ba_4u_4\}$ we have $x\in \Stab[\alpha_1]$. Since also $h_{e_7}\in\Stab[\alpha_1]$ and
$\psi(h_{e_7})=\psi_{v_1}(h_{e_7})$, thus $(\ast)$ holds for these $x$ by Lemma \ref{RelE1}.
\end{proof}
\section{Triangles.}\label{sec_triangles}
In this section we finish the proof of Theorem \ref{mainB} by showing that the map $\psi$ defined at the beginning of the previous section respects the relations corresponding to the triangles of $X$. For $f\in\cS_2(X)$ and 
$i=1,2,3$ let
$\nu_i=\nu_i(f)\in\cS_0(X)$, $\varepsilon_i=\varepsilon_i(f)\in\cS_1(X)$, 
$\widetilde{\varepsilon_i}=\widetilde{\varepsilon_i}(f)\in\cS_1(\widetilde{X})$ and 
$x_i=x_i(f)\in\Stab\,s(\nu_i)$ be as defined in Subsection \ref{orbits}.
We have to prove that
\[(\ast\ast)\quad\psi(h_{\varepsilon_1})\psi_{\nu_2}(x_2)\psi(h_{\varepsilon_2})\psi_{\nu_3}(x_3)\psi(h_{\varepsilon_3})^{-1}=\psi_{\nu_1}(x_1)\]
holds in $\G$. Note that we have not yet chosen the elements $x_i$.  Once any two of them are chosen, the third one is determined by the relation
$h_{\varepsilon_1}x_2h_{\varepsilon_2}x_3h^{-1}_{\varepsilon_3}=x_1$.
As explained in Subsection \ref{orbits}, for $j\in\{1,\dots,10\}$ and
for each permutation $\sigma\in\Sym_3$, $x_i(f_j^\sigma)$ are determined by $x_i(f_j)$. Moreover, it is easy to check that if $(\ast\ast)$ holds for $f_j$ then it also holds for $f_j^\sigma$. Therefore it suffices to prove the following.
\begin{lemma}
For $j\in\{1,\dots,10\}$ and $i\in\{1,2,3\}$ the elements $x_i(f_j)$ can be chosen is such a way that $(\ast\ast)$ is satisfied.
\end{lemma}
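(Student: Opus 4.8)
The proof will go triangle by triangle, $j=1,\dots,10$. For each $f_j$ we have three vertices $\nu_1,\nu_2,\nu_3$, three edges $\varepsilon_1,\varepsilon_2,\varepsilon_3$, and we must exhibit a choice of $x_1,x_2,x_3$ with $x_i\in\Stab\,s(\nu_i)$ realising the gluing data $x_1(s(\varepsilon_3))=\widetilde{\varepsilon_3}$, $h_{\varepsilon_1}x_2(s(\varepsilon_2))=\widetilde{\varepsilon_2}$, and then verify $(\ast\ast)$ in $\G$. The strategy is uniform: choose two of the $x_i$ explicitly as short words in Dehn twists / crosscap transpositions (reading them off from the concrete curve pictures underlying $s(f_j)$), let the third be defined by $x_1=h_{\varepsilon_1}x_2h_{\varepsilon_2}x_3h^{-1}_{\varepsilon_3}$, and then prove that the word $\psi(h_{\varepsilon_1})\psi_{\nu_2}(x_2)\psi(h_{\varepsilon_2})\psi_{\nu_3}(x_3)\psi(h_{\varepsilon_3})^{-1}$ equals $\psi_{\nu_1}(x_1)$ in $\G$. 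Since $\varphi_{g,0}$ sends this identity to the true relation $h_{\varepsilon_1}x_2h_{\varepsilon_2}x_3h^{-1}_{\varepsilon_3}=x_1$ in $\M$, it is enough to show the left-hand side of $(\ast\ast)$ lies in a subgroup of $\G$ on which $\varphi_{g,0}$ restricts to an isomorphism — this is the same ``shortcut'' philosophy used throughout Sections \ref{sec_base}--\ref{sec_edges}.

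\textbf{Key reduction.} The workhorse lemmas are: the isomorphism $\psi_{v_2}\colon\Stab[\mu_g]\to\mathcal{S}_{g,0}(v_2)\subset\G$ (Theorem \ref{pres_Stab_mu_g}), the analogous $\psi_{v_1}$, $\psi_{v_3}$ (Theorems \ref{pres_stab_alpha1}, \ref{pres_Stab_xi}), Lemma \ref{shortcut_AB} (a word in only the $u_i$'s, or only in the $a_i,b_j$'s, is trivial in $\G$ iff it is in $\M$), and Corollary \ref{genus3_shortcut}. For most triangles one checks that all of $h_{\varepsilon_1},h_{\varepsilon_2},h_{\varepsilon_3}$ and all of $x_1,x_2,x_3$ lie in a single stabiliser $\Stab\,[\text{some vertex}]$, and that the three maps $\psi_{v_i}$ agree on the relevant elements (this is exactly what the edge-relation lemmas of Section \ref{sec_edges}, e.g. Lemmas \ref{RelE1}, \ref{RelE2}, \ref{RelE4}, were engineered to give). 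When that happens, $(\ast\ast)$ in $\G$ is just the image under a single injective $\psi_{v_i}$ of the true relation in $\M$, hence automatic. Concretely: $f_1$ (all vertices $v_1$, all edges $e_2$) lives in $\Stab[\alpha_1]$; $f_2,f_3,f_4$ live in $\Stab[\mu_g]$ (their edges are among $e_1,e_2,e_3$, all of whose $h$'s lie in $\Stab[\mu_g]$, and $\psi_{v_1}=\psi_{v_2}$ there by Lemma \ref{RelE1}); $f_5$ lives in $\Stab[\xi]$ for $g\in\{5,6\}$; $f_6,f_7$ are handled inside $\Stab[\mu_g]$; $f_8,f_9,f_{10}$ mix two stabilisers but the overlap lemmas (\ref{RelE5}, \ref{RelE6}, \ref{RelE7}) reconcile the two descriptions.

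\textbf{Carrying it out.} First I would fix, for each $f_j$, an explicit regular neighbourhood picture and read off natural candidates for $x_2$ and $x_3$ as single twists or crosscap slides about curves obtained by applying small mapping classes to the $\gamma_I$'s (just as $h_{e_i}$ was read off in Table \ref{tabE}); then compute $x_1=h_{\varepsilon_1}x_2h_{\varepsilon_2}x_3h^{-1}_{\varepsilon_3}$ in $\M$ and recognise it as an element of $\Stab\,s(\nu_1)$. Second, for each $j$ I would locate a common vertex $w$ with $x_1,x_2,x_3,h_{\varepsilon_1},h_{\varepsilon_2},h_{\varepsilon_3}\in\Stab[w]$ (enlarging if necessary via $h^2_{\varepsilon}$-tricks as in Lemma \ref{RelE6}), invoke the appropriate edge lemma to see $\psi(h_{\varepsilon_k})=\psi_w(h_{\varepsilon_k})$ and $\psi_{\nu_k}(x_k)=\psi_w(x_k)$, and conclude $(\ast\ast)$ by injectivity of $\psi_w$. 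Third, for the few triangles where no single stabiliser contains everything — these will be among $f_8,f_9,f_{10}$ — I would split the product: write the left side of $(\ast\ast)$ as a product of two pieces, each expressible in some $\mathcal{S}(v_i)$, use the already-established edge relations to move $h$'s past $x$'s, and reduce to a computation inside $\mathcal{S}_{g,0}(v_2)$ of the sort done in the proof of Lemma \ref{c6r8} (using $\sim_L,\sim_R,\approx$ bookkeeping if the word is long).

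\textbf{Main obstacle.} The routine part is the combinatorics of matching $\widetilde{\varepsilon_i}$ with $x_i(s(\varepsilon_i))$ — tedious but mechanical. The genuine difficulty will be exactly those ``mixed'' triangles $f_8,f_9,f_{10}$ (present only for $g\in\{5,6\}$), where the two ends of the triangle force two different stabiliser descriptions and one must show a potentially long word in $\G$ — not obviously lying in any one $\Stab[w]$ — nevertheless collapses. There the proof will look like the $(\widetilde{\textrm{R8'}})$ computation in Section \ref{sec_v13}: repeatedly applying (A1--A9, B1--B3, C1--C8, D) and the derived relations (E1--E6, G1--G3) to push the word into $\mathcal{S}_{g,0}(v_2)$, then finishing by the isomorphism $\varphi_{g,0}|_{\mathcal{S}}$. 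I expect that verifying $(\ast\ast)$ for $f_9$ (genus $5$, involving $h_{e_5}$ and the crosscap slide $v$) is the single hardest case, since $h_{e_5}=a_4ba_3a_4a_2a_3a_1a_2$ is the longest of the edge elements and the relevant stabilisers ($v_1$ via $\beta$, $v_2$ via $\mu_5$) overlap only after applying Lemma \ref{RelE5}.
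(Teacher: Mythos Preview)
Your overall strategy — find a common stabiliser containing all the $h_{\varepsilon_i}$ and $x_i$, use the edge lemmas of Section~\ref{sec_edges} to reconcile the different $\psi_{\nu_i}$, and conclude by injectivity — is exactly the paper's approach. However, several of your specific claims about which stabiliser to use are wrong, and as a result you badly overestimate the difficulty of the low-genus triangles.

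Concretely: you say $f_2,f_3,f_4$ ``live in $\Stab[\mu_g]$'' because the $h$'s for $e_1,e_2,e_3$ lie there, but $h_{e_3}=a_{g-1}^{-1}\notin\Stab[\mu_g]$ (the curve $\alpha_{g-1}$ meets $\mu_g$). Likewise $f_6$ does not live in $\Stab[\mu_g]$ for the same reason. The paper instead handles $f_1,f_4,f_7$ by choosing $x_1=x_2$ so that $x_3=x_1^{-1}$ and $(\ast\ast)$ reduces to a single braid relation (A1, A2 or A4); it handles $f_2,f_3,f_5,f_8$ by choosing two of the $x_i$ equal to $1$, so that $(\ast\ast)$ collapses to the already-known identity $\psi(h_\varepsilon)=\psi_{\nu}(h_\varepsilon)$. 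In particular $f_8$ is trivial, not hard.

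The main point you miss is that for $g\in\{5,6\}$ the correct common stabiliser for the remaining triangles $f_6,f_9,f_{10}$ is $\Stab[\xi]$, not $\Stab[\mu_g]$ or $\Stab[\alpha_1]$. All the relevant $h_{\varepsilon_i}$ are words in the $a_i$, $b$, $c$ (twists about curves disjoint from $\xi$), hence lie in $\Stab[\xi]$ with $\psi(h_{\varepsilon_i})=\psi_{v_3}(h_{\varepsilon_i})$; one then chooses each $x_i\in\Stab[\xi]\cap\Stab\,s(\nu_i)$ and invokes Lemma~\ref{RelE4} to get $\psi_{\nu_i}(x_i)=\psi_{v_3}(x_i)$. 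Since $\psi_{v_3}$ is a homomorphism, $(\ast\ast)$ is then immediate — no computation of the $(\widetilde{\textrm{R8'}})$ type is needed at all. Your predicted ``single hardest case'' $f_9$ is dispatched in three lines this way (take $x_1=a_4ba_3a_4$, $x_2=1$). So your plan would eventually succeed, but only after long and unnecessary manipulations; the paper's choice of $\Stab[\xi]$ as the ambient stabiliser and of many $x_i=1$ is what makes the argument short.
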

 \begin{proof}
Fix $f=f_j$ and let $[\gamma_1,\gamma_2,\gamma_3]=s(f)$.

{\bf Case 1.} Suppose that $\varepsilon_1=\varepsilon_2=\varepsilon_3$ and
$h_{\varepsilon_1}(\gamma_3)=\gamma_3$.
Then $s(\varepsilon_1)=[\gamma_1, \gamma_2]$ and $h_{\varepsilon_1}[\gamma_1,\gamma_3]=[\gamma_2,\gamma_3]$. Once 
$x_1$ is chosen such that $x_1[\gamma_1,\gamma_2]=[\gamma_1,\gamma_3]$, then we can take $x_2=x_1$ because $h_{\varepsilon_1}x_1[\gamma_1,\gamma_2]=[\gamma_2,\gamma_3]$, and $x_3$ is determined. The assumption of this case is satisfied for
$j\in\{1,4,7\}$. Indeed, for $j=1$ we have
$s(f)=[\alpha_1, \alpha_3, \alpha_5]$, $\varepsilon_1=\varepsilon_2=\varepsilon_3=e_2$, $h_{e_2}=a_2a_3a_1a_2$ and we take $x_1=a_4a_5a_3a_4$.
For $j\in\{4,7\}$ we have $\varepsilon_1=\varepsilon_2=\varepsilon_3=e_3$, $h_{e_3}=a_{g-1}^{-1}$ and $s(f_4)=[\mu_g, \mu_{g-1}, \mu_{g-2}]$,
$s(f_7)=[\mu_5, \mu_4, \gamma_{\{1,2,3\}}]$ ($g=5$). We take
$x_1(f_4)=a_{g-2}^{-1}$ and $x_1(f_7)=b^{-1}$. It is easy to check that in each case we have $x_3=x_1^{-1}$ and $(\ast\ast)$ is a consequence of the relations (A1, A2, A4).

{\bf Case 2.} Suppose that $\varepsilon_2=\varepsilon_3$, $h_{\varepsilon_3}=1$, 
$s(\varepsilon_3)=[\gamma_1,\gamma_3]$ and $h_{\varepsilon_1}(\gamma_3)=\gamma_3$.
Then $h_{\varepsilon_1}[\gamma_1,\gamma_3]=[\gamma_2,\gamma_3]$ and we can take $x_1=x_2=1$, $x_3=h_{\varepsilon_1}^{-1}$. In this case $(\ast\ast)$ is $\psi(h_{\varepsilon_1})=\psi_{\nu_3}(h_{\varepsilon_1})$.
The assumption of this case is satisfied for
$j\in\{2,5\}$. Indeed, for $j=2$ we have
$\varepsilon_1=e_2$, $\varepsilon_2=\varepsilon_3=e_1$, 
$s(f)=[\alpha_1,\alpha_3,\mu_g]$, $h_{e_2}=a_2a_3a_1a_2$. Since
$\psi(h_{e_2})=\psi_{v_2}(h_{e_2})$ thus $(\ast\ast)$ holds.
For $j=5$ we have
$\varepsilon_1=e_2$, $\varepsilon_2=\varepsilon_3=e_4$, 
$s(f)=[\alpha_1,\alpha_3,\xi]$ and $(\ast\ast)$ holds because
$\psi(h_{e_2})=\psi_{v_3}(h_{e_2})$.

{\bf Case 3.} Suppose that $\varepsilon_1=\varepsilon_3$, $h_{\varepsilon_1}=1$, 
$s(\varepsilon_2)=[\gamma_2,\gamma_3]$ and $h_{\varepsilon_2}(\gamma_1)=\gamma_1$.
Analogously as in Case 2 we can take
$x_2=x_3=1$, $x_1=h_{\varepsilon_2}$ and $(\ast\ast)$ becomes $\psi(h_{\varepsilon_2})=\psi_{\nu_1}(h_{\varepsilon_1})$.
The assumption of this case is satisfied for
$j\in\{3,8\}$. Indeed, for $j=3$ we have
$\varepsilon_1=\varepsilon_3=e_1$, $\varepsilon_2=e_3$,  
$s(f)=[\alpha_1,\mu_g,\mu_{g-1}]$, $h_{e_3}=a_{g-1}^{-1}$ and $(\ast\ast)$ holds
because
$\psi(h_{e_3})=\psi_{v_1}(h_{e_3})$.
For $j=8$ we have
$\varepsilon_1=\varepsilon_3=e_1$, $\varepsilon_2=e_7$,  
$s(f)=[\alpha_1,\mu_g,\gamma_{\{1,\dots,5\}}]$, $h_{e_7}=b_2^{-1}$ and $(\ast\ast)$ holds
because
$\psi(h_{e_7})=\psi_{v_1}(h_{e_7})$.

{\bf Case 4.} Suppose that $j\in\{6,9,10\}$. We have $h_{\varepsilon_i}\in\Stab[\xi]$ and $\psi(h_{\varepsilon_i})=\psi_{v_3}(h_{\varepsilon_i})$ for $i=1,2,3$. If we can choose $x_i\in\Stab[\xi]\cap\Stab\,s(\nu_i)$ such that 
$\psi_{\nu_i}(x_i)=\psi_{v_3}(x_i)$ then $(\ast\ast)$ will follow from the fact that $\psi_{v_3}$ is a homomorphism.

For $j=6$ we have 
$s(f)=[\mu_6,\mu_5,\beta]$ and $h_{\varepsilon_1}=a_5^{-1}$. It can be checked that for $x_1=a_4ba_3a_4a_2a_3a_1a_2$ we have $x_1[\mu_6,\alpha_1]=[\mu_6,\beta]$.
Since $a_5^{-1}[\mu_6,\beta]=[\mu_5,\beta]$ we can take $x_2=x_1$ and $x_3$ is determined. Clearly $x_1\in\Stab[\xi]\cap\Stab[\mu_6]$ and 
$\psi_{v_2}(x_1)=\psi_{v_3}(x_1)$. It follows that $x_3\in\Stab[\xi]\cap\Stab[\alpha_1]$ and $\psi_{v_1}(x_3)=\psi_{v_3}(x_3)$ by Lemma \ref{RelE4}.
 
For $j=9$ we have $s(f)=[\alpha_1,\mu_5,\beta]$. It can be checked that for $x_1=a_4ba_3a_4$ we have $x_1[\alpha_1,\alpha_3]=[\alpha_1,\beta]$. Since 
$s(\varepsilon_2)=[\mu_5,\beta]$ and $h_{\varepsilon_1}=1$, we can take $x_2=1$ and $x_3$ is determined. We have $x_3\in\Stab[\xi]$ and 
$\psi_{v_1}(x_3)=\psi_{v_3}(x_3)$ by Lemma \ref{RelE4}.

For $j=10$ we have $s(f)=[\alpha_1,\alpha_3,\gamma_{\{3,4,5,6\}}]$, $\varepsilon_1=\varepsilon_2=e_2$, $\varepsilon_3=e_6$ and $h_{e_2}=a_2a_3a_1a_2$, $h_{e_6}=a_2ca_1a_2$ . We take $x_1=1$ and $x_2=a_4a_5a_3a_4^2ba_3a_4$. It can be checked that
$h_{e_2}x_2[\alpha_1,\alpha_3]=[\alpha_3,\gamma_{\{3,4,5,6\}}]$. We have $\psi_{v_1}(x_2)=\psi_{v_3}(x_2)$,  $x_3\in\Stab[\xi]$ and $\psi_{v_1}(x_3)=\psi_{v_3}(x_3)$ by Lemma \ref{RelE4}.
\end{proof}
\appendix
\section{The presentation of $\M(S_{\rho,r})$.}
In this section we prove Theorem \ref{presS}. For $r=1$ the presentation given in Theorem \ref{presS} is essentially the same as the one given in \cite{Mat}, with additional generators and relations. In the case $r=2$ we follow the idea of \cite{LabPar}. 

We are going to use some basic facts about geometric representations of Artin groups (see \cite{LabPar} for details). Let $\Gamma_1$, $\Gamma_2$ be the Artin groups of types $A_5$ and $D_6$ respectively. 
For $i\ge 1$ and $j=1,2$ we have homomorphisms $\theta_{i,j}\colon\Gamma_j\to\M(S_{\rho,r})$
, such that $\theta_{i,1}$ maps the standard generators of $\Gamma_1$ on $b_{i-1}, a_{2i}, a_{2i+1}, a_{2i+2}, a_{2i+3}$ and $\theta_{i,2}$ maps the standard generators of $\Gamma_2$ on $b_{i-1}, a_{2i}, a_{2i+1}, a_{2i+2}, a_{2i+3}, b_i$. Let $\Delta(\Gamma_j)$ be the fundamental element of $\Gamma_j$ and $C_{i,j}=\theta_{i,j}(\Delta(\Gamma_j))$. Then we have
\begin{align*}
&C_{i,1}=b_{i-1}a_{2i}a_{2i+1}a_{2i+2}a_{2i+3}b_{i-1}a_{2i}a_{2i+1}a_{2i+2}b_{i-1}a_{2i}a_{2i+1}b_{i-1}a_{2i}b_{i-1}\\
&C^2_{i,1}=(b_{i-1}a_{2i}a_{2i+1}a_{2i+2}a_{2i+3})^6,\quad
C_{i,2}=(b_{i-1}a_{2i}a_{2i+1}a_{2i+2}a_{2i+3}b_i)^5
.
\end{align*}

\medskip

{\it Proof of Theorem \ref{presS}.} 
Let $S_1=S_{\rho,1}$ and $S_2=S_{\rho,2}$ for a fixed $\rho\ge 1$. We assume that $S_1$ is obtained from $S_2$ by gluing a disc along $\beta_\rho$ (see Figure \ref{fig_S}) and let $P$ be the center of the glued disc. We have the following short exact sequences, which are special cases of (\ref{Bir_es}) and (\ref{Cup_es}).
\begin{equation}\label{Bir_S}
1\to\pi_1(S_1,P)\to\M(S_1,P)\to\M(S_1)\to 1
\end{equation}
\begin{equation}\label{Cup_S}
1\to\lr{b_\rho}\to\M(S_2)\to\M(S_1,P)\to 1
\end{equation}
By \cite{Mat} $\M(S_1)$ admits a presentation with generators
$b_1$, $a_i$ for $i=1,\dots,2\rho$ and relations (A1--A6).
We add to this presentation the generators $b_j$ for
$j=0,2,\dots,\rho-1$ and relations (A7,A8). We need to show that (A8) are satisfied in $\M(S_1)$. Fix $i\ge 1$ and let $M_1$ and $M_2$ be  regular neighbourhoods of
$\beta_i\cup\beta_{i-1}\cup\alpha_{2i}\cup\dots\cup\alpha_{2i+3}$ and
$\beta_{i-1}\cup\alpha_{2i}\cup\dots\cup\alpha_{2i+3}$ respectively.
The boundary of $M_1$ consists of three connected components, one of which is  isotopic to $\beta_{i+1}$ and one bounds a disc. Let $\delta$ be the third component and note that the boundary of $M_2$ consists of two connected components isotopic to $\beta_{i+1}$ and $\delta$. By \cite[Proposition 2.12]{LabPar} we have
$C_{i,2}=T_\delta b_{i+1}^2$ and $C^2_{i,1}=T_\delta b_{i+1}$. The relation (A8) follows and Theorem \ref{presS} is proved for $r=1$.

We are going to show that $\M(S_1,P)$ admits a presentation with generators
$a_i$ for $i=1,\dots,2\rho+1$ and $b_j$ for $j=0,1,\dots,\rho-1$ and relations
(A1--A8) and if $\rho\ge 2$
\[\textrm{(A8a)}\quad(b_{\rho-2}a_{2\rho-2}a_{2\rho-1}a_{2\rho}a_{2\rho+1}b_{\rho-1})^5=(b_{\rho-2}a_{2\rho-2}a_{2\rho-1}a_{2\rho}a_{2\rho+1})^6.\]
To prove this we apply Lemma \ref{ext_pres} to the sequence (\ref{Bir_S}).
By gluing a punctured annulus along the boundary of $S_1$ we obtain an induced embedding $\M(S_1)\to\M(S_1,P)$, which is a splitting of (\ref{Bir_S}). Through this embedding we will identify the generators of $\M(S_1)$ with elements of $\M(S_1,P)$ which will be the cokernel generators, and the defining relations (A1--A8) of $\M(S_1)$ will be the cokernel relations in our presentation of $\M(S_1,P)$. For notational convenience we set
\[c_0=b_{\rho-1},\ c_i=a_{2\rho+2-i}\ \textrm{for\ }i=1,\dots,{2\rho+1},\quad d=b_{\rho-2}\ \textrm{if\ }\rho\ge 2.\]
The kernel is freely generated by
\[x_1=c_1c_0^{-1},\quad x_i=c_ix_{i-1}c_i^{-1}\ \textrm{for\ }i=2,\dots,2\rho.\]
These will be our kernel generators. We also add $c_1$ to the set of generators together with the relation
\[(0)\quad c_1=x_1c_0.\]
The following relations are consequences of (A1--A8).
\begin{align*}
&(H1)\ c_ic_j=c_jc_i\quad \textrm{for\ }1\le i<j-1\le 2\rho\\
&(H2)\ c_ic_{i+1}c_i=c_{i+1}c_ic_{i+1}\quad \textrm{for\ }1\le i\le 2\rho\\
&(H3)\ c_0c_i=c_ic_0\quad \textrm{for\ }i\neq 2\qquad
(H4)\ c_0c_2c_0=c_2c_0c_2\\
&(H5)\ dc_i=c_id\quad \textrm{for\ }i\neq 4\qquad
(H6)\ dc_4d=c_4dc_4
\end{align*}
Indeed, (H1,H2) are simply (A1,A2) rewritten in the symbols $c_i$, (H4, H6) involve only cokernel generators, and so they are consequences of the cokernel relations, because of the splitting, and so are (H3, H5) for $i\ne 1$. If $i=1$ then the last relations follow from (A1, A8).

Since $\M(S_1)$ is generated by $d$ and $c_i$ for
$i=0,2,\dots,2\rho$, we can use these cokernel generators to produce the conjugation relations. We write $c_i(x_j)$ instead of $c_ix_jc_i^{-1}$.
\begin{align*}
&(1)\ c_i(x_{i-1})=x_i,\quad (2)\ c_i(x_i)=x_ix_{i-1}^{-1}x_i,\quad
(3)\ c_i(x_j)=x_j,\quad\textrm{for\ }i\ge 2, j\notin\{i-1,i\},\\
&(4)\ c_0(x_1)=x_1,\quad (5)\ c_0(x_2)=x_1^{-1}x_2,\quad
(6)\ c_0(x_i)=c_ic_0(x_{i-1})\quad
\textrm{for\ }i>2;\\
&(7)\ d(x_i)=x_i\ \textrm{for\ }i=1,2,3,\quad (8)\ d(x_4)=x_1^{-1}x_2x_3^{-1}x_4,\\
&(9)\ d(x_j)=c_jd(x_{j-1})\quad
\textrm{for\ }j>4.
\end{align*}
We are going to show that (2--7,9) are consequences of (0,1) and (H1--H6). 
(6) follows from (1) and (H3):
\[c_0(x_i)=c_0c_i(x_{i-1})=c_ic_0(x_{i-1}).\]
Analogously (9) follows from (1) and (H5).
(7) follows from (0, 1, H5), and (3) follows from (0, 1, H1) if $j<i-1$,  for $j=i+1$ we have
\[
c_i(x_{i+1})=c_ic_{i+1}c_i(x_{i-1})=
c_{i+1}c_ic_{i+1}(x_{i-1})=x_{i+1}\]
and for $j=i+1+k$ by induction
\[
c_i(x_{i+1+k})=c_ic_{i+1+k}(x_{i+k})=
c_{i+1+k}c_i(x_{i+k})=x_{i+1+k}.\]
For $i=2$ (2)  is equivalent to
\begin{align*}
&c_2^2c_1c_0^{-1}c_2^{-2}=c_2c_1\underline{c_0^{-1}c_2^{-1}c_0c_1^{-1}c_2c_1}c_0^{-1}c_2^{-1}\iff
c_2c_1c_0^{-1}c_2^{-1}=c_1c_2\underline{c_0^{-1}c_1}c_2^{-1}c_0^{-1}\iff\\
&c_2c_1c_0^{-1}c_2^{-1}=\underline{c_1c_2c_1}c_0^{-1}c_2^{-1}c_0^{-1}\iff
c_0^{-1}c_2^{-1}=c_2c_0^{-1}c_2^{-1}c_0^{-1}\iff c_2c_0c_2=c_0c_2c_0
\end{align*}
and for $i>2$
\begin{align*}
&c_ix_ic_i^{-1}=c_ic_{i-1}x_{i-2}\underline{c_{i-1}^{-1}c_i^{-1}c_{i-1}}x_{i-2}^{-1}\underline{c_{i-1}^{-1}
c_ic_{i-1}}x_{i-2}c_{i-1}^{-1}c_i^{-1}\iff\\
&c_ic_{i-1}x_{i-2}c_{i-1}^{-1}c_i^{-1}=c_{i-1}x_{i-2}c_ic_{i-1}^{-1}\underline{c_i^{-1}x_{i-2}^{-1}
c_i}c_{i-1}c_i^{-1}x_{i-2}c_{i-1}^{-1}\iff\\
&\underline{c_{i-1}^{-1}c_ic_{i-1}}x_{i-2}\underline{c_{i-1}^{-1}c_i^{-1}c_{i-1}}=\underline{x_{i-2}c_i}c_{i-1}^{-1}x_{i-2}^{-1}
c_{i-1}\underline{c_i^{-1}x_{i-2}}\iff\\
&c_i\underline{c_{i-1}c_{i}^{-1}x_{i-2}c_ic_{i-1}^{-1}}c_i^{-1}=c_ix_{i-2}c_{i-1}^{-1}x_{i-2}^{-1}
c_{i-1}x_{i-2}c_i^{-1}\iff\\
&x_{i-1}=x_{i-2}c_{i-1}^{-1}x_{i-2}^{-1}
c_{i-1}x_{i-2}\iff\\
&c_{i-1}x_{i-1}c_{i-1}^{-1}=c_{i-1}x_{i-2}c_{i-1}^{-1}x_{i-2}^{-1}
c_{i-1}x_{i-2}c_{i-1}^{-1}=x_{i-1}x_{i-2}^{-1}x_{i-1}.
\end{align*}
and we are done by induction.
(4) is equivalent to $c_0c_1=c_1c_0$ and (5) to
\begin{align*}
&c_0c_2c_1\underline{c_0^{-1}c_2^{-1}c_0^{-1}}=c_0c_1^{-1}c_2c_1c_0^{-1}c_2^{-1}\iff
c_2c_1c_2^{-1}=c_1^{-1}c_2c_1.
\end{align*}
Finally we are going to show that (8) follows from (0,1), (H1--H6) and (A8a).
Let $C_j=C_{\rho-1,j}$ for $j=1,2$ so that we have
\begin{align*}
&C_1=dc_4c_3c_2c_1dc_4c_3c_2dc_4c_3dc_4d,\quad
C^2_1=(dc_4c_3c_2c_1)^6,\quad
C_2=(dc_4c_3c_2c_1c_0)^5
.
\end{align*}
We leave it as an exercise for the reader to check that by using (0,1) and (H1--H6) the relation (8) can be rewritten as
\[C_1=c_0c_2c_1c_3c_2c_0c_4c_3c_2c_1dc_4c_3c_2c_0,\]  
and by (H1--H6)  we have
\[C_2=C_1c_0c_2c_1c_3c_2c_0c_4c_3c_2c_1dc_4c_3c_2c_0.\]
Thus we have obtained the relation $C_2=C_1^2$, which is exactly (A8a).    

We can drop the generators $x_i$ and relations (0--9) to obtain a presentation of $\M(S_1,P)$ with generators 
$a_i$ for $i=1,\dots,2\rho+1$ and $b_j$ for $j=0,1,\dots,\rho-1$ and relations
(A1--A8,A8a). 

Now we will obtain a presentation of $\M(S_2)$ by applying Lemma \ref{ext_pres} to the sequence (\ref{Cup_S}). We take the generators of $\M(S_1,P)$ as cokernel generators and $b_\rho$ as kernel generator. The relations (A1--A8) are satisfied in $\M(S_2)$ and the cokernel relation corresponding to (A8a) is $C_2C_1^{-2}=b_\rho$ which gives (A8) for $i+1=\rho$. The conjugation relations are \[(\ast)\quad b_\rho y=y b_\rho\] for every cokernel generator $y$. 
It suffices to consider $y=a_i$ for $i=1,\dots,2\rho+1$ and $y=b_1$ if $\rho\ge 2$.
If $\rho=1$ then $(\ast)$ follows from (A3), so we suppose that $\rho\ge 2$. Since 
$b_\rho=C_2C_1^{-2}$ and $C_2$, $C_1^2$ are central in $\theta_{\rho-1,2}(\Gamma_2)$ and $\theta_{\rho-1,1}(\Gamma_1)$ respectively, $(\ast)$ is a consequence of (H1--H6) for $y=c_i=a_{2\rho+2-i}$, $i=1,2,3,4$ and $y=d=b_{\rho-2}$. In particular  $b_\rho$ commutes with $a_1=b_0$ if $\rho=2$ and with $b_1$ if $\rho=3$.
If $\rho\ge 3$ then it follows from (A1--A8) that $b_\rho$ commutes with $a_i$ for $i\le 2\rho-4$  and  $b_1$ if $\rho\ge 4$. Finally (A9a, A9b) imply that it also commutes with $a_{2\rho-3}$ if $\rho\ge 3$ and $b_1$ if $\rho=2$.
Since all conjugation relations are  consequences of (A1--A9), $\M(S_2)$ admits the presentation from Theorem \ref{presS}.
\hfill{$\Box$}
%

%
\end{document}